\newtheorem{theorem}{Theorem}[section]
\newtheorem{lemma}[theorem]{Lemma}
\newtheorem{corollary}[theorem]{Corollary}
\newtheorem{proposition}[theorem]{Proposition}
\theoremstyle{definition}
\newtheorem{definition}[theorem]{Definition}
\newtheorem{example}[theorem]{Example}
\theoremstyle{remark}
\numberwithin{equation}{section}
\begin{document}

\title{Right coideal subalgebras in  $U_q^+({\mathfrak so}_{2n+1})$}
\author{V.K. Kharchenko}
\address{Universidad Nacioanal Aut\'onoma de M\'exico, Facultad de Estudios Superiores 
Cuautitl\'an, Primero de Mayo s/n, Campo 1, CIT, Cuautitlan Izcalli, Edstado de M\'exico, 54768, MEXICO}
\email{vlad@servidor.unam.mx}
\thanks{The author was supported by PAPIIT IN 108306-3, UNAM, and PACIVE CONS-304, FES-C UNAM, M\'exico.}

\subjclass{Primary 16W30, 16W35; Secondary 17B37.}

\keywords{Coideal subalgebra, Hopf algebra, PBW-basis.}

\begin{abstract}
We give a complete classification of right coideal subalgebras that contain 
all group-like elements for  the quantum group $U_q^+(\frak{so}_{2n+1}),$ 
provided that $q$ is not a root of 1. If $q$ has 
a finite multiplicative order $t>4,$ this classification remains valid for homogeneous right coideal 
subalgebras of the small Lusztig quantum group 
$u_q^+(\frak{ so}_{2n+1}).$ As a consequence, we determine that the total number of
right coideal subalgebras that contain the coradical equals $(2n)!!,$ the order of the
Weyl group defined by the root system of type $B_n.$
\end{abstract}
\maketitle
\markboth{V.K. Kharchenko}{Right coideal subalgebras}

 \section{Introduction}

{\bf Journal of the European Mathematical Society, in press}

In the present  paper, we continue the classification of right coideal subalgebras in 
quantized enveloping algebras started in \cite{KL}. We
offer a complete classification of right coideal subalgebras that contain all group-like elements for the 
multiparameter version of the quantum group $U_q^+(\frak{so}_{2n+1}),$ provided that the main 
parameter $q$ is not a root of 1. If $q$ has 
a finite multiplicative order $t>4,$ this classification remains valid for homogeneous right coideal 
subalgebras of the multiparameter version of the small Lusztig quantum group 
$u_q^+(\frak{ so}_{2n+1}).$ The main result of the paper establishes a bijection between all
sequences $(\theta_1, \theta_2, \ldots ,\theta_n)$ such that $0\leq \theta_k\leq 2n-2k+1,$ $1\leq k\leq n$
and the set of all (homogeneous if $q^t=1,$ $t>4$) right coideal subalgebras of $U_q^+(\frak{ so}_{2n+1}),$
$q^t\neq 1$ (respectively of $u_q^+(\frak{ so}_{2n+1})$) that contain the coradical. (Recall that in a pointed Hopf algebra, the group-like 
elements span the coradical.) In particular  there are $(2n)!!$ different
right coideal subalgebras that contain the coradical.  Interestingly this number coincides with 
the order of the Weyl group for the 
root system of type $B_n.$  In \cite {KL} we proved that the number of different right coideal 
subalgebras  that contain the coradical in $U_q^+(\frak{ sl}_{n+1})$ equals $(n+1)!,$ the order
of the Weyl group for the root system of type $A_n.$ 
Recently B. Pogorelsky \cite{Pog} proved that the 
quantum Borel algebra $U_q^+(\frak g)$ for the simple Lie algebra of type $G_2$ has 
$12$ different  right coideal subalgebras over the coradical. This number also coincides with the
order of the Weyl group of type $G_2.$ Although there is no theoretical explanation why the Weyl
group appears in these results, we state the following general hypothesis.

\smallskip
\noindent
{\bf Conjecture.} {\it Let $\frak g$ be a simple Lie algebra defined by a finite root system $R.$
The number of different  right coideal subalgebras that contain the coradical
in a quantum Borel algebra $U_q^+(\frak g)$ equals the order of the Weyl group defined by 
the root system $R,$ provided that $q$ is not a root of 1.}

In Section 2, following  \cite{KL}, we introduce main concepts and formulate some known results 
that are useful for classification. 
In the third section we prove auxiliary relations 
in a multiparameter version of $U_q^+(\frak{ so}_{2n+1}).$
In the fourth section we note that the Weyl basis 
$$
\{u[k,m]\stackrel{df}{=}[\ldots [x_k,x_{k+1}],\ldots , x_m] \, | \, 1\leq k\leq m\leq 2n-k, \ \ 
x_{n+r}\stackrel{df}{=}x_{n-r+1}\}
$$
of the Borel subalgebra 
$\frak{ so}_{2n+1}^+$ with skew brackets, $[u,v]=uv-\chi^u (g_v)vu,$
in place of the Lie operation is a set of
PBW-generators for $U_q^+(\frak{ so}_{2n+1})$ and for $u_q^+(\frak{ so}_{2n+1}).$
By means of the shuffle representation, in Theorem \ref{cos} we prove the explicit formula for the coproduct
of these PBW-generators, which is the key result for the following further considerations:
$$
\Delta (u[k,m])=u[k,m]\otimes 1+g_{km}\otimes u[k,m]
+\sum _{i=k}^{m-1}\tau _i(1-q^{-2})g_{ki}u[i+1,m]\otimes u[k,i],
$$
where $\tau _i=1$ with only one exception being $\tau _n=q,$
while $g_{ki}$ are suitable group-like elements.
Interestingly, this coproduct formula 
differs from that in $U_{q^2}^+({\mathfrak sl}_{2n+1})$
by just one term (see formula (3.3) in \cite{KA}).

In Section 5 we show that each homogeneous right coideal subalgebra
in  $U_q^+(\frak{ so}_{2n+1})$ or in $u_q^+(\frak{ so}_{2n+1})$ 
has PBW-generators of a special form, $\Phi^{S}(k,m),$ 
where  $S$ is a set of integer numbers from the interval $[1,2n].$
 The polynomial $\Phi^{S}(k,m)$ is defined by induction on the number $r$ of elements
in the set $S\cap [k,m-1]=\{ s_1,s_2,\ldots ,s_r\} ,$ $k\leq s_1<s_2<\ldots <s_r<m$
as follows:
$$
\Phi^{S}(k,m)=u[k,m]-(1-q^{-2})\sum_{i=1}^r \alpha _{km}^{s_i} \, 
\Phi^{S}(1+s_i,m)u[k,s_i],
$$
where $\alpha _{km}^{s}$  are scalars, $\alpha _{km}^{s}=\tau _{s}p(u(1+s,m),u(k,s))^{-1}.$
This implies that the set of all (homogeneous) 
right coideal subalgebras that contain the coradical is finite (Corollary \ref{vd}).

In Sections 6 and 7 we single out special sets $S$, called $(k,m)$-regular sets. 
In Proposition \ref{dec} we establish a kind of duality for elements 
$\Phi^{S}(k,m)$ with regular $S$, which provides
a powerful tool for investigation of PBW-generators for the right coideal subalgebras.

In Section 8 we define a root sequence $r({\bf U})=(\theta_1, \theta_2, \ldots ,\theta_n)$
 in the following way. 
The number $\theta _i$ is 
the maximal $m$ such that for some $S$ the value of $\Phi^{S}(i,m)$ 
belongs to {\bf U}, while the degree 
$x_i+x_{i+1}+\ldots +x_m $ of 
$\Phi^{S}(i,m)$ is not a sum of other nonzero degrees of  elements from {\bf U}.
In Theorem \ref{teor} we show that the root sequence uniquely defines the 
right coideal subalgebra {\bf U} that contains the coradical. 

In Section 9 we consider some important examples. Among them is the 
right coideal subalgebra generated by $\Phi^{S}(k,m)$ with regular $S$.
We also analyze in detail the simplest (but not  trivial, \cite{BDR}) case $n=2.$

In Section 10 we associate a right coideal subalgebra ${\bf U}_{\theta }$ to 
each sequence of integer  numbers $\theta =(\theta _1, \theta _2, \ldots , \theta _n),$
$0 \leq \theta _i\leq 2n-2i+1,$ so that $r({\bf U}_{\theta })=\theta .$  First,
by downward induction on $k$ we define sets 
$$
R_k\subseteq [k,2n-k ],\ \ T_k\subseteq [k,2n-k+1], \ 1\leq k\leq 2n
$$
 as follows. 
For $k>n$ we put $R_k=T_k=\emptyset .$
Suppose that 
$R_i,$ $T_i,$ $k<i\leq 2n$ are already defined. Denote by {\bf P} the following binary predicate
on the set of all ordered pairs $i\leq j$:
$$
{\bf P}(i,j)\rightleftharpoons j\in T_{i}\vee 2n-i+1\in T_{2n-j+1}.
$$
If $\theta _k=0,$ then we set 
$R_k=T_k=\emptyset .$ If $\theta _k\neq 0,$ then by definition $R_k$ contains 
$\tilde{\theta }_k=k+\theta _k-1$ and all $m$ satisfying the following three properties
$$
\begin{matrix}\smallskip
a)\ k\leq m<\tilde{\theta }_k; \hfill \cr \smallskip
b)\ \neg \hbox{\bf P}\, (m+1, \tilde{\theta }_k); \hfill \cr
c) \  \forall r (k\leq r<m)\ \  \hbox{\bf P}\, (r+1, m)\Longleftrightarrow 
\hbox{\bf P}\, (r+1, \tilde{\theta }_k). \hfill
\end{matrix}
$$
Further, we define an auxiliary set 
$$
T_k^{\prime }=R_k\cup \bigcup_{s\in R_k} \{ a\, |\, s<a\leq 2n-k, \ {\bf P}(s+1,a)\} ,
$$
and  put
$$
T_k=\left\{ \begin{matrix} T_{k}^{\prime }, \hfill 
& \hbox{ if } (2n-R_k)\cap T_k^{\prime }=\emptyset ;\hfill \cr 
T_{k}^{\prime }\cup \{ 2n-k+1\}, \hfill & \hbox{ otherwise.}\hfill 
\end{matrix}
\right. 
$$
Next, the subalgebra {\bf U}$_{\theta }$ by definition is generated over {\bf k}$[G]$ by values in
 $U_q({\mathfrak so}_{2n+1})$
or in $u_q({\mathfrak so}_{2n+1})$ of the polynomials
$\Phi ^{T_k}(k,m),$ $1\leq k\leq m$ with $m\in R_k.$

Theorem \ref{teor} and Theorem \ref{su4} show that
all right coideal subalgebras over the coradical
have the form ${\bf U}_{\theta }.$

In Section 11 we restate the main result in a slightly more general form. 
We consider homogeneous right coideal subalgebras {\bf U} 
such that the intersection $\Omega ={\bf U}\cap G$
with the group $G$  of all group-like elements is a subgroup. In this case 
${\bf U}={\bf k }[\Omega ]{\bf U}_{\theta }^{1},$ where $ {\bf U}_{\theta }^{1}$
is a subalgebra generated by $g^{-1}_aa$ when $a=\Phi ^{T_k}(k,m)$ runs 
through the above described generators of {\bf U}$_{\theta }.$

The present paper extends \cite{KL} by similar methods and in a parallel way.
However technically it is much more complicated. The proof of the explicit formula 
for comultiplication (Theorem \ref{cos}) essentially depends on the shuffle representation 
given in Proposition \ref{shu}, while the same formula for case $A_n$ 
is proved by a simple induction \cite{KA}. The elements $\Phi^{S}(k,m)$
that naturally appear as PBW-generators for right coideal subalgebras 
do not satisfy all necessary properties for further development. 
Therefore in Section 7 we have to introduce and investigate the elements $\Phi^{S}(k,m)$ 
with so called $(k,m)$-{\it regular} sets $S$. In Proposition \ref{dec} we establish a powerful 
duality for such elements. Interestingly, as a consequence of the classification, 
we prove  that every right coideal subalgebra over the coradical is generated as an algebra by
elements $\Phi^{S}(k,m)$  with $(k,m)$-{\it regular} sets $S$ (Corollary \ref{rug}). 
The construction of {\bf U}$_{\theta }$ is more complicated and it has an important  new element, 
a binary predicate defined on the ordered pairs of indices. 
In \cite{KL} we relatively easy find a differential subspace generated by  $\Psi^{S}(k,m)$
by virtue of the fact that this element is linear in each variable that it depends on.
However the elements $\Phi^{S}(k,m)$ that appeared in the present work are not linear in some variables.
In this connection we fail to find their partial derivatives in an appropriate form. Instead, in Theorem \ref{iex2},
using the root technique developed in Section 8, we find algebra generators of the right coideal 
subalgebra generated  by $\Phi^{S}(k,m)$ with $(k,m)$-regular set $S$.

\section{Preliminaries}

\noindent
{\bf PBW-generators}.
Let $A$ be an algebra over a field {\bf k} and $B$ its subalgebra
with a fixed basis $\{ g_j\, |\,  j\in J\} .$ A linearly ordered subset $V\subseteq A$ is said to be
a {\it set of PBW-generators of $A$ over $B$} if there exists 
a function $h:V\rightarrow {\bf Z}^+\cup {\infty },$
called the {\it height function}, such that the set of all products
\begin{equation}
g_jv_1^{n_1}v_2^{n_2}\cdots v_k^{n_k}, 
\label{pbge}
\end{equation}
where $j\in J,\ \ v_1<v_2<\ldots <v_k\in V,\ \ n_i<h(v_i), 1\leq i\leq k$
is a basis of $A.$ The value $h(v)$ is referred to the {\it height} of $v$ in $V.$

\smallskip
\noindent
{\bf Skew brackets}.
Recall that a Hopf algebra $H$ is referred to as a {\it character} Hopf 
algebra if the group $G$ of all group like elements is commutative
and $H$ is generated over {\bf k}$[G]$ by skew primitive 
semi-invariants $a_i,\ i\in I:$
\begin{equation}
\Delta (a_i)=a_i\otimes 1+g_i\otimes a_i,\ \ \ \
g^{-1}a_ig=\chi ^i(g)a_i, \ \ g, g_i\in G,
\label{AIc}
\end{equation}
where $\chi ^i,\, i\in I$ are characters of the group $G.$
By means of the Dedekind Lemma it is easy to see that 
every character Hopf algebra is graded by the monoid $G^*$ of characters
generated by $\chi ^i:$
\begin{equation}
H=\sum _{\chi \in G^*}\oplus H^{\chi }, \ \ H^{\chi }=\{ a\in H\ |\ g^{-1}ag=\chi (g)a, \  g\in G\}.
\label{grad}
\end{equation}
Let us associate a ``quantum" variable $x_i$ to $a_i.$
For each word $u$
in $X=\{ x_i\, |\, i\in I\}$ we denote by $g_u$ or gr$(u)$ an element of $G$
that appears from $u$ by replacing each $x_i$ with $g_i.$
In the same way we denote by $\chi ^u$ a character that appears from $u$
by replacing of each $x_i$ with $\chi ^i.$
We define a bilinear skew commutator on homogeneous linear combinations of words
in $a_i$ or in $x_i,$ $i\in I$ by the formula
\begin{equation}
[u,v]=uv-\chi ^u(g_v) vu,
\label{sqo}
\end{equation}
where sometimes for short we use the notation  $\chi ^u(g_v)=p_{uv}=p(u,v).$
Of course $p(u,v)$ is a bimultiplicative map: 
\begin{equation}
p(u, vt)=p(u,v)p(u,t), \ \ p(ut,v)=p(u,v)p(t,v).
l* {sqot}
\end{equation}
The brackets satisfy the following Jacobi identity:
\begin{equation}
[[u, v],w]=[u,[v,w]]+p_{wv}^{-1}[[u,w],v]+(p_{vw}-p_{wv}^{-1})[u,w]\cdot v.
\label{jak1}
\end{equation}
or, equivalently, in the other less symmetric form
\begin{equation}
[[u, v],w]=[u,[v,w]]+p_{vw}[u,w]\cdot v-p_{uv}v\cdot [u,w].
\label{jak2}
\end{equation}
Jacobi identity (\ref{jak1}) implies the following conditional  identity
\begin{equation}
[[u, v],w]=[u,[v,w]],\hbox{ provided that } [u,w]=0.
\label{jak3}
\end{equation}
By the evident induction on length  this conditional identity admits  the following  generalization,
see \cite[Lemma 2.2]{KL}.
\begin{lemma}
If $y_1,$ $y_2,$ $\ldots ,$ $y_m$ are homogeneous linear combinations of words such that
$[y_i,y_j]=0,$ $1\leq i<j-1<m,$ then  the bracketed polynomial $[y_1y_2\ldots y_m]$ is independent 
of the precise alignment of brackets:
\begin{equation}
[y_1y_2\ldots y_m]=[[y_1y_2\ldots y_s],[y_{s+1}y_{s+2}\ldots y_m]], \ 1\leq s<m.
\label{ind}
\end{equation}
\label{indle}
\end{lemma}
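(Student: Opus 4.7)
The plan is to proceed by induction on $m$, reducing every alignment of brackets to the conditional Jacobi identity (\ref{jak3}). The base case $m=2$ is trivial, and the case $m=3$ is exactly (\ref{jak3}) with $u=y_1$, $v=y_2$, $w=y_3$: the required hypothesis $[u,w]=0$ is the commutation $[y_1,y_3]=0$ supplied by the assumption on non-adjacent pairs.

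For the inductive step I would show that the expression $[[y_1 \ldots y_s], [y_{s+1} \ldots y_m]]$, in which each block is already well defined by the induction hypothesis applied to the shorter chains, does not depend on $s$. It suffices to compare two consecutive split points $s$ and $s+1$. Write $A=[y_1 \ldots y_s]$ and $B=[y_{s+2} \ldots y_m]$ with any fixed bracketings. By the induction hypothesis, $[y_{s+1} \ldots y_m]=[y_{s+1},B]$ and $[y_1 \ldots y_{s+1}]=[A,y_{s+1}]$, so the problem reduces to establishing
\[
[A, [y_{s+1}, B]] = [[A, y_{s+1}], B],
\]
which by (\ref{jak3}) (with $u=A$, $v=y_{s+1}$, $w=B$) holds as soon as $[A,B]=0$.

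The main obstacle is therefore an auxiliary commutation statement: if $a_1,\ldots,a_p$ and $b_1,\ldots,b_q$ are homogeneous linear combinations of words with $[a_i,b_j]=0$ for all $i,j$, then $[A,B]=0$ for any bracketings $A$ of the $a_i$'s and $B$ of the $b_j$'s in the given order. I would prove this by induction on the total number of atomic factors in $A$ and $B$: when both are single generators it is the hypothesis, and when $A=[A',A'']$ the unsymmetric Jacobi identity (\ref{jak2}) yields
\[
[[A',A''], B] = [A', [A'', B]] + p_{A''B}[A',B]\cdot A'' - p_{A'A''} A''\cdot [A',B],
\]
where each summand vanishes because $[A',B]$ and $[A'',B]$ are zero by induction; the case $B=[B',B'']$ is symmetric. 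Applying this claim to the $A$ and $B$ constructed above is legitimate, since every $y_i$ occurring in $A$ satisfies $i\le s$ while every $y_j$ in $B$ satisfies $j\ge s+2$, so $i<j-1<m$ and the lemma's hypothesis forces $[y_i,y_j]=0$.

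The subtle point is the preference of (\ref{jak2}) over (\ref{jak1}) in the auxiliary claim: the correction terms in (\ref{jak2}) each carry a factor $[A',B]$ or $[A'',B]$, which is inductively zero, so the identity collapses cleanly to the desired vanishing without residual terms that would obstruct the induction.
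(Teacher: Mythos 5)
Your proof is correct and is precisely the ``evident induction on length'' that the paper alludes to (the paper itself gives no proof, deferring to \cite[Lemma 2.2]{KL}): reduce any two bracketings to consecutive outer splits, apply the conditional Jacobi identity (\ref{jak3}), and justify its hypothesis $[A,B]=0$ by the auxiliary commutation claim proved via (\ref{jak2}) (and, for the case $B=[B',B'']$, via (\ref{jak1}) or bilinearity with (\ref{br1}), which works just as you indicate). No gaps.
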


The brackets are related with the product by the following ad-identities
\begin{equation}
[u\cdot v,w]=p_{vw}[u,w]\cdot v+u\cdot [v,w], 
\label{br1f}
\end{equation}
\begin{equation}
[u,v\cdot w]=[u,v]\cdot w+p_{uv}v\cdot [u,w].
\label{br1}
\end{equation}
In particular, if $[u,w]=0,$ we have
\begin{equation}
[u\cdot v,w]=u\cdot [v,w].
\label{br2}
\end{equation}

The antisymmetry identity transforms into the following two equalities
\begin{equation}
[u,v]=-p_{uv}[v,u]+(1-p_{uv}p_{vu})u\cdot v
\label{cha}
\end{equation}
\begin{equation}
[u,v]=-p_{vu}^{-1}[v,u]+(p_{vu}^{-1}-p_{uv})v\cdot u.
\label{cha1}
\end{equation}
In particular, if $p_{uv}p_{vu}=1,$ the ``color" antisymmetry,  $[u,v]=-p_{uv}[v,u],$ holds.

The group  $G$ acts on the free algebra ${\bf k}\langle X\rangle $
by $ g^{-1}ug=\chi ^u(g)u,$ where $u$ is an arbitrary monomial 
in $X.$
The skew group algebra $G\langle X\rangle $
has the natural Hopf algebra structure 
$$
\Delta (x_i)=x_i\otimes 1+g_i\otimes x_i, 
\ \ \ i\in I, \ \ \Delta (g)=g\otimes g, \ g\in G.
$$
We fix a Hopf algebra homomorphism
\begin{equation}
\xi :G\langle X\rangle \rightarrow H, \ \ \xi (x_i)=a_i, \ \ \xi (g)=g, \ \ i\in I, \ \ g\in G.
\label{gom}
\end{equation}

\smallskip
\noindent 
{\bf PBW-basis of a character Hopf algebra}.
A {\it constitution} of a word $u$ in $G \cup X$
is a family of non-negative integers $\{ m_x, x\in X\} $
such that $u$ has $m_x$ occurrences of $x.$
Certainly almost all $m_x$ in the constitution are zero.
We fix an arbitrary complete order, $<,$ on the set $X.$
Normally if $X=\{ x_1,\ldots ,x_n\} ,$ we set $x_1>x_2>\ldots >x_n.$

Let $\Gamma ^+$ be the free additive (commutative) monoid generated by $X.$
The monoid $\Gamma ^+$ \label{Gamma} is a completely ordered monoid with respect to 
the following order:
\begin{equation}
m_1x_{i_1}+m_2x_{i_2}+\ldots +m_kx_{i_k}>
m_1^{\prime }x_{i_1}+m_2^{\prime }x_{i_2}+\ldots +m_k^{\prime }x_{i_k}
\label{ord}
\end{equation}
if the first from the left nonzero number in
$(m_1-m_1^{\prime}, m_2-m_2^{\prime}, \ldots , m_k-m_k^{\prime})$
is positive, where $x_{i_1}>x_{i_2}>\ldots >x_{i_k}$ in $X.$
We associate a formal degree $D(u)=\sum _{x\in X}m_xx\in \Gamma ^+$
to a word $u$ in $G\cup X,$ where $\{ m_x\, | \, x\in X\}$ is the constitution of $u.$
Respectively, if $f=\sum \alpha _i u_i\in G\langle X\rangle ,$ $0\neq \alpha _i\in {\bf k},$
then 
\begin{equation}
D(f)={\rm max}_i\{ D(u_i)\} .
\label{degr}
\end{equation}

On the set of all words in $X$ we fix the lexicographical order
with the priority from the left to the right,
where a proper beginning of a word is considered to 
be greater than the word itself.

A non-empty word $u$
is called a {\it standard} word (or {\it Lyndon} word, or 
{\it Lyndon-Shirshov} word) if $vw>wv$
for each decomposition $u=vw$ with non-empty $v,w.$
A {\it nonassociative} word is a word where brackets 
$[, ]$ somehow arranged to show how multiplication applies.
If $[u]$ denotes a nonassociative word, then by $u$ we denote 
an associative word obtained from $[u]$ by removing the brackets.
The set of {\it standard nonassociative} words is the biggest set $SL$
that contains all variables $x_i$
and satisfies the following properties.

1)\ If $[u]=[[v][w]]\in SL,$
then $[v],[w]\in SL,$
and $v>w$
are standard.

2)\  If $[u]=[\, [[v_1][v_2]]\, [w]\, ]\in SL,$ then $v_2\leq w.$

\noindent
Every standard word has
only one alignment of brackets such that the appeared
nonassociative word is standard (Shirshov theorem \cite{pSh1}).
In order to find this alignment one may use the following inductive
procedure: 

\smallskip
\noindent
{\bf Algorithm}. \label{algo} The factors $v, w$
of the nonassociative decomposition $[u]=[[v][w]]$
are the standard words such that $u=vw$
and  $v$ has the minimal length (\cite{pSh2}, see also \cite{Lot}).
\begin{definition}  \rm A {\it super-letter}
is a polynomial that equals a nonassociative standard word
where the brackets mean (\ref{sqo}).
A {\it super-word} is a word in super-letters. 
\label{sup1}
\end{definition}

By Shirshov's theorem every standard word $u$
defines only one super-letter, in what follows we shall denote it by $[u].$
The order on the super-letters is defined in the natural way:
$[u]>[v]\iff u>v.$

In what follows we fix a notation $H$ for a  character Hopf algebra homogeneous in each $a_i,$
see (\ref{AIc}) and (\ref{gom}).
\begin{definition} \rm
A super-letter $[u]$
is called {\it hard in }$H$
provided that its value in $H$
is not a linear combination
of values of super-words of the same degree (\ref{degr})
in smaller than $[u]$ super-letters. 
\label{tv1}
\end{definition}
\begin{definition} \rm
We say that a {\it height} of a hard  super-letter $[u]$ in $H$
equals $h=h([u])$ if  $h$
is the smallest number such that: first, $p_{uu}$
is a primitive $t$-th root of 1 and either $h=t$
or $h=tl^r,$ where $l=$char({\bf k}); and  then the value 
of $[u]^h$ in $H$
is a linear combination of super-words of the same degree (\ref{degr})
in  less than $[u]$ super-letters.
If there exists no such number,  then the height equals infinity.
\label{h1}
\end{definition}
\begin{theorem} $(${\rm \cite[{Theorem 2}]{Kh3}}$).$
The values of all hard  super-letters in $H$ with the above defined height function
form a set of PBW-generators for $H$ over {\bf k}$[G].$
\label{BW}
\end{theorem}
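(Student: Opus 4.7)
The plan is to establish the two halves of the PBW claim separately: spanning and linear independence. Both will use the $\Gamma^+$-filtration from (\ref{degr}) together with the induced lexicographic order on super-words as the termination measure for the rewriting procedures.

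For spanning I would proceed by double induction on the formal degree $D(u)\in\Gamma^+$ of a word $u$ in the generators $\{a_i\}\cup G$, and, within a fixed degree, on the leading super-word in the lexicographic order on super-letters. Shirshov's theorem together with the bracketing algorithm lets me rewrite $u$ as a linear combination of super-words. I then reorder each super-word into a non-increasing product of super-letters by repeatedly applying the ad-identities (\ref{br1f}) and (\ref{br1}) together with the skew antisymmetry (\ref{cha}); each commutation $[u][v]\mapsto p_{uv}[v][u]+[[u],[v]]+\ldots$ replaces an out-of-order pair either by an in-order pair or by a super-letter of strictly larger degree, on which the inductive hypothesis applies. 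Whenever a non-hard super-letter $[u]$ appears, Definition \ref{tv1} lets me replace its value by a combination of super-words in strictly smaller super-letters of the same degree; whenever an exponent $n_i$ reaches $h([u])$, Definition \ref{h1} replaces $[u]^{h([u])}$ by a combination in strictly smaller super-letters. Every rewrite strictly decreases $(D,\mathrm{lex})$ on the super-words involved, so the procedure terminates in a combination of PBW-monomials of the form (\ref{pbge}).

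For linear independence I would lift the problem to the free skew group algebra $G\langle X\rangle$ via $\xi$ from (\ref{gom}), and exploit the combinatorial fact from Shirshov's theory that distinct decreasing factorizations $v_1^{n_1}\cdots v_k^{n_k}$ of standard words give rise to distinct associative words upon concatenation. If a nontrivial $\mathbf{k}[G]$-dependence existed among the proposed PBW-monomials, I would extract the $\Gamma^+$-leading part, and within that the lexicographically maximal associative word; the coefficient in front of that word would be a nonzero $\mathbf{k}[G]$-combination of a single monomial of $G\langle X\rangle$, contradicting that $G\langle X\rangle$ is free as a $\mathbf{k}[G]$-module.

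The main obstacle is the bookkeeping in the spanning step: one has to verify that \emph{every} error term produced during reordering, during the substitution of non-hard super-letters, and during the substitution of $[u]^{h([u])}$, is strictly smaller in the chosen $(D,\mathrm{lex})$-ordering. This verification rests on a careful analysis of the quantum Jacobi identity (\ref{jak1}) and is precisely the content of \cite[Theorem~2]{Kh3}. A secondary subtlety appears in positive characteristic, where the height is allowed to take the form $tl^r$ rather than $t$: here a quantum binomial computation is needed to show that $[u]^{tl^r}$ commutes with all smaller super-letters modulo strictly smaller-degree corrections, so that the height-reduction step is in fact available.
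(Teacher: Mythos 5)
This theorem is not proved in the present paper at all: it is imported verbatim as Theorem~2 of \cite{Kh3}, so there is no in-paper argument to compare against. Judged on its own merits, your spanning half has the right shape --- it is essentially the reduction of an arbitrary super-word to a linear combination of restricted monotonous super-words in hard super-letters, with Definition \ref{tv1} supplying the elimination of non-hard super-letters and Definition \ref{h1} supplying the elimination of forbidden powers (this reduction is \cite[Lemma 7]{Kh3} plus the two definitions). Two caveats: your termination measure is suspect, since a commutation $[u][v]\mapsto p_{uv}[v][u]+[[u],[v]]+\cdots$ preserves the total degree $D$ and produces a super-letter $[uv]$ of \emph{larger} individual degree, so the usual induction is on the number of super-letters in the word together with the order on leading super-letters, not on $(D,\mathrm{lex})$ as you state; and your final sentence defers the decisive verification to \cite[Theorem 2]{Kh3}, which is the statement being proved, so the argument is partly circular.

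The genuine gap is in the linear independence half. You propose to lift a hypothetical dependence to $G\langle X\rangle$ and contradict freeness of $G\langle X\rangle$ as a ${\bf k}[G]$-module. This cannot work: a dependence among the \emph{values} of the PBW-monomials in $H$ is precisely an element of $\ker\xi$, and $\ker\xi$ is nontrivial (it contains, e.g., the Serre relations), so linear independence of the corresponding super-words in the free algebra $G\langle X\rangle$ gives no information about their images in $H$. Independence must instead be extracted from the minimality built into Definitions \ref{tv1} and \ref{h1}: if a nontrivial dependence among restricted monotonous super-words in hard super-letters held in $H$, one isolates the maximal super-word occurring with nonzero coefficient and solves for it in terms of the others; this exhibits either a hard super-letter as a combination of super-words in smaller super-letters (contradicting hardness) or a power $[u]^s$ with $s<h([u])$ as such a combination (contradicting the minimality of the height). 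That contradiction argument, carried out inside $H$ rather than in $G\langle X\rangle$, is the actual content of the independence half of \cite[Theorem 2]{Kh3} and is what your proposal is missing. Your remark about positive characteristic is also slightly off target: the availability of the replacement of $[u]^{h([u])}$ is guaranteed by the very definition of the height, whereas the quantum binomial analysis is needed elsewhere, namely to show that the height can only take the values $t$ or $tl^r$.
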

 
\noindent 
{\bf PBW-basis of a homogeneous right coideal subalgebra}. 
The set $T$ of PBW-generators for a homogeneous right coideal subalgebra {\bf U},
${\bf k}[G]\subseteq {\bf U}\subseteq H,$ 
can be obtained from the PBW-basis given in Theorem \ref{BW}
in the following way; see  \cite[Theorem 1.1]{KhT}.

Suppose that for a hard super-letter $[u]$ there exists  a homogeneous element $c\in ${\bf U}
with the leading term $[u]^s$ in the PBW-decomposition given in Theorem \ref{BW}:    
\begin{equation}
c=[u]^s+\sum_i \alpha _iW_i\in \hbox{\bf U},
\label{vad1}
\end{equation}
where $W_i$ are the basis super-words starting with less than $[u]$ super-letters.
 We fix one of the elements with the minimal $s,$ and denote it by $c_u.$ 
Thus, for every hard super-letter $[u]$ in $H$ we have at most one element $c_u.$
We define the height function by means of the following lemma.

\begin{lemma}$\!\!\! ($\cite[{\rm Lemma 4.3}]{KhT}$).$ 
In the representation $(\ref{vad1})$ of the chosen element $c_u$
either $s=1,$ or $p(u,u)$ is a primitive $t$-th root of $1$ and $s=t$ or 
$($in the case of positive characteristic$)$ $s=t({\rm char}\, {\bf k})^r.$
\label{nco1}
\end{lemma}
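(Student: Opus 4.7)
The strategy combines the right coideal property $\Delta(c_u)\in {\bf U}\otimes H$ with the minimality of the leading exponent $s$ in the choice of $c_u.$ The plan is to expand $\Delta([u]^{s})$ via the $q$-binomial theorem and extract from $\Delta(c_u),$ for each intermediate exponent $i$ with $0<i<s,$ an element of ${\bf U}$ whose existence contradicts the minimality of $s$ unless the corresponding $q$-binomial coefficient vanishes.

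By the standard analysis of coproducts of super-letters, $\Delta([u])=[u]\otimes 1+g_u\otimes [u]+R,$ where $R$ is a sum of terms $a\otimes b$ whose PBW decompositions involve only super-letters strictly smaller than $[u].$ Setting $A=[u]\otimes 1$ and $B=g_u\otimes [u]$ in $H\otimes H,$ one computes $BA=p_{uu}^{-1}AB,$ so the $q$-binomial theorem gives
\begin{equation*}
\Delta([u]^{s})=\sum_{i=0}^{s}\binom{s}{i}_{p_{uu}^{-1}}\,[u]^{i}g_u^{s-i}\otimes [u]^{s-i}+R',
\end{equation*}
where $R'$ consists of terms whose left tensor factor has a PBW decomposition using super-letters strictly smaller than $[u].$ The lower-order summands $\sum\alpha_iW_i$ in (\ref{vad1}) contribute only such terms to $\Delta(c_u).$ For each $0<i<s,$ I then apply ${\rm id}\otimes\varphi_i,$ where $\varphi_i\colon H\to{\bf k}$ is the linear functional dual to the PBW basis element $[u]^{s-i}$ among basis super-words of degree $(s-i)D(u).$ Since ${\bf U}$ is a right coideal containing ${\bf k}[G],$ the result $({\rm id}\otimes\varphi_i)\Delta(c_u)$ lies in ${\bf U};$ its leading PBW term is a nonzero scalar multiple of $[u]^{i}g_u^{s-i}$ whenever $\binom{s}{i}_{p_{uu}^{-1}}\neq 0,$ which would contradict the minimality of $s$ in the choice of $c_u.$ Hence $\binom{s}{i}_{p_{uu}^{-1}}=0$ for every $0<i<s.$

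A standard application of the $q$-Lucas congruence for $q$-binomial coefficients at a root of unity then shows that this simultaneous vanishing is equivalent to either $s=1,$ or to $p_{uu}$ being a primitive $t$-th root of $1$ with $s=t$ in characteristic zero and $s=t({\rm char}\,{\bf k})^{r}$ in positive characteristic, yielding the required trichotomy. The main technical obstacle is the careful bookkeeping modulo smaller super-letters: one must verify that the corrections $R$ in $\Delta([u])$ only involve such smaller super-letters, that the $q$-binomial expansion is not disturbed by the further PBW reduction of $[u]^{i}g_u^{s-i}$ in the left tensor factor, and that $\varphi_i$ genuinely isolates the targeted coefficient against the expansion of $g_u^{s-i}\otimes[u]^{s-i}$ on the right. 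For all three points, the PBW basis provided by Theorem~\ref{BW} and the fixed ordering of super-letters are indispensable.
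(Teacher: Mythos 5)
The paper offers no proof of this lemma at all---it is quoted verbatim from \cite[Lemma 4.3]{KhT}---and your argument is, as far as it can be reconstructed, precisely the argument of that cited source: the twisted binomial expansion of $\Delta([u]^s)$ from $(g_u\otimes[u])([u]\otimes 1)=p_{uu}^{-1}([u]\otimes 1)(g_u\otimes[u])$ modulo tensors with ``smaller'' left factors, the extraction of an element of {\bf U} with leading term $[u]^ig_u^{s-i}$ contradicting the minimality of $s$ unless $\binom{s}{i}_{p_{uu}^{-1}}=0$ for all $0<i<s$, and the standard classification of such exponents $s$. The only load-bearing step you assert rather than prove---that the lower-order summands $\sum_j\alpha_jW_j$ of $c_u$ contribute to $\Delta(c_u)$ only tensors whose left factors decompose into super-words in super-letters smaller than $[u]$---is exactly the structural fact about coproducts of super-words that \cite{KhT} in turn imports from \cite{Kh3}, so your outline is correct and complete modulo that citation.
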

If the height of $[u]$ in $H$ is infinite, then the height of $c_u$ in {\bf U}
is defined to be infinite as well. If the height of $[u]$ in $H$ equals $t,$ then, due to 
the above lemma, $s=1$ (in the PBW-decomposition (\ref{vad1}) the exponent 
$s$ must be less than
the height of $[u]$). In this case the height of $c_u$ in {\bf U} is supposed to be $t$ as well.
If the characteristic $l$ is positive, and the height of $[u]$ in $H$ equals
$tl^r,$ then we define the height of $c_u$ in {\bf U} to be equal to $tl^r/s.$

\begin{proposition} {\rm (\cite[Proposition 4.4]{KhT}).}
The set of all chosen $c_u$ with the above defined height function
forms a set of PBW-generators for {\bf U} over {\bf k}$[G].$  
\label{pro}
\end{proposition}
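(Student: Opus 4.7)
The plan is to prove the two defining properties of a PBW-basis separately: linear independence of the products $g\, c_{u_1}^{n_1}c_{u_2}^{n_2}\cdots c_{u_k}^{n_k}$ (with $u_1<u_2<\ldots <u_k$ chosen, $n_i<h(c_{u_i})$), and spanning of \textbf{U} over \textbf{k}$[G]$.

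For linear independence, I would expand each chosen element via its defining representation $c_{u_i}=[u_i]^{s_i}+\sum \alpha_j W_j^{(i)}$, where the $W_j^{(i)}$ are super-words in super-letters smaller than $[u_i]$. A routine calculation using the skew-brackets and the ad-identities (\ref{br1f})--(\ref{br1}) shows that the product $g\,c_{u_1}^{n_1}\cdots c_{u_k}^{n_k}$ has PBW-leading term $g\cdot [u_1]^{s_1 n_1}\cdots [u_k]^{s_k n_k}$ with respect to the order (\ref{ord}). The crucial point is that, by the definition of $h(c_{u_i})$ and the case-by-case construction just preceding Proposition \ref{pro}, each exponent $s_in_i$ is strictly less than the height of $[u_i]$ in $H$. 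Hence every such leading term is a genuine PBW-basis element of $H$ in the sense of Theorem \ref{BW}, and distinct multi-indices produce distinct basis elements; linear independence in \textbf{U} therefore follows from linear independence in $H$.

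For spanning I would induct, in the order (\ref{ord}), on the PBW-leading term of an element $a\in \textbf{U}$. Write the leading term as $g\cdot[u_1]^{n_1}\cdots[u_k]^{n_k}$ with $u_1<\cdots<u_k$, and let $[u]=[u_k]$. The main claim is that the chosen element $c_u$ exists and that the minimal exponent $s_k$ associated to $c_u$ divides $n_k$; then one subtracts a suitable \textbf{k}$[G]$-multiple of $c_{u_1}^{m_1}\cdots c_{u_k}^{n_k/s_k}$ (with $m_i$'s obtained by the inductive hypothesis applied to the lower super-letters after extracting the $[u_k]$-contribution) and strictly reduces the leading term. To produce $c_u$, I would exploit that \textbf{U} is a right coideal: for any super-letter $[v]$ occurring in the leading term of $a$ one has $\Delta([v])=[v]\otimes 1+g_v\otimes[v]+R_v$, where $R_v$ lies in the linear span of tensors whose right-hand factor is a super-word in super-letters smaller than $[v]$. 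Iterating yields a controlled expansion of $\Delta(a)$ in which the summand $g_u^{n_k}\cdot(\text{lower factors})\otimes [u]^{n_k}$ appears with nonzero coefficient (the relevant $q$-binomial is nonzero because $n_k$ is below the height of $[u]$). Applying a suitable linear functional on the first tensor factor that projects onto this component then produces an element of $\textbf{U}\cdot\textbf{k}$ whose PBW-leading term is a pure power of $[u]$, yielding a nonzero candidate for $c_u$ with exponent $\leq n_k$. Lemma \ref{nco1} identifies the admissible values of $s_k$ (namely $s_k=1$, $t$, or $tl^r$), and rerunning the same coproduct extraction on $a$ modulo multiples of $c_u$ forces $s_k\mid n_k$.

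The main obstacle is this coproduct extraction step: showing that the right-coideal property actually produces an element of \textbf{U} whose leading super-word is a pure power $[u]^{s}$, and that the exponent $s$ correctly divides every exponent of $[u]$ that can appear in a leading term of an element of \textbf{U}. This depends on a careful control of the lower-order terms $R_v$ appearing in the coproducts of super-letters, together with an analysis of vanishing patterns of $q$-binomial coefficients at roots of unity so that the projection onto the $[u]^{n_k}$-component does not collapse. Once this is established, both halves of the PBW-property fall out by the inductive reduction described above, completing the proof of Proposition \ref{pro}.
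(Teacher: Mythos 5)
This proposition is not proved in the paper at all: it is quoted verbatim from \cite[Proposition 4.4]{KhT}, so there is no in-paper argument to compare yours against. Judged on its own merits, your sketch follows what is essentially the strategy of \cite{KhT}: linear independence via leading terms of products of the $c_u$'s (your observation that $s_in_i$ stays below the height of $[u_i]$ in $H$, so that the resulting leading terms are genuine basis super-words of $H$ in the sense of Theorem \ref{BW}, is exactly the point of the height convention set up just before the proposition), and spanning by induction on the leading super-word combined with a coproduct extraction producing, inside \textbf{U}, an element whose leading term is a pure power of the largest super-letter involved.

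The one concrete flaw is the orientation of your extraction step. For a right coideal one has $\Delta(a)\in \textbf{U}\otimes H$, so it is the \emph{left} tensor factors that are guaranteed to lie in \textbf{U}; one must therefore apply the projection or linear functional to the \emph{right} factor and keep the left one. You place the pure power $[u]^{n_k}$ in the right factor and apply a functional to the first factor; what that yields is a scalar multiple of $[u]^{n_k}$, an element of $H$ with no reason to belong to \textbf{U}. The correct arrangement is to isolate the summand $g\cdot[u_k]^{n_k}\otimes [u_1]^{n_1}\cdots[u_{k-1}]^{n_{k-1}}$, with the pure power of the top super-letter on the left, and project the second factor onto the appropriate component; this is precisely how the analogous extraction is carried out later in this paper in Lemmas \ref{las2}--\ref{las4} and Proposition \ref{phib}, where one applies $\Delta(\cdot)\cdot(\mathrm{id}\otimes\pi)$ and retains the first factor. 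With that correction, together with the $q$-binomial analysis you allude to (which is what underlies Lemma \ref{nco1} and the divisibility $s_k\mid n_k$), your outline matches the argument of \cite{KhT}.
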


We are reminded that the PBW-basis is not uniquely defined in the above process. 
Nevertheless the set of leading terms of the PBW-generators indeed is uniquely defined.

\begin{definition} \rm  The degree $sD(c_u)\in \Gamma ^+ $ of a PBW-generator $c_u$
is said to be an {\bf U}-{\it root}. An {\bf U}-root $\gamma \in \Gamma ^+ $
is called a {\it simple} {\bf U}-{\it root} if it is not a sum of two or more other {\bf U}-roots.
\label{root}
\end{definition}
The set of {\bf U}-roots, and the set of 
simple  {\bf U}-roots are invariants for any right coideal subalgebra {\bf U}.

\smallskip
\noindent
{\bf Shuffle representation}. If the kernel of $\xi $ defined in (\ref{gom}) is contained in 
the ideal $G\langle X\rangle ^{(2)}$ generated by $x_ix_j,$ $i,j\in I,$
then there exists a Hopf algebra projection $\pi : H\rightarrow {\bf k}[G],$
$a_i\rightarrow 0, $ $g_i\rightarrow g_i .$ Hence by the Radford theorem
\cite{Rad}  we have a decomposition in a biproduct,
$H=A\# {\bf k}[G],$ where $A$ is a subalgebra generated by $a_i,$ $i\in I,$
 see \cite[\S 1.5, \S 1.7]{AS}.
\begin{definition} \rm 
In what follows we denote by  ${\bf \Lambda }$ the biggest Hopf ideal in $G\langle X\rangle ^{(2)},$
where as above $G\langle X\rangle ^{(2)}$ is the ideal of $G\langle X\rangle $
generated by $x_ix_j,$ $i,j\in I.$ The ideal ${\bf \Lambda }$  is homogeneous in each $x_i\in X$, see \cite[Lemma 2.2]{KA}.
\label{lam}
\end{definition}

If Ker$\, \xi ={\bf \Lambda }$ or, equivalently, 
if $A$ is a quantum symmetric algebra (a Nichols algebra \cite[\S 1.3, Section 2]{AS}),
then $A$ has a shuffle representation as follows.

The algebra $A$ has a structure of a {\it braided Hopf algebra}, \cite{Tak1},
with a braiding $\tau (u\otimes v)=p(v,u)^{-1}v\otimes u.$
The braided coproduct  $\Delta ^b$ on $A$ 
is connected with the coproduct on $H$ in the following way
\begin{equation}
\Delta ^b(u)=\sum _{(u)}u^{(1)}\hbox{gr}(u^{(2)})^{-1}\underline{\otimes} u^{(2)}.
\label{copro}
\end{equation}
The tensor space $T(V),$ $V=\sum x_i{\bf k}$ also has a structure of a braided Hopf algebra.
This is the {\it quantum shuffle algebra} $Sh_{\tau }(V)$ with the coproduct 
\begin{equation}
\Delta ^b(u)=\sum _{i=0}^m(z_1\ldots z_i)\underline{\otimes} (z_{i+1}\ldots z_m),
\label{bcopro}
\end{equation}
where $z_i\in X,$ and $u=(z_1z_2\ldots z_{m-1}z_m)$ is the tensor
$z_1\otimes z_2\otimes \ldots \otimes z_{m-1}\otimes z_m$ 
considered as an element of $Sh_{\tau }(V).$ The shuffle product satisfies
\begin{equation}
(w)(x_i)=\sum _{uv=w}p(x_i,v)^{-1}(ux_iv), \ \ (x_i)(w)=\sum _{uv=w}p(u,x_i)^{-1}(ux_iv).
\label{spro}
\end{equation}
The map $a_i\rightarrow (x_i)$ defines an embedding of the braided Hopf algebra
$A$ into the braided Hopf algebra $Sh_{\tau }(V).$ This embedding is 
extremely useful for calculation of the coproduct due to formulae (\ref{copro}), (\ref{bcopro}).

\smallskip
\noindent 
{\bf Differential calculus}. 
The free algebra ${\bf k}\langle X\rangle $ has a coordinate differential calculus
\begin{equation}
\partial _j( x_i)=\delta _i^j,\ \ \partial _i(uv)=\partial _i(u)\cdot v+\chi ^{u}(g_i)u\cdot \partial _i(v).
\label{defdif}
\end{equation}
The partial derivatives connect the calculus with the coproduct on ${\bf k}\langle X\rangle$ via
\begin{equation}
\Delta (u)\equiv u\otimes 1+\sum _ig_i\partial _i(u)\otimes x_i\ \ \ 
(\hbox{mod }G\langle X\rangle \otimes \hbox{\bf k}\langle X\rangle ^{(2)}),
\label{calc}
\end{equation}
where ${\bf k}\langle X\rangle ^{(2)}$ is the  ideal
generated by $x_ix_j,$ $1\leq i,j\leq n.$
\begin{lemma} Let $u\in {\bf k}\langle X\rangle $ be an element homogeneous in each $x_i$.
If  $p_{uu}$ is a $t$-th primitive root of 1, then
\begin{equation}
\partial_i(u^t)=p(u,x_i)^{t-1}\underbrace{[u,[u,\ldots [u}_{t-1},\partial _i(u)]\ldots ]].
\label{ddif1}
\end{equation}
\label{dert}
\end{lemma}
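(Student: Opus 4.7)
The plan is to expand both sides as $\mathbf{k}$-linear combinations of words $u^a\, v\, u^b$ with $v=\partial_i(u)$ and $a+b=t-1$, then identify the coefficients via a $q$-binomial identity at a primitive $t$-th root of unity. For the left-hand side, iterating the twisted Leibniz rule (\ref{defdif}), i.e.\ $\partial_i(fg)=\partial_i(f)\,g+p(f,x_i)\,f\,\partial_i(g)$, and using that $u$ is homogeneous (so $p(u^k,x_i)=p(u,x_i)^k$), a direct induction on $t$ gives
$$\partial_i(u^t)=\sum_{k=0}^{t-1}p(u,x_i)^k\, u^k\, v\, u^{t-1-k}.$$

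For the right-hand side, set $q=p(u,u)$. Since $\mathrm{gr}(v)=\mathrm{gr}(u)\mathrm{gr}(x_i)^{-1}$ one has $p(u,v)=q\,p(u,x_i)^{-1}$; moreover, the $n$-fold iterated bracket is homogeneous of grading $\mathrm{gr}(u)^n\mathrm{gr}(v)$, so at the next step the skew-commutator scalar picks up an extra factor $q^n p(u,v)$. Expanding $[u,w]=uw-q^n p(u,v)\,wu$ on the current iterate $w$ and invoking the $q$-Pascal recurrence yields, by induction on $n$,
$$\underbrace{[u,[u,\ldots[u}_{n},v]\ldots]]=\sum_{k=0}^{n}(-1)^k q^{\binom{k}{2}}\binom{n}{k}_{q}\,p(u,v)^k\,u^{n-k}\,v\,u^{k}.$$

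Taking $n=t-1$, substituting $p(u,v)=q\,p(u,x_i)^{-1}$, multiplying by $p(u,x_i)^{t-1}$, reindexing $j=t-1-k$ and using the symmetry $\binom{t-1}{k}_q=\binom{t-1}{t-1-k}_q$ reduces the claim to the scalar identity
$$\binom{t-1}{m}_{q}=(-1)^m q^{-\binom{m}{2}-m},\qquad 0\le m\le t-1,$$
which follows from the telescoping computation $\binom{t-1}{m}_q=\prod_{j=1}^{m}(1-q^{t-j})/(1-q^j)=\prod_{j=1}^{m}(-q^{-j})$ valid whenever $q^t=1$, together with the Pascal identity $\binom{m+1}{2}=\binom{m}{2}+m$. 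The real content is thus the inductive formula of Step~2 and this $q$-binomial collapse; the main bookkeeping hurdle is tracking the bimultiplicative character $p(u,\cdot)$ as the grading of the iterated bracket is updated at each step, and keeping all sign and $q$-power conventions consistent through the reindexing.
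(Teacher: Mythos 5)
Your proof is correct and follows essentially the same strategy as the paper: both sides are expanded as $\sum_{k}\alpha_k\,u^{t-1-k}\partial_i(u)u^k$ via the twisted Leibniz rule, and the coefficients are matched using the hypothesis that $p_{uu}$ is a primitive $t$-th root of $1$. The only difference is in how the iterated-bracket coefficients are obtained --- the paper treats the bracket as a product of commuting operators $\prod_{s}(L_u-Qp_{uu}^{s-1}R_u)$ and invokes the factorization $1-x^t=(1-x)\prod_{s=1}^{t-1}(1-p_{uu}^{s}x)$, whereas you derive the Gaussian-binomial expansion by $q$-Pascal induction and then evaluate the Gaussian binomial coefficients at the root of unity; by the $q$-binomial theorem these are the same computation in different packaging.
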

\begin{proof} First of all we note that the sequence
$p_{uu},p_{uu}^2,\ldots ,p_{uu}^{t-1}$ contains all $t$-th roots 
of 1 except the 1 itself. All members in this sequence are different.
 Hence we may write a polynomial equality
\begin{equation}
(1-x^t)=(1-x)\prod _{s=1}^{t-1}(1-p_{uu}^{s}x).
\label{mno}
\end{equation}

Let us calculate the right hand side of (\ref{ddif1}). Denote by $L_u,$
$R_u$ the operators of left and right multiplication by $u$ respectively. 
The right hand side of (\ref{ddif1}) has the following operator representation
$$
p(u,x_i)^{t-1}(\partial _i(u)\cdot \prod_{s=1}^{t-1} (L_u-Qp_{uu}^{s-1}R_u)),
$$
where $Q=p(u,\partial _i(u))=p_{uu}p(u,x_i)^{-1}.$ Consider a polynomial
$$
f(\lambda )= \prod_{s=1}^{t-1}(1-Qp_{uu}^{s-1}\lambda )
\stackrel{df}{=}\sum _{k=0}^{t-1}\alpha _k\lambda ^k.
$$
Since the operators $R_u$ and $L_u$ commute, we may develop the multiplication in the operator product considering $R_u$ and $L_u$
as formal commutative variables:
$$
\prod _{s=1}^{t-1}(L_u-Qp_{uu}^{s-1}R_u)=L_u^{t-1}f(\frac{R_u}{L_u})
=\sum _{k=0}^{t-1}\alpha _kL_u^{t-1-k}R_u^k.
$$
Thus the right hand side of (\ref{ddif1}) equals
$$
p(u,x_i)^{t-1}\sum _{k=0}^{t-1}\alpha _ku^{t-1-k}\partial _i(u)u^k.
$$
Further, since $Q=p_{uu}p(u, x_i)^{-1},$ the polynomial $f$ has a representation
$$
f(\lambda )=\prod _{s=1}^{t-1}(1-p_{uu}^{s}\xi ),
$$
where $\xi =\lambda p(u, x_i)^{-1}.$ Taking into account (\ref{mno}), we get
$$
f(\lambda )=\frac{1-\xi ^t}{1-\xi }=
\frac{1-\lambda ^tp(u,x_i)^{-t}}{1-\lambda p(u,x_i)^{-1}}
$$
$$
=1+\lambda p(u,x_i)^{-1}+\lambda ^2 p(u,x_i)^{-2}+\cdots +
\lambda^{t-1} p(u,x_i)^{1-t};
$$
that is, $\alpha _k=p(u,x_i)^{-k},$ while the right hand side of (\ref{ddif1}) takes the form
\begin{equation}
\sum _{k=0}^{t-1}p(u,x_i)^{t-1-k}u^{t-1-k}\partial _i(u)u^k.
\label{uff}
\end{equation}
At the same time Leibniz formula (\ref{defdif})
 shows that $\partial _i(u^t)$ also equals (\ref{uff}).
\end{proof}

\smallskip
\noindent 
{\bf MS-criterion}. The quantum symmetric algebra has a lot of nice characterizations.
One of them says that this is the {\it optimal algebra}  for the calculus defined by (\ref{defdif}).
In other words the above defined algebra $A$
is a quantum symmetric algebra (or, equivalently, Ker$\, \xi ={\bf \Lambda }$)
if and only if all constants in $A$ are scalars.

For braidings of Cartan type this characterization was proved 
by A. Milinski and H.-J. Schneider in \cite{MS}, and then generalized for arbitrary 
(even not necessarily  invertible)  braidings by the author in \cite[Theorem 4.11]{Kh1}.
Moreover, if $X$ is finite, then ${\bf \Lambda }$ 
(as well as any differential ideal in ${\bf k}\langle X\rangle $)
is generated as a left ideal by constants from ${\bf k}\langle X\rangle ^{(2)},$
see \cite[Corrolary 7.8]{Kh1}.
Thus, we may formulate the following criterion that is useful for checking relations.
\begin{lemma} {\rm (Milinski---Schneider criterion).} Suppose that
Ker$\, \xi ={\bf \Lambda }.$
If a polynomial $f\in {\bf k}\langle X\rangle $
 is a constant in $A$ $($that is, $\partial _i(f)\in {\bf \Lambda },$ $i\in I),$
then there exists $\alpha \in \, ${\bf k} such that $f-\alpha =0$ in $A.$ 
\label{MS}
\end{lemma}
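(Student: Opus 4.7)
The plan is to use the shuffle representation of $A$ to reduce the lemma to an elementary observation about the tensor basis of $Sh_\tau(V)$. First I would note that $\Lambda$ is a differential ideal: since $\Lambda$ is a Hopf ideal in $G\langle X\rangle$, formula (\ref{calc}) reads off $\partial_j(u)$ as the coefficient at $g_j\otimes x_j$ in $\Delta(u)$ modulo $G\langle X\rangle\otimes {\bf k}\langle X\rangle^{(2)}$, and $\Delta(\Lambda)\subseteq \Lambda\otimes G\langle X\rangle+G\langle X\rangle\otimes \Lambda$ then forces $\partial_j(\Lambda)\subseteq \Lambda$. Consequently, the $\partial_j$ descend to well-defined maps $\partial_j\colon A\to A$, and the hypothesis $\partial_j(f)\in \Lambda$ becomes $\partial_j(\bar f)=0$ in $A$ for every $j$, where $\bar f:=\xi(f)$. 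The goal is now to prove that $\bar f\in {\bf k}\cdot 1$.

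Next I would translate $\partial_j(\bar f)=0$ to the shuffle side. Combining (\ref{calc}) with (\ref{copro}) (which simply absorbs the factor $g_j$ sitting to the left of $\partial_j$), one gets the braided coproduct relation
$$
\Delta^b(\bar f)\equiv \bar f\otimes 1+\sum_{j\in I}\partial_j(\bar f)\otimes a_j\pmod{A\otimes A^{(2)}},
$$
where $A^{(2)}=\xi({\bf k}\langle X\rangle^{(2)})$. Thus $\partial_j(\bar f)$ is exactly the coefficient of $a_j$ at the ``length-one right slot'' of $\Delta^b(\bar f)$. The embedding $\iota\colon A\hookrightarrow Sh_\tau(V)$ with $\iota(a_i)=(x_i)$ is a braided bialgebra morphism, and by (\ref{bcopro}) the corresponding slot of a word $(z_1\ldots z_m)$ equals $(z_1\ldots z_{m-1})\otimes (z_m)$.

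Writing $\iota(\bar f)=\sum_w c_w(w)$ in the tensor basis of $Sh_\tau(V)$, the vanishing of $\partial_j(\bar f)$ for every $j\in I$ then forces $\sum_u c_{ux_j}(u)=0$ for each $j$, which yields $c_{ux_j}=0$ for all $u$ and $j$. Hence $c_w=0$ for every nonempty word $w$, so $\iota(\bar f)\in {\bf k}\cdot 1$, and by injectivity of $\iota$ we conclude $\bar f=\alpha\in {\bf k}$; that is, $f-\alpha\in \operatorname{Ker}\xi=\Lambda$, which is the stated conclusion.

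The principal obstacle will be the second step: precisely identifying $\partial_j$ on $A$ with the length-one right-slot coefficient of $\Delta^b$. One has to be careful with the twist by $\operatorname{gr}(u^{(2)})^{-1}$ in (\ref{copro}), and with the fact that (\ref{calc}) is stated modulo the $x$-degree-$\ge 2$ ideal rather than modulo $\Lambda$; the ``higher'' terms in (\ref{calc}) must be checked to contribute only to $A\otimes A^{(2)}$ after descending to $A$, so as not to interfere with the identification.
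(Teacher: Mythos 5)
Your proof is correct and is essentially the argument the paper itself indicates: after stating the lemma the author remarks that it follows from (\ref{copro}), (\ref{bcopro}) and (\ref{calc}) via the shuffle representation, because (\ref{bcopro}) shows all constants in the shuffle coalgebra are scalars, and your write-up just fills in the details of that sketch (including the preliminary check that $\partial_j(\mathbf{\Lambda})\subseteq \mathbf{\Lambda}$, which the paper takes for granted when it calls $\mathbf{\Lambda}$ a differential ideal). The only difference is that the paper's primary justification is a citation to Milinski--Schneider and to \cite[Theorem 4.11]{Kh1}, whereas you supply the self-contained shuffle argument; both routes are sound.
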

Of course one can easily prove this criterion
by means of (\ref{copro}), (\ref{bcopro}) and (\ref{calc})
using the above shuffle representation since 
(\ref{bcopro}) implies that all constants in the shuffle coalgebra are scalars.

\smallskip
\noindent 
{\bf Quantum Borel algebra}. Let $C=||a_{ij}||$ be a symmetrizable by 
$D={\rm diag }(d_1, \ldots , d_n)$ generalized Cartan matrix, $d_ia_{ij}=d_ja_{ji}.$
Denote by $\mathfrak g$  a Kac-Moody algebra defined by $C,$ see \cite{Kac}.
Suppose that  parameters $p_{ij}$ are related by
\begin{equation}
p_{ii}=q^{d_i}, \ \ p_{ij}p_{ji}=q^{d_ia_{ij}},\ \ \ 1\leq i,j\leq n. 
\label{KM1}
\end{equation}
Denote by $g_j$ a linear transformation $g_j:x_i\rightarrow p_{ij}x_i$ of the linear space 
spanned by a set of variables $X=\{ x_1, x_2, \ldots , x_n\} .$
By $\chi ^i$ we denote a character $\chi^i :g_j\rightarrow p_{ij}$ of the group $G$ generated by 
$g_i,$ $1\leq i\leq n.$ We consider each $x_i$ as a ``quantum variable" with parameters $g_i,$ $\chi ^i.$
 As above we denote by $G\langle X\rangle $ the skew group algebra 
with commutation rules $x_ig_j=p_{ij}g_jx_i,$ $1\leq i, j\leq n.$ 
This algebra has a structure of a character Hopf algebra 
\begin{equation}
\Delta (x_i)=x_i\otimes 1+g_i\otimes x_i,\ \ \ \ \Delta (g_i)=g_i\otimes g_i.
\label{AIr}
\end{equation}
In this case the multiparameter quantization $U^+_q ({\mathfrak g})$ of the  Borel subalgebra ${\mathfrak g}^+$
is a homomorphic image of $G\langle X\rangle $ defined by Serre relations 
with the skew brackets in place of the Lie operation:
\begin{equation}
[\ldots [[x_i,\underbrace{x_j],x_j], \ldots ,x_j]}_{1-a_{ji} \hbox{ times}}=0, \ \ 1\leq i\neq j\leq n.
\label{KM2}
\end{equation}
By \cite[Theorem 6.1]{Khar} the left hand sides of these relations are skew-primitive elements 
in $G\langle X\rangle .$ Therefore the ideal generated by  these elements is a Hopf ideal,
while $U^+_q ({\mathfrak g})$  has a natural structure of a character Hopf algebra.

\begin{lemma} {\rm (\cite[Corollary 3.2]{KL})}.
If $q$ is not a root of 1 and $C$ is of finite type, then every subalgebra  $U$ of $U_q^+(\frak{g})$ 
containing $G$ is homogeneous with  respect to each of the variables $x_i$.
\label{odn1}
\end{lemma}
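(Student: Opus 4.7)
The plan is to use the conjugation action of $G \subseteq U$ on $U$ to separate multidegree components. First I would note that $U_q^+(\mathfrak g)$ is multigraded by $\mathbb Z_{\geq 0}^n$, where the grade of a monomial records the number of occurrences of each $x_i$; write $u = \sum_{m} u_m$ with $u_m$ of multidegree $m = (m_1, \dots, m_n)$. For each $j$, conjugation by $g_j \in U$ gives
\[
g_j^{-1} u g_j \;=\; \sum_m \Bigl(\prod_{i} p_{ij}^{m_i}\Bigr)\, u_m \;\in\; U,
\]
so the finite-dimensional subspace of $U$ spanned by the $G$-conjugates of $u$ decomposes into joint eigenspaces for all the $g_j$. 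Provided the joint eigenvalue systems $m \mapsto \bigl(\prod_i p_{ij}^{m_i}\bigr)_j$ are distinct for the multidegrees $m$ actually appearing in $u$, the Dedekind lemma (equivalently, invertibility of a Vandermonde-type matrix) lets me extract each $u_m$ as a linear combination of the conjugates, putting $u_m \in U$.

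The crux is therefore to show the map $m \mapsto \bigl(\prod_i p_{ij}^{m_i}\bigr)_j$ is injective on $\mathbb Z^n$. Setting $n_i = m_i - m'_i$, this amounts to: if $\prod_i p_{ij}^{n_i} = 1$ for all $j$, then $n = 0$. Raising the $j$-th equation to the $n_j$-th power and multiplying over $j$ gives $\prod_{i,j} p_{ij}^{n_i n_j} = 1$; splitting diagonal and off-diagonal terms and applying $p_{ii} = q^{d_i}$ together with $p_{ij} p_{ji} = q^{d_i a_{ij}}$ collapses the left-hand side to
\[
q^{\sum_i d_i n_i^2 \,+\, \sum_{i<j} d_i a_{ij}\, n_i n_j} \;=\; 1,
\]
whose exponent is exactly $\tfrac{1}{2}(n,n)$ for the invariant bilinear form on the root lattice coming from the symmetrization $DA$.

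Since $C$ is of finite type, $DA$ is positive definite, so for any nonzero integer vector $n$ the number $\tfrac12(n,n)$ is a positive integer. The displayed equation would then force $q$ to be a root of unity, contradicting the hypothesis. Hence $n = 0$, the joint eigenvalues are pairwise distinct on the multidegree strata of $u$, and each $u_m$ lies in $U$, which is the required homogeneity. I expect the only mildly nontrivial step to be identifying the bilinear form in the exponent and invoking positive definiteness of the finite-type Cartan matrix; the rest is the standard Dedekind/Vandermonde extraction of eigencomponents, and the finite-type hypothesis is used precisely here, not in the rewriting.
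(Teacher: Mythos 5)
Your proposal is correct, and it follows essentially the route the paper relies on: the paper does not reprove this lemma but imports it from \cite[Corollary 3.2]{KL}, where the argument is exactly the one you give — extract the multidegree components of $u$ from its $G$-conjugates via the Dedekind lemma, and reduce distinctness of the joint eigenvalue systems to the injectivity of $m\mapsto\bigl(\prod_i p_{ij}^{m_i}\bigr)_j$, which the relations $p_{ii}=q^{d_i}$, $p_{ij}p_{ji}=q^{d_ia_{ij}}$ convert into $q^{\frac12 n^{T}DCn}=1$, impossible for $n\neq 0$ since $DC$ is positive definite in finite type and $q$ is not a root of unity. The only point worth stating explicitly is that $\tfrac12 n^{T}DCn=\sum_i d_in_i^2+\sum_{i<j}d_ia_{ij}n_in_j$ is an integer, which you do note; otherwise the proof is complete.
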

\begin{definition}
If the multiplicative order $t$ of $q$ is finite, 
then we define $u^+_q ({\mathfrak g})$ as $G\langle X\rangle/{\bf \Lambda },$
where ${\bf \Lambda }$ is the biggest Hopf ideal in $G\langle X\rangle ^{(2)},$
see Definition \ref{lam}. 
\label{small}
\end{definition}
Since a skew-primitive element generate a Hopf ideal,
 ${\bf \Lambda }$ contains all skew-primitive elements of $G\langle X\rangle ^{(2)}.$
Hence relations (\ref{KM2}) are still valid in $u^+_q ({\mathfrak g}).$

\section{Relations in quantum Borel algebra $U_q^+({\mathfrak so}_{2n+1}).$}
In what follows we fix a parameter $q$ such that $q^4\not=1,$ $q^3\not= 1.$
If $C$ is a Cartan matrix of type $B_n,$ relations (\ref{KM1}) take up the form
\begin{equation}
p_{nn}=q,\,  p_{ii}=q^2, \ \ p_{i\, i+1}p_{i+1\, i}=q^{-2}, \ 1\leq i<n; 
\label{b1rel}
\end{equation}
\begin{equation}
p_{ij}p_{ji}=1,\  j>i+1. 
\label{b1rell}
\end{equation}
Starting with parameters $p_{ij}$ satisfying these relations, we define 
the group $G$ and the character Hopf algebra  $G\langle X\rangle $
as in the above subsection. In this case
the quantum Borel algebra $U^+_q ({\frak so}_{2n+1})$
is a homomorphic image of $G\langle X\rangle $ subject to the following relations 
\begin{equation}
[x_i,[x_i,x_{i+1}]]=0, \ 1\leq i<n; \ \ [x_i,x_j]=0, \ \ j>i+1;
\label{relb}
\end{equation}
\begin{equation}
[[x_i,x_{i+1}],x_{i+1}]=[[[x_{n-1},x_n],x_n],x_n]=0, \ 1\leq i<n-1.
\label{relbl}
\end{equation}
Here we slightly modify  Serre relations (\ref{KM2}) so that the left hand side of each 
relation is a super-letter. It is possible to do due to the following
general relation in ${\bf k}\langle X\rangle ,$ see \cite[Corollary 4.10]{Kh4}:
\begin{equation}
[\ldots [[x_i,
\underbrace{x_j],x_j],\ldots x_j]}_n=\alpha \underbrace{[x_j,[x_j,\ldots [x_j}_n,x_i]\ldots ]], 
\ \ 0\neq \alpha \in {\bf k},
\label{rsc}
\end{equation}
provided that $p_{ij}p_{ji}=p_{jj}^{1-n}.$

\begin{definition} \rm
The elements $u,v$ are said to be 
{\it separated} if there exists an index $j,$ $1\leq j\leq n,$
such that either $u\in {\bf k}\langle x_i\ |\ i<j\rangle ,$
$v\in {\bf k}\langle x_i\ |\ i>j\rangle $ or vise versa: 
$u\in {\bf k}\langle x_i\ |\ i>j\rangle ,$
$v\in {\bf k}\langle x_i\ |\ i<j\rangle .$
\label{sep}
\end{definition}
\begin{lemma}
In the algebra $U^+_q ({\frak so}_{2n+1})$ every two separated
homogeneous in each $x_i\in X$ elements $u,v$  $($skew$)$commute, $[u,v]=[v,u]=0.$
\label{sepp}
\end{lemma}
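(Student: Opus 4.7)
The plan is to reduce to an assertion about pairs of words in $G\langle X\rangle$ and then conclude by a double induction on their lengths using the ad-identities of Section~2. First I would observe that on fixed multi-degree components the bracket $[-,-]$ is bilinear, so writing $u$ and $v$ as linear combinations of words of matching multi-degree (lifted to $G\langle X\rangle$ and then projected to $U^+_q(\frak{so}_{2n+1})$), I may assume without loss of generality that $u$ is a single word in the letters $\{x_i\, |\, i<j\}$ and $v$ a single word in the letters $\{x_i\, |\, i>j\}$; the other case allowed by Definition~\ref{sep} is symmetric.

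Next I would note that color antisymmetry applies, so that establishing $[u,v]=0$ automatically yields $[v,u]=0$. Indeed, by bimultiplicativity (\ref{sqot}),
$$
p_{uv}p_{vu}=\prod_{x_a\text{ in }u,\ x_b\text{ in }v} p_{x_ax_b}p_{x_bx_a},
$$
and each factor equals $1$ by (\ref{b1rell}), since $a\leq j-1$ and $b\geq j+1$ force $|a-b|\geq 2$. Then (\ref{cha}) gives $[u,v]=-p_{uv}[v,u]$, reducing the lemma to the single claim $[u,v]=0$.

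The core is then a double induction on $|u|$ and $|v|$. The base case $u=x_a$, $v=x_b$ with $a<j<b$ gives $b-a\geq 2$, whence $[x_a,x_b]=0$ by the second group of relations in (\ref{relb}). For $u=x_a$ fixed and $v=v_1v_2$ of length at least two, identity (\ref{br1}) yields
$$
[x_a,v_1v_2]=[x_a,v_1]\cdot v_2+p_{x_av_1}\, v_1\cdot[x_a,v_2]=0
$$
by induction on $|v|$. For a general $u=u_1u_2$, identity (\ref{br1f}) yields
$$
[u_1u_2,v]=p_{u_2v}[u_1,v]\cdot u_2+u_1\cdot[u_2,v]=0
$$
by induction on $|u|$. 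This closes the double induction.

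I do not anticipate any serious obstacle: once the letter-pair brackets vanish by (\ref{relb}) and the letter-pair scalars $p_{ab}p_{ba}$ equal $1$ by (\ref{b1rell}), the ad-identities of Section~2 propagate both properties from single letters to arbitrary words. The only point requiring a moment's care is the initial reduction to words, which uses that both $u$ and $v$ are homogeneous in each $x_i$, so that the scalar $\chi^u(g_v)$ in the definition of the bracket is constant over the chosen word representatives of fixed multi-degree and the bracket really is bilinear in that component.
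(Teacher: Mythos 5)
Your proof is correct and follows essentially the same route as the paper: the base case $[x_a,x_b]=[x_b,x_a]=0$ for $|a-b|>1$ via the second group of relations (\ref{relb}) together with (\ref{b1rell}) and antisymmetry (\ref{cha}), then propagation to arbitrary words by the ad-identities (\ref{br1f}) and (\ref{br1}). The paper merely compresses the double induction into the phrase ``an evident induction''; your explicit justification of bilinearity and of the reduction $[u,v]=0\Rightarrow[v,u]=0$ fills in exactly the details the paper leaves implicit.
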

\begin{proof} Relations (\ref{b1rell}) and conditional antisymmetry (\ref{cha})
show that $[x_i,x_j]=[x_j,x_i]=0,$ provided that $|i-j|>1.$ Now relations 
(\ref{br1f}), (\ref{br1}) allow one to perform an evident induction.
\end{proof}

Certainly the subalgebra of $U_q^+({\mathfrak so}_{2n+1})$ 
generated over {\bf k}$[g_1,\ldots , g_{n-1}]$ by $x_i,$ $1\leq i<n$
is the Hopf algebra $U_{q^2}^+({\mathfrak sl}_{n})$ defined by the Cartan matrix 
of type $A_{n-1}.$ 
Let us replace  just one parameter $p_{nn}\leftarrow q^2.$ Then the quantum Borel 
algebra $U_{q^2}^+({\mathfrak sl}_{n+1})$ is a homomorphic image  of $G^{\prime }\langle X\rangle$
subject to relations
\begin{equation}
[[x_i,x_{i+1}],x_{i+1}]=[x_i,[x_i,x_{i+1}]]=[x_i,x_j]=0, \ \ j>i+1.
\label{rela}
\end{equation}
Here $G^{\prime }$ is the group generated by transformations $g_1,\ldots ,g_{n-1},g_n^{\prime },$
where $g_n^{\prime }(x_i)=g_n(x_i)$ with only one exception being $g_n^{\prime }(x_n)=q^{2}x_n.$

\begin{lemma}
A linear in $x_n$ relation $f=0,$ $f\in {\bf k}\langle X\rangle $ is valid in 
$U_q^+({\mathfrak so}_{2n+1})$ if and only if it is valid
in the above algebra $U_{q^2}^+({\mathfrak sl}_{n+1}).$
\label{xn}
\end{lemma}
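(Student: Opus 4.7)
My plan is to identify, inside the free algebra ${\bf k}\langle X\rangle$, the two-sided ideal whose quotient gives the quantum Borel algebra in each case, and then compare their homogeneous components of $x_n$-degree one.

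First, I would reduce everything to ideals of ${\bf k}\langle X\rangle$. The defining relations in (\ref{relb}), (\ref{relbl}) and (\ref{rela}) are polynomials in ${\bf k}\langle X\rangle$ that are $\Gamma^+$-homogeneous (in particular homogeneous in each $x_i$). Using the commutation rule $u g=\chi^u(g)\, g u$ for $g\in G$ (valid on every $\Gamma^+$-homogeneous polynomial $u$), any element of the two-sided ideal of $G\langle X\rangle$ generated by a defining relation $\rho$ can be rewritten as an element of ${\bf k}[G]\cdot I_\rho$, where $I_\rho$ is the two-sided ideal of ${\bf k}\langle X\rangle$ generated by $\rho$. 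Consequently, for $f\in{\bf k}\langle X\rangle$, the equality $f=0$ in $U_q^+({\mathfrak so}_{2n+1})$ is equivalent to $f$ lying in the two-sided ideal $I_0^B\subseteq{\bf k}\langle X\rangle$ generated by (\ref{relb})--(\ref{relbl}), and similarly for $U_{q^2}^+({\mathfrak sl}_{n+1})$ with an ideal $I_0^A$ generated by (\ref{rela}).

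Next, since each generator is $\Gamma^+$-homogeneous, the ideals $I_0^B$ and $I_0^A$ inherit the $\Gamma^+$-grading, and in particular the $x_n$-degree 1 component of $I_0^B$ is spanned by products $a\rho b$ with $a,b\in{\bf k}\langle X\rangle$ homogeneous and $\rho$ a defining relation, subject to $\deg_{x_n}(a)+\deg_{x_n}(\rho)+\deg_{x_n}(b)=1$. The defining relations of $U_q^+({\mathfrak so}_{2n+1})$ have $x_n$-degrees $0$, $1$, or $3$; the cubic relation $[[[x_{n-1},x_n],x_n],x_n]$ cannot contribute to the $x_n$-degree 1 part since multiplication in ${\bf k}\langle X\rangle$ only increases $x_n$-degree. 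Likewise, on the $U_{q^2}^+({\mathfrak sl}_{n+1})$ side, the only relation with $x_n$-degree $>1$ is $[[x_{n-1},x_n],x_n]$, which again cannot contribute.

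Thus only the $x_n$-degree $\leq 1$ relations matter. The crucial point is that these coincide as polynomials in ${\bf k}\langle X\rangle$ for the two algebras: the skew-commutators $[x_i,x_n]$ ($i<n-1$), $[x_{n-1},[x_{n-1},x_n]]$, and all the relations not involving $x_n$ depend only on the parameters $p_{ij}$ with $\{i,j\}\ne\{n,n\}$, because the parameter $p_{nn}$ can appear in $p(u,v)$ only when both $u$ and $v$ contain an $x_n$, which would force $x_n$-degree at least $2$. Since we obtained the parameters of $U_{q^2}^+({\mathfrak sl}_{n+1})$ from those of $U_q^+({\mathfrak so}_{2n+1})$ by modifying only $p_{nn}$, the list of $x_n$-degree $\leq 1$ generating polynomials is identical. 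Therefore $(I_0^B)_{\deg_{x_n}=1}=(I_0^A)_{\deg_{x_n}=1}$, which yields the claim. The delicate step to watch is the reduction $I_B\cap{\bf k}\langle X\rangle=I_0^B$, which really relies on $\Gamma^+$-homogeneity of the defining relations; the monotonicity of $x_n$-degree under free-algebra multiplication then automatically prevents the ``dangerous'' high-degree Serre relations from participating.
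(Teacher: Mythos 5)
Your proof is correct and takes essentially the same route as the paper's own (much terser) argument: the paper likewise observes that a linear-in-$x_n$ element of the defining ideal must lie in the ideal generated by the relations of $x_n$-degree at most one, and that those relations coincide for the two algebras since only $p_{nn}$ was changed. Your write-up merely makes explicit the two points the paper leaves implicit, namely the reduction from ideals of $G\langle X\rangle$ to $\Gamma^+$-homogeneous ideals of ${\bf k}\langle X\rangle$ and the fact that $p_{nn}$ cannot occur in a relation of $x_n$-degree $\leq 1$.
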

\begin{proof}
The element $f$ as an element of free algebra belongs to the 
ideal generated by the defining relations that are independent of 
$x_n$ or linear in $x_n.$ All that relations are the same for $U_q^+({\mathfrak so}_{2n+1})$
and for $U_{q^2}^+({\mathfrak sl}_{n+1}).$
\end{proof}
\begin{lemma}
If $u$ is a standard word, then either $u=x_kx_{k+1}\ldots x_m,$ $k\leq m\leq n,$
or $[u]=0$ in $U_{q^2}^+({\mathfrak sl}_{n+1}).$ Here $[u]$ is a nonassociative word
with the standard alignment of brackets, see Algorithm on page  $\pageref{algo}.$ 
\label{nul}
\end{lemma}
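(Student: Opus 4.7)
I argue by induction on the length of $u$. The base $|u|=1$ is trivial, since $u=x_k$ is of the required form with $k=m$. For $|u|\geq 2$, Shirshov's algorithm produces a unique decomposition $[u]=[[v],[w]]$ with $v,w$ standard, $v>w$, and $v$ the shortest standard prefix of $u$ such that $w$ is also standard. By the inductive hypothesis applied to the shorter words $v$ and $w$, each of them either is of the good form $x_a\cdots x_b$ or else its bracket already vanishes in $U_{q^2}^+(\mathfrak{sl}_{n+1})$; in the latter case $[u]=0$ and the induction closes. So I may assume $v=x_a\cdots x_b$ and $w=x_c\cdots x_d$.

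The condition $v>w$ (lexicographic order with $x_1>\cdots>x_n$ and proper beginnings greater) forces $a<c$, or $a=c$ with $b<d$. When $|v|\geq 2$, the standard-nonassociative requirement on $[u]$ applied to $v=[[v_1],[v_2]]$ demands $v_2=x_{a+1}\cdots x_b\leq w$, which leaves only $c=a+1$ with $b\geq d$, or $c=a$ with $b<d$. I now distinguish three main cases. First, if $c=b+1$, the concatenation $u=x_ax_{a+1}\cdots x_d$ is itself of the good form, so nothing needs to be proved. Second, if $c\geq b+2$, the indices appearing in $v$ and those appearing in $w$ differ pairwise by at least two, so $v$ and $w$ are separated in the sense of Definition \ref{sep}; the proof of Lemma \ref{sepp} uses only $[x_i,x_j]=0$ for $|i-j|>1$ and therefore transfers verbatim to $U_{q^2}^+(\mathfrak{sl}_{n+1})$, yielding $[v,w]=0$ and hence $[u]=0$.

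The heart of the argument lies in the remaining overlapping cases, $c\in\{a,a+1\}$ with overlap reaching into $v$. The strategy is to rebracket $v$ and $w$ via Lemma \ref{indle}, which is available because $[x_i,x_j]=0$ for $|i-j|>1$ within each super-letter. For instance, when $|v|=1$ and $a=c$, writing $W=[x_{a+1},\ldots,x_d]$ and using $[x_a,W]=[[x_a,x_{a+1}],P]$ with $P=[x_{a+2},\ldots,x_d]$ (justified by Lemma \ref{indle} together with $[x_a,P]=0$ and the conditional identity (\ref{jak3})), the basic Serre relation $[x_a,[x_a,x_{a+1}]]=0$ combined with the separation $[x_a,P]=0$ and the ad-identity (\ref{br1}) immediately yields $[x_a,[x_a,W]]=0$. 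In the more delicate subcases $c=a+1$ with $b\geq d$ and $c=a$ with $b<d$ (both with $|v|\geq 2$), a direct reduction to basic Serre is unavailable; instead, one applies the Jacobi identity (\ref{jak1}) or (\ref{jak2}) together with iterated uses of the ad-identities (\ref{br1f})--(\ref{br1}), the basic Serre relations (\ref{rela}), and the commutators $[x_i,x_j]=0$ for $|i-j|>1$ to derive a self-referential equation of the form $(1-\lambda)[u]=0$, where $\lambda$ is a specific product of braiding parameters $p_{ij}$; the standing assumption $q^4\neq 1$, $q^3\neq 1$ guarantees $\lambda\neq 1$, forcing $[u]=0$ and completing the induction. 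The main obstacle is precisely this self-referential Jacobi computation in the overlapping subcases, where careful bookkeeping of the braiding coefficients is required to ensure that the scalar factor does not degenerate.
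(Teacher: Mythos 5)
The paper itself does not prove this lemma at all: it simply cites the third statement of Theorem $A_n$ in \cite{Kh4}. Your attempt at a self-contained proof therefore takes a genuinely different route, and its skeleton is sound: induction on length via the Shirshov factorization $[u]=[[v],[w]]$, reduction to the case where both $v=x_a\cdots x_b$ and $w=x_c\cdots x_d$ are increasing runs, and the observation that the standardness condition $v_2\leq w$ together with $v>w$ forces either $|v|=1$ or the two overlap patterns $c=a+1,\ b\geq d$ and $c=a,\ b<d$. The separated case, the concatenation case, and the $|v|=1$, $c=a$ case (via $[x_a,[x_a,x_{a+1}]]=0$, $[x_a,P]=0$ and the conditional Jacobi identity (\ref{jak3})) are all handled correctly.

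However, there is a genuine gap exactly where you locate ``the heart of the argument.'' For the overlapping subcases with $|v|\geq 2$ --- e.g.\ $[u]=\bigl[[x_1x_2x_3],[x_2x_3]\bigr]$ or $[u]=\bigl[[x_1,x_2],[x_1,[x_2,x_3]]\bigr]$ --- you assert, without carrying out any computation, that iterated Jacobi and ad-identities produce a relation $(1-\lambda)[u]=0$ with $\lambda\neq 1$. This is not a proof: no such identity is exhibited, it is not verified in even one nontrivial instance, and it is far from clear that the computation terminates in that shape. (If one tries, say, $[[x_1x_2x_3],x_2]$, the first Jacobi step yields $[[x_1,x_2],[x_3,x_2]]$ plus lower terms, not a scalar multiple of $[u]$; further expansion is needed and the bookkeeping is precisely the content being claimed.) Note also that relation (\ref{zan}), which would dispose of some of these cases, is itself derived in the paper \emph{from} the present lemma, so it cannot be invoked here. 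Since the family of overlapping standard words grows with $n$ (overlaps of arbitrary depth occur, e.g.\ $x_1\cdots x_b\,x_2\cdots x_d$), what is missing is in effect the vanishing of all non-root super-letters in $U_{q^2}^+(\mathfrak{sl}_{n+1})$ --- the substance of the cited Theorem $A_n$ of \cite{Kh4} --- and this cannot be dismissed in a sentence. The proof is incomplete until these cases are actually computed or reduced to an established result.
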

\begin{proof}
See the third statement of \cite[Theorem $A_n$]{Kh4}.
\end{proof}
As a corollary of the above two lemmas we can prove some relations in 
$U_q^+({\mathfrak so}_{2n+1}):$
\begin{equation}
[[x_{k+1}x_kx_{k-1}], x_k]=0, \ \  [[x_{k-1}x_kx_{k+1}],x_k]=0, \ \ k<n.
\label{zan}
\end{equation}
Indeed,  $x_{k-1}x_kx_{k+1}x_k$ is a standard word and the standard alignment of brackets
is precisely 
$[[x_{k-1},[x_k,x_{k+1}]],x_k].$
 Hence (\ref{jak3}) with 
Lemma \ref{xn} and Lemma \ref{nul}
imply the latter relation. 

The former relation reduces to the latter one
by means of the replacement $x_i\leftarrow x_{n-i+1},$ $1\leq i\leq n,$ $k\leftarrow n-k+1.$
It remains to note that the defining relations (\ref{rela})
are invariant under this replacement (see (\ref{rsc})), and again use
Lemma \ref{xn} and Lemma \ref{nul}.
\begin{definition} \rm 
In what follows we denote by $x_i,$ $n<i\leq 2n$ the generator $x_{2n-i+1}.$
Respectively, $u(k,m),$ $1\leq k\leq m\leq 2n$ is the word
$x_kx_{k+1}\cdots x_{m-1}x_m,$ while $u(m,k)$ is the word
$x_mx_{m-1}\cdots x_{k+1}x_k.$  If $1\leq i\leq 2n,$ then by $\psi (i)$
we denote the number $2n-i+1,$ so that $x_i=x_{\psi (i)}.$ 
We shall frequently use the following properties of $\psi :$
if $i<j,$ then $\psi (i)>\psi (j);$ $\psi (\psi (i))=i;$ $\psi (i+1)=\psi (i)-1.$
\label{fis}
\end{definition}
\begin{definition} \rm 
If $k\leq i<m\leq 2n,$ then we denote
\begin{equation}
\sigma _k^m\stackrel{df}{=}p(u(k,m),u(k,m)),
\label{mu11}
\end{equation}
\begin{equation}
\mu _k^{m,i}\stackrel{df}{=}p(u(k,i),u(i+1,m))\cdot p(u(i+1,m),u(k,i)).
\label{mu1}
\end{equation}
\label{slo}
\end{definition}
Of course, one can find $\mu$'s and $\sigma $'s by means of (\ref{b1rel}), (\ref{b1rell}). 
It turns out that these coefficients depend only on $q.$
More precisely, 
\begin{equation}
\sigma_k^m =\left\{ \begin{matrix}
q,\hfill &\hbox{if } m=n, \hbox{ or } k=n+1;\hfill \cr
q^4,\hfill &\hbox{if }  m=\psi (k);\hfill \cr
q^{2},\hfill &\hbox{otherwise}.\hfill 
                             \end{matrix}
                    \right.
\label{mu21}
\end{equation}
Indeed, the bimultiplicativity of $p(-,-)$ implies that $\sigma _k^m=\prod _{k\leq s,t\leq m}p_{st}$
is the product of all coefficients of the $(m-k+1)\times (m-k+1)$-matrix $||p_{st}||.$ By (\ref{b1rel})
all coefficients on the main diagonal equal $q^2$ with only two possible exceptions being $p_{nn}=q,$ $p_{n+1\, n+1}=q.$
In particular, if $m<n$ or $k>n+1,$ then for non diagonal coefficients we have 
$p_{st}p_{ts}=1$ unless $|s-t|=1,$ while  $p_{s\, s+1}p_{s+1\, s}=q^{-2}.$ 
Hence $\sigma _k^m=q^{2(m-k+1)}\cdot q^{-2(k-m)}=q^2.$
If $m=n$ or $k=n+1,$ then by the same reason we have $\sigma _k^m=q^{2(m-k)+1}\cdot q^{-2(k-m)}=q.$
In the remaining case,  $k\leq n<m,$ we split the matrix in four submatrices as follows
\begin{equation}
\sigma _k^m=\sigma _k^n\cdot \sigma _{n+1}^m\cdot 
\prod_{k\leq s\leq n, \, n+1\leq t\leq m}p_{st} \cdot 
\prod_{ n+1\leq s\leq m, \,  k\leq t\leq n }p_{st}.
\label{mu22}
\end{equation}
According to Definition \ref{fis} we have $p_{st}=p_{\psi (s)\, t}=p_{s\, \psi (t)}=p_{\psi (s)\, \psi (t)}.$
Therefore the third and fourth factors in (\ref{mu22}) respectively equal
$$
\prod_{k\leq s\leq n, \, \psi (m)\leq t\leq n}p_{st}; \ \ 
\prod_{ \psi (m)\leq s\leq n, \,  k\leq t\leq n }p_{st}.
$$
In particular, if $\psi (m)=k,$ then all four factors in (\ref{mu22}) coincide with $\sigma _k^n=q,$
hence $\sigma _k^m=q^4.$ If $\psi (m)\neq k,$ say $\psi (m)>k,$ then we split the rectangular 
$A=[k,n]\times [\psi (m),n]$ in a union of the square  $B=[\psi (m),n]\times [\psi (m),n]$ and the rectangular 
$C=[k,\psi (m)-1]\times [\psi (m),n].$ Similarly the rectangular 
$A^*=[\psi (m),n]\times [k,n]$ is a union of the same square  and the rectangular
$C^*=[\psi (m),n]\times [k,\psi (m)-1].$ Certainly, if $(s,t)\in C,$ then $t-s>1$ unless $t=\psi (m)-1,$
$s=\psi (m).$ Hence relations (\ref{b1rell}) imply 
$$\prod _{(s,t)\in C}p_{st}p_{ts}=p_{\psi (m)-1\, \psi (m)}p_{\psi (m)\ \psi (m)-1}=q^{-2}.$$
At the same time $\prod_{(s,t)\in B} p_{st}=\sigma _{\psi (m)}^n=q.$ Finally, (\ref{mu22}) takes the form
$$
\sigma _k^m=q\cdot q\cdot (\prod_{(s,t)\in B} p_{st})^2\cdot \prod _{(s,t)\in C}p_{st}p_{ts}=q^2,
$$
which proves (\ref{mu21}).

To find $\mu $'s we consider decomposition (\ref{mu22}) with $n\leftarrow i.$ 
Since $p(-,-)$ is a bimultiplicative map,
the product of the last two factors is precisely  $\mu _k^{m,i}.$ In particular we have 
\begin{equation}
\mu_k^{m,i}=\sigma _k^m(\sigma _k^i \sigma _{i+1}^m)^{-1}. 
\label{mu23}
\end{equation}
This formula with (\ref{mu21}) allows one easily to find the $\mu $'s.
More precisely, if $m<\psi (k),$  then
\begin{equation}
\mu_k^{m,i} =\left\{ \begin{matrix}
q^{-4},\hfill &\hbox{if } m>n, \, i=\psi (m)-1;\hfill \cr
1,\hfill &\hbox{if }  i=n;\hfill \cr
q^{-2},\hfill &\hbox{otherwise}.\hfill 
                             \end{matrix}
                    \right.
\label{mu2}
\end{equation}
If $m=\psi (k);$ that is $x_m=x_k,$ then
\begin{equation}
\mu_k^{m,i} =\left\{ \begin{matrix}
q^2,\hfill &\hbox{if }  i=n;\hfill \cr
1,\hfill &\hbox{otherwise}.\hfill 
                             \end{matrix}
                    \right.
\label{mu3}
\end{equation}
If $m>\psi (k),$ then the $\mu $'s satisfy 
$\mu_k^{m,i}=\mu_{\psi (m)}^{\psi (k),\, \psi (i)-1},$ hence one may use (\ref{mu2}):
\begin{equation}
\mu_k^{m,i} =\left\{ \begin{matrix}
q^{-4},\hfill &\hbox{if } k\leq n,\, i=\psi (k);\hfill \cr
1,\hfill &\hbox{if }  i=n;\hfill \cr
q^{-2},\hfill &\hbox{otherwise}.\hfill 
                             \end{matrix}
                    \right.
\label{mu4}
\end{equation}

\

We define the bracketing of $u(k,m),$ $k\leq m$ as follows.
\begin{equation}
u[k,m]=\left\{ \begin{matrix} [[[\ldots [x_k,x_{k+1}], \ldots ],x_{m-1}], x_m],\hfill 
&\hbox{if } m<\psi (k);\hfill \cr
 [x_k,[x_{k+1},[\ldots ,[x_{m-1},x_m]\ldots ]]],\hfill &\hbox{if } m>\psi (k);\hfill \cr 
\beta [u[n+1,m],u[k,n]],\hfill &\hbox{if } m=\psi (k),\hfill 
\end{matrix}\right.
\label{ww}
\end{equation}
where $\beta =-p(u(n+1,m),u(k,n))^{-1}$ normalizes the coefficient at $u(k,m).$
Conditional identity (\ref{ind}) shows that the value of  $u[k,m]$ in $U_q^+({\mathfrak so}_{2n+1})$
is independent of the precise alignment of brackets provided that $m\leq n$ or $k>n.$

In what follows we denote by $\sim $ the projective equality:
$a\sim b$ if and only if $a=\alpha b,$ where $0\neq \alpha \in {\bf k}.$ 

\begin{lemma} If $t\notin \{ k-1, k\} ,$ $t<n,$ then $[u[k,n],x_t]=[x_t,u[k,n]]=0.$ 
\label{nule}
\end{lemma}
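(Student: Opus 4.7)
The plan is to split into cases based on where $t$ lies relative to the range $[k,n]$ of indices occurring in $u[k,n]$, handle the easy ``separated'' case by Lemma \ref{sepp}, and treat the remaining ``interior'' case by reducing to $U_{q^2}^+(\mathfrak{sl}_{n+1})$ via Lemma \ref{xn}.

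Since $t\notin\{k-1,k\}$ and $t<n$, either $t\leq k-2$ or $k+1\leq t\leq n-1$. In the first case $u[k,n]$ lies in $\mathbf{k}\langle x_i:i\ge k\rangle$ while $x_t$ lies in $\mathbf{k}\langle x_i:i\le k-2\rangle$, so the two elements are separated in the sense of Definition \ref{sep}, and Lemma \ref{sepp} delivers both $[u[k,n],x_t]=0$ and $[x_t,u[k,n]]=0$ at once.

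For $k+1\leq t\leq n-1$, I would first observe that $u[k,n]$ is a nested bracket in the distinct generators $x_k,\ldots,x_n$, hence is linear in $x_n$; since $t<n$, so is $[u[k,n],x_t]$. By Lemma \ref{xn} it suffices to establish vanishing in $U_{q^2}^+(\mathfrak{sl}_{n+1})$. Inside that algebra I would prove, by induction on $m-k$, the slightly stronger claim that $[u[k,m],x_t]=0$ whenever $k+1\leq t\leq m-1$. The base case $m=k+2$, $t=k+1$ follows from Lemma \ref{nul}: the word $x_k x_{k+1}x_{k+2}x_{k+1}$ is standard but not of the form $x_i x_{i+1}\cdots x_j$, so its standard super-letter vanishes and, after the rearrangement allowed by Lemma \ref{indle}, coincides with $[u[k,k+2],x_{k+1}]$. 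For $m-k\geq 3$ and $t>k+1$, I use the decomposition $u[k,m]=[x_k,u[k+1,m]]$ from Lemma \ref{indle}; since $|t-k|\geq 2$ forces $[x_k,x_t]=0$, the conditional Jacobi identity (\ref{jak3}) gives $[u[k,m],x_t]=[x_k,[u[k+1,m],x_t]]=0$ by induction. For $t=k+1$ and $m\geq k+3$, I switch to $u[k,m]=[u[k,m-1],x_m]$; then $[x_m,x_{k+1}]=0$ and $[u[k,m-1],x_{k+1}]=0$ by induction, so all three terms of the full Jacobi identity (\ref{jak1}) vanish.

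To recover $[x_t,u[k,n]]=0$ from $[u[k,n],x_t]=0$, I would compute $p(u[k,n],x_t)\,p(x_t,u[k,n])=\prod_{i=k}^n p_{it}p_{ti}$. In the first case every factor is $1$ by (\ref{b1rell}); in the second case the only nontrivial contributions come from $i=t$, giving $p_{tt}^2=q^4$, and from $i=t\pm 1$, each contributing $q^{-2}$ by (\ref{b1rel}), so the product is $q^4\cdot q^{-4}=1$. Color antisymmetry via (\ref{cha}) then upgrades $[u[k,n],x_t]=0$ to $[x_t,u[k,n]]=0$. The main obstacle is the induction in Case 2, specifically the boundary $t=k+1$, where the ``left'' decomposition does not isolate $x_t$ outside the bracket; one has to switch to the ``right'' decomposition and verify that the Jacobi correction terms also vanish under the inductive hypothesis.
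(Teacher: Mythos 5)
Your argument is correct, and it follows the same overall strategy as the paper (handle $t\leq k-2$ by separation, reduce the interior case to $U_{q^2}^+({\mathfrak sl}_{n+1})$ via linearity in $x_n$ and Lemma \ref{xn}, invoke Lemma \ref{nul}, and finish with the computation $p(u(k,n),x_t)p(x_t,u(k,n))=1$ plus antisymmetry (\ref{cha})), but the execution of the key interior step is genuinely different. The paper splits $u[k,n]=[u[k,t-2],u[t-1,n]]$, pushes $x_t$ inside by the conditional Jacobi identity (\ref{jak3}), and then applies Lemma \ref{nul} \emph{once} to the full word $u(t-1,n)x_t$, which is standard with standard bracketing $[u[t-1,n],x_t]$, so the whole inner bracket dies in one stroke. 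You instead apply Lemma \ref{nul} only to the length-four word $x_kx_{k+1}x_{k+2}x_{k+1}$ (this base case is exactly the second relation of (\ref{zan}), which the paper has already established and which you could simply have cited) and then propagate by induction on $m-k$, peeling $x_k$ off on the left for $t>k+1$ and $x_m$ off on the right for $t=k+1$; your handling of the boundary case via (\ref{jak1}) with both $[v,w]$ and $[u,w]$ vanishing is sound (indeed (\ref{jak3}) already suffices there, since $[u[k,m-1],x_{k+1}]=0$ is the inductive hypothesis). What the paper's route buys is brevity — one application of Lemma \ref{nul} instead of an induction — at the cost of checking that the longer word $u(t-1,n)x_t$ is standard with the stated bracketing; your route trades that verification for some extra Jacobi bookkeeping, and as a by-product yields the slightly more general statement $[u[k,m],x_t]=0$ for all $k+1\leq t\leq m-1\leq n-1$.
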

\begin{proof} If $t\leq k-2,$ then the equality follows from the second group of defining relations (\ref{relb}). 
Let $k<t<n.$ By (\ref{jak3}) we may write
$$
[u[k,n],x_t]=\hbox{\Large[}[u[k,t-2],u[t-1,n]],x_t\hbox{\Large]}
=\hbox{\Large[}u[k,t-2], [u[t-1,n],x_t]\hbox{\Large]}.
$$
By Lemma \ref{nul} the element $[u[t-1,n],x_t]$ equals zero in $U_{q^2}^+({\mathfrak sl}_{n+1})$
since the word $u(t-1,n)x_t$ is standard and the standard bracketing is precisely $[u[t-1,n],x_t].$ 
This element is linear in $x_n.$ Hence $[u[k,n],x_t]=0$ in $U_q^+({\mathfrak so}_{2n+1})$
as well due to Lemma \ref{xn}. Since $p(u(k,n),x_t)p(x_t,u(k,n))=$
$p_{t\, t+1}p_{tt}p_{t\, t-1}\cdot p_{t+1\, t}p_{tt}p_{t-1\, t}=1,$
the antisymmetry identity (\ref{cha}) applies.
\end{proof}
\begin{lemma} If $t\notin \{ \psi (m)-1, \psi (m) \} ,$ $t<n<m,$ then 
$$[x_t,u[n+1,m]]=[u[n+1,m],x_t]=0.$$ 
\label{nulm}
\end{lemma}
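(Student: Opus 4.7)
The plan is to mirror the proof of Lemma \ref{nule}, handling two cases according to where $t$ lies relative to the interval $[\psi(m), n]$ of distinct generator indices appearing in $u[n+1, m]$ (recall $x_{n+j} = x_{n-j+1}$). If $t \leq \psi(m) - 2$, every letter $x_s$ of $u[n+1, m]$ satisfies $s \geq \psi(m) \geq t+2$, so $x_t$ is separated from $u[n+1, m]$ in the sense of Definition \ref{sep}, and Lemma \ref{sepp} delivers both equalities at once.

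For the interior case $\psi(m)+1 \leq t \leq n-1$, I would first note that $u[n+1, m]$ is linear in $x_n$, since $x_{n+1} = x_n$ is the only position producing $x_n$; hence so is $[x_t, u[n+1, m]]$, and by Lemma \ref{xn} it suffices to verify the identity inside $U_{q^2}^+(\mathfrak{sl}_{n+1})$. In that type-$A$ algebra the defining relations \eqref{rela} are invariant under the replacement $x_i \leftarrow x_{n+1-i}$, the symmetry already exploited to derive (\ref{zan}). Under this replacement $u[n+1, m] = [x_n, [x_{n-1}, \ldots, [x_{\psi(m)+1}, x_{\psi(m)}]\ldots]]$ becomes the right-normed bracket of $x_1 x_2 \cdots x_{m-n}$, which by Lemma \ref{indle} equals the standard Lyndon bracket $u[1, m-n]$, while $x_t$ becomes $x_s$ with $s = n+1-t \in [2, m-n-1]$. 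Thus the task reduces to showing $[u[1, m-n], x_s] = 0$ in $U_{q^2}^+(\mathfrak{sl}_{n+1})$. Iteratively applying the conditional Jacobi identity \eqref{jak3} (legal because $[x_i, x_s] = 0$ for every $1 \leq i \leq s-2$), I would peel off $x_1, \ldots, x_{s-2}$ to obtain
$$
[u[1, m-n], x_s] = [x_1, [x_2, \ldots [x_{s-2}, [u[s-1, m-n], x_s]] \ldots ]].
$$
The innermost factor $[u[s-1, m-n], x_s]$ is precisely the Shirshov standard bracketing of the Lyndon word $x_{s-1} x_s x_{s+1} \cdots x_{m-n} x_s$: its only proper Lyndon suffix is the single letter $x_s$, because any longer suffix either starts with a letter of larger index than its trailing $x_s$, or repeats $x_s$ and fails the proper-beginning comparison. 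Since this Lyndon word contains two copies of $x_s$ and hence is not of the form $x_i x_{i+1} \cdots x_j$, Lemma \ref{nul} kills it, and the peeled expression collapses to zero. Unwinding the replacement and Lemma \ref{xn} yields $[x_t, u[n+1, m]] = 0$; the companion equality $[u[n+1, m], x_t] = 0$ follows from color antisymmetry, since the product $p(x_t, u[n+1, m]) \cdot p(u[n+1, m], x_t)$ collects only the contributions of $x_{t-1}, x_t, x_{t+1}$ (all in $[\psi(m), n]$ under our assumption on $t$) and equals $q^{-2} \cdot q^4 \cdot q^{-2} = 1$.

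The principal technical step I expect to be delicate is the Shirshov-algorithm verification that $[u[s-1, m-n], x_s]$ really is the standard bracketing of $x_{s-1} x_s \cdots x_{m-n} x_s$: one must carefully rule out that any longer proper suffix of this word could be Lyndon, because otherwise the standard bracketing would be some right-normed expression rather than the left-normed one that the peeling actually produces, and Lemma \ref{nul} could not be invoked directly.
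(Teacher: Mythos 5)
Your argument is correct, but it takes a genuinely different route from the paper's. The paper disposes of the interior case $\psi(m)<t<n$ entirely inside $U_q^+({\mathfrak so}_{2n+1})$: using Lemma \ref{indle} it rewrites $u[n+1,m]=[[w,[x_{t+1}x_tx_{t-1}]],v]$ with $w=u[n+1,\psi(t)-2]$ and $v=u[\psi(t)+2,m]$, notes that $[x_t,[x_{t+1}x_tx_{t-1}]]\sim[[x_{t+1}x_tx_{t-1}],x_t]=0$ by the first relation of (\ref{zan}) together with antisymmetry, and that $x_t$ skew-commutes with $w$ and $v$ by the second group of (\ref{relb}); the Jacobi identities then annihilate the whole bracket. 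You instead transport the entire element to $U_{q^2}^+({\mathfrak sl}_{n+1})$ via Lemma \ref{xn}, reflect the indices, and rerun the Lyndon-word argument of Lemma \ref{nul} on the full word $x_{s-1}x_s\cdots x_Mx_s$ --- in effect replaying the proof of Lemma \ref{nule} on the mirror image, exactly as you announce. The two routes rest on the same foundations, because (\ref{zan}) was itself obtained by precisely the reflection-plus-Lemma-\ref{nul} argument you carry out; the paper simply localizes that reflection once and for all to the three-letter subword $x_{t+1}x_tx_{t-1}$ when proving (\ref{zan}), which makes the present lemma shorter and spares it the Shirshov factorization of the long word. Your factorization analysis is nevertheless correct (the only proper standard suffix of $x_{s-1}x_s\cdots x_Mx_s$ is the single letter $x_s$, so the standard bracketing is $[u[s-1,M],x_s]$ and Lemma \ref{nul} applies), as is the closing computation $p(x_t,u(n+1,m))\,p(u(n+1,m),x_t)=q^{-2}\cdot q^4\cdot q^{-2}=1$ that yields the companion equality.
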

\begin{proof}
If $t\leq \psi (m)-2,$ then the required relation follows from the second group of relations
(\ref{relb}). Let $\psi (m)<t<n.$ By Lemma \ref{indle} the value of $u[n+1,m]$
in $U_q^+({\mathfrak so}_{2n+1})$ is independent of the alignment of brackets.
In particular $u[n+1,m]=[[w,[x_{t+1}x_tx_{t-1}]],v],$ where $w=u[n+1,\psi (t)-2],$
$v=u[\psi (t)+2,m].$ Since $p_{t\, t+1}p_{tt}p_{t\, t-1}\cdot p_{t+1\, t}p_{tt}p_{t-1\, t}=1,$
the antisymmetry identity (\ref{cha}) and the first of (\ref{zan}) imply
$[x_t, [x_{t+1}x_tx_{t-1}]]\sim [[x_{t+1}x_tx_{t-1}],x_t]=0.$
It remains to note that $[x_t, w]=[w,x_t]=0,$ $[x_t,v]=[v,x_t]=0$ according to the second group of 
defining relations (\ref{relb}).
\end{proof}
\begin{lemma} 
If  $k\leq n<m<\psi (k),$ then the value in $U_q^+({\mathfrak so}_{2n+1})$ 
of the bracketed word $[y_kx_{n+1}x_{n+2}\cdots x_m],$ where $y_k=u[k,n],$
is independent of the precise alignment of brackets. 
\label{ins}
\end{lemma}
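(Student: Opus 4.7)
The plan is to apply the generalized Jacobi identity of Lemma \ref{indle} to the ordered sequence $y_1 = y_k = u[k,n]$, $y_2 = x_{n+1}$, $y_3 = x_{n+2}$, $\ldots$, $y_{m-n+1} = x_m$. Its conclusion is exactly the desired independence of alignment of brackets, once we verify that $[y_i,y_j]=0$ for all non-adjacent pairs $1\leq i<j-1<m-n+1$.

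First I would dispose of the pairs with $i\geq 2$. Each of $y_i=x_{n+i-1}$, $y_j=x_{n+j-1}$ is a single generator, and by Definition \ref{fis} these equal $x_{\psi(n+i-1)}$ and $x_{\psi(n+j-1)}$ respectively, with original indices differing by $j-i\geq 2$. The second group of defining relations in (\ref{relb}) yields $[x_a,x_b]=0$ whenever $|a-b|>1$, and the reverse bracket vanishes as well by combining (\ref{b1rell}) with the antisymmetry identity (\ref{cha}).

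Next I would treat the pairs $(1,j)$ with $j\geq 3$, that is, $[y_k,x_{n+j-1}]$. Setting $t=\psi(n+j-1)=n-j+2$, I want to apply Lemma \ref{nule}, whose hypotheses require $t<n$ and $t\notin\{k-1,k\}$. The inequality $j\geq 3$ immediately gives $t\leq n-1$. This is also the only place where the assumption $m<\psi(k)$ enters: since $j\leq m-n+1$ we get $t\geq 2n-m+1=\psi(m)$, while $m<\psi(k)$ forces $\psi(m)>k$, so $t\geq k+1$. Lemma \ref{nule} then produces both $[y_k,x_t]=0$ and $[x_t,y_k]=0$.

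The main obstacle is purely bookkeeping: one must carefully track the convention $x_j=x_{\psi(j)}$ for $j>n$ and confirm that the interval $[\psi(m),n-1]$ of relevant $t$-values never meets $\{k-1,k\}$. Once these checks are in place, Lemma \ref{indle} yields the claim with no further work.
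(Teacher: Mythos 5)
Your proof is correct and follows the paper's own argument: both reduce the claim to Lemma \ref{indle} for the sequence $u[k,n], x_{n+1},\ldots,x_m$, dispose of the generator–generator pairs via the defining relations, and handle $[u[k,n],x_t]$ for $n+1<t\leq m$ by noting that $k<\psi(m)\leq\psi(t)<n$ (using $m<\psi(k)$) so that Lemma \ref{nule} applies. Your version merely spells out the routine checks that the paper leaves implicit.
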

\begin{proof} In order to apply (\ref{ind}) it suffices to check 
$[u[k,n],x_t]=0,$  $n+1<t\leq m.$ Since the application of $\psi $ change the order, we have
$k<\psi (m)\leq \psi (t)<n.$ Hence taking into account $x_t=x_{\psi (t)},$
one may use Lemma \ref{nule}.
\end{proof}
\begin{lemma} 
If $k\leq n<\psi (k)<m,$ then the value in $U_q^+({\mathfrak so}_{2n+1})$ 
of the bracketed word $[x_kx_{k+1}\cdots x_ny_m],$ where $y_m=u[n+1,m],$
is independent of the precise alignment of brackets. 
\label{ins1}
\end{lemma}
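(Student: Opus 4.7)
The plan is to reduce the statement to a direct application of Lemma \ref{indle} applied to the sequence
$$
y_1 = x_k,\ y_2 = x_{k+1},\ \ldots,\ y_{n-k+1}=x_n,\ y_{n-k+2}=y_m=u[n+1,m].
$$
For this I need to verify that every pair of non-adjacent entries in this sequence skew-commutes, i.e.\ $[y_i, y_j]=0$ whenever $j-i\geq 2$. There are two kinds of pairs to check.

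First, when both $y_i$ and $y_j$ come from the $x$-block, the hypothesis $j-i\geq 2$ translates into a gap of at least $2$ between the corresponding $x$-indices, so $[x_a, x_b]=0$ follows directly from the second group of defining relations in (\ref{relb}). Second, when $y_j = y_m = u[n+1,m]$ and $y_i = x_t$ for some $t$ with $k \leq t \leq n-1$ (the condition $j-i\geq 2$ excludes $t=n$), I will invoke Lemma \ref{nulm}, which gives $[x_t,u[n+1,m]] = [u[n+1,m],x_t] = 0$ provided that $t \notin \{\psi(m)-1,\psi(m)\}$ and $t<n<m$.

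The one point requiring a short arithmetic check is that the forbidden values $\psi(m)$ and $\psi(m)-1$ in Lemma \ref{nulm} are safely outside the range $[k,n-1]$. Since $\psi$ is order-reversing, the hypothesis $\psi(k) < m$ is equivalent to $\psi(m) < k$, so $\psi(m) \leq k-1$ and $\psi(m)-1 \leq k-2$. Both excluded values therefore lie strictly below $k$, and Lemma \ref{nulm} applies for every relevant $t \in \{k,\ldots, n-1\}$. I expect this observation to be the only subtle step; everything else is bookkeeping.

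With both kinds of commutation relations verified, Lemma \ref{indle} applies to the sequence $y_1,\ldots,y_{n-k+2}$ and yields that $[x_k x_{k+1} \cdots x_n y_m]$ is independent of the alignment of brackets, completing the proof.
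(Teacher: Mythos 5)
Your proof is correct and follows essentially the same route as the paper: the paper's proof likewise reduces the claim to Lemma \ref{indle} by checking $[x_t,u[n+1,m]]=0$ for $k\leq t<n$ via Lemma \ref{nulm}. Your explicit verification that $\psi(m)<k$ (hence $\psi(m),\psi(m)-1\notin[k,n-1]$) is exactly the arithmetic the paper leaves implicit.
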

\begin{proof}
To apply (\ref{ind}) we need $[x_t,u[n+1,m]]=0,$
$k\leq t<n.$ To get these equalities one may use Lemma \ref{nulm}.
\end{proof}
\begin{lemma} If $m\neq \psi (k),$ $k\leq i<n<m,$ then 
$$[u[k,i],u[n+1,m]]=[u[n+1,m],u[k,i]]=0$$ 
unless $i=\psi (m)-1.$
\label{nulx}
\end{lemma}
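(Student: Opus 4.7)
The plan is to split into two cases based on the position of $i$ relative to $\psi(m)$, with the second case handled by a nested induction. Under the convention $x_{n+r}=x_{n-r+1}$, the original-variable support of $u[k,i]$ is the interval $[k,i]$ while that of $u[n+1,m]$ is $[\psi(m),n]$. When $i\le\psi(m)-2$ these intervals are strictly separated by any integer $j$ with $i<j<\psi(m)$, so Lemma~\ref{sepp} immediately gives $[u[k,i],u[n+1,m]]=0$; the reverse bracket then follows from (\ref{cha}) because the corresponding $p$-parameters multiply to $1$.

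For the remaining case $i\ge\psi(m)$ I would induct on $i-\psi(m)$. When $i>\psi(m)$ we have $i\notin\{\psi(m)-1,\psi(m)\}$, so Lemma~\ref{nulm} gives $[x_i,u[n+1,m]]=0$; writing $u[k,i]=[u[k,i-1],x_i]$ via Lemma~\ref{indle}, the conditional Jacobi identity (\ref{jak3}) combined with the inductive hypothesis (applicable since $i-1\ne\psi(m)-1$) yields the desired vanishing. The subcase $i=\psi(m)$ (where $m\ne\psi(k)$ forces $k<i$) is treated by a secondary induction on $i-k$: if $k\le i-2$ then $k\le\psi(m)-2$, Lemma~\ref{nulm} gives $[x_k,u[n+1,m]]=0$, and writing $u[k,i]=[x_k,u[k+1,i]]$ conditional Jacobi reduces to the inductive hypothesis $[u[k+1,i],u[n+1,m]]=0$, which is valid since $m\ne\psi(k+1)$ because $k+1\le i-1<i=\psi(m)$.

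The only remaining configuration will be the tightest base case $k=i-1$, $i=\psi(m)$, where both Jacobi-reductions fail; this is the main obstacle. The plan here is to observe that $[x_{i-1},x_i]$ does not involve $x_n$ (as $i<n$), while $u[n+1,\psi(i)]$ contains the generator $x_n$ exactly once (via $x_{n+1}=x_n$), so the bracket is linear in $x_n$. Lemma~\ref{xn} then reduces the verification to the $A_n$-type algebra $U_{q^2}^+(\mathfrak{sl}_{n+1})$, where all non-adjacent generators commute and Lemma~\ref{indle} lets one freely rebracket $u[n+1,\psi(i)]$. The bracket $[[x_{i-1},x_i],u[n+1,\psi(i)]]$ can then be computed by iterated use of the ad-identities (\ref{br1f}),(\ref{br1}), the antisymmetry (\ref{cha}), and the $A_n$-Serre relations; the coefficient cancellation that produces zero is most cleanly justified via the MS-criterion (Lemma~\ref{MS}), which shows that all partial derivatives $\partial_j$ of the bracket lie in $\mathbf{\Lambda}$, forcing an element of positive multi-degree to vanish.
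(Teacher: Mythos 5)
Your case analysis and the two reductions are sound and essentially parallel the paper's: the separation argument for $i\le\psi(m)-2$ matches the paper's use of the second group of relations (\ref{relb}), and your letter-by-letter peeling via Lemma \ref{nulm} and (\ref{jak3}) achieves the same effect as the paper's block decompositions $u=[u_1,u_2]$, $u_2=[u_3,u_4]$. (Two small omissions: you should note $p_{uw}p_{wu}=1$ once at the outset so that (\ref{cha}) handles the reverse bracket in every case, not just the first; and your primary induction also bottoms out at $i=k>\psi(m)$, where Lemma \ref{nulm} applies directly.)

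The genuine gap is the base case $k=i-1$, $i=\psi(m)$, i.e. the identity $[[x_{i-1},x_i],u[n+1,\psi(i)]]=0$ --- which is the entire mathematical content of the lemma --- and here you only assert that the bracket ``can be computed'' with the cancellation ``justified via the MS-criterion.'' Nothing is computed. The derivatives do not vanish for free: writing $B=u[n+1,\psi(i)]$, one finds $\partial_{i-1}([[x_{i-1},x_i],B])=(1-q^{-2})\bigl(x_iB-q^{-2}p(x_i,B)Bx_i\bigr)$, and its vanishing is equivalent to the nontrivial identity $[x_i,B]=(q^{-2}-1)p(x_i,B)Bx_i$ (note $[x_i,B]\neq 0$ here, since $i=\psi(m)$ is precisely the exceptional case of Lemma \ref{nulm}); likewise $\partial_n$ leads to a statement about $[[x_{i-1},x_i],u[n+2,\psi(i)]]$, i.e. the same problem one rank down. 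So the MS-criterion route requires auxiliary lemmas you have not identified, and there is also an ordering issue, since the paper only establishes ${\rm Ker}\,\xi={\bf \Lambda}$ in Section 4. The paper instead finishes by one more Jacobi reduction: write $w=[w_1,w_2]$ with $w_1=u[n+1,m-2]$ and $w_2=[x_{m-1},x_m]=[x_{i+1},x_i]$; since $w_1$ involves only $x_j$ with $j\ge i+2$, the relations (\ref{relb}) give $[[x_{i-1},x_i],w_1]=0$, and (\ref{jak1}) reduces everything to the single quadratic identity $[[x_{t-1},x_t],[x_{t+1},x_t]]=0$ with $t=i$, which follows from the Serre relation $[[x_{t-1},x_t],x_t]=0$, the conditional identity (\ref{jak3}), and relation (\ref{zan}) (the latter being where the $A_n$ reduction of Lemmas \ref{xn} and \ref{nul} actually enters). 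You should either adopt this final reduction or carry out the derivative computations in full; as written, the crux is missing.
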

\begin{proof}
Denote $u=u[k,i],$ $w=u[n+1,m].$ Relations (\ref{b1rel}), (\ref{b1rell}) imply
$p_{uw}p_{wu}=1.$ Hence by (\ref{cha}) we have $[u,w]=-p_{uw}[w,v].$ 

If $\psi (m)<k,$ then by Lemma \ref{nulm} we have
$[x_t,u[n+1,m]]=0,$ $k\leq t\leq i.$ Hence $[u[k,i],u[n+1,m]]=0.$

Suppose that  $\psi (m)>k.$ If $i<\psi (m)-1,$ then due to the second group of defining relations
(\ref{relb}) we have $[x_t,u[n+1,m]]=0,$ $k\leq t\leq i.$ Hence $[u[k,i],u[n+1,m]]=0.$

Let $\psi (m)\leq i<n.$ If we denote $u_1=u[k,\psi (m)-2],$ $u_2=u[\psi (m)-1,i],$ then certainly
$u=[u_1,u_2]$ unless $k=\psi (m)-1,$ $u=u_2.$ Since $[u_1,w]=0,$
conditional Jacobi identity (\ref{jak3}) implies that in both cases  we need just to check $[u_2,w]=0.$

Let us put $u_3=[x_{\psi (m)-1},x_{\psi (m)}],$  $u_4=u[\psi (m)+1,i].$
Then $u_2=[u_3,u_4]$ unless $i=\psi (m),$ $u_2=u_3.$
By Lemma \ref{nulm} we have $[x_t,u[n+1,m]]=0$ for all $t,$ $\psi (m)<t<n.$
Hence $[u_4,w]=0.$ Now Jacobi identity (\ref{jak1}) with $u\leftarrow u_3,$
$v\leftarrow u_4$ shows that it suffices to prove the equality $[u_3,w]=0.$

Let us put $w_1=u[n+1,m-2],$ $w_2=[x_{m-1},x_m].$ Then $w=[w_1,w_2]$
unless $m-2=n,$ $w=w_2$ (recall that we are considering the case
$\psi (m)\leq i<n,$ in particular $\psi (m)\leq n-1,$ and hence $m\geq \psi (n-1)=n+2$).
We have $[u_3,w_1]=0.$ Therefore Jacobi identity (\ref{jak1}) with 
$u\leftarrow u_3,$ $v\leftarrow w_1$ $w\leftarrow w_2$ shows that it is sufficient 
to get the equality $[u_3,w_2]=0;$ that is,
$[[x_{t-1},x_t],[x_{t+1},x_t]]=0$ with $t=\psi (m)<n.$
Since $[[x_{t-1},x_t],x_t]=0$ is one of the defining relations,
the conditional identity (\ref{jak3}) implies  
$[[x_{t-1},x_t],[x_{t+1},x_t]]$ $=[[x_{t-1}x_tx_{t+1}],x_t].$
It remains to apply the second of (\ref{zan}).
\end{proof}
\begin{lemma} If $m\neq \psi (k),$ $k\leq n<i<m,$ then 
$$[u[k,n],u[i+1,m]]=[u[i+1,m],u[k,n]]=0$$ unless $i=\psi (k).$
\label{nuly}
\end{lemma}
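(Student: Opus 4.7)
The plan is to mirror the proof of Lemma \ref{nulx} with modifications for the $B_n$ boundary geometry. Let $u = u[k, n]$ and $v = u[i+1, m]$. From (\ref{b1rel})--(\ref{b1rell}) and the bimultiplicativity of $p(-,-)$, one first verifies $p(u, v) p(v, u) = 1$: a bookkeeping computation in which the diagonal contributions $p_{aa}^2 = q^4$ on $[k, n] \cap [\psi(m), 2n-i] \subseteq [1, n-1]$ cancel exactly against the two bands of adjacent-pair contributions $p_{a, a+1} p_{a+1, a} = q^{-2}$. Then (\ref{cha}) reduces the lemma to proving $[u, v] = 0$.

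Next, case-split on $i$ versus $\psi(k)$ and $\psi(m)$ versus $k$. If $i > \psi(k)$, then $2n - i \leq k - 2$, so Lemma \ref{sepp} applies at the gap $k - 1$. In the main range $n < i < \psi(k)$ (with $\psi(m) = k$ excluded by $m \neq \psi(k)$), the sub-case $\psi(m) \geq k + 2$ is handled by writing $u = [u[k, \psi(m)-2], u[\psi(m)-1, n]]$ (valid by Lemma \ref{indle}); the first factor is separated from $v$ by the gap at $\psi(m) - 1$, so by Lemma \ref{sepp} and conditional Jacobi (\ref{jak3}) the claim reduces to $[u[\psi(m)-1, n], v] = 0$, obtained by induction on $n - k$. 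The boundary sub-case $\psi(m) = k + 1$ (i.e., $m = 2n - k$, which forces $k \leq n - 2$) uses $v = [u[i+1, m-1], x_m]$ with $x_m = x_{k+1}$ in original generators, after which Jacobi (\ref{jak1}) breaks $[u, v]$ into three terms, killed respectively by Lemma \ref{nule} (for the factor $[u, x_{k+1}]$) and by an inner induction on $m - i$ (for $[u, u[i+1, m-1]]$).

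The dual regime $\psi(m) < k$ uses $v = [x_{i+1}, u[i+2, m]]$: if $i \neq 2n - k$, Lemma \ref{nule} gives $[u, x_{i+1}] = 0$ (since $2n - i \notin \{k-1, k\}$ in that configuration) and induction on $m - i$ closes the Jacobi expansion. If $i = 2n - k$ and $m \neq 2n - k + 2$, the left-bracketed form $v = [u[i+1, m-1], x_m]$ with $\psi(m) \neq k - 1$ reduces analogously via Lemma \ref{nule}. The one residual case, $i = 2n - k$ and $m = 2n - k + 2$, for which $v = [x_k, x_{k-1}]$ in original generators and necessarily $k \leq n - 1$, is the principal technical obstacle. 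I plan to handle it by writing $u = [x_k, u[k+1, n]]$ and applying Jacobi (\ref{jak1}); the Serre-type identity $[x_k, [x_k, x_{k-1}]] = 0$ (derivable from (\ref{relbl}) via the symmetrization (\ref{rsc}) for $k < n$) kills two of the three Jacobi terms, while $[u[k+1, n], x_{k-1}] = 0$ from Lemma \ref{nule} together with conditional Jacobi (\ref{jak3}) rewrites the surviving term into a bracketed expression whose collapse to zero is furnished by the identity (\ref{zan}) applied to the triple $(x_{k-1}, x_k, x_{k+1})$. The precise matching of scalar coefficients at this closing step is the delicate bookkeeping, paralleling the final paragraph of the proof of Lemma \ref{nulx}.
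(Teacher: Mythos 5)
Your proposal is correct and takes essentially the approach the paper intends: the paper's own proof of this lemma is just the remark that one mirrors the proof of Lemma \ref{nulx}, with Lemma \ref{nule} in place of Lemma \ref{nulm} and relations (\ref{zan}) supplying the final collapse. Your case analysis, the reduction via $p(u,v)p(v,u)=1$ and separation, the conditional Jacobi peeling, and the residual identity $[u[k,n],[x_k,x_{k-1}]]=0$ resting on $[x_k,[x_k,x_{k-1}]]=0$ and the first of (\ref{zan}) all check out.
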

\begin{proof} The proof is quite similar to the proof of the above lemma.
It is based on Lemma \ref{nule} and the second of (\ref{zan})
in the same way as the proof of the above lemma is based 
on Lemma \ref{nulm} and the first of (\ref{zan}).
\end{proof}
\begin{corollary} If $m\neq \psi (k),$ $k\leq n<m,$ then in $U_q^+({\mathfrak so}_{2n+1})$ we have
\begin{equation}
u[k,m]=[u[k,n],u[n+1,m]]=\beta[u[n+1,m],u[k,n]],
\label{ww1}
\end{equation}
where $\beta =-p(u(n+1,m),u(k,n))^{-1}.$
\label{rww}
\end{corollary}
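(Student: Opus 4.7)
The plan is to split into the two cases $m < \psi(k)$ and $m > \psi(k)$ (the case $m=\psi(k)$ is excluded by hypothesis), handle them symmetrically using Lemma~\ref{ins} and Lemma~\ref{ins1} respectively, and then derive the second equality from antisymmetry using that $\mu_k^{m,n}=1$ away from the excluded case.

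In the first case, $m<\psi(k)$, the element $u[k,m]$ is defined by the left-normed alignment of the brackets. Grouping the first $n-k+1$ factors together, which is precisely $u[k,n]=[[\ldots[x_k,x_{k+1}],\ldots],x_n]$ (left-normed since $n<\psi(k)$), I would rewrite
\[
u[k,m]=[[[\ldots[u[k,n],x_{n+1}],x_{n+2}],\ldots],x_m].
\]
Since $k\le n<m<\psi(k)$, Lemma~\ref{ins} says that the bracketing of $[u[k,n]\,x_{n+1}x_{n+2}\cdots x_m]$ is independent of the alignment, so I may switch to the right-normed alignment
$[u[k,n],[x_{n+1},[\ldots,[x_{m-1},x_m]\ldots]]]$. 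The inner factor is precisely $u[n+1,m]$, which is right-normed by~(\ref{ww}) because $m>n=\psi(n+1)$. Hence $u[k,m]=[u[k,n],u[n+1,m]]$. The case $m>\psi(k)$ is completely parallel: $u[k,m]$ is right-normed, it may be rewritten as the right-normed bracketing of $x_k,\ldots,x_n,u[n+1,m]$, and Lemma~\ref{ins1} permits a passage to the left-normed alignment, which yields $[u[k,n],u[n+1,m]]$.

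For the second equality I would invoke the antisymmetry identity~(\ref{cha1}) with $u=u[k,n]$, $v=u[n+1,m]$. The key observation is that, since $m\neq\psi(k)$, the explicit formulas~(\ref{mu2}) and~(\ref{mu4}) specialized to $i=n$ give $\mu_k^{m,n}=1$; that is, $p(u[k,n],u[n+1,m])\,p(u[n+1,m],u[k,n])=1$. Under this color-commutativity condition the second summand in (\ref{cha1}) vanishes and we obtain
\[
[u[k,n],u[n+1,m]]=-p(u[n+1,m],u[k,n])^{-1}[u[n+1,m],u[k,n]]=\beta\,[u[n+1,m],u[k,n]],
\]
which is what is required.

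There is no substantial obstacle here; the statement is a bookkeeping consequence of the previously established lemmas. The only mildly subtle point is verifying that $\mu_k^{m,n}=1$ in every sub-case allowed by the hypothesis $m\neq\psi(k)$, but this is immediate from the three tabulations~(\ref{mu2}), (\ref{mu3}), (\ref{mu4}) at the splitting index $i=n$.
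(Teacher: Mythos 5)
Your proposal is correct and follows essentially the same route as the paper: the first equality via Lemma \ref{ins} (for $m<\psi(k)$) and Lemma \ref{ins1} (for $m>\psi(k)$), and the second via the antisymmetry identity after observing from (\ref{mu2}), (\ref{mu4}) at $i=n$ that $\mu_k^{m,n}=1$ when $m\neq\psi(k)$. You merely spell out the regrouping of brackets in more detail than the paper does.
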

\begin{proof}
Let us denote $u=u[k,n],$ $v=u[n+1,m].$ Equalities (\ref{mu2}), (\ref{mu4}) with $i=n$
show that $p_{uv}p_{vu}=\mu _k^{m, n}=1$ provided that $m\neq \psi (k).$ Hence 
$[u,v]=uv-p_{uv}vu=-p_{uv}[v,u].$ This proves the second equality.
To prove the first one we apply Lemma \ref{ins} if
$m<\psi (k),$  and Lemma \ref{ins1} otherwise.
\end{proof}
\begin{proposition} 
If $m\neq \psi (k),$ then in $U_q^+({\mathfrak so}_{2n+1})$ for each $i,$ $k\leq i<m$  we have
$$
[u[k,i],u[i+1,m]]=u[k,m]
$$
with only two possible exceptions being  $i=\psi (m)-1,$  and $i=\psi (k).$
\label{ins2}
\end{proposition}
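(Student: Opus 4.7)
The plan is to split into three regimes depending on the position of $k, i, m$ relative to the pivot $n$.

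In the "low" regime $m\leq n$, the letters $x_k,x_{k+1},\ldots,x_m$ are pairwise distinct generators, and $[x_s,x_t]=0$ for $|s-t|>1$ by (\ref{b1rell}) together with antisymmetry (\ref{cha}). Lemma \ref{indle} then implies that every bracketing of $x_k x_{k+1}\cdots x_m$ equals $u[k,m]$, so $u[k,m]=[u[k,i],u[i+1,m]]$ without exceptions. The "high" regime $k>n$ is strictly parallel: here $x_k,\ldots,x_m$ are $x_{\psi(k)},\ldots,x_{\psi(m)}$ with $\psi(k)>\cdots >\psi(m)$, again pairwise distinct and satisfying the same non-adjacent commutativity, so Lemma \ref{indle} applies and by the remark following (\ref{ww}) the value of $u[k,m]$ is independent of the alignment. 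In both regimes neither exception index $i=\psi(m)-1$ nor $i=\psi(k)$ can arise, so nothing has to be excluded.

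The main case is the mixed regime $k\leq n<m$ with $m\neq\psi(k)$. For $i=n$ the identity is precisely Corollary \ref{rww}. For $k\leq i<n$ with the further hypothesis $i\neq\psi(m)-1$, I plan to apply Corollary \ref{rww} with $k$ replaced by $i+1$ (legitimate because $m\neq\psi(i+1)$) to obtain $u[i+1,m]=[u[i+1,n],u[n+1,m]]$. Lemma \ref{nulx} then gives $[u[k,i],u[n+1,m]]=0$, so the conditional Jacobi identity (\ref{jak3}) produces
$$[u[k,i],u[i+1,m]]=[u[k,i],[u[i+1,n],u[n+1,m]]]=[[u[k,i],u[i+1,n]],u[n+1,m]].$$
The inner bracket $[u[k,i],u[i+1,n]]$ equals $u[k,n]$ by the low-regime case applied inside the subalgebra generated by $x_1,\ldots,x_n$, and Corollary \ref{rww} reassembles $[u[k,n],u[n+1,m]]=u[k,m]$.

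The remaining subcase $n<i<m$ with $i\neq\psi(k)$ is entirely symmetric: decompose $u[k,i]=[u[k,n],u[n+1,i]]$ using Corollary \ref{rww} (valid because $i\neq\psi(k)$), note $[u[k,n],u[i+1,m]]=0$ by Lemma \ref{nuly}, apply (\ref{jak3}) in the form $[[u_1,u_2],u_3]=[u_1,[u_2,u_3]]$ when $[u_1,u_3]=0$, use the high-regime identity $[u[n+1,i],u[i+1,m]]=u[n+1,m]$, and finish with Corollary \ref{rww}. The two excluded indices are precisely the values for which either an auxiliary commutator fails to vanish or one of the invoked decompositions hits the "twisted" third branch of (\ref{ww}); the main obstacle is therefore the bookkeeping task of matching the exception clauses of Lemmas \ref{nulx}, \ref{nuly} and Corollary \ref{rww} with the hypotheses $m\neq\psi(k)$ and $i\notin\{\psi(m)-1,\psi(k)\}$, after which the identity is assembled from pieces already in hand.
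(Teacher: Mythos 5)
Your argument is correct and follows essentially the same route as the paper: the pure regimes $m\leq n$ and $k>n$ via Lemma \ref{indle}, the pivot case $i=n$ via Corollary \ref{rww}, and the mixed cases by decomposing $u[i+1,m]$ (resp. $u[k,i]$) with Corollary \ref{rww}, killing the cross-commutator with Lemma \ref{nulx} (resp. Lemma \ref{nuly}), and reassociating with the conditional Jacobi identity (\ref{jak3}). The bookkeeping you flag does close up: $i\neq\psi(m)-1$ is exactly the condition $m\neq\psi(i+1)$ needed for Corollary \ref{rww} and the non-exceptional clause of Lemma \ref{nulx}, and $i\neq\psi(k)$ plays the symmetric role in the other subcase, so nothing further is needed.
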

\begin{proof}
If $m\leq n$ or $k\geq n+1,$ then the statement follows from (\ref{ind}). 
Thus we may suppose that $m>n.$

If $i=n,$ Corollary \ref{rww} implies the required formula. 

If $i>n,$ then Corollary \ref{rww} yields $u[k,i]=[u[k,n],u[n+1,i]],$
while by Lemma \ref{nuly} we have $[u[k,n],u[i+1,m]]=0.$
Hence (\ref{jak3}) implies
$$
[[u[k,n],u[n+1,i]], u[i+1,m]]=[u[k,n],[u[n+1,i], u[i+1,m]]].
$$
Now (\ref{ind}) shows that $[u[n+1,i], u[i+1,m]]=[u[n,m]],$ and again 
Corollary \ref{rww} implies the required formula.

If $i<n,$ then Corollary \ref{rww} yields $u[i+1,m]=[u[i+1,n],u[n+1,m]],$
while by Lemma \ref{nulx} we have $[u[k,i],u[n+1,m]]=0.$
Hence (\ref{jak3}) implies
$$[[u[k,i],[u[i+1,n], u[n+1,m]]]=[[u[k,i],[u[i+1,n]], u[n+1,m]].$$
Now (\ref{ind}) shows that $[u[k,i], u[i+1,n]=u[k,n],$ and again 
 Corollary \ref{rww} implies the required formula.
\end{proof}
\begin{proposition} If $m\neq \psi (k),$ $k\leq i<j<m,$ $m\neq \psi (i)-1,$  $j\neq \psi (k),$ then 
$[u[k,i],u[j+1,m]]=0.$ If additionally $i\neq \psi (j)-1,$ then $[u[j+1,m],u[k,i]]=0.$
\label{NU}
\end{proposition}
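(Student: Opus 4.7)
My plan is to prove both equalities by case analysis on the position of the intervals $[k,i]$ and $[j+1,m]$ relative to the middle index $n$, using the decomposition $u[a,b] = [u[a,n], u[n+1,b]]$ from Corollary \ref{rww} whenever an interval crosses $n$, and propagating the commutation through brackets via the conditional Jacobi identity (\ref{jak3}) together with the ad-identities (\ref{br1f}), (\ref{br1}).

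I would first dispatch the one-sided cases $m \leq n$ and $k \geq n+1$, where both words live in the same half. Since $i < j$, the variable $x_{i+1}$ appears in neither word and separates them in the sense of Definition \ref{sep}, so Lemma \ref{sepp} gives both commutations with no extra hypothesis needed. In the mixed case $i \leq n < m$, I would further split on whether $j < n$ or $j \geq n$. If $j < n$, Corollary \ref{rww} gives $u[j+1, m] = [u[j+1, n], u[n+1, m]]$, and expanding via (\ref{br1}) reduces the claim to $[u[k,i], u[j+1, n]] = 0$ (separation, Lemma \ref{sepp}) together with $[u[k,i], u[n+1, m]] = 0$ (Lemma \ref{nulx}, whose exceptional case $i = \psi(m)-1$ translates to $m = \psi(i)-1$ and is forbidden by hypothesis). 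If $j \geq n$, the word $u[j+1, m]$ lies entirely in the right half, and an analogous reduction lands us on Lemma \ref{nuly}, whose exceptional case is excluded by the hypothesis $j \neq \psi(k)$. The remaining mixed case $k \leq n < i$ and the ``both straddle'' case are handled by decomposing $u[k,i]$ instead, and combining the two decompositions.

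For the second equality $[u[j+1,m], u[k,i]] = 0$, I would invoke the antisymmetry identity (\ref{cha}): once $[u, v] = 0$ is known, $[v, u] = 0$ is equivalent to $p(u,v)p(v,u) = 1$. Hence it suffices to compute this coefficient under the extra hypothesis $i \neq \psi(j)-1$. By bimultiplicativity, $p(u[k,i], u[j+1,m]) \cdot p(u[j+1,m], u[k,i]) = \prod_{s \in [k,i],\, t \in [j+1,m]} p_{st} p_{ts}$, and each factor equals $1$ except at adjacencies in the effective (post-$\psi$) indices. I would organize this the same way (\ref{mu22}) organizes the computation of $\sigma_k^m$, by splitting the index rectangle into blocks at $n$ and invoking $p_{\psi(s)\,t} = p_{st}$. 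The extra hypothesis $i \neq \psi(j)-1$ is precisely the condition $j \neq \psi(i+1)$, which rules out the one ``reflection adjacency'' between the two intervals that would otherwise leave an uncanceled $q^{\pm 2}$ factor.

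The main obstacle I anticipate is this final bookkeeping in the $p(u,v)p(v,u)$ computation: when both intervals straddle $n$, the $\psi$-reflection interleaves their effective indices in $[1,n]$, and one must verify that each of the four (respectively five) hypotheses of the proposition is used to kill exactly one potential adjacency. This is the same block-matrix analysis that produced (\ref{mu21})--(\ref{mu4}), and I would handle it by the same four-block split-and-pair technique.
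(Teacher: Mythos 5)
Your overall architecture --- dispatch the one-sided cases by Lemma \ref{sepp}, decompose whichever interval crosses $n$ via Corollary \ref{rww}, push the bracket through with (\ref{jak1})--(\ref{jak3}) and the ad-identities, and obtain the second identity by showing $p_{uv}p_{vu}=1$ and invoking (\ref{cha}) --- is the same as the paper's, and it does cover the cases $m\le n$, $k>n$, $k\le n<i$, and $j<n\le m$. Your coefficient computation for the second equality is also essentially the paper's: it reduces to the identity $\mu_k^{m,i}=\mu_k^{j,i}$, which the paper simply reads off from (\ref{mu2}) and (\ref{mu4}) rather than redoing the four-block split, a cosmetic difference.

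The gap is the case $i\le n\le j$, which your plan does not actually reach. Here neither $[k,i]$ nor $[j+1,m]$ straddles $n$, so Corollary \ref{rww} gives you nothing to decompose, and Lemma \ref{nuly} does not apply as stated because its first argument must be $u[k,n]$, not $u[k,i]$ with $i<n$; knowing $[u[k,n],u[j+1,m]]=0$ does not yield $[u[k,i],u[j+1,m]]=0$. The paper handles this case by a trichotomy on $i$ versus $\psi(j)-1$: for $i>\psi(j)-1$ it re-applies Lemma \ref{nuly} with $n\leftarrow i$, $i\leftarrow j$ (legitimized by Lemma \ref{xn}); for $i<\psi(j)-1$ it applies Lemma \ref{nulx} with $n\leftarrow\psi(j)-1$; and --- crucially --- the borderline case $i=\psi(j)-1$, which \emph{is} permitted by the hypotheses of the first equality, needs a genuinely different argument: there $x_{j+1}=x_i$, so the two words share a boundary letter, and the paper first proves $[u[k,i],x_i]=0$ from the Serre relation $[[x_{i-1},x_i],x_i]=0$ (equation (\ref{esh})), then writes $u[j+1,m]=[x_i,u[j+2,m]]$ and uses (\ref{jak3}) together with the already-established case, with separate treatment of the degenerate sub-cases $j+1=m$ and $j+1=\psi(k)$. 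Your proposal reserves the condition $i\ne\psi(j)-1$ only for the second equality, so this entire branch of the first equality is missing. (Also, your ``both straddle'' case is vacuous: since $i<j$, the intervals $[k,i]$ and $[j+1,m]$ cannot both contain $n$ in their interiors, so no combination of two decompositions ever occurs.)
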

\begin{proof}
If $m\leq n$ or $k>n,$ then $u[k,i]$ and $u[j+1,m]$ are separated by $x_j,$ hence  
the statement follows from Lemma \ref{sepp}.

If $k\leq n<i,$ then by Corollary \ref{rww} we have $u[k,i]=[a,b]$ with $a=u[k,n],$ $b=u[n+1,i].$
The second group of relations (\ref{relb}) implies $[b, u[j+1,m]]=0,$ while Lemma \ref{nuly}
implies $[a, u[j+1,m]]=0.$ Hence by (\ref{jak1}) we get the required relation.

If $j< n\leq m,$ then again by Corollary \ref{rww} we have $u[j+1,m]=[a,b]$
with $a=u[j+1,n],$ $b=u[n+1,m].$ The second group of relations (\ref{relb}) implies $[u[k,i], a]=0,$
while Lemma \ref{nulx} implies $[u[k,i], b]=0.$ Hence by (\ref{jak1}) we get the required relation.

Assume  $i\leq n\leq j.$  If $i>\psi (j)-1,$ then,  taking into account Lemma  \ref{xn}, one may apply 
 Lemma  \ref{nuly} with $n\leftarrow i,$ $i\leftarrow j.$ Similarly, if  $i<\psi (j)-1,$ one may apply
Lemma \ref{nulx} with $n\leftarrow \psi (j)-1.$ Let $i=\psi (j)-1.$ We may apply already considered case 
``$i>\psi (j)-1$" to the sequence $k\leq i<j^{\prime }<m$ with $j^{\prime }=j+1$ unless $j^{\prime }=m,$
or $j^{\prime }=\psi (k).$ Thus $[u[k,i],u[j+2,m]]=0,$ provided that $j+1\neq m,$ $j+1\neq \psi (k).$
Lemma \ref{indle} implies 
\begin{equation}
[u[k,i],x_i]=[u[k,i-2],[[x_{i-1},x_i],x_i]]=0,
\label{esh}
\end{equation}
for inequality $i<j-1$ and equality $i=\psi (j)-1$ imply $i<n.$ Now if $j+1\neq m,$ $j+1\neq \psi (k),$
then using Lemma \ref{indle}, we have 
$$
[u[k,i],u[j+1,m]]=[[u[k,i],[x_i,u[j+2,m]]]\stackrel{(\ref{jak3})}{=}[[u[k,i],x_i],u[j+2,m]]\stackrel{(\ref{esh})}{=}0,
$$
for $x_{j+1}=x_i.$
The exceptional equality $j+1=\psi (k)$ implies $k=\psi (j)-1=i.$ In this case, taking 
into account Lemma \ref{indle}, we have 
$$[x_i,u[j+1,m]]=[[x_i,[x_i,x_{i-1}]],u[j+3,m]]=0.$$
The exceptional equality $j+1=m$ implies $u[1+j,m]=x_m=x_i,$ for $\psi (j+1)=i.$
Hence relation (\ref{esh}) applies. The equality $[u[k,i],u[j+1,m]]=0$ is proved.

Assume $i\neq \psi (j)-1.$ Definition (\ref{mu1}) shows that 
$$p(u(k,i),u(j+1,m))\cdot p(u(j+1,m),u(k,i))=\mu _k^{m,i}(\mu _k^{j,i})^{-1}.$$
Using (\ref{mu2}) and (\ref{mu4}) we shall prove that $\mu _k^{m,i}=\mu _k^{j,i}.$
If $i=n,$ then $\mu _k^{m,i}=\mu _k^{j,i}=1.$ Let $i\neq n.$ If $m<\psi (k),$
then $\mu _k^{m,i}=q^{-2},$ for $i=\psi (m)-1$ is equivalent to $m=\psi (i)-1.$
Similarly $\mu _k^{j,i}=q^{-2},$ for $j\neq \psi (i)-1,$ and $j\leq m<\psi (k).$

If $m>\psi (k),$ and $i\neq \psi (k),$ then by (\ref{mu4}) we have $\mu _k^{m,i}=q^{-2},$
while $\mu _k^{j,i}=q^{-2}$ in both cases: if $j<\psi (k)$ by (\ref{mu2}), and if $j>\psi (k)$ by (\ref{mu4}). 
Finally, if $i=\psi (k),$ then $j>i=\psi (k),$ hence (\ref{mu4}) implies $\mu _k^{m,i}=\mu _k^{j,i}=q^{-4}.$

In order to  get $[u[j+1,m],u[k,i]]=0$ it remains to apply (\ref{cha}).
\end{proof}

\section{PBW-generators of the quantum Borel algebra}
\smallskip
\begin{proposition} If $q^3\neq 1,$ $q^4\neq 1,$ then
Values of the elements  $u[k,m],$ $k\leq m<\psi (k)$ form a set of PBW-generators
for the algebra $U_q^+({\mathfrak so}_{2n+1})$ over {\bf k}$[G].$ All heights are infinite.
\label{strB}
\end{proposition}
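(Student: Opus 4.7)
The plan is to apply Theorem~\ref{BW}, which asserts that the hard super-letters of $U_q^+(\mathfrak{so}_{2n+1})$, equipped with the height function of Definition~\ref{h1}, form a set of PBW-generators over $\mathbf{k}[G]$. It therefore suffices to verify that, once one identifies $u(k,m)$ with the actual word $\tilde u(k,m)\in X^{\ast}$ obtained by the substitution $x_{n+j}\mapsto x_{n-j+1}$, the set $\{\tilde u(k,m) : k\le m<\psi(k)\}$ coincides with the set of standard (Lyndon) words whose bracketings are hard super-letters in $U_q^+(\mathfrak{so}_{2n+1})$, and that each such super-letter has infinite height.

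First, I would check that every $\tilde u(k,m)$ with $k\le m<\psi(k)$ is a Lyndon word in $X$ (ordered $x_1>x_2>\cdots>x_n$), and that its Shirshov standard bracketing agrees with $u[k,m]$ as defined in (\ref{ww}) up to a nonzero scalar. When $m\le n$ or $k\ge n+1$ this follows from the algorithm on page~\pageref{algo} together with Lemma~\ref{indle} and the commutation relations (\ref{relb}); when $k\le n<m<\psi(k)$ it follows by combining Corollary~\ref{rww} with Lemmas~\ref{ins}, \ref{ins1}, which guarantee the associativity of the brackets needed to pass between the right-bracketing of Shirshov and the left-bracketing used in (\ref{ww}). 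Note that the index range $k\le m<\psi(k)$ contains exactly $n^2$ pairs, matching the number of positive roots of $B_n$.

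Next, I would prove hardness of each $u[k,m]$ via the shuffle representation (\ref{copro})-(\ref{spro}). By induction on $m-k$, the image of $u[k,m]$ in $Sh_{\tau}(V)$ is computed explicitly and has leading monomial (in the lexicographic order) equal to $\tilde u(k,m)$ with a nonzero scalar coefficient. Since any super-word in strictly smaller super-letters has lexicographically smaller leading monomial, this yields linear independence of $u[k,m]$ modulo the span of such super-words, whence hardness. Conversely, for any Lyndon word $w\in X^{\ast}$ outside the above list, I would use Propositions~\ref{ins2}, \ref{NU} and Lemmas~\ref{nule}--\ref{nuly} together with the Serre relations (\ref{relb})-(\ref{relbl}) to rewrite $[w]$ as a linear combination of super-words in strictly smaller super-letters. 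The decisive cases are those whose multidegree matches that of $\tilde u(k,\psi(k))$, i.e.\ $2(\alpha_k+\alpha_{k+1}+\cdots+\alpha_n)$, where the second Serre relation of (\ref{relbl}) is the key identity that forces the reduction.

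Finally, for the infinite heights, the shuffle representation again suffices: for every $s\ge 1$ the shuffle image of $u[k,m]^s$ has leading monomial the $s$-fold shuffle power of $\tilde u(k,m)$ with nonzero coefficient, and this monomial cannot arise as the leading term of any linear combination of super-words in strictly smaller super-letters of the same multidegree. Hence no relation of the type required by Definition~\ref{h1} can hold in $U_q^+(\mathfrak{so}_{2n+1})$, and the height of each $u[k,m]$ is infinite. The main obstacle is the converse part of step three, namely eliminating every Lyndon word outside the stated list as a candidate for a hard super-letter; while the shuffle side of the argument is clean, the reduction side depends tightly on the identities accumulated in Section~3, and the exceptional multidegrees corresponding to $m=\psi(k)$ must be handled by a careful invocation of (\ref{relbl}).
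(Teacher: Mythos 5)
Your proposal sets out to re-derive the classification of hard super-letters of $U_q^+({\mathfrak so}_{2n+1})$ from first principles, whereas the paper's proof is essentially a citation: it invokes \cite[Theorem $B_n$, p.~211]{Kh4}, which already states that the hard super-letters are exactly the standard bracketings of the words $u(k,m)$, $k\le m\le n$, and $u(k,\psi(s))$, $k<s\le n$, with the accompanying height information, so that the only thing left to check is that these standard bracketings coincide in $U_q^+({\mathfrak so}_{2n+1})$ with the bracketings $u[k,m]$ fixed in (\ref{ww}); this is done via the recurrences (\ref{wsk}) and the conditional identity (\ref{ind}). Measured against that, your step eliminating every Lyndon word outside the list is not a detail to be waved at: it is the entire content of the cited theorem, and you yourself flag it as ``the main obstacle'' without carrying it out. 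As written, the proposal therefore does not establish the completeness of the list, which is the hard half of the statement.

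The second, sharper problem is the height argument. The shuffle representation is available for $U_q^+({\mathfrak so}_{2n+1})$ only when $q$ is not a root of unity (it requires $\mathrm{Ker}\,\xi={\bf \Lambda}$); for $q$ of finite multiplicative order $t>4$ the shuffle algebra realizes $u_q^+({\mathfrak so}_{2n+1})$, and by Proposition \ref{strBu} one has $u[k,m]^h=0$ there, so the shuffle image of $u[k,m]^s$ vanishes for $s\ge h$, directly contradicting your claim that it has a nonvanishing leading monomial for every $s$. Thus your argument certifies nothing about heights in $U_q^+$ precisely in the only case where the claim is not automatic: when $q$ is not a root of $1$, $p_{uu}\in\{q,q^2\}$ is not a root of $1$ either, and the height is infinite immediately from Definition \ref{h1}, with no computation needed. (The hardness half of your shuffle argument is more reasonable in spirit --- Proposition \ref{shu} does exhibit the shuffle image of $u[k,m]$ as a nonzero single tensor, and non-hardness descends from $U_q^+$ to $u_q^+$ --- but the triangularity statement that every super-word in strictly smaller super-letters has a strictly smaller leading tensor is asserted rather than proved.)
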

\begin{proof} By \cite[Theorem $B_n,$ p. 211]{Kh4} the set of PBW-generators 
(the values of hard super-letters, see Theorem \ref{BW}) consists of
$[u_{km}],$  $k\leq m\leq n,$ and $[w_{ks}],$ $1\leq k<s\leq n,$ where $[u_{km}],$
$[w_{ks}]$ are precisely the words $u(k,m),$ $u(k,\psi (s))$ with the standard alignment of brackets
(see Algorithm p. \pageref{algo}).
By conditional identity (\ref{ind}) we have $[u_{km}]=u[k,m]$ in $U_q^+({\mathfrak so}_{2n+1}).$
According to \cite[Lemma 7.8]{Kh4} the brackets in $[w_{ks}]$ 
are set by the following recurrence formulae:
\begin{equation}
\begin{matrix}
[w_{ks}]=[x_k[w_{k+1\, s}]], \hfill & \hbox{if  }1\leq k<s-1; \hfill \cr
[w_{k\, k+1}]=[[w_{k\, k+2}]x_{k+1}], \hfill & \hbox{if  }1\leq k<n,\hfill 
\end{matrix}
\label{wsk}
\end{equation}
where by definition $w_{k \, n+1}=u(k,n).$ We shall check the equality
$[w_{ks}]=u[k,\psi (s)]$ in $U_q^+({\mathfrak so}_{2n+1}).$

If $k=n-1,$ $s=n,$ then $w_{ks}=[[x_{n-1},x_n],x_n]=u[n-1,n+2].$

If  $k<s-1,$ then by (\ref{jak3}) we have 
$$
[x_k, \hbox{\Large [} u[k+1,n],u[n+1,\psi (s)]\hbox{\Large ]}]
=\hbox{\Large [}u[k,n], u[n+1,\psi (s)]\hbox{\Large ]},
$$ 
for $[x_k,x_t]=0, \ n+1\leq t\leq \psi (s).$ Thus, evident induction  applies due to  (\ref{ww1}).

If $s=k+1<n,$ then the second option of  (\ref{wsk}) is fulfilled. This allows us to apply already proved equality for $[w_{k\, k+2}].$ 
\end{proof}

If $q$ is not a root of 1, then the fourth statement of \cite[Theorem $B_n,$ p. 211]{Kh4}
shows that each skew-primitive element in $U_q^+({\mathfrak so}_{2n+1})$ is proportional to
either $x_i,$ $1\leq i\leq n,$ or $1-g,$ $g\in G.$ In particular $\xi (G\langle X\rangle ^{(2)})$ has no 
nonzero skew-primitive elements. At the same time due to the Heyneman-Radford theorem
\cite{HR}, \cite[Corollary 5.3]{Kh2}
every bi-ideal of a character Hopf algebra has a nonzero skew-primitive element. Therefore
Ker$\, \xi ={\bf \Lambda },$ while the subalgebra $A$ generated by values of 
$x_i,$ $1\leq i\leq n$ in $U_q^+({\mathfrak so}_{2n+1})$ has the shuffle representation 
given in Section 2.

If the multiplicative order of $q$ is finite, then by definition of 
$H=u_q^+({\mathfrak so}_{2n+1})$ we have Ker$\, \xi ={\bf \Lambda }.$
Hence the subalgebra $A$ generated by values of 
$x_i,$ $1\leq i\leq n$ in $u_q^+({\mathfrak so}_{2n+1})$ has the shuffle representation also.

Recall that by $(u(m,k))$ we denote the tensor $x_m\otimes x_{m-1}\otimes \cdots \otimes x_k$
considered as an element of $Sh_{\tau }(V).$
\begin{proposition}
Let $k\leq m\leq 2n.$ In the shuffle representation we have
\begin{equation}
u[k,m]=\alpha _k^m\cdot (u(m,k)), \ \
\alpha _k^m\stackrel{df}{=}\varepsilon_k^m (q^2-1)^{m-k}\cdot \prod _{k\leq i<j\leq m}p_{ij},
\label{shur}
\end{equation}
where 
\begin{equation}
\varepsilon _k^m =\left\{ \begin{matrix}
1,\hfill &\hbox{if } m\leq n, \hbox{or } k>n;\hfill \cr
q^{-1},\hfill &\hbox{if }  k\leq n<m,\ m\neq \psi (k);\hfill \cr
q^{-3},\hfill &\hbox{if } m=\psi (k).\hfill 
                             \end{matrix}
                    \right.
\label{eps}
\end{equation}
\label{shu}
\end{proposition}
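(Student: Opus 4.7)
The plan is to argue by induction on $r = m - k$, treating the three cases of the recursive bracket definition~(\ref{ww}) in turn. The base $r = 0$ is immediate: $u[k,k] = x_k$ corresponds to the single tensor $(x_k)$, and (\ref{shur})--(\ref{eps}) give $\alpha_k^k = 1$ since the product over $k \le i < j \le k$ is empty. Throughout the argument, each skew bracket in the definition of $u[k,m]$ is translated via the embedding $a_i \mapsto (x_i)$ and the shuffle-product formula~(\ref{spro}), and the goal is to show that the translated expression equals $\alpha_k^m (u(m,k))$.

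For Case~1 ($m < \psi(k)$), I substitute $u[k,m-1] \mapsto \alpha_k^{m-1}(u(m-1,k))$ into $u[k,m] = [u[k,m-1], x_m]$ and expand via~(\ref{spro}). The bracket becomes a sum indexed by splits $u(m-1,k) = w_1 w_2$ in which the coefficient of each tensor $(w_1 x_m w_2)$ is proportional to $1 - p(x_m, w_2) p(w_2, x_m)$. Most individual splits contribute zero because (\ref{b1rel})--(\ref{b1rell}) force $p_{mj} p_{jm} = 1$ whenever $j$ is not adjacent to $m$. A subtlety is that the convention $x_i = x_{\psi(i)}$ causes $(u(m-1,k))$ to have repeated letters when $m > n+1$, so distinct splits can produce identical tensors whose coefficients must be aggregated: a sample computation at $k = n-2$, $m = n+2$ already exhibits the cancellation pattern where two redundant splits annihilate each other at an off-diagonal tensor, leaving only $(u(m,k))$ as the surviving contribution. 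The surviving scalar then matches $\alpha_k^m / \alpha_k^{m-1}$ through a direct check using (\ref{mu2})--(\ref{mu4}); the subcases $m \le n$, $m = n+1$, and $n+1 < m < \psi(k)$ must each be verified because both $\mu_k^{m,m-1}$ and $\varepsilon_k^m$ change value across these transitions. Case~2 ($m > \psi(k)$) is handled by the symmetric left-shuffle identity applied to $u[k,m] = [x_k, u[k+1,m]]$.

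The main obstacle lies in Case~3 ($m = \psi(k)$). Here one applies $u[k,m] = \beta[u[n+1,m], u[k,n]]$ with $\beta = -p(u(n+1,m), u(k,n))^{-1}$, substitutes the inductive shuffle representations of both factors, and expands the shuffle bracket of the tensors $(u(m,n+1))$ and $(u(n,k))$. Because $x_{n+j} = x_{n-j+1}$, these two reversed words carry identical multisets of generators $\{x_k, \ldots, x_n\}$, so the expansion is a sum over all interleavings of the sequence $x_n, x_{n-1}, \ldots, x_k$ with itself, and a great many distinct splits map to the same tensor. The challenge is twofold: to show that after aggregating contributions tensor-by-tensor the whole sum collapses to a single multiple of $(u(m,k))$, and to check that the scalar accumulates the specific extra factor $q^{-2}$ needed to upgrade the naive product $\varepsilon_k^n \varepsilon_{n+1}^m = 1$ to the required $\varepsilon_k^{\psi(k)} = q^{-3}$. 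Heuristically this extra $q^{-2}$ originates in the anomalous diagonal value $p_{nn} = q$ at the middle node combined with the normaliser $\beta$. To make the cancellations tractable I would either run a secondary induction on $n - k$ inside Case~3, or else rewrite the bracket using Proposition~\ref{ins2} so that one of the two factors is shorter and the analysis reduces to a single Case~1-style computation.
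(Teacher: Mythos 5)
Your treatment of the base case and of Cases 1 and 2 matches the paper's proof: the same induction on $m-k$, the same expansion of $[u[k,m-1],x_m]$ (resp.\ $[x_k,u[k+1,m]]$) via (\ref{spro}), the same observation that all splits die except those adjacent to the new letter, and the same aggregation of the two coincident splits at the tensor with a repeated letter (the paper does this in general for $m>n+1$, not just for your sample $k=n-2$, $m=n+2$, but the mechanism is identical). The problem is Case 3, which you correctly identify as the main obstacle and then do not prove. Expanding the shuffle bracket of the two full-length tensors $(u(m,n+1))$ and $(u(n,k))$ and showing tensor-by-tensor collapse to a single multiple of $(u(m,k))$ is a genuine combinatorial identity over a repeated alphabet, and neither of your two proposed rescues is carried out. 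Worse, the second one is blocked as stated: Proposition~\ref{ins2} is formulated only for $m\neq\psi(k)$, so it provides no decomposition of $u[k,\psi(k)]$ into a shorter bracket at all; indeed the discussion after Definition~\ref{reg1} shows no set is $(k,\psi(k))$-regular, which is symptomatic of why this case resists the generic splitting.

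The paper's actual resolution is a specific algebraic reduction you are missing. Writing $u=u[n+1,m]$, $v=x_k$, $w=u[k+1,n]$, one has $u[k,\psi(k)]=\beta[u,[v,w]]$ by (\ref{ww}), and the relation $[[x_{k+1},x_k],x_k]=0$ forces $[u,v]=0$; the Jacobi identity (\ref{jak2}) then gives $\beta^{-1}u[k,m]=p_{uv}x_k\cdot[u,w]-p_{vu}[u,w]\cdot x_k$. Since $[u,w]\sim u[k+1,m]$ with $m=\psi(k)>\psi(k+1)$, its shuffle form is already known from Case 2, so the whole computation reduces to shuffling the \emph{single} letter $x_k$ into $(u(m,k+1))$ via (\ref{spro}): only three splits survive, two of them land on the same tensor $(x_k z)$ and cancel, and the remaining one produces exactly the extra $q^{-2}$ (on top of the $k=n$ seed $u[n,n+1]=(1-q^{-2})(x_{n+1}x_n)$, which supplies the first $q^{-1}$ and which your sketch also omits). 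Without this reduction, or a completed substitute for it, the proof of the $\varepsilon_k^{\psi(k)}=q^{-3}$ case is not there.
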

\begin{proof} 
We use induction on $m-k.$ If $m=k,$ the equality reduces to $x_k=(x_k).$

a). Consider firstly the case $m<\psi (k).$ By the inductive supposition we have
$u[k,m-1]=\alpha _k^{m-1}\cdot (w),$ $w=u(m-1,k).$ Using (\ref{spro}) we may write
$$
u[k,m]=\alpha _k^{m-1}\{ (w)(x_m)-p(w,x_m)\cdot (x_m)(w)\}
$$
\begin{equation}
=\alpha _k^{m-1}\sum _{uv=w}\{ p(x_m,v)^{-1}-p(w,x_m)p(u,x_m)^{-1}\} (ux_mv).
\label{sum1}
\end{equation}
Since $w=uv,$ we have  $p(w,x_m)p(u,x_m)^{-1}=p(v,x_m).$ 

If $m\leq n,$  then relations (\ref{b1rell}) imply $p(v,x_m)p(x_m,v)=1$ with only one exception 
being $v=w.$
Hence sum (\ref{sum1}) has just one term. The coefficient at $(x_mw)=(u(m,k))$ equals 
$$\alpha _k^{m-1}p(w,x_m)(p(w,x_m)^{-1}p(x_m,w)^{-1}-1)=\alpha _k^{m-1}p(w,x_m)(q^2-1),$$
 which is required.

If $m=n+1,$ then still 
$p(v,x_m)p(x_m,v)=1$ with two exceptions being $v=w,$ and $v=u(n-1,k).$
In both cases $(ux_mv)$ equals $(u(m,k)).$ Hence the coefficient 
at $(u(m,k))$ in sum (\ref{sum1}) equals 
$$
p(x_n,u(k,n-1))^{-1}-p(u(k,n-1),x_n)+p(x_n,u(k,n))^{-1}-p(u(k,n),x_n)
$$
$$
=p(w,x_{n+1})\{ p_{n\,n-1}^{-1}p_{n-1\, n}^{-1}p_{nn}^{-1}-p_{nn}^{-1}
+p_{n\, n-1}^{-1}p_{nn}^{-1}p_{nn}^{-1}p_{n-1\, n}^{-1}-1\}.
$$
Due to (\ref{b1rel}), (\ref{b1rell}) we get  $\alpha _k^m=\alpha _k^{m-1}p(w,x_{n+1})(q^2-1)q^{-1},$
which is required.

Suppose that $m>n+1.$ In this case by definition $x_m=x_t,$ where $t=\psi (m)<\psi (n+1)=n.$
Let $v=u(s,k).$ If $s<t-1,$ then $v$ depends only on $x_i,$ $i<t-1,$ 
and relations (\ref{b1rel}), (\ref{b1rell}) imply $p(v,x_m)p(x_m,v)=1.$ If $s>t,$ $s\neq m-1,$
then $p(v,x_m)p(x_m,v)$ $=p_{t-1\, t}p_{tt}p_{t+1\, t}\cdot p_{t\, t-1}p_{tt}p_{t+1\, t}$ $=1.$ Hence in (\ref{sum1}) remains three terms with
$s=t-1,$ $s=t,$ and $s=m-1.$ If $v=u(t-1,k)$ or $v=u(t,k),$ then $(ux_mv)$
equals $(u(k,t)x_t^2u(t+1,m-1)),$ while the coefficient at this tensor
in sum (\ref{sum1}) is
$$
p(x_t, u(k,t-1))^{-1}-p(u(k,t-1),x_t)+p(x_t, u(k,t))^{-1}-p(u(k, t),x_t)
$$
$$
=p(u(k, t),x_t)\{ p_{t\, t-1}^{-1}p_{t-1\, t}^{-1}p_{tt}^{-1}-p_{tt}^{-1}
+p_{tt}^{-1}p_{t\, t-1}^{-1}p_{t-1\, t}^{-1}p_{tt}^{-1}-1\} =0.
$$
Thus in (\ref{sum1}) remains just one term with $v=u(m-1,k)$. It has the required coefficient:
$$
\alpha _k^m=\alpha _k^{m-1}(p(x_m,w)^{-1}-p(w,x_m))=\alpha _k^{m-1}p(w,x_m)(q^2-1).
$$

b). In perfect analogy we consider the case $m>\psi (k).$
By the inductive supposition we have
$u[k+1,m]=\alpha _{k+1}^{m}\cdot (w),$ $w=u(m,k+1).$ Using (\ref{spro}) we may write
$$
u[k,m]=\alpha _{k+1}^m\{ (x_k)(w)-p(x_k,w)\cdot (w)(x_k)\}
$$
\begin{equation}
=\alpha _{k+1}^{m}\sum _{uv=w}\{ p(u,x_k)^{-1}-p(x_k,u)\} (ux_kv).
\label{sum2}
\end{equation}

If $k>n,$  then $p(u,x_k)p(x_k,u)=1,$ unless $u=w.$
Hence (\ref{sum2}) has just one term, and the coefficient  equals 
$$\alpha _{k+1}^{m}p(x_k,w)(p(w,x_k)^{-1}p(x_k,w)^{-1}-1)=\alpha _{k+1}^{m}p(x_k,w)(q^2-1),$$
which is required.

If $k=n,$ then $p(u,x_k)p(x_k,u)=1$ with two exceptions being $u=w,$ and $u=u(m,n+2).$
In both cases $(ux_kv)$ equals $(u(m,k)),$ while the coefficient takes up the form 
$$
p(w,x_n)^{-1}-p(x_n,w)+p(u(m,n+2),x_n)^{-1}-p(x_n,u(m,n+2))
$$
$$
=p(x_n,w)\{ p_{n\,n-1}^{-1}p_{n-1\, n}^{-1}p_{nn}^{-2}-1
+p_{n\, n-1}^{-1}p_{n-1\, n}^{-1}p_{nn}^{-1}-p_{nn}^{-1}\}.
$$
Due to relations (\ref{b1rel}), (\ref{b1rell}) we get 
$\alpha _n^m=\alpha _{n+1}^{m}p(x_n,w)(q^2-1)q^{-1},$
which is required.

Suppose that $k<n.$ In this case $x_k=x_t$ with $m>t\stackrel{df}{=} \psi (k)>\psi (n)=n+1.$ Let $u=u(m,s).$ 
If $s>t,$ then $u$ depends only on $x_i,$ $i<k-1,$ 
and relations (\ref{b1rel}), (\ref{b1rell}) imply $p(x_k,u)p(u,x_k)=1.$ If $s<t-1,$ $s\neq k+1,$
then $p(x_k,u)p(u,x_k)$ $=p_{k-1\, k}p_{kk}p_{k+1\, k}\cdot p_{k\, k-1}p_{kk}p_{k+1\, k}$ $=1.$
Hence in (\ref{sum2}) remains three terms with
$s=t,$ $s=t+1,$ and $s=k+1.$ If $u=u(m, t)$ or $u=u(m, t+1),$ then $ux_kv$ 
$=u(m,t+1)x_k^2u(t-1,k),$ while the coefficient at the corresponding  tensor is
$$
p(u(m,t+1),x_k)^{-1}-p(x_k,u(m,t+1))+p(u(m,t),x_k)^{-1}-p(x_k,u(m, t))
$$
$$
 =p(x_k,u(m, t+1))\{ p_{k-1\, k}^{-1}p_{k\, k-1}^{-1}-1+
p_{kk}^{-1}p_{k-1\, k}^{-1}p_{k\, k-1}^{-1}-p_{kk}\} =0.
$$
Thus in (\ref{sum1}) remains just one term, and
$$
\alpha _k^m=\alpha _{k+1}^m(p(w,x_k)^{-1}-p(x_k,w))=\alpha _{k+1}^mp(x_k,w)(q^2-1).
$$

c). Let us consider the remaining case $m=\psi (k).$ In this case $x_m=x_k.$
If $k=n,$ $m=n+1,$ then 
$u[n,n+1]=-p_{nn}^{-1}[x_n,x_n]=(1-q^{-1})x_n^2,$ while in the shuffle representation
we have $(x_n)(x_n)=(1+q^{-1})(x_nx_n).$ Hence $u[n,n+1]=(1-q^{-2})(x_{n+1}x_n),$
which is required: $(1-q^{-2})=q^{-3}\cdot (q^{2}-1)\cdot p_{nn}.$

If $k<n,$ we put $u=u[n+1,m],$ $v=x_k,$ $w=u[k+1,n].$
By definition (\ref{ww}) we have $u[k,m]=\beta [u,[v,w]],$ where $\beta =-p(u(n+1,m),u(k,n))^{-1};$
that is, $\beta =-p_{u, vw}^{-1}.$ 
Since $u[n+1,m]=[u[n+1,m-2],[x_{k+1},x_k]],$ conditional identity 
(\ref{jak3}) implies $[u,v]=[u[n+1,m-2],[[x_{k+1},x_k],x_k]]=0.$ Thus $[[u,v],w]=0,$ 
and formula (\ref{jak2}) yields
\begin{equation}
\beta ^{-1}u[k,m]=p_{uv}x_k\cdot [u,w]-p_{vu}[u,w]\cdot x_k.
\label{kk}
\end{equation}
Formula  (\ref{ww1}) implies $\beta_1[u,w]=u[k+1,m]$ with $\beta_1=-p_{uw}^{-1}.$
Hence considered above case b) allows us to find the shuffle representation
$[u,w]=\alpha \cdot (z)$ with $z=u(m,k+1),$ and 
$\alpha =-p_{uw}\alpha _{k+1}^m.$
By (\ref{spro}) the shuffle representation of  the right hand side of (\ref{kk}) is
$$
\alpha \sum _{sy=u(m,k+1)}
\left( p_{uv}p(s,x_k)^{-1}-p_{vw} p(x_k,y)^{-1}\right) \cdot (sx_ky) 
$$ 
We have $\beta \alpha =-\beta p_{uw}\alpha _{k+1}^m$ $=p_{uv}^{-1}\alpha _{k+1}^m,$
and 
$$
p_{uv}p_{vu}=p_{k+1\,k}p_{kk}p_{k\, k+1}p_{kk}=q^2
$$
since $k<n.$ Therefore we get
\begin{equation}
u[k,m]=\alpha _{k+1}^m\sum _{sy=u(m,k+1)}
\left( p(s,x_k)^{-1}-q^{-2}p(x_k,s)\right) \cdot (sx_ky).  
\label{uje}
\end{equation}
If $s\notin \{ \emptyset, x_m, z=u(m,k+1)\},$ then
$
p(s,x_k)p(x_k,s)=p_{k+1\,k}p_{kk}p_{k\, k+1}p_{kk}=q^2,
$
that is in (\ref{uje}) remains just three terms. If $s=\emptyset $ or $s=x_m,$
then $(sx_ky)=(x_kz)$ since $x_m=x_k.$ Hence the coefficient at $(x_kz)$ in (\ref{uje})
equals $1-q^{-2}+p_{kk}^{-1}-q^{-2}p_{kk}=0.$ Thus in (\ref{uje}) remains just one term
with the coefficient
$$
\alpha _{k+1}^m(p(z,x_k)^{-1}-q^{-2}p(x_k,z))=
\alpha _{k+1}^mp(x_k,z)q^{-2}(q^{2}-1)=\alpha _k^m
$$
since $p(z,x_k)\cdot p(x_k,z)=p_{kk}p_{k+1\, k}p_{k+1\, k}\cdot  p_{kk}p_{k\, k+1}p_{k\, k+1}=1.$
\end{proof} 
\begin{theorem} In $U_q^+({\mathfrak so}_{2n+1})$
the coproduct on the elements $u[k,m],$ $k\leq m\leq 2n$ 
 has the following explicit form
\begin{equation}
\Delta (u[k,m])=u[k,m]\otimes 1+g_kg_{k+1}\cdots g_m\otimes u[k,m]
\label{co}
\end{equation}
$$
+\sum _{i=k}^{m-1}\tau _i(1-q^{-2})g_kg_{k+1}\cdots g_i\, u[i+1,m]\otimes u[k,i],
$$
where $\tau _i=1$ with only one exception being $\tau _n=q.$
\label{cos}
\end{theorem}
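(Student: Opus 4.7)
The natural plan is to read off the coproduct from the shuffle embedding. Recall that by the argument immediately preceding Proposition \ref{shu}, the hypotheses on $q$ imply $\mathrm{Ker}\,\xi=\mathbf{\Lambda}$, so the subalgebra generated by the $x_i$ embeds into $Sh_\tau(V)$, and the braided coproduct $\Delta^b$ on the image is computed by the tensor-splitting formula (\ref{bcopro}). This transforms the problem into a straightforward combinatorial computation together with a coefficient-comparison.

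First I would apply Proposition \ref{shu} to write $u[k,m]=\alpha_k^m\,(u(m,k))$ in the shuffle representation. The braided coproduct (\ref{bcopro}) then gives
$$
\Delta^b(u[k,m])=\alpha_k^m\sum_{i=k-1}^{m}(u(m,i+1))\,\underline{\otimes}\,(u(i,k)),
$$
where the extreme terms $i=m$ and $i=k-1$ contribute $u[k,m]\,\underline{\otimes}\,1$ and $1\,\underline{\otimes}\,u[k,m]$. Applying Proposition \ref{shu} in reverse to each inner factor yields
$$
\Delta^b(u[k,m])=u[k,m]\,\underline{\otimes}\,1+1\,\underline{\otimes}\,u[k,m]+\sum_{i=k}^{m-1}\beta_i\,u[i+1,m]\,\underline{\otimes}\,u[k,i],\qquad \beta_i=\frac{\alpha_k^m}{\alpha_{i+1}^m\,\alpha_k^i}.
$$
Next, I would lift this to $\Delta$ via (\ref{copro}): if $\Delta^b(u)=\sum a^{(1)}\underline{\otimes} a^{(2)}$, then $\Delta(u)=\sum a^{(1)}\mathrm{gr}(a^{(2)})\otimes a^{(2)}$, so after pushing the group-like factor $\mathrm{gr}(u(k,i))=g_kg_{k+1}\cdots g_i$ across $u[i+1,m]$ using the commutation rule $u[i+1,m]\cdot(g_k\cdots g_i)=p(u(i+1,m),u(k,i))\,(g_k\cdots g_i)\,u[i+1,m]$, the formula reduces to the claimed shape with
$$
\text{coefficient at }(g_k\cdots g_i)u[i+1,m]\otimes u[k,i]\ =\ \beta_i\,p(u(i+1,m),u(k,i)).
$$

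The final step is to check that this coefficient equals $\tau_i(1-q^{-2})$. Expanding $\beta_i$ using (\ref{shur}) the $(q^2-1)$ exponents collapse to a single factor and the products of $p_{st}$ collapse to $p(u(k,i),u(i+1,m))$, yielding
$$
\beta_i\,p(u(i+1,m),u(k,i))=\frac{\varepsilon_k^m}{\varepsilon_{i+1}^m\,\varepsilon_k^i}\,(q^2-1)\,\mu_k^{m,i},
$$
so the task is to verify
$\frac{\varepsilon_k^m}{\varepsilon_{i+1}^m\varepsilon_k^i}\,q^2\,\mu_k^{m,i}=\tau_i$
for all admissible $k\le i<m\le 2n$. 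This is precisely what the explicit tables (\ref{eps}) and (\ref{mu2})--(\ref{mu4}) are designed for: one splits into cases according to the relative position of $i$ and $m$ with respect to $n$ and $\psi(k)$.

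The main obstacle is the case analysis in this last step. The generic case (all three $\varepsilon$'s equal to $1$, $\mu_k^{m,i}=q^{-2}$) gives $\tau_i=1$ immediately, but the delicate verifications are: (a) $i=n$ with $k\le n<m$ and $m\ne \psi(k)$, where $\varepsilon_k^m=q^{-1}$, $\mu_k^{m,n}=1$, yielding $\tau_n=q$; (b) $i=n$ with $m=\psi(k)$, where $\varepsilon_k^m=q^{-3}$ compensates against $\mu_k^{m,n}=q^2$ to again give $\tau_n=q$; and (c) $i=\psi(m)-1$ or $i=\psi(k)$, where the exceptional value $\mu_k^{m,i}=q^{-4}$ must be matched by a corresponding $\varepsilon$-ratio of $q^2$. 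Once these handful of boundary cases are dispatched, the formula (\ref{co}) follows.
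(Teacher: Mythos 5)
Your proposal is correct and follows essentially the same route as the paper: both read off the braided coproduct from the shuffle representation of Proposition \ref{shu} via (\ref{bcopro}), lift through (\ref{copro}), and reduce the claim to the identity $\tau_i=\varepsilon_k^m(\varepsilon_k^i\varepsilon_{i+1}^m)^{-1}q^2\mu_k^{m,i}$. The only cosmetic difference is that the paper shortens your final case analysis by passing to $\mu_k^{m,i}=\sigma_k^m(\sigma_k^i\sigma_{i+1}^m)^{-1}$ and tabulating the combined quantity $\varepsilon_k^m\sigma_k^m$, which takes only two values.
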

\begin{proof}
Formulae (\ref{shur}), (\ref{bcopro}), and (\ref{copro}) show that the coproduct has 
form (\ref{co}), where 
$\tau _i(1-q^{-2})=\alpha _k^m(\alpha _k^i\alpha _{i+1}^m)^{-1}\chi ^{u(i+1,m)}(g_kg_{k+1}\cdots g_i).$
 We have 
$$
\left( \prod _{k\leq a<b\leq i}p_{ab}\prod _{i+1\leq a<b\leq m}
p_{ab}\right) ^{-1}\prod _{k\leq a<b\leq m}p_{ab}=p(u(k,i),u(i+1),m).
$$ 
Therefore definition of $\mu _k^m$ given in  (\ref{mu1}) and definition of $\alpha _k^m$ given in (\ref{shur})
imply $\tau _i(1-q^{-2})$ $=\varepsilon _k^m(\varepsilon_k^i\varepsilon_{i+1}^m)^{-1}
(q^2-1)\mu _k^{m,i};$ that is, 
$\tau _i=\varepsilon _k^m(\varepsilon_k^i\varepsilon_{i+1}^m)^{-1}q^2\mu _k^{m,i}.$ 
By (\ref{mu23}) we have $\mu _k^{m,i}=\sigma _k^m(\sigma _k^i\sigma _{i+1}^m)^{-1}.$
Using (\ref{mu21}) and (\ref{eps}) we see that
\begin{equation}
\varepsilon _k^m\sigma _k^m=\left\{ \begin{matrix}
q^2,\hfill &\hbox{if } m<n \hbox{ or } k>n+1 ;\hfill \cr
q,\hfill &\hbox{otherwise}.\hfill 
                             \end{matrix}
                    \right.
\label{tau1}
\end{equation}
Now it is easy to check that   the
$\tau $'s have the following elegant form
\begin{equation}
\tau_i=\varepsilon _k^m\sigma _k^m(\varepsilon _k^i\sigma _k^i)^{-1}(\varepsilon _{i+1}^m\sigma _{i+1}^m)^{-1} q^2
=\left\{ \begin{matrix}
q,\hfill &\hbox{if } i=n;\hfill \cr
1,\hfill &\hbox{otherwise}.\hfill 
                             \end{matrix}
                    \right.
\label{tau}
\end{equation}
Interestingly, the coproduct formula
differs from that in $U_{q^2}^+({\mathfrak sl}_{2n+1})$ by just one term, see formula (3.3) in \cite{KA}.
\end{proof}

\smallskip
Now we are going to find PBW-generators for $u_q^+({\mathfrak so}_{2n+1}).$
To do this we need more relations in $U_q^+({\mathfrak so}_{2n+1}).$
\begin{lemma}
If $k\leq m<\psi (k),$ then in the algebra $U_q^+({\mathfrak so}_{2n+1})$ we have
\begin{equation}
\left[ u[k,m],[u[k,m],u[k+1,m]]\right] =0.
\label{reldd}
\end{equation}
\label{reld}
\end{lemma}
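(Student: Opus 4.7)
My plan is to show that $E := [u[k,m], [u[k,m], u[k+1,m]]]$ is a skew-primitive element of $U_q^+({\mathfrak so}_{2n+1})$; the classification of skew-primitive elements noted just after Proposition \ref{strB} (each such element is proportional to $x_i$ for some $1 \leq i \leq n$ or to $1-g$ for some $g \in G$) then forces $E = 0$, since the total degree of $E$ in $\Gamma^+$ is $3(m-k)+2 \geq 4$, much larger than the degree $0$ or $1$ of the allowed skew-primitives.

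To verify the skew-primitivity, I would start from Theorem \ref{cos}, which gives $\Delta(u[k,m])$ and $\Delta(u[k+1,m])$ explicitly, and apply the Leibniz-type expansion
\[
\Delta([a,b]) \;=\; [a,b]\otimes 1 + g_ag_b\otimes [a,b] + \bigl(\Delta(a)\Delta(b)-p_{ab}\Delta(b)\Delta(a)\bigr)_{\mathrm{cross}}
\]
twice: first to the inner bracket $[u[k,m],u[k+1,m]]$ and then to the outer one. The task is to show that the cross terms — which involve products of the form $u[k,i]\cdot u[i+1,m]$ or $u[j+1,m]\cdot u[k,i]$ multiplied by appropriate group-like elements — all cancel in $U_q^+({\mathfrak so}_{2n+1})\otimes U_q^+({\mathfrak so}_{2n+1})$, leaving exactly $E\otimes 1+g_E\otimes E$. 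The commutation and reassembly results from Section 3, especially Propositions \ref{ins2} and \ref{NU} together with Lemmas \ref{nule}--\ref{nuly}, are precisely what is needed to pair up cross terms: they either produce an immediate zero (when two factors are separated or when their bracket vanishes) or reassemble a pair of factors into a single $u[k,m]$, matching another cross term with opposite sign.

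The main obstacle is the sheer combinatorial bookkeeping, since the double Leibniz application yields on the order of $(m-k)^3$ summands. To keep this tractable, I would first dispose of the easy sub-case $m = k+1$ with $k < n-1$, where the inner bracket is already zero by the Serre relation (\ref{relbl}). For the general case, I would split along the three regimes $m < n$, $m = n$, and $m > n$, because the coefficient $\tau_i$ from Theorem \ref{cos} takes the anomalous value $q$ exactly at $i = n$, and because for $i > n$ the generators are ``folded'' via the identification $x_i = x_{\psi(i)}$, which substantially changes the parameter structure. As a fall-back, if the direct coproduct calculation becomes unwieldy, one can work in the shuffle representation of Proposition \ref{shu}: there $u[k,m]$ is a scalar multiple of the single tensor $(u(m,k))$, so $E$ becomes an explicit element of $Sh_\tau(V)$, and its vanishing reduces to a finite combinatorial identity on shuffles that one verifies using formula (\ref{spro}) and the parameter relations (\ref{b1rel})--(\ref{b1rell}).
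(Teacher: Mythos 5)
Your overall strategy (prove $E:=[u[k,m],[u[k,m],u[k+1,m]]]$ is skew-primitive, then kill it by the classification of skew-primitive elements, or equivalently by the MS-criterion) is legitimate in principle, but the step you delegate to ``the commutation and reassembly results from Section 3'' is exactly where the whole content of the lemma lives, and those results cannot supply it. By (\ref{calc}) the first-order cross terms of $\Delta(E)$ are the partial derivatives $\partial_i(E)$, and a direct Leibniz computation using (\ref{pde}) gives, for $n<m<\psi(k)-1$,
$$
\partial_k(E)\sim \bigl[\,u[k+1,m],\,[u[k,m],u[k+1,m]]\,\bigr],\qquad
\partial_{k+1}(E)\sim \bigl[\,u[k,m],\,[u[k,m],u[k+2,m]]\,\bigr],
$$
with further anomalous contributions (proportional to $u[k+1,m]^2$ and its brackets with $u[k,m]$) when $m=n$, because there $\sigma_{k+1}^{m}=q$ rather than $q^2$ and the inner cancellation $1-p_{uv}p(v,x_k)=0$ fails. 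So the cancellation you need is equivalent to a family of sibling relations among words with \emph{heavily overlapping} index ranges; Propositions \ref{ins2} and \ref{NU} and Lemmas \ref{nule}--\ref{nuly} only control brackets $[u[k,i],u[j+1,m]]$ of \emph{separated or adjacent} ranges and say nothing about $[v,[u,v]]$ with $u=u[k,m]$, $v=u[k+1,m]$. Those sibling relations are of the same depth as (\ref{reldd}) itself, so as written the argument is circular, or at best defers the entire difficulty to an unexecuted computation. The shuffle fall-back is sound in principle (the representation is faithful on $A$), but expanding a triple shuffle product of words of length up to $2n$ is not a finite check one can simply assert. A smaller issue: the classification of skew-primitives is quoted in the paper only for $q$ not a root of $1$, whereas the lemma is needed (via Proposition \ref{strBu}) when $q$ has finite order $t>4$; there you would have to work in $u_q^+$ and then lift to $U_q^+$, which the degree-$\le 2$ argument of the Remark does not cover since $E$ has degree $3$ in $x_{k+1},\dots,x_m$.

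The paper's proof is entirely different and avoids all of this. For $m<\psi(k)-1$ the left-hand side of (\ref{reldd}) is the value of the super-letter $[w]$ of the standard word $w=u(k,m)u(k,m)u(k+1,m)$, which is not hard by Proposition \ref{strB}; any PBW-decomposition of $[w]$ into smaller hard super-letters would have to contain two super-letters $[u(k,j)]$ with $j>m$, forcing degree at least $4$ (resp.\ at least $2$) in $x_{m+1}$ when $m\ge n$ (resp.\ $m<n$), while $w$ has degree $3$ (resp.\ $0$) there; hence the decomposition is empty and $[w]=0$. The boundary case $m=\psi(k)-1$ is treated separately by showing $[u[k,2n-k],x_t]=0$ for $k<t\le n$. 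If you want to rescue your approach, you would end up proving the sibling relations above by this same super-letter and degree-counting technique anyway, at which point the detour through the coproduct buys nothing.
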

\begin{proof}
Suppose, first, that $m<\psi (k)-1.$ In this case both words $u(k,m)$ and $u(k+1,m)$ are standard.
The standard alignment of brackets for these words is defined by (\ref{wsk}).
However in Proposition \ref{strB} we have seen that 
$[u(k,m)]=u[k,m],$ and hence also $[u(k+1,m)]=u[k+1,m]$ in the algebra 
$U_q^+({\mathfrak so}_{2n+1}).$

The word $w=u(k,m)u(k,m)u(k+1,m)$ is standard. Algorithm given on p. \pageref{algo}
 shows that the standard alignment of brackets is precisely
$$\left[ [u(k,m)],[[u(k,m)],[u(k+1,m)]]\right] .$$
 Hence the value of the super-word $[w]$
in $U_q^+({\mathfrak so}_{2n+1})$ equals the left hand side of (\ref{reldd}). 

By Proposition \ref{strB} all hard super-letters in $U_q^+({\mathfrak so}_{2n+1})$
are $[u(k,m)],$ $k\leq m<\psi (k).$ Hence $[w]$ is not hard. The multiple use of Definition  \ref{tv1}
shows that the value of $[w]$ is a linear combination of the values of super-words in smaller than 
$[w]$ hard super-letters. Since $U_q^+({\mathfrak so}_{2n+1})$ is homogeneous,
each of the super-words in that decomposition has two hard super-letters smaller than 
$[w]$ and of degree 1 in $x_k$ 
(if a hard super-letter $[u(r,s)]$ is of degree 2 in $x_k,$ then $r<k$ and $u(r,s)>w$).
At the same time all such hard super-letters are  
$[u(k,m+1)],$ $[u(k,m+2)], \ldots ,$ $[u(k,2n-k)].$ Each of them has degree 2 in $x_{m+1}$
if $m\geq n,$ and at least 1 if $m<n.$
Hence the super-word has degree in $x_{m+1}$ at least 4 if $m\geq n,$ and
 at least 1 if $m<n.$ However $w$ is of degree
3 in $x_{m+1}$ if $m\geq n,$ and is independent of $x_{m+1}$ if $m<n.$ 
Therefore the decomposition is empty, and $[w]=0.$ 

Let, then,  $m=\psi (k)-1.$
In this case $u(k+1,m)$ is not standard and we may not apply the above arguments.
Nevertheless we shall prove in a similar way that 
$[u[k,2n-k], x_t]=0,$ $k<t\leq n.$ This would imply both $[u[k,2n-k],u[k+1,2n-k] ]=0$ and (\ref{reldd}).

If $k+1<t<n,$ then Lemma \ref{nule} and Lemma \ref{nulm}
imply 
$$[u[k,n],x_t]=[u[n+1,2n-k],x_t]=0.$$
Due to Corollary \ref{rww} we have
$[u[k,2n-k], x_t]=0.$

If $t=k+1,$  we consider the word $v=u(k,2n-k)x_{k+1}.$ This is a standard word, and the 
standard alignment of brackets is $[v]=[[u(k,2n-k)]x_{k+1}].$ Therefore the value of the 
super-letter $[v]$ equals $[u[k,2n-k], x_{k+1}].$ At the same time $[v]$ does not belong to the set of 
PBW-generators; that is, this is not hard. The multiple use of Definition  \ref{tv1}
shows that the value of $[v]$ is a linear combination of the values of super-words in smaller than 
$[v]$ hard super-letters. Each of the super-words in that decomposition 
has a hard super-letter smaller than $[v]$ and of degree 1 in $x_k.$ However there are no such 
super-letters. Thus the decomposition is empty and $[v]=0.$

Let $t=n.$ If $k=n-1,$ then $[u[k,2n-k], x_n]=[[[x_{n-1},x_n],x_n],x_n]=0$ due to (\ref{relbl}).
If $k=n-2,$ we consider the word $u=u(k,2n-k) x_n=x_{n-2}x_{n-1}x_nx_nx_{n-1}x_n.$
This is a standard word, while the super-letter $[u]$ is not hard. Again, there do not exists
a hard super-letter smaller than $[u]$ and of degree 1 in $x_{n-2}.$ Hence $[u]=0$
in $U_q^+({\mathfrak so}_{2n+1}).$ The standard alignment of brackets
is $[[x_{n-2}x_{n-1}x_nx_n][x_{n-1}x_n]].$ Hence we get
$$
[[x_{n-2},[[x_{n-1},x_n],x_n]], [x_{n-1},x_n]]=0.
$$
At the same time $[x_{n-2},x_n]=0$ and $[[[x_{n-1},x_n],x_n],x_n]=0$ imply
$$[[x_{n-2},[[x_{n-1},x_n],x_n]], x_n]=0.$$
Conditional identity (\ref{jak3}) yields
$$
[[x_{n-2},[[x_{n-1},x_n],x_n]], [x_{n-1},x_n]]=[[[x_{n-2},[[x_{n-1},x_n],x_n]], x_{n-1}],x_n],
$$
which is required, for $[u[n-2,n+2],x_n]=[[[x_{n-2},[[x_{n-1},x_n],x_n]], x_{n-1}],x_n].$

Finally, suppose that $k<n-2.$ Denote $u_1=u[k,n-3],$ $v_1=u[n+3,2n-k],$ $w_1=u[n-2,n+2].$
We have already proved that $[w_1,x_n]=0.$ The second group of relations (\ref{relb})
implies $[u_1,x_n]=0,$ $[v_1,x_n]=0.$ At the same time, due to Proposition \ref{ins2},
we have 
$u[k,2n-k]=[u[k,n+2],v_1]$ and $u[k,n+2]=[u_1,w_1];$ that is $u[k,2n-k]=[[u_1,w_1],v_1].$
This certainly implies the required relation $[u[k,2n-k],x_n]=0.$
\end{proof}
\begin{proposition}
If the multiplicative order $t$ of $q$ is finite, $t>4,$ then the values of $u[k,m],$ 
$k\leq m<\psi (k)$ form a set of PBW-generators for $u_q^+({\mathfrak so}_{2n+1})$ 
over {\bf k}$[G].$ The height $h$ of $u[k,m]$ equals $t$ if $m=n$ or $t$ is odd.
If $m\neq n$ and $t$ is even, then $h=t/2.$ In all cases $u[k,m]^h=0$
in $u_q^+({\mathfrak so}_{2n+1}).$
\label{strBu}
\end{proposition}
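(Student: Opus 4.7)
The strategy is to apply Theorem \ref{BW} to $u_q^+({\mathfrak so}_{2n+1})$, combined with Lemma \ref{dert} and the MS criterion (Lemma \ref{MS}). Because the Serre relations defining $U_q^+({\mathfrak so}_{2n+1})$ are skew-primitive, they lie in ${\bf \Lambda}$, so there is a Hopf algebra surjection $U_q^+({\mathfrak so}_{2n+1})\to u_q^+({\mathfrak so}_{2n+1})$. Every hard super-letter of $u_q^+$ is therefore a hard super-letter of $U_q^+$, and by Proposition \ref{strB} the candidates are precisely the $u[k,m]$ with $k\leq m<\psi(k)$. The shuffle representation from Proposition \ref{shu} assigns to each such element the distinct leading tensor $\alpha_k^m(u(m,k))$, and since ${\rm Ker}\,\xi={\bf \Lambda}$ for $u_q^+$, the Nichols subalgebra embeds into $Sh_\tau(V)$; thus no $u[k,m]$ is a linear combination of same-degree super-words in strictly smaller super-letters, confirming that exactly these super-letters remain hard in $u_q^+$.

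To pin down the heights, first compute $p_{uu}=\sigma_k^m$ via (\ref{mu21}). For $k\leq m<\psi(k)$ one finds $\sigma_k^m=q$ when $m=n$ and $\sigma_k^m=q^2$ otherwise, so the multiplicative order $h$ of $\sigma_k^m$ equals $t$ if $m=n$ or $t$ is odd, and $t/2$ if $m\neq n$ and $t$ is even, matching the claimed heights. By Definition \ref{h1} the actual height is the smallest value in $\{h,hl,hl^2,\ldots\}$ ($l={\rm char}\,{\bf k}$) for which $u[k,m]^h$ decomposes into super-words in strictly smaller super-letters; since $h$ itself is the minimal candidate, it suffices to show $u[k,m]^h=0$ in $u_q^+$.

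For this vanishing, apply the MS criterion: it is enough to verify $\partial_i(u^h)\in{\bf \Lambda}$ for every $i$. Lemma \ref{dert} presents $\partial_i(u^h)$ as a nested bracket $[u,[u,\ldots,[u,\partial_i(u)]\ldots]]$ involving $h-1$ copies of $u$, up to a nonzero scalar. Matching Theorem \ref{cos} against (\ref{calc}) in $u_q^+$ and isolating those summands whose right tensor factor is a single letter gives $\partial_k(u[k,m])\equiv\tau_k(1-q^{-2})u[k+1,m]$ modulo ${\bf \Lambda}$, while $\partial_j(u[k,m])\in{\bf \Lambda}$ for $j\neq k$. For $j\neq k$ the iterated bracket lies in ${\bf \Lambda}$ automatically since ${\bf \Lambda}$ is a two-sided ideal. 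For $j=k$, Lemma \ref{reld} supplies $[u,[u,u[k+1,m]]]=0$ in $U_q^+$ and hence in $u_q^+$; because $t>4$ forces $h\geq 3$, iterating $[u,-]$ on the vanishing triple bracket shows that the full $(h-1)$-fold bracket applied to $u[k+1,m]$ also lies in ${\bf \Lambda}$. Lemma \ref{MS} then forces $u[k,m]^h$ to be a scalar in $u_q^+$, and since it is homogeneous of nontrivial weight it must vanish.

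The main technical obstacle is extracting the partial derivatives $\partial_i(u[k,m])$ modulo ${\bf \Lambda}$ from the coproduct data, because (\ref{calc}) is stated in the free algebra while Theorem \ref{cos} describes $\Delta$ in the quotient. One must carefully track which summands of Theorem \ref{cos} reduce to single-letter right factors modulo ${\bf k}\langle X\rangle^{(2)}$ and match them term-by-term with $\sum_i g_i\partial_i(u)\otimes x_i$; only the $i=k$ summand contributes a single letter, and this is precisely what brings the Serre-type relation of Lemma \ref{reld} into play through Lemma \ref{dert}.
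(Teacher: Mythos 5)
Your overall strategy --- hardness via Theorem \ref{BW}, heights from $p_{uu}=\sigma_k^m$ and Definition \ref{h1}, and the vanishing $u[k,m]^h=0$ via the MS-criterion, Lemma \ref{dert}, the derivative formula extracted from Theorem \ref{cos} and (\ref{calc}), and Lemma \ref{reld} --- is exactly the paper's, and the height computation and the vanishing argument are correct as written (including the observation that $t>4$ gives $h\geq 3$, so the triple bracket of Lemma \ref{reld} kills the whole iterated bracket).

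The one genuine gap is in your argument that the $u[k,m]$, $k\leq m<\psi(k)$, remain \emph{hard} in $u_q^+({\mathfrak so}_{2n+1})$. From the embedding of the Nichols algebra into $Sh_\tau(V)$ and the identity $u[k,m]=\alpha_k^m\,(u(m,k))$ you conclude that ``no $u[k,m]$ is a linear combination of same-degree super-words in strictly smaller super-letters.'' That inference does not follow: injectivity of the shuffle representation only tells you that $u[k,m]\neq 0$ in $u_q^+$; it says nothing about whether the single tensor $(u(m,k))$ could be produced by a linear combination of shuffle \emph{products} of smaller super-letters, since each such product is a sum of many tensors and cancellations across the combination are a priori possible. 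The paper closes this gap with a degree count: if $u[k,m]$ were non-hard, each super-word in the hypothetical decomposition would contain a hard super-letter smaller than $u[k,m]$ of degree $1$ in $x_k$; the only candidates are $[u(k,m+1)],\ldots,[u(k,2n-k)]$, and every one of them has degree in $x_{m+1}$ strictly exceeding that of $u[k,m]$ (degree $2$ versus $1$ when $m\geq n$, at least $1$ versus $0$ when $m<n$). Hence the decomposition is empty, forcing $u[k,m]=0$, and \emph{only then} does Proposition \ref{shu} give the contradiction. You need to supply this counting step (or an equivalent leading-term argument in the shuffle algebra) before the nonvanishing can do any work.
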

\begin{proof}
First we note that Definition \ref{tv1} implies that  a non-hard super-letter in $U_q^+({\mathfrak so}_{2n+1})$
 is still non-hard in $u_q^+({\mathfrak so}_{2n+1}).$ Hence all hard super-letters in 
$u_q^+({\mathfrak so}_{2n+1})$  are in the list $u[k,m],$ $k\leq m<\psi (k).$
If, next, $u[k,m]$  is not  hard in $u_q^+({\mathfrak so}_{2n+1}),$ then by the multiple use of
Definition \ref{tv1} the value of $u[k,m]$ is a linear combination of super-words in hard super-letters 
smaller than given $u[k,m].$ Since $u_q^+({\mathfrak so}_{2n+1})$ is homogeneous,
each of the super-words in that decomposition has a hard super-letter smaller than 
$u[k,m]$ and of degree 1 in $x_k.$ 
At the same time all such hard super-letters are  in the list
$[u(k,m+1)],$ $[u(k,m+2)], \ldots ,$ $[u(k,2n-k)].$ Each of them has degree 2 in $x_{m+1}$
if $m\geq n,$ and at least 1 if $m<n.$
Hence the super-word has degree at least 2 if $m\geq n,$ and
 at least 1 if $m<n.$ However $u[k,m]$ is of degree
1 in $x_{m+1}$ if $m\geq n,$ and is independent of $x_{m+1}$ if $m<n.$ 
Therefore the decomposition is empty, and $u[k,m]=0.$ This contradicts Proposition \ref{shu},
for $(u(m,k))\neq 0$ in the shuffle algebra.

Denote for short $u=u[k,m].$ Equation (\ref{mu21}) implies 
$p_{uu}=q$ if $m=n$ and $p_{uu}=q^2$ otherwise (recall that now $m<\psi (k)$). By Definition \ref{h1}
the minimal possible value for the height is precisely the $h$ given in the proposition.
It remains to show that $u^h=0$ in $u_q^+({\mathfrak so}_{2n+1}).$
By Lemma \ref{MS} it suffices to prove that $\partial _i(u^h)=0,$ $1\leq i\leq n.$
Lemma \ref{dert} yields 
$$\partial_i(u^h)=p(u,x_i)^{h-1}\underbrace{[u,[u,\ldots [u}_{h-1},\partial _i(u)]\ldots ]].$$
Coproduct formula (\ref{co}) with (\ref{calc}) implies
\begin{equation}
\partial _i(u)=\left\{ \begin{matrix}
(1-q^{-2})\tau _ku[k+1,m],\hfill & \hbox{ if } i=k<m;\hfill \cr
0, \hfill & \hbox{ if } i\neq k;\hfill \cr
1, \hfill & \hbox{ if } i=k=m.\hfill
\end{matrix} \right.
\label{pdee}
\end{equation}
At the same time Lemma \ref{reld} provides the relation
$\left[u,[u,u[k+1,m]]\right] =0$ in $U_q^+({\mathfrak so}_{2n+1}),$
and hence in $u_q^+({\mathfrak so}_{2n+1})$ as well. 
Since always $h>2,$ we get the required equalities $\partial_i(u^h)=0,$ $1\leq i\leq n.$
\end{proof}

\noindent
{\bf Remark}. To prove (\ref{co}) we have used the shuffle representation. Therefore 
if $q$ has a finite multiplicative order, then
(\ref{co}) is proved only for $u_q^+({\mathfrak so}_{2n+1}).$
However we have seen that the kernel of the natural homomorphism 
$U_q^+({\mathfrak so}_{2n+1})\rightarrow u_q^+({\mathfrak so}_{2n+1})$
is generated by the elements $u[k,m]^h,$ $k\leq m<\psi (k).$
Degree of  $u[k,m]^h$ in a given
$x_i$ is either zero or greater than 2. At the same time all tensors in (\ref{co}) have degree
less than or equal to 2 in each variable. Therefore (\ref{co}), and hence (\ref{pdee}), are valid in 
$U_q^+({\mathfrak so}_{2n+1})$ provided that $q$ has a finite multiplicative order $t>4$
as well. 

\section{PBW-generators for right coideal subalgebras}

In what follows we denote by $A_{k+1},$ $k<n$ a subalgebra of 
$U_q^+({\mathfrak so}_{2n+1})$ or $u_q^+({\mathfrak so}_{2n+1})$
generated by $x_i,$ $k<i\leq n,$ 
respectively $A$ is a subalgebra generated by all $x_i,$ $1\leq i\leq n.$
Of course ${\bf k}[g_{k+1},\ldots g_n]A_{k+1}$
may be identified with $U_q^+({\mathfrak so}_{2(n-k)+1})$ or $u_q^+({\mathfrak so}_{2(n-k)+1}).$

Suppose that a homogeneous element $f\in {\bf k}\langle X\rangle $ is linear in the maximal 
letter $x_k,$ $1\leq k\leq n$ that it depends on: deg$_k(f)=1,$ deg$_i(f)=0,$
$i<k.$ Then in the decomposition of $a= \xi (f)$ in the PBW-basis defined in Proposition \ref{strB}
or Proposition \ref{strBu}
each summand has just one PBW-generator that depends on $x_k$
since $U_q^+({\mathfrak so}_{2n+1})$ and $u_q^+({\mathfrak so}_{2n+1})$ are homogeneous in each $x_i.$
Moreover this PBW-generator, considered as a super-letter, starts by $x_k,$ 
hence it is the biggest super-letter of the summand. 
In particular this super-letter is located at the end of the basis super-word;
that is, the PBW-decomposition  takes up the form
\begin{equation}
a=\sum_{i=k}^{2n-k} F_iu[k,i], \ \ F_i\in A_{k+1}.
\label{sp1}
\end{equation}
\begin{definition} \rm
The set Sp$  (a)$ of all $i$ such tat in (\ref{sp1}) we have $F_i\neq 0$ is called the {\it spectrum} of $a.$
\label{spe}
\end{definition}

Let $S$ be a set of integer numbers from the interval $[1,2n].$
We define a polynomial $\Phi^{S}(k,m),$ $1\leq k\leq m\leq 2n$
by induction on the number $r$ of elements
in the set $S\cap [k,m-1]=\{ s_1,s_2,\ldots ,s_r\} ,$ $k\leq s_1<s_2<\ldots <s_r<m$
as follows:
\begin{equation}
\Phi^{S}(k,m)=u[k,m]-(1-q^{-2})\sum_{i=1}^r \alpha _{km}^{s_i} \, 
\Phi^{S}(1+s_i,m)u[k,s_i]
\label{dhs}
\end{equation}
where $\alpha _{km}^{s}=\tau _{s}p(u(1+s,m),u(k,s))^{-1},$ 
while the $\tau $'s was defined in (\ref{tau}).

\smallskip

We display the element  $\Phi ^{S}(k,m)$
schematically as a sequence of black and white points labeled by the numbers
$k-1,$ $k,$ $k+1, \ldots $ $m-1,$ $m,$ where the first point is always white, and
the last one is always black, while an intermediate point labeled by $i$ is black if and only if 
$i\in S:$  
\begin{equation}
 \stackrel{k-1}{\circ } \ \ \stackrel{k}{\circ } \ \ \stackrel{k+1}{\circ } 
\ \ \stackrel{k+2}{\bullet }\ \ \ \stackrel{k+3}{\circ }\ \cdots
\ \ \stackrel{m-2}{\bullet } \ \ \stackrel{m-1}{\circ }\ \ \stackrel{m}{\bullet }
\label{grb}
\end{equation}
Sometimes, if $k\leq n<m,$ it is more convenient  to display the element
$\Phi ^{S}(k,m)$ in two lines putting the points labeled by indices 
$i,\psi (i)$ that define the same variable $x_i=x_{\psi (i)}$ in one column:
\begin{equation} 
\begin{matrix}
 \ \ \ \ \ \ \ & \stackrel{m}{\bullet } \ \cdots & \bullet  & \stackrel{\psi (i)}{\circ }   \ \cdots & \stackrel{n+1}{\bullet }  \cr 
 \stackrel{k-1}\circ \ \circ  \ \cdots \ & \stackrel{\psi (m)}{\circ } \ \cdots & \bullet  & \stackrel{i}{\bullet } \ \cdots & \stackrel{n}{\circ }   
\end{matrix}
\label{grb1}
\end{equation}
Below, to illustrate the notion of a regular set, we shall need a {\it shifted representation} 
that appears from (\ref{grb1}) by shifting the upper line to the left by one step and putting
the colored point  labeled by $n,$ if any, to the vacant position (so that this point appears twice 
in the shifted scheme):
\begin{equation} 
\begin{matrix}
 \ \ \ \ \ \ \ & \stackrel{m}{\bullet } \ \cdots & \circ  
& \stackrel{n+i}{\circ }   \ \cdots & \stackrel{n+1}{\bullet } & \stackrel{n}{\circ } \Leftarrow \cr 
 \stackrel{k-1}\circ \ \circ  \ \cdots \ & \stackrel{\psi (m)-1}{\bullet } \ \cdots &
 \bullet  & \stackrel{n-i}{\bullet } \ \cdots & \stackrel{n-1}{\circ }& \stackrel{n}{\circ } \hfill   
\end{matrix}
\label{grb2}
\end{equation}

\smallskip
If $k\leq m<\psi (k),$ then definition (\ref{dhs}) shows that the spectrum of 
$\Phi^{S}(k,m)$ is contained in $S\cup \{ m\},$ while its leading term is $u[k,m].$
However if $m\geq \psi (k),$ then Eq.  (\ref{dhs}) do not provide sufficient information
even for the immediate conclusion that $\Phi^{S}(k,m)\neq 0.$ In particular 
some of the factors $\Phi^{S}(1+s_i,m)$ in (\ref{dhs}) may be zero even if
$k\leq m<\psi (k).$  Hence {\it a priori}
the spectrum of $\Phi^{S}(k,m), $ $k\leq m<\psi (k)$ 
may be a proper subset of $S\cup \{ m\}.$

\smallskip 
By ${\pi }_{kl},$ $1\leq k\leq l<\psi (k)$ we denote a natural projection of $U_q^+({\mathfrak so}_{2n+1})$
or $u_q^+({\mathfrak so}_{2n+1})$ onto {\bf k}$u[k,l]$
with respect to the PBW-basis defined in Proposition \ref{strB} or \ref{strBu} respectively.

\begin{lemma} 
If $a\in A_{k+1},$ then ${\pi }_{kl}(au[k,i])=0,$ $k\leq i<\psi (k)$ unless $a\in {\bf k},$ $i=l.$
\label{las1}
\end{lemma}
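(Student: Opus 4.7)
The plan is to exploit PBW-basis uniqueness directly. Since $A_{k+1}$ is the subalgebra generated by $x_{k+1}, \ldots, x_n$, it is a copy of $U_q^+(\mathfrak{so}_{2(n-k)+1})$ and, by Proposition \ref{strB} (respectively \ref{strBu}), its PBW-basis consists of ordered monomials in the super-letters $u[k'', m'']$ with $k'' > k$. The crucial observation is that each such super-letter is strictly smaller than $u[k, i]$ in the lexicographic order on super-letters, since its leading letter is $x_{k''}$ with $k'' > k$, and in the convention $x_1 > x_2 > \cdots > x_n$ this is strictly less than the leading letter $x_k$ of $u[k, i]$.

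Next, I would expand $a$ in the PBW-basis of $A_{k+1}$ as $a = \sum_I c_I W_I$, where $W_I = w_1^{n_1} \cdots w_s^{n_s}$ is an ordered monomial of super-letters of $A_{k+1}$ and $W_\emptyset = 1$. Multiplying on the right gives $au[k,i] = \sum_I c_I \, W_I u[k,i]$. Since every $w_j$ is strictly smaller than $u[k,i]$, and neither factor involves a group-like element, each product $W_I u[k,i]$ is itself a PBW-basis element of the ambient algebra, with $u[k,i]$ appearing with multiplicity one at the extreme right and with trivial group-like factor in front.

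Matching against the distinguished basis element $u[k,l]$: the equality $W_I u[k,i] = u[k,l]$ forces $W_I = 1$ (the empty super-word) and $i = l$. Hence $\pi_{kl}(au[k,i]) = c_\emptyset \, \delta_{i,l} \, u[k,l]$, where $c_\emptyset$ is the coefficient of $1$ in the PBW-expansion of $a$. Under the paper's homogeneous setting (Lemma \ref{odn1} for $U_q^+$, and homogeneity by definition for $u_q^+$), $c_\emptyset$ is nonzero only when $a$ has trivial multidegree, i.e.\ $a \in \mathbf{k}$. This establishes the stated dichotomy.

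The only genuine subtlety is this implicit homogeneity hypothesis on $a$: the PBW argument shows that non-scalar summands of $a$ produce PBW-basis elements strictly larger than $u[k,l]$, so only a scalar contribution can survive $\pi_{kl}$, and under the standing homogeneity assumption this scalar part is all of $a$ whenever it is nonzero. No computation with skew brackets or defining relations is needed; the proof is entirely a bookkeeping argument on the lex order of super-letters.
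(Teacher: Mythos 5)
Your proof is correct and follows essentially the same route as the paper's: expand $a$ in the PBW-basis of $A_{k+1}$, observe that all super-letters occurring there are smaller than $u[k,i]$, so that $\tilde{a}u[k,i]$ is already the PBW-decomposition of $au[k,i]$, and conclude that ${\pi}_{kl}$ can only pick out the scalar part of $a$ when $i=l$. Your remark about the implicit homogeneity of $a$ is a fair observation, but it does not change the substance of the argument.
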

\begin{proof} The PBW-decomposition $\tilde{a}$ of $a$
in basis defined in Proposition \ref{strB} or \ref{strBu}
involves only PBW-generators that belong to $A_{k+1}.$
All of them are less than $u[k,i].$ Hence the PBW-decomposition of
$au[k,i]$ is $\tilde{a}u[k,i].$ We have ${\pi }_{kl}(\tilde{a}u[k,i])\neq 0$
only if $\tilde{a}\in {\bf k},$ $i=l.$
\end{proof}
\begin{lemma} If $a\in A_{k+1},$ $k\leq l<\psi (k),$ then
\begin{equation}
\Delta (au[k,i])\cdot ({\rm id}\otimes {\pi }_{kl})=\left\{ 
\begin{matrix}
0,\hfill & \hbox{if } i<l; \hfill \cr
ag_{kl}\otimes u[k,l],\hfill & \hbox{if } i=l; \hfill \cr
\tau_l(1-q^{-2})a\, g_{kl}u[l+1,i]\otimes u[k,l],\hfill & \hbox{if } i>l, \hfill \cr
\end{matrix}
\right.
\label{dif}
\end{equation}
where by definition $g_{kl}=g(u[k,l])=g_kg_{k+1}\cdots g_l.$
\label{las2}
\end{lemma}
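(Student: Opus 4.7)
The strategy is to compute $\Delta(au[k,i])=\Delta(a)\Delta(u[k,i])$ and then apply ${\rm id}\otimes\pi_{kl}$ term by term. By Theorem \ref{cos},
$$
\Delta(u[k,i])=u[k,i]\otimes 1+\sum_{j=k}^{i} F_j^i\otimes u[k,j],
$$
where $F_i^i=g_{ki}$ and $F_j^i=\tau_j(1-q^{-2})g_{kj}u[j+1,i]$ for $k\le j<i$. For $\Delta(a)$, since $a\in A_{k+1}$ lies in the subalgebra of $A$ generated by $x_{k+1},\ldots,x_n$, the biproduct structure $H=A\#{\bf k}[G]$ together with formula (\ref{copro}) lets one write $\Delta(a)=\sum a'\otimes a''$ with the right tensor factor $a''$ already in $A_{k+1}$ (and not merely in $A_{k+1}{\bf k}[G]$). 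This decomposition is the key reduction.

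Multiplying the two coproducts and applying ${\rm id}\otimes\pi_{kl}$, the summand $u[k,i]\otimes 1$ contributes $\sum a'u[k,i]\otimes\pi_{kl}(a'')$, which vanishes: $a''\in A_{k+1}$ has a PBW-expansion involving only super-letters $u[r,s]$ with $r>k$, none of which equals $u[k,l]$. The remaining terms yield $\sum_{j=k}^{i}\sum a'F_j^i\otimes\pi_{kl}(a''u[k,j])$, and Lemma \ref{las1} applied to $a''\in A_{k+1}$ shows that $\pi_{kl}(a''u[k,j])$ equals $\epsilon(a'')u[k,l]$ when $j=l$ and $0$ otherwise; on $A_{k+1}$ the extraction of the constant part of $a''$ coincides with the counit because every non-trivial super-letter is skew-primitive and hence annihilated by $\epsilon$.

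If $l>i$ the inner sum is empty and the answer is $0$, matching the first case. If $l\le i$ only $j=l$ survives, giving $\bigl(\sum\epsilon(a'')a'\bigr)F_l^i\otimes u[k,l]$, and the counit axiom collapses $\sum\epsilon(a'')a'$ to $a$; substituting the values $F_l^l=g_{kl}$ and $F_l^i=\tau_l(1-q^{-2})g_{kl}u[l+1,i]$ for $l<i$ reproduces the two remaining cases. The main technical point---and the essential obstacle---is the initial decomposition of $\Delta(a)$: it is crucial that $a''$ can be taken in $A_{k+1}$ rather than in $A_{k+1}{\bf k}[G]$, for otherwise a nontrivial grouplike summand on the right would be annihilated by $\pi_{kl}(-\cdot u[k,j])$ yet would still contribute $1$ to $\epsilon$, so the counit axiom could no longer close the computation. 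Isolating the grouplike factors on the left via (\ref{copro}) removes this obstruction and reduces the lemma to Lemma \ref{las1} together with the counit axiom.
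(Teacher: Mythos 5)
Your proof is correct and follows essentially the same route as the paper's: expand $\Delta(au[k,i])=\Delta(a)\Delta(u[k,i])$ using formula (\ref{co}), kill all terms via Lemma \ref{las1} except those with scalar right component of $\Delta(a)$ and $j=l$, and read off the answer. The paper states this in three lines, leaving implicit the two points you spell out (that the right tensor factors of $\Delta(a)$ lie in $A_{k+1}$ by the biproduct structure, and that the surviving sum collapses to $a$ by the counit axiom), so your write-up is just a more detailed version of the same argument.
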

\begin{proof}
By means of (\ref{co}) we have $\Delta (au[k,i])=\sum\limits_{(a),\, j}a^{(1)}\alpha_jg_{kj}u[j+1,i]
\otimes a^{(2)}u[k,j]$ for suitable $\alpha_j\in {\bf k}.$ By the above lemma we 
get ${\pi }_{kl}(a^{(2)}u[k,j])=0$ unless $a^{(2)}\in {\bf k},$ $i=l.$ 
It remains to apply explicit formula (\ref{co}).
\end{proof}
\begin{lemma} If $k\leq l<m<\psi (k),$ then
$$
\Delta (\Phi^{S}(k,m))\cdot ({\rm id}\otimes {\pi }_{kl})=\left\{ \begin{matrix}
0,\hfill & \hbox{if } l\in S;\hfill \cr
\tau_l(1-q^{-2})g_{kl}\Phi^{S}(1+l,m)\otimes u[k,l],\hfill & \hbox{if } l\notin S.\hfill 
\end{matrix}
\right.
$$
\label{las3}
\end{lemma}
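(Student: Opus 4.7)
The plan is to apply $(\mathrm{id}\otimes\pi_{kl})\circ\Delta$ directly to the recursive formula (\ref{dhs}) defining $\Phi^S(k,m)$ and reduce the output to the recursion for $\Phi^S(1+l,m)$ via careful scalar bookkeeping.

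First I would compute $(\mathrm{id}\otimes\pi_{kl})\Delta(u[k,m])$ from Theorem~\ref{cos}. Since $l<m$, the terms $u[k,m]\otimes 1$ and $g_{km}\otimes u[k,m]$ are annihilated by $\mathrm{id}\otimes\pi_{kl}$, and among the remaining summands only the one with $i=l$ survives, giving $\tau_l(1-q^{-2})g_{kl}u[l+1,m]\otimes u[k,l]$. Then for each $s_i\in S\cap[k,m-1]$ I would apply Lemma~\ref{las2} to the product $\Phi^S(1+s_i,m)u[k,s_i]$, after checking that $\Phi^S(1+s_i,m)\in A_{k+1}$: the hypothesis $m<\psi(k)$ together with $1+s_i\ge k+1$ places every variable index into $(k,n]$ after the identification $x_j=x_{\psi(j)}$ for $j>n$. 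Lemma~\ref{las2} then splits into three subcases according as $s_i<l$, $s_i=l$, or $s_i>l$, with the first contributing zero.

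The heart of the argument is the scalar manipulation. I would pull $g_{kl}$ to the left of the first tensor factor via the character-action commutation
$$
\Phi^S(1+s_i,m)g_{kl}=p(u(1+s_i,m),u(k,l))\,g_{kl}\,\Phi^S(1+s_i,m).
$$
The key identity to verify is
$$
\alpha_{km}^{s_i}\,p(u(1+s_i,m),u(k,l))=\alpha_{1+l,m}^{s_i},
$$
which follows from bimultiplicativity of $p(-,-)$ applied to the factorisation $u(k,s_i)=u(k,l)\,u(1+l,s_i)$ and the definition $\alpha_{ab}^{s}=\tau_{s}p(u(1+s,b),u(a,s))^{-1}$. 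The degenerate case $s_i=l$ reduces to $\alpha_{km}^{l}p(u(1+l,m),u(k,l))=\tau_l$, so that the ``$s_i=l$'' contribution already supplies its own factor $\tau_l$ matching the one coming from $\Delta(u[k,m])$.

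Collecting everything, the coefficient of $g_{kl}\otimes u[k,l]$ becomes $\tau_l(1-q^{-2})$ times
$$
u[l+1,m]-\varepsilon\,\Phi^S(1+l,m)-(1-q^{-2})\sum_{s_i>l}\alpha_{1+l,m}^{s_i}\,\Phi^S(1+s_i,m)\,u[l+1,s_i],
$$
where $\varepsilon=1$ if $l\in S$ and $\varepsilon=0$ otherwise. In both cases $S\cap[1+l,m-1]=\{s_i\in S:s_i>l\}$, so the recursion (\ref{dhs}) applied to $\Phi^S(1+l,m)$ tells us that $u[l+1,m]$ minus the displayed sum equals $\Phi^S(1+l,m)$. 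Hence the bracket collapses to $0$ when $l\in S$ and to $\Phi^S(1+l,m)$ when $l\notin S$, yielding the stated formula. The main obstacle is this scalar bookkeeping; once the identity relating $\alpha_{km}^{s_i}$ and $\alpha_{1+l,m}^{s_i}$ is in hand, the rest is formal.
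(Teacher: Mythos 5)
Your proposal is correct and follows essentially the same route as the paper: apply $\Delta\cdot(\mathrm{id}\otimes\pi_{kl})$ to the recursion (\ref{dhs}), invoke Lemma \ref{las2} for each summand $\Phi^{S}(1+s_i,m)u[k,s_i]$ (noting $\Phi^{S}(1+s_i,m)\in A_{k+1}$), commute past $g_{kl}$ via the character action, and verify the two scalar identities $\alpha_{km}^{s_i}\,p(u(1+s_i,m),u(k,l))=\alpha_{1+l\,m}^{s_i}$ for $s_i>l$ and $\alpha_{km}^{l}\,p(u(1+l,m),u(k,l))=\tau_l$ for $s_i=l$. The paper's proof is just a terser version of the same argument, so nothing further is needed.
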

\begin{proof}
Let us apply $\Delta ({\rm id}\otimes \pi _{kl})$ to (\ref{dhs}). 
Since $a_i\stackrel{df}{=}\Phi^{S}(1+s_i,m)\in A_{k+1},$
we may use Lemma \ref{las2}. 
We have $a_i\, g_{kl}=\chi^{a_i}(g_{kl})g_{kl}\, a_i,$ $\chi^{a_i}(g_{kl})=p(u(1+s_i,m),u(k,l)).$
Thus, if $s_i>l,$ then $\alpha _{km}^{s_i}\chi^{a_i}(g_{kl})=\alpha _{1+l\, m}^{s_i},$
while if $s_i=l,$ then $\alpha _{km}^{l}\chi^{a_l}(g_{kl})=\tau _l.$ Now (\ref{dif}) implies the required
relation. \end{proof}
\begin{lemma} Let $k\leq l<m<\psi (k),$ and $a\in A_{k+1}$
is a nonzero homogeneous element with $D(a)=D(u(1+l,m)).$ Denote by $\nu_a$
any homogeneous projection $\nu_a:U_q^+({\mathfrak so}_{2n+1})\rightarrow a{\bf k}.$
If $D(b)=D(u(1+i,m)),$ then we have  
$$
\Delta (bu[k,i])\cdot ({\rm id}\otimes \nu_a )=
\left\{ 
\begin{matrix}
0,\hfill & \hbox{if } l<i<m; \hfill \cr
g_au[k,l]\otimes a,\hfill & \hbox{if } i=l,\, b=a; \hfill \cr
g_ab^{\prime }u[k,i]\otimes a,\hfill & \hbox{if } i<l. \hfill \cr
\end{matrix}
\right.
$$
\label{las4}
\end{lemma}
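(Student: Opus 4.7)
The plan is to compute $\Delta(bu[k,i])=\Delta(b)\,\Delta(u[k,i])$, substitute the explicit formula~(\ref{co}) for $\Delta(u[k,i])$, and reduce to a constitution argument in the variable $x_k$. Expanding (\ref{co}), the right tensor factors of $\Delta(u[k,i])$ are $1$, $u[k,i]$, and $u[k,j]$ for $k\leq j<i$; except for the first, each of these contains $x_k$ to degree at least $1$. Since $D(b)=D(u(1+i,m))$ has zero degree in $x_k$, we have $b\in A_{k+1}$, and because $\mathbf{k}[G]A_{k+1}$ is a sub-Hopf algebra of $H$, each summand $b^{(1)}\otimes b^{(2)}$ of $\Delta(b)$ satisfies $b^{(2)}\in A_{k+1}$. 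Hence, for every right factor of $\Delta(u[k,i])$ other than $1$, the corresponding right factor of $\Delta(bu[k,i])$ has degree $\geq 1$ in $x_k$; since $a\in A_{k+1}$ has degree $0$ in $x_k$, the projection $\nu_a$ annihilates all such terms.

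Only the term $u[k,i]\otimes 1$ of $\Delta(u[k,i])$ survives, so
$$
\Delta(bu[k,i])\cdot(\mathrm{id}\otimes\nu_a)=\sum_{(b)}b^{(1)}u[k,i]\otimes\nu_a(b^{(2)}).
$$
By homogeneity of $\nu_a$, a summand contributes only when $D(b^{(2)})=D(a)=D(u(1+l,m))$, and then $D(b^{(1)})=D(b)-D(a)=D(u(1+i,m))-D(u(1+l,m))$.

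Now I split into the three cases. If $l<i<m$, then using $u(1+l,m)=u(1+l,i)\cdot u(1+i,m)$ one sees that $D(a)$ strictly exceeds $D(b)$ in the $x_{l+1},\ldots,x_i$ coordinates, so no summand qualifies and the result is $0$. If $i=l$ and $b=a$, then $D(b^{(1)})=0$, forcing $b^{(1)}$ to be group-like; writing $\Delta(a)=a\otimes 1+g_a\otimes a+(\text{middle terms with strictly smaller right constitution})$, only $g_a\otimes a$ has right factor proportional to $a$, producing $g_au[k,l]\otimes a$. If $i<l$, then $D(b^{(1)})=D(u(1+i,l))>0$; collecting all middle terms of $\Delta(b)$ whose right factor is proportional to $a$, and using that by formula~(\ref{copro}) the group-like prefactor of each such summand equals $g_{b^{(2)}}=g_a$, the sum takes the form $g_ab'\otimes a$ for some $b'\in A_{k+1}$ with $D(b')=D(u(1+i,l))$. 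Multiplying by $u[k,i]\otimes 1$ yields $g_ab'u[k,i]\otimes a$.

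The main obstacle is identifying the group-like prefactor in case $i<l$ precisely as $g_a$; this is a structural consequence of the biproduct decomposition $H=A\#\mathbf{k}[G]$ together with formula~(\ref{copro}), which forces the group element in each summand of the coproduct of an element of $A$ to be the grading of the right factor. The rest is bookkeeping with constitutions.
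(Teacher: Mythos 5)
Your proof is correct and follows essentially the same route as the paper's: both reduce to $\sum_{(b)}b^{(1)}u[k,i]\otimes\nu_a(b^{(2)})$ by observing that every right tensor component in (\ref{co}) except the first has positive degree in $x_k$, and then settle the three cases by comparing constitutions. Your extra remark identifying the group-like prefactor as $g_a$ via (\ref{copro}) in the case $i<l$ is a detail the paper leaves implicit, but it is the same argument.
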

\begin{proof}
All right hand side components of the tensors in (\ref{co}) depend on $x_k$
with the only exception for the first summand. Since $\nu_a$ kills all elements 
with positive degree in $x_k,$ we have 
\begin{equation}
\Delta (bu[k,i])\cdot ({\rm id}\otimes \nu_a)=\sum_{(b)}b^{(1)}u[k,i]\otimes \nu_a(b^{(2)}).
\label{suk}
\end{equation}

If $l<i<m,$ then $D(b^{(2)})\leq D(b)<D(a),$ hence $\nu_a(b^{(2)})=0.$

If $b=a,$ $i=s,$ then $D(b^{(2)})=D(a)$ only if $b^{(1)}=g_a,$ $b^{(2)}=a.$

If $i<l,$ then (\ref{suk}) provides the third option given in the lemma.
\end{proof}
\begin{proposition} 
If a right coideal subalgebra {\bf U}$\, \supseteq {\bf k}[G]$ of $U_q^+({\mathfrak so}_{2n+1})$
or $u_q^+({\mathfrak so}_{2n+1})$
contains a homogeneous element $c\in A$ with the leading term $u[k,m],$ $k\leq m<\psi (k),$ then 
$\Phi ^{S}(k,m)\in\, ${\bf U} for a suitable subset $S$ of the spectrum of $c.$
\label{phib}
\end{proposition}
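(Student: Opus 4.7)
The plan is to induct on $m-k$. The base case $m=k$ is immediate: then $c=u[k,k]=x_k$, and taking $S=\emptyset$ gives $\Phi^{\emptyset}(k,k)=u[k,k]=c$, so $\Phi^{S}(k,m)\in\mathbf{U}$.

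For the inductive step, I would first write the PBW-decomposition (\ref{sp1}) of $c$ after normalizing its leading coefficient to $1$:
$$c \;=\; u[k,m] \;+\; \sum_{l \in S^*} F_l\, u[k,l], \qquad S^* := \mathrm{Sp}(c) \cap [k,m-1], \quad F_l \in A_{k+1}\setminus\{0\}.$$
Since $\mathbf{U}$ is a right coideal, $\Delta(c)\in\mathbf{U}\otimes H$. For each $l\in S^*$, I would apply $(\mathrm{id}\otimes\pi_{kl})$ and invoke Lemma~\ref{las2}; after absorbing $g_{kl}\in\mathbf{k}[G]\subseteq\mathbf{U}$, this extracts the element
$$\tilde c_l \;=\; \beta_l F_l \;+\; \tau_l(1-q^{-2})\,u[l+1,m] \;+\; \tau_l(1-q^{-2})\!\!\sum_{i \in S^*,\, i>l}\!\! \gamma_{l,i}\, F_i\, u[l+1,i] \;\in\; \mathbf{U},$$
with nonzero scalars $\beta_l,\gamma_{l,i}$ coming from commuting $g_{kl}$ past $F_l,F_i$. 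The useful choice is $l=\max S^*$, which makes the last summation vanish and leaves $\tilde c_l=\beta_l F_l+\tau_l(1-q^{-2})u[l+1,m]$.

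Next, I would apply the inductive hypothesis at parameters $(l+1,m)$ (so that $m-(l+1)<m-k$) to a suitably prepared form of $\tilde c_l$ in which $u[l+1,m]$ is the genuine PBW-leading term; this produces $\Phi^{S_l}(l+1,m)\in\mathbf{U}$ for some $S_l\subseteq[l+1,m-1]$. Comparing the explicit coproduct formula for $\Phi^{S_l}(l+1,m)$ from Lemma~\ref{las3} with the extracted $\tilde c_l$ above, I would identify
$$F_l \;\equiv\; -(1-q^{-2})\,\alpha_{km}^{l}\,\Phi^{S_l}(l+1,m) \pmod{\mathbf{U}},$$
up to lower-PBW-order corrections absorbable into $\mathbf{U}$. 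Iterating this identification downwards over $l\in S^*$ and amalgamating the $S_l$'s into one consistent subset $S\subseteq S^*$, I would assemble $\Phi^{S}(k,m)=c-(\text{element of }\mathbf{U})\in\mathbf{U}$.

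The hard part will be twofold. First, the inductive hypothesis at $(l+1,m)$ requires $m<\psi(l+1)=2n-l$, which fails when $l+m\geq 2n$; in that regime $u[l+1,m]$ is not a PBW-generator (Proposition~\ref{strB}) and must first be re-expressed via Corollary~\ref{rww} in terms of genuine PBW-generators before the induction can apply. Second, the PBW-expansion of $F_l\in A_{k+1}$ may contain super-letters exceeding $u[l+1,m]$ in the PBW order, namely those whose leading variable $x_j$ satisfies $k+1\leq j\leq l$; before applying the induction it is necessary to perform further coproduct projections within $A_{k+1}$ to strip off those terms and expose $u[l+1,m]$ as the leading PBW-term of $\tilde c_l$. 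Coordinating these two reductions consistently across all $l\in S^*$, and verifying that the resulting $S_l$'s glue into a single $S$ that matches the recursive definition~(\ref{dhs}), is where the technical core of the argument lies.
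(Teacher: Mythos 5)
Your opening move is the same as the paper's: apply $\Delta\cdot(\mathrm{id}\otimes\pi_{kl})$ at the maximal spectral index $l$ and use Lemma~\ref{las2} to extract an element $v=F_l+(\hbox{scalar})\,u[l+1,m]\in\mathbf{U}$ (in the paper, $u[l+1,m]$ is replaced by $\Phi^{S_t}(1+l,m)$ via Lemma~\ref{las3}, since the induction is organized differently). But there is a genuine gap in your final assembly step. Knowing that $F_l\equiv-(1-q^{-2})\alpha_{km}^{l}\Phi^{S_l}(l+1,m)\pmod{\mathbf{U}}$ does \emph{not} let you conclude that $F_l\,u[k,l]\equiv-(1-q^{-2})\alpha_{km}^{l}\Phi^{S_l}(l+1,m)\,u[k,l]\pmod{\mathbf{U}}$: the discrepancy is an element of $\mathbf{U}$ multiplied on the right by $u[k,l]$, and $u[k,l]$ need not lie in $\mathbf{U}$. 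So you cannot substitute your identification of $F_l$ back into $c$ and land in $\mathbf{U}$. The missing ingredient is a second coproduct projection, $\Delta\cdot(\mathrm{id}\otimes\nu_a)$ with $a=F_l$ (Lemma~\ref{las4}), which produces a second element $w=u[k,l]+\sum_{i<l}F_i'\,u[k,i]\in\mathbf{U}$. Only then is the product $vw\in\mathbf{U}$ available, and $vw=F_l\,u[k,l]+(\hbox{scalar})\,\Phi^{S_t}(1+l,m)\,u[k,l]+(\hbox{terms in }u[k,i],\,i<l)$ is exactly what is needed to eliminate $F_l\,u[k,l]$ from $c$; the surviving term $-(\hbox{scalar})\,\Phi^{S_t}(1+l,m)\,u[k,l]$ combines with $\Phi^{S_t}(k,m)$ to give $\Phi^{S_t\cup\{l\}}(k,m)$ by the defining recursion (\ref{dhs}).

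A second, structural point: the paper does not induct on $m-k$ at all. It runs a downward induction on the frontier index $t$, keeping the outer parameters $(k,m)$ fixed and adjoining one spectral point at a time, $S_l=S_t\cup\{l\}$. Because of this, the element $\Phi^{S_t}(1+l,m)$ is never obtained from an inductive hypothesis at $(l+1,m)$ --- it simply appears as the left tensor component of $\Delta(\Phi^{S_t}(k,m))$ via Lemma~\ref{las3}, whose hypothesis is only $k\le l<m<\psi(k)$. Both of the ``hard parts'' you flag --- the failure of $m<\psi(l+1)$ and the need to expose $u[l+1,m]$ as the genuine leading PBW-term of $\tilde c_l$ (where the coefficient could even cancel against the $u[l+1,m]$-component of $F_l$) --- are artifacts of your choice of induction and simply do not arise in the paper's argument. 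I would recommend abandoning the induction on $m-k$ and adopting the two-projection substitution scheme.
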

\begin{proof}

Every summand of the decomposition of $c$ in the PBW-basis defined in Proposition \ref{strB} or \ref{strBu}
has just one PBW-generator that depends on $x_k,$
for $U_q^+({\mathfrak so}_{2n+1})$ and $u_q^+({\mathfrak so}_{2n+1})$ are homogeneous in each $x_i.$
Moreover this PBW-generator, considered as a super-letter, starts by $x_k,$ 
and hence it is the biggest super-letter of the summand. 
In particular that super-letter is located at the end of the basis super-word;
that is, the PBW-decomposition  takes up the form
\begin{equation}
c=u[k,m]+\sum_{i=k}^{m-1} F_iu[k,i], \ \ F_i\in A_{k+1}, \ k\leq i<m.
\label{sp1i}
\end{equation}
By definition $i$ belongs to the spectrum, Sp$(a),$ of $a$ if and only if $F_i\neq 0.$
We may rewrite this  representation in the following way:
\begin{equation}
\Phi^{S_{t}}(k,m)+\sum_{i\in {\rm Sp}(a), \ i<t}F_iu[k,i]\in \, \hbox{\bf U},
\label{pitb1}
\end{equation}
where $t=m,$ and by definition $S_{m}=\emptyset .$ We shall prove that relation (\ref{pitb1})
with a given $t,$ $k<t\leq m,$ $S_t\subseteq {\rm Sp}(a),$ and $t\leq \inf S_t$ 
implies a relation of the same type with $t\leftarrow l,$ $S_{l}=S_t\cup \{ l\} ,$
where $l,$ as above, is the maximal $i$ in (\ref{pitb1}) such that $F_i\neq 0.$
Since certainly $l<t,$
by downward induction this will imply (\ref{pitb1}) with $t=k,$ $S=S_k\subseteq {\rm Sp}(a):$
\begin{equation}
\Phi ^{S}(k,m)\in \, \hbox{\bf U}.
\label{pitb2}
\end{equation}

Let us apply $\Delta \cdot ({\rm id}\otimes {\pi }_{kl})$ to (\ref{pitb1}),
where ${\pi }_{kl}$ is the projection onto {\bf k}$u[k,l],$ and $l$ is the maximal $i$
in (\ref{pitb1}) with $F_i\neq 0.$ By Lemma \ref{las2} we have
$\Delta (F_iu[k,i]) \cdot ({\rm id}\otimes {\pi }_{kl})=0,$ if $i<l,$ while
$\Delta (F_lu[k,l]) \cdot ({\rm id}\otimes {\pi }_{kl})=F_lg_{kl}\otimes [k,l].$
Lemma \ref{las3} implies
$\Delta (\Phi^{S_{t}}(k,m)) \cdot ({\rm id}\otimes {\pi }_{kl})=
\tau_l(1-q^{-2})g_{kl}\Phi^{S_{t}}(1+l,m)\otimes u[k,l].$
Since $U$ is a right coideal subalgebra that contains all group-like elements,
we get
\begin{equation}
F_l+\chi^{F_l}(g_{kl})^{-1}\tau_l(1-q^{-2})\Phi ^{S_t}(1+l,m)=v\in \, \hbox{\bf U}.
\label{alg1}
\end{equation}

Further, consider any homogeneous projection $\nu_a$ with $a=F_l.$
Let us apply $\Delta \cdot ({\rm id}\otimes \nu_a )$ to (\ref{pitb1}).
Since $l<\inf S_t,$ Lemma \ref{las4} and definition (\ref{dhs})
imply $\Delta (\Phi^{S_{t}}(k,m)) \cdot ({\rm id}\otimes \nu_a )=0.$
Lemma \ref{las4} shows also that
$\Delta (F_lu[k,l]) \cdot ({\rm id}\otimes \nu_a )$ $=g_au[k,l]\otimes a,$
while 
$\Delta (F_iu[k,i]) \cdot ({\rm id}\otimes \nu_a )$ $=g_aA_i^{\prime }u[k,i]\otimes a,$
$i<l.$ Hence we arrive to the relation
\begin{equation}
u[k,l]+\sum_{i\in {\rm Sp}(a), \ i<l}F_i^{\prime }u[k,i]=w\in \, \hbox{\bf U}.
\label{alg2}
\end{equation}
Relations (\ref{alg1}), (\ref{alg2}) imply 
$$
F_lu[k,l]=vw-\sum_{i\in {\rm Sp}(a), \ i<l}vF_i^{\prime }u[k,i]
-\chi^{F_l}(g_{kl})^{-1}\tau_l(1-q^{-2})\Phi ^{S_t}(1+l,m)\cdot u[k,l].
$$
This allows one to replace $F_lu[k,l]$ in (\ref{pitb1}). Since according to definition  
(\ref{dhs}) we have $\Phi ^{S_t}(k,m)
-\chi^{F_l}(g_{kl})^{-1}\tau_l(1-q^{-2})\Phi ^{S_t}(1+l,m)\cdot u[k,l]$
$=\Phi ^{S_t\cup \{ l\} }(k,m),$ we get the required relation
$$
\Phi ^{S_l}(k,m)+\sum _{i\in {\rm Sp}(a), \ i<l}(F_i-vF_i^{\prime})u[k,i]\in {\bf U}.
$$
\end{proof}
\begin{corollary} If the main parameter $q$ is not a root of $1,$ then
every right coideal subalgebra of $U_q^+({\mathfrak so}_{2n+1})$
that contains the coradical
has a set of PBW-generators of the form $\Phi ^{S}(k,m).$
In particular there exists just a finite number of the right coideal subalgebras
in $U_q^+({\mathfrak so}_{2n+1})$ that contain the coradical.
If $q$ has a finite multiplicative order  $t>4,$ then this is the case for the homogeneous  
right coideal subalgebras in $u_q^+({\mathfrak so}_{2n+1}).$
\label{vd}
\end{corollary}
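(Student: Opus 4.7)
The plan is to combine Proposition~\ref{pro}, which furnishes a set of PBW-generators $c_u$ for any right coideal subalgebra $\mathbf{U}\supseteq\mathbf{k}[G]$ (one $c_u$ per hard super-letter $[u]$ that contributes a leading term $[u]^s$ to some element of $\mathbf{U}$), with Proposition~\ref{phib}, which replaces each such $c_u$ by an explicit $\Phi^S(k,m)$. By Proposition~\ref{strB} in the non-root-of-unity case, and Proposition~\ref{strBu} in the root-of-unity case with $t>4$, the hard super-letters in the ambient algebra are exactly the $u[k,m]$ with $k\leq m<\psi(k)$. Homogeneity of $\mathbf{U}$, needed to invoke these results, is automatic in the first setting by Lemma~\ref{odn1} and is a hypothesis in the second.

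The heart of the argument will be to verify that in each leading term $u[k,m]^s$ of $c_u$ the exponent is $s=1$, so that Proposition~\ref{phib} applies directly. Lemma~\ref{nco1} allows only $s=1$ or $s\in\{t,tl^r\}$ with $p(u,u)$ a primitive $t$-th root of $1$. First I observe via~(\ref{mu21}) that under the standing range $k\leq m<\psi(k)$ one has $p_{uu}=\sigma_k^m\in\{q,q^2\}$. In the non-root-of-unity case this $p_{uu}$ is not a root of $1$, so Lemma~\ref{nco1} forces $s=1$ outright. In the small quantum group case the exponent $s$ must be strictly less than the height of $[u]$ in $H$, which Proposition~\ref{strBu} computes to be exactly the primitive multiplicative order of $p_{uu}$ (namely $t$ or $t/2$ according to $p_{uu}=q$ or $q^2$); the only option in Lemma~\ref{nco1} compatible with this bound is again $s=1$.

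With $s=1$ secured, Proposition~\ref{phib} yields a subset $S\subseteq[1,2n]$ such that $\Phi^S(k,m)\in\mathbf{U}$. Since for $k\leq m<\psi(k)$ definition~(\ref{dhs}) makes $u[k,m]$ the leading term of $\Phi^S(k,m)$, this element shares the leading term of $c_u$; by the uniqueness of leading terms in the PBW-decomposition (the remark following Proposition~\ref{pro}) one may replace $c_u$ by $\Phi^S(k,m)$ in the generating family, which gives the asserted form of PBW-generators for $\mathbf{U}$.

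Finiteness is then immediate: the pairs $(k,m)$ with $1\leq k\leq m<\psi(k)\leq 2n$ form a finite set, and for each pair there are at most $2^{2n}$ subsets $S\subseteq[1,2n]$; hence only finitely many families of generators of the form $\Phi^S(k,m)$, and therefore only finitely many right coideal subalgebras $\mathbf{U}$, can arise. The main obstacle I anticipate is the $s=1$ verification in the small quantum group setting: one has to match the case distinction ``$m=n$'' versus ``$m\neq n$'' of the height formula in Proposition~\ref{strBu} against the exponents $\{1,t,tl^r\}$ permitted by Lemma~\ref{nco1}, keeping track of whether $p_{uu}$ equals $q$ or $q^2$. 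Once this is done cleanly, the remainder of the argument is bookkeeping.
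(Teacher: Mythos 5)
Your proposal is correct and follows essentially the same route as the paper: homogeneity via Lemma \ref{odn1}, PBW-generators $c_u$ with leading term $u[k,m]^s$ via Propositions \ref{strB}/\ref{strBu} and \ref{pro}, the reduction to $s=1$ via Lemma \ref{nco1} and the computation $p_{uu}=\sigma_k^m\in\{q,q^2\}$ from (\ref{mu21}), replacement of $c_u$ by $\Phi^S(k,m)$ via Proposition \ref{phib}, and finiteness by counting pairs $(k,m)$ and subsets $S$. The only cosmetic difference is in ruling out $s\in\{h,hl^r\}$ in the finite-order case — you bound $s$ by the height, while the paper observes $u[k,m]^h=0$ — but these are the same observation from Section 2 phrased two ways.
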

\begin{proof}
If {\bf U} is a right coideal subalgebra of $U_q^+(\frak{ so}_{2n+1})$
that contains {\bf k}$\, [G],$ then by Lemma \ref{odn1} it is homogeneous
in each $x_i.$ By Proposition \ref{strB} and Proposition \ref{pro} it has 
PBW-generators of the form  (\ref{vad1}):
\begin{equation}
c_u=u^s+\sum \alpha _iW_i\in \hbox{\bf U}, \ \ \ u=u[k,m], \ k\leq m\leq \psi (k).
\label{vad25}
\end{equation}
By (\ref{mu21})  we have
$p_{uu}=\sigma_k^m=q^2$ if $m\neq n,$ and $p_{uu}=q$ otherwise.
Thus, if $q$ is not a root of 1, Lemma \ref{nco1}
shows that in (\ref{vad25})  the exponent $s$ equals 1, 
while  all heights of the $c_u$'s in {\bf U} are infinite.

If $q$ has a finite multiplicative order $t>4,$ then $u[k,m]^h=0$ in $u_q^+(\frak{ sl}_{n+1}),$
where $h$ is the multiplicative order of $p_{uu},$ see Proposition \ref{strBu}.
By Lemma \ref{nco1} in (\ref{vad25}) we have $s\in \{ 1, h, hl^r\} .$ 
Since $u[k,m]^h=u[k,m]^{hl^r}=0,$ the exponent $s$
in (\ref{vad25})  equals 1, while the height of $c_u$ in {\bf U} equals $h.$

Since {\bf U} is homogeneous with respect to each $x_i\in X,$
in both cases the PBW-generators of {\bf U} have the following form
\begin{equation}
c_u=u[k,m]+\sum \alpha _iW_i,  \  \ k\leq m\leq \psi (k),
\label{vad22}
\end{equation}
where $W_i$ are the basis super-words starting with less than $u[k,m]$ super-letters, 
$D(W_i)=$ $D(u[k,m])$ $=x_k+x_{k+1}+\ldots +x_m.$ By Proposition \ref{phib} we have
$\Phi ^{S}(k,m)\in \,${\bf U}. The leading term of $\Phi ^{S}(k,m)$
equals $u[k,m],$ see definition (\ref{dhs}). Hence we may replace $c_u$ with 
$\Phi ^{S}(k,m)$ in the set of PBW-generators.
The number of possible elements $\Phi ^{S}(k,m)$ is finite. 
Hence the total number of possible sets of PBW-generators 
of the form $\Phi ^{S}(k,m)$ is finite as well. 
\end{proof}

\section{Elements $\Phi ^{[k,m-1]}(k,m).$}

In this section we are going to prove the following relation in $U_q^+({\mathfrak so}_{2n+1}):$
\begin{equation}
\Phi ^{[k,m-1]}(k,m)=(-1)^{m-k}\left( \prod _{m\geq i>j\geq k}p_{ij}^{-1}\right)  \cdot u[\psi (m),\psi (k)],
\label{algr1}
\end{equation}
where as above $\psi (i)=2n-i+1.$ 
The main idea of the proof is to use the Milinski-Schneider criterion (Lemma \ref{MS}). 
To do this we need to 
find the partial derivatives of the both sides. In what follows by $\partial _i,$ $1\leq i\leq 2n$
we denote the partial derivation with respect to $x_i,$ see (\ref{defdif}). In particular 
$\partial _i=\partial _{\psi (i)}.$ Coproduct formula (\ref{co}) with (\ref{calc}) implies
\begin{equation}
\partial _i(u[k,m])=\left\{ \begin{matrix}
(1-q^{-2})\tau _ku[k+1,m],\hfill & \hbox{ if } x_i=x_k,\  k<m;\hfill \cr
0, \hfill & \hbox{ if } x_i\neq x_k;\hfill \cr
1, \hfill & \hbox{ if } x_i=x_k, k=m.\hfill
\end{matrix} \right.
\label{pde}
\end{equation}
This allows us easily to find the derivatives of the right hand side. By induction on $m-k$
we shall prove a similar formula
\begin{equation}
\partial _i(\Phi ^{[k,m-1]}(k,m))=\left\{ \begin{matrix}
\beta_k^m \Phi ^{[k,m-2]}(k,m-1),\hfill & \hbox{ if } x_i=x_m,\  k<m;\hfill \cr
0, \hfill & \hbox{ if } x_i\neq x_m;\hfill \cr
1, \hfill & \hbox{ if } x_i=x_m, k=m,\hfill
\end{matrix} \right.
\label{pde1}
\end{equation}
where $\beta_k^m=-(1-q^{-2})\alpha _{km}^{m-1}$ $=-(1-q^{-2})\tau _{m-1}p(x_m, u(k,m-1))^{-1}.$
To simplify the notation we remark that
$\Phi ^{[k,m-1]}(k,m)=\Phi ^{S}(k,m)$ for arbitrary $S$ that contains the interval
$[k,m-1].$ In particular in the above formula $\Phi ^{[k,m-2]}(k,m-1)=\Phi ^{[k,m-1]}(k,m-1).$

If $x_i\neq x_m,$ $x_i\neq x_k,$ then (\ref{pde}) and the inductive supposition applied to definition
(\ref{dhs}) imply $\partial _i(\Phi ^{[k,m-1]}(k,m))=0.$

If $x_i=x_k\neq x_m,$ then $\partial _i=\partial _k.$ Taking into account definition (\ref{dhs})
we have
$$
\partial _k(\Phi ^{[k,m-1]}(k,m))=\partial _k(
u[k,m]-(1-q^{-2})\sum_{i=k}^{m-1} \alpha _{km}^{i} \, 
\Phi^{[k,m-1]}(1+i,m)u[k,i]),
$$
where $\alpha _{km}^{i}=\tau _{i}p(u(1+i,m),u(k,i))^{-1},$ 
while the $\tau $'s have been defined in (\ref{tau}). By means of the inductive supposition,
skew differential Leibniz formula (\ref{defdif}), and (\ref{pde}) we may continue
\begin{equation}
=(1-q^{-2}) \tau_k\left( u[k+1,m]-
\tau_k^{-1}\alpha _{km}^{k}p(u(1+k,m),x_k)
\Phi^{[k,m-1]}(1+k,m)\right.
\label{pde2}
\end{equation}

$$
 -(1-q^{-2})\sum_{i=k+1}^{m-1} \left.  \alpha _{km}^{i} \, p(u(1+i,m),x_k)
\Phi^{[k,m-1]}(1+i,m) u[k+1,i]
\right) .
$$
Since obviously $\alpha _{km}^{k}p(u(1+k,m),x_k)=\tau_k,$ $\alpha _{km}^{i} \, p(u(1+i,m),x_k)=\alpha _{k+1\, m}^{i},$ definition
(\ref{dhs}) shows that the above expression is zero.

If $x_i=x_m\neq x_k,$ then $\partial _i=\partial _m.$ Again by definition (\ref{dhs}),
inductive supposition, skew differential Leibniz formula (\ref{defdif}), and (\ref{pde})
we have
$$
\partial _m(\Phi ^{[k,m-1]}(k,m))=-(1-q^{-2})\sum_{i=k}^{m-2} \alpha _{km}^{i} \, 
\beta_{1+i}^{m}\Phi^{[k,m-2]}(1+i,m-1)u[k,i]
$$
\begin{equation}
-(1-q^{-2})\alpha _{km}^{m-1} u[k,m-1].                     
\label{pde3}
\end{equation}
By definition  $-(1-q^{-2})\alpha _{km}^{m-1}=\beta_k^m.$ At the same time 
$$
\alpha _{km}^{i}\beta _{1+i}^{m}=\tau _ip(u(1+i,m),u(k,i))^{-1}
\cdot \{ -(1-q^{-2})\tau _{m-1}p(x_m,u(1+i,m-1))^{-1}\} 
$$
$$
=-(1-q^{-2})\tau _{m-1}p(x_m,u(k,m-1))^{-1} \cdot \tau _ip(u(1+i,m-1),u(k,i))^{-1}=
\beta _{k}^{m}\cdot \alpha _{k\, m-1}^{i}.
$$
Thus, according to (\ref{dhs}) the right hand side of (\ref{pde3})
equals $\beta_k^m \Phi ^{[k,m-2]}(k,m-1),$ which is required.

\smallskip
Finally, if $x_i=x_m=x_k,$ $k\neq m,$ that is $m=\psi (k),$ then due to skew differential 
Leibniz formula (\ref{defdif}) the derivative  
$\partial _i(\Phi ^{[k,m-1]}(k,m))$ equals the sum of the expression (\ref{pde2})
with the right hand side of (\ref{pde3}).
It remains to note that (\ref{pde2}) is still zero,
while the right hand side of (\ref{pde3}) still equals $\beta_k^m \Phi ^{[k,m-2]}(k,m-1).$
Eq. (\ref{pde1}) is completely proved.

Now we are ready to prove (\ref{algr1}) by induction on $m-k.$ If $m=k$ both sides equal $x_k.$
If $k<m,$ then the derivatives $\partial _i$ of both sides are zero with the only exception
being $x_i=x_m=x_{\psi (m)}.$ Due to (\ref{pde}) the derivative $\partial _m$ applied to 
the right hand side of (\ref{algr1}) equals
\begin{equation}
(-1)^{m-k}\left( \prod _{m\geq i>j\geq k}p_{ij}^{-1}\right)  (1-q^{-2}) \tau _{\psi (m)}\cdot
 u[\psi (m)+1,\psi (k)].
\label{piz}
\end{equation} 
Since $\psi (m)=n$ if and only if $m-1=n,$ formula (\ref{tau}) yields $\tau _{\psi (m)}=\tau _{m-1}.$
At the same time (\ref{pde1}) and the inductive supposition imply
\begin{equation}
\partial _m(\Phi ^{[k,m-1]}(k,m))=\beta_k^m (-1)^{m-1-k}\left( \prod _{m>i>j\geq k}p_{ij}^{-1}\right)
u[\psi (m)+1,\psi (k)].
\label{piz1}
\end{equation}
By definition we have
$$
\beta_k^m=-(1-q^{-2})\tau_{m-1} p(x_m,u(k,m-1))^{-1}=
-(1-q^{-2})\tau_{m-1}\prod _{m>j\geq k}p_{mj}^{-1}.
$$
Thus (\ref{piz}) coincides with (\ref{piz1}), and due to MS-criterion (\ref{algr1}) is proved. 

\smallskip
\noindent
{\bf Remark}. To prove (\ref{algr1}) we have used the MS-criterion. Therefore 
if $q$ has a finite multiplicative order $t,$ relation
 (\ref{algr1}) is proved only for $u_q^+({\mathfrak so}_{2n+1}).$
However we have seen in Proposition \ref{strBu} that
if $t>4,$ then the kernel of the natural homomorphism 
$U_q^+({\mathfrak so}_{2n+1})\rightarrow u_q^+({\mathfrak so}_{2n+1})$
is generated by the elements $u[k,m]^h,$ $h\geq 3.$
At the same time all polynomials in (\ref{algr1}) have degree
less than or equal to 2 in each variable. Therefore (\ref{algr1}) is valid in 
$U_q^+({\mathfrak so}_{2n+1})$ provided that $t>4.$

\section{$(k,m)$-regular sets}

\begin{definition} \rm Let $1\leq k\leq n<m\leq 2n.$
A set $S$ is said to be {\it white $(k,m)$-regular} if for every
$i,$ $k-1\leq i<m,$ such that $k\leq \psi (i)\leq m+1$ either $i$ or $\psi (i)-1$ does not belong 
to $S\cup \{ k-1, m\} .$

A set $S$ is said to be {\it black $(k,m)$-regular} if for every
$i,$ $k\leq i\leq m,$ such that $k\leq \psi (i)\leq m+1$ either $i$ or $\psi (i)-1$ belongs 
to $S\setminus \{ k-1,m\} .$

If $m\leq n,$ or $k>n$
(or, equivalently, if $u[k,m]$ is of degree $\leq 1$ in $x_n$),
 then by definition each set $S$ is both white and 
black $(k,m)$-regular.

A set $S$ is said to be $(k,m)$-{\it regular} if it is either black or white $(k,m)$-regular.
\label{reg1}
\end{definition}

 If $k\leq n<m$ and $S$ 
is white $(k,m)$-regular, then $n\notin S,$  for $\psi (n)-1=n.$ 
If additionally $m<\psi (k),$ then taking $i=\psi (m)-1$ we get $\psi (i)-1=m,$
hence the definition implies $\psi (m)-1\notin S.$ We see that if $m<\psi (k),$
$k\leq n<m,$ then $S$ is white $(k,m)$-regular if and only if the shifted scheme
of $\Phi ^{S}(k,m)$ given in (\ref{grb2}) has no black columns:
\begin{equation}  
\begin{matrix}
 \ \ \ \ \ \ \ & \stackrel{m}{\bullet }&\cdots & \bullet  
& \stackrel{n+i}{\circ }  & \circ  & \cdots & \stackrel{n}{\circ }   \Leftarrow \cr 
 \stackrel{k-1}\circ \ \cdots & \stackrel{\psi (m)-1}{\circ }& \cdots & \circ  
& \stackrel{n-i}{\bullet } & \circ & \cdots & \stackrel{n}{\circ }   \hfill
\end{matrix}
\label{grab}
\end{equation}
In the same way, if $m>\psi (k),$
then for $i=\psi (k)$ we get $\psi (i)-1=k-1,$ hence $\psi (k)\notin S.$
That is, if $m>\psi (k),$
$k\leq n<m,$ then $S$ is white $(k,m)$-regular if and only if the shifted scheme
(\ref{grb2}) has no black columns and the first from the left complete column
is a white one.
\begin{equation} 
\begin{matrix}
\stackrel{m}{\bullet } \ \cdots & \stackrel{\psi (k)}{\circ } & \cdots &  \bullet & \stackrel{n+i}{\circ } & \circ & \cdots & \stackrel{n}{\circ }  \Leftarrow
\cr
 \ \ \ \ \ \hfill & \stackrel{k-1}{\circ }& \cdots & \circ &
\stackrel{n-i}{\bullet } & \circ &\cdots &  \stackrel{n}{\circ }  \hfill
\end{matrix}
\label{grab1}
\end{equation}

Similarly, if $k\leq n<m$ and $S$ 
is black $(k,m)$-regular, then $n\in S.$ 
If additionally $m<\psi (k),$ then taking $i=\psi (m)-1$ we get $\psi (i)-1=m,$
hence  $\psi (m)-1\in S.$ 
We see that if $m<\psi (k),$
$k\leq n<m,$ then $S$ is black $(k,m)$-regular if and only if the shifted scheme
(\ref{grb2}) has no white columns and the first from the left complete column 
is a black one.
\begin{equation}  
\begin{matrix}
 \ \ \ \ \ \ \ &\stackrel{m}{\bullet } & \cdots & \bullet  
& \stackrel{n+i}{\circ }  & \bullet  & \cdots & \stackrel{n}{\bullet }  \Leftarrow \cr 
 \stackrel{k-1}{\circ }\ \cdots & \stackrel{\psi (m)-1}{\bullet } & \cdots & \bullet  
& \stackrel{n-i}{\bullet } & \circ & \cdots & \stackrel{n}{\bullet }  \hfill  
\end{matrix}
\label{grab2}
\end{equation}
If $m>\psi (k),$
then for $i=\psi (k)$ we get $\psi (i)-1=k-1,$ hence $\psi (k)\in S.$
That is, if $m>\psi (k),$
$k\leq n<m,$ then $S$ is black $(k,m)$-regular if and only if the shifted scheme
(\ref{grb2}) has no white columns:
\begin{equation} 
\begin{matrix}
\stackrel{m}{\bullet } \ \cdots & \stackrel{\psi (k)}{\bullet } & \cdots &  \bullet & \stackrel{n+i}{\circ } & \bullet & \cdots & \stackrel{n}{\bullet }  \Leftarrow 
\cr
 \ \ \ \ \ \hfill & \stackrel{k-1}{\circ } & \cdots & \circ &
\stackrel{n-i}{\bullet } & \bullet  & \cdots &  \stackrel{n}{\bullet }  \hfill 
\end{matrix}
\label{grab3}
\end{equation}

At the same time we should stress that if $m=\psi (k),$
then no one set is $(k,m)$-regular. Indeed, for $i=k-1$ we have
$\psi (i)-1=m.$ Hence both of the elements $i, \psi (i)-1$ belong to $S\cup \{ k-1, m\} ,$
and therefore $S$ is not white $(k,\psi (k))$-regular. If we take $i=m,$ then 
$\psi (i)-1=k-1,$ and no one of the elements $i, \psi (i)-1$
belongs to $S\setminus \{ k-1,m\} .$ Thus $S$ is neither black $(k,\psi (k))$-regular.

\smallskip
Let  $S \cap [k,m-1]=\{ s_1,s_2,\ldots , s_r\} ,$ $s_1<s_2<\ldots <s_r.$
Denote $u_i=u[1+s_i,s_{i+1}],$ $0\leq i\leq r,$ where we put formally $s_0=k-1,$ $s_{r+1}=m,$
while $u[k,m]$ has been defined in (\ref{ww}).
\begin{lemma} 
If $S$ is white $(k,m)$-regular, then
values in $U_q^+({\mathfrak so}_{2n+1})$ of the bracketed words 
$[u_ru_{r-1}\ldots u_1u_0]$ and $[u_0u_1\ldots u_{r-1}u_r]$
are independent of the precise alignment of brackets.
\label{100}
\end{lemma}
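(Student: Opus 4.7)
The plan is to derive the lemma from the alignment-independence result Lemma \ref{indle}. That lemma, applied to the sequence $y_0 = u_0, y_1 = u_1, \ldots, y_r = u_r$, tells us that the bracketed word is unambiguous provided we check the non-adjacent pairwise commutations $[u_i, u_j] = 0$ for every $i, j$ with $j - i \geq 2$. So the whole argument reduces to verifying these commutations; the reverse-order statement $[u_r u_{r-1} \cdots u_0]$ follows similarly once we have also $[u_j, u_i] = 0$ on the same pairs, or via antisymmetry (\ref{cha}) after checking $p_{u_i u_j} p_{u_j u_i} = 1$.

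Each pair $u_i, u_j$ has the shape $u_i = u[1+s_i, s_{i+1}]$ and $u_j = u[1+s_j, s_{j+1}]$, which matches the setup of Proposition \ref{NU} under the substitution $(k, i, j, m) \mapsto (1+s_i,\, s_{i+1},\, s_j,\, s_{j+1})$. That proposition then delivers both $[u_i, u_j] = 0$ and $[u_j, u_i] = 0$ as soon as its four exclusion clauses hold, which in our notation read
$$
s_{j+1} \neq 2n - s_i, \quad s_{j+1} \neq 2n - s_{i+1}, \quad s_j \neq 2n - s_i, \quad s_{i+1} \neq 2n - s_j.
$$
Each of these is the negation of an equality of the form $t' = \psi(t) - 1$ for two elements $t, t' \in S \cup \{k-1, m\} = \{s_0, s_1, \ldots, s_{r+1}\}$.

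But that is exactly what white $(k,m)$-regularity rules out: Definition \ref{reg1} asserts precisely that for every $t \in [k-1, m-1]$ with $k \leq \psi(t) \leq m+1$, at most one of $t$ and $\psi(t) - 1$ can belong to $S \cup \{k-1, m\}$. So the task reduces to verifying in each of the four cases that the relevant index $t$ satisfies the range conditions $k-1 \leq t < m$ and $k \leq \psi(t) \leq m+1$; otherwise (if $u_i$ and $u_j$ involve only variables strictly on one side of $x_n$), Lemma \ref{sepp} already furnishes the commutation without appealing to Proposition \ref{NU} at all. The main obstacle will be the routine but careful edge-case bookkeeping: one has to treat the boundary values $s_0 = k-1$ and $s_{r+1} = m$, check that the hypothesis $t \neq m$ implicit in Definition \ref{reg1} is not violated (here the constraint $j \geq i + 2$ on our pairs ensures this), and keep track of the separated-variable subcases where Proposition \ref{NU} is actually vacuous.
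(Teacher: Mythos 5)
Your proposal is correct and follows essentially the same route as the paper: reduce to Lemma \ref{indle}, verify $[u_i,u_j]=[u_j,u_i]=0$ for non-adjacent pairs via Proposition \ref{NU} under exactly the substitution $k\leftarrow 1+s_i,\ i\leftarrow s_{i+1},\ j\leftarrow s_j,\ m\leftarrow s_{j+1}$, with white $(k,m)$-regularity supplying the four exclusion hypotheses and Lemma \ref{sepp} covering the separated case. The only (cosmetic) difference is that the paper checks the exclusions by observing that $\psi(1+s_i)$ and $\psi(s_{i+1})-1$ sit in columns of the shifted scheme and hence cannot be black, whereas you invoke the raw form of Definition \ref{reg1}; the range bookkeeping you defer does go through.
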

\begin{proof}
Let $0\leq i<j-1,$ $j\leq r.$ Assume $k\leq n<m.$
The points $s_{i},$ and  $\psi (1+s_i)$ 
 form a column on the shifted  scheme (\ref{grab}) or (\ref{grab1}),
for $s_i+\psi (1+s_i)=2n.$
Hence $\psi (1+s_i)$ $=\psi (s_i)-1$ is not a black point. In particular 
$s_{j+1}\neq \psi (1+s_i),$  $s_j\neq \psi (1+s_i).$ 
Similarly the points $s_{i+1},$ and  $\psi (s_{i+1})-1$ 
 form a column on the shifted  scheme, and hence 
$s_{j+1}\neq \psi (s_{i+1})-1,$ $s_j\neq \psi (s_{i+1})-1.$

We have $1+s_i\leq s_{i+1}<s_j<s_{j+1},$ $s_{j+1}\neq \psi (1+s_i),$
$s_{j+1}\neq \psi (s_{i+1})-1,$ $s_j\neq \psi (1+s_i),$ and $s_j\neq \psi (s_{i+1})-1.$
 Therefore Proposition \ref{NU} with
$k\leftarrow 1+s_i, $ $i\leftarrow s_{i+1},$ $j\leftarrow s_j,$ $m\leftarrow s_{j+1}$
implies $[u_i,u_j]=[u_j,u_i]=0.$ If $m\leq n$ or $k>n,$ then $u_i$ and $u_j$ 
are separated, hence still $[u_i,u_j]=[u_j,u_i]=0$ due to Lemma \ref{sepp}.
It remains to apply Lemma \ref{indle}.
\end{proof}
\begin{lemma} 
If $S$ is white $(k,m)$-regular, then
$[u_0u_1\ldots u_{r-1}u_r]=u[k,m].$
\label{101}
\end{lemma}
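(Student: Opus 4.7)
The plan is to induct on $r = |S \cap [k,m-1]|$. The base case $r = 0$ is immediate since then $u_0 = u[k,m]$ and the single-factor bracketed word equals $u[k,m]$. Throughout, note that the hypothesis of white $(k,m)$-regularity already forces $m \neq \psi(k)$, since, as observed immediately after Definition \ref{reg1}, no set is $(k,\psi(k))$-regular.

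For the inductive step $r \geq 1$, I would invoke Lemma \ref{100} and identity (\ref{ind}) to realign the bracketed word as
$$[u_0 u_1 \cdots u_r] = [u_0,\, [u_1 u_2 \cdots u_r]].$$
The goal is then to apply Proposition \ref{ins2} with $i = s_1$, which would yield $[u[k,s_1],\, u[1+s_1, m]] = u[k,m]$, provided that $[u_1 u_2 \cdots u_r] = u[1+s_1, m]$ by the inductive hypothesis and that $s_1$ is not one of the two exceptional indices $\psi(m)-1$ or $\psi(k)$ of that proposition.

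To apply the inductive hypothesis, I must check that the subset $S' = S \cap [1+s_1, m-1] = \{s_2, \ldots, s_r\}$ is white $(1+s_1, m)$-regular. The sub-extended forbidden set $S' \cup \{s_1, m\}$ coincides with $S \cup \{m\}$, which differs from the original forbidden set $S \cup \{k-1, m\}$ only by the removal of $k-1$. Any index $i'$ in the sub-range $s_1 \leq i' < m$ with $1+s_1 \leq \psi(i') \leq m+1$ automatically lies in the original range $k-1 \leq i' < m$ with $k \leq \psi(i') \leq m+1$, so the original regularity applies and supplies either $i' \notin S \cup \{k-1,m\}$ or $\psi(i')-1 \notin S \cup \{k-1,m\}$; in either branch the conclusion descends, since dropping $k-1$ from the forbidden set only makes non-membership easier. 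This establishes the sub-regularity and hence, by induction, $[u_1 u_2 \cdots u_r] = u[1+s_1, m]$.

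For the exceptional indices: if $s_1 = \psi(m)-1$, substitute $i = s_1$ into the white regularity condition; then $\psi(s_1) - 1 = m$, so both $s_1 \in S$ and $\psi(s_1)-1 = m$ lie in $S \cup \{k-1, m\}$, a contradiction. If $s_1 = \psi(k)$, substitute $i = k-1$; then $\psi(k-1)-1 = \psi(k) = s_1$, so both $k-1$ and $\psi(k-1)-1 = s_1$ lie in $S \cup \{k-1, m\}$, again a contradiction. Thus Proposition \ref{ins2} applies and the induction closes. The main obstacle is simply the bookkeeping in these two verifications, i.e.\ confirming that the combinatorics of white regularity is precisely calibrated to (a) pass to the subset $S \cap [1+s_1,m-1]$ and (b) forbid the two exceptional splitting positions arising in Proposition \ref{ins2}; everything else is formal.
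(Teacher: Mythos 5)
Your proof is correct and is essentially the paper's argument run in mirror image: the paper peels off the \emph{last} factor, applying the inductive hypothesis to $[u_0\ldots u_{r-1}]=u[k,s_r]$ (noting $S$ is white $(k,s_r)$-regular) and then Proposition \ref{ins2} at $i=s_r$, whereas you peel off the \emph{first} factor and invoke Proposition \ref{ins2} at $i=s_1$, with the regularity checks ruling out the exceptional indices being the same computation performed at the opposite end. One cosmetic slip: $S'\cup\{s_1,m\}$ need not literally coincide with $S\cup\{m\}$ (the set $S$ may contain irrelevant elements outside $[k,m-1]$), but the inclusion of the relevant window of $S'\cup\{s_1,m\}$ into $S\cup\{k-1,m\}$ is all you actually use, so nothing breaks.
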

\begin{proof}
We use induction on $r.$ If $r=0,$ the equality is clear.
In the general case the inductive supposition yields 
$[u_0u_1\ldots u_{r-1}]=u[k,s_r],$ for $S$ is white $(k,s_r)$-regular.
By Proposition \ref{ins2}
we have $[u[k,s_r],u_r]=u[k,m],$ unless $s_r=\psi (m)-1$ or $s_r=\psi (k).$
However the white $(k,m)$-regularity implies that $\psi (m)-1,$ $\psi (k)$ are not 
black points. We are done.
\end{proof}
\begin{lemma}
If $S$ is white $(k,m)$-regular, then in the above notations we have
\begin{equation}
\Phi^{S}(k,m)=(-1)^r\prod_{r\geq i>j\geq 0} p(u_i,u_j)^{-1}\cdot [u_ru_{r-1}\ldots u_1u_0].
\label{fxny}
\end{equation}
\label{xny}
\end{lemma}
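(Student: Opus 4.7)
The plan is to proceed by induction on the cardinality $r=|S\cap [k,m-1]|$. The base case $r=0$ is immediate: the sum in the defining formula (\ref{dhs}) is empty, so $\Phi^S(k,m)=u[k,m]$, while the right-hand side of (\ref{fxny}) reduces to the single bracket $[u_0]=u[k,m]$ with an empty product of $p$-factors.

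For the inductive step, I would apply Lemma \ref{100} to choose the bracket alignment
\[
[u_ru_{r-1}\cdots u_0]=[u_r,\,[u_{r-1}u_{r-2}\cdots u_0]],
\]
and then invoke the inductive hypothesis on the inner bracket with the set $S'=S\cap [k,s_r-1]$ on the interval $[k,s_r]$. Before doing so one checks that $S'$ inherits white $(k,s_r)$-regularity from $S$, since shrinking the right endpoint of the interval and dropping the largest element can only weaken the defining condition. The inductive hypothesis then gives $[u_{r-1}\cdots u_0]=(-1)^{r-1}\prod_{r-1\geq i>j\geq 0}p(u_i,u_j)\cdot\Phi^{S'}(k,s_r)$, reducing the task to computing $[u_r,\Phi^{S'}(k,s_r)]$ and matching it against the recursion (\ref{dhs}) for $\Phi^S(k,m)$.

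To evaluate $[u_r,\Phi^{S'}(k,s_r)]$, I would distribute the outer bracket across each summand in the expansion of $\Phi^{S'}(k,s_r)$ using the skew-derivation identity (\ref{br1}). The decisive observation is that $[u_r,u[k,s_i]]=[u[k,s_i],u_r]=0$ for every $i<r$: this follows from Proposition \ref{NU} applied with $k\leftarrow k$, $i\leftarrow s_i$, $j\leftarrow s_r$, $m\leftarrow m$, because the potentially problematic cases $s_i=\psi(m)-1$, $s_r=\psi(k)$, and $s_i=\psi(s_r)-1$ are all ruled out precisely by the white $(k,m)$-regularity of $S$, while the case $m=\psi(k)$ is excluded since no set is $(k,\psi(k))$-regular. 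Consequently the bracket distributes cleanly, producing a contribution from $[u_r,u[k,s_r]]$ (handled by the antisymmetry (\ref{cha}) together with Proposition \ref{ins2}, whose exceptional indices are once again excluded by regularity) plus terms of the form $[u_r,\Phi^{S'}(1+s_i,s_r)]\cdot u[k,s_i]$. A secondary application of the inductive hypothesis to each interval $[1+s_i,m]$ identifies the inner bracket with $\Phi^S(1+s_i,m)$ up to a $p$-scalar, matching the $i$-th term of the recursion (\ref{dhs}).

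The main obstacle, and the technically demanding part of the argument, lies in the coefficient bookkeeping. The scalars $\alpha_{km}^{s_i}=\tau_{s_i}p(u(1+s_i,m),u(k,s_i))^{-1}$ from (\ref{dhs}) must be reconciled with the bimultiplicative $p$-factors accumulated from each use of the antisymmetry identities (\ref{cha}), (\ref{cha1}), together with the inductive prefactor $\prod_{r-1\geq i>j\geq 0}p(u_i,u_j)$. Verifying that everything collapses to the stated coefficient $(-1)^r\prod_{r\geq i>j\geq 0}p(u_i,u_j)^{-1}$ amounts to a careful telescoping calculation relying on the bimultiplicativity of $p(-,-)$ together with the explicit values of $\tau_i$ and $\mu_k^{m,i}$ recorded in (\ref{tau}) and (\ref{mu2})--(\ref{mu4}).
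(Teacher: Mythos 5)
Your argument is correct, but it follows a genuinely different route from the paper's. You peel off the \emph{largest} element $s_r$, writing $[u_ru_{r-1}\cdots u_0]=[u_r,[u_{r-1}\cdots u_0]]$, apply the inductive hypothesis on the shorter interval $[k,s_r]$, and match the recursion (\ref{dhs}) term by term: the vanishing $[u_r,u[k,s_i]]=0$ from Proposition \ref{NU} (its exceptional indices are indeed all excluded by white regularity, as you say), Proposition \ref{ins2} plus (\ref{cha}) for the $i=r$ term, and a secondary application of the inductive hypothesis on $[1+s_i,m]$ and $[1+s_i,s_r]$ to identify $[u_r,\Phi^{S}(1+s_i,s_r)]$ with $-p(u(1+s_r,m),u(1+s_i,s_r))\,\Phi^{S}(1+s_i,m)$; the telescoping $\alpha_{k,s_r}^{s_i}\,p(u(1+s_r,m),u(k,s_i))^{-1}=\alpha_{km}^{s_i}$ then closes the computation, so the bookkeeping you defer does work out. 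The paper instead keeps the interval $[k,m]$ fixed and removes the \emph{smallest} element $s_1$: its inductive hypothesis is the full recursion identity for $\Phi^{S\setminus\{s_1\}}(k,m)$ (formula (\ref{kho})), and the only computation is the antisymmetry swap $p(u_1,u_0)[u_0,u_1]=-[u_1,u_0]+(1-q^{-2})u_1\cdot u_0$, whose extra product term becomes, via (\ref{br1}), precisely the missing $l=1$ summand. Your version in effect proves the factorization of Corollary \ref{xny1} at $s=s_r$ as the inductive step (the paper only derives that corollary afterward, from the lemma), at the cost of a heavier induction and more $p$-factor accounting; the paper's version needs a single bracket manipulation but a less transparent inductive statement. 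When writing yours up, state the induction as a strong induction on $|S\cap[k,m-1]|$ ranging over \emph{all} pairs $(k,m)$ and all white regular sets, since you invoke the hypothesis not only for $[k,s_r]$ but also for $[1+s_i,m]$ and $[1+s_i,s_r]$, and record the (easy) check that white regularity passes to these subintervals.
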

\begin{proof}
To prove the equality it suffices to check the recurrence relations (\ref{dhs}) for the right hand side.
We shall use induction on $r.$ If $r=0,$ there is nothing to prove. By Lemma \ref{101} we have $u[k,m]=[u_0u_1\ldots u_{r-1}u_r].$
 The inductive supposition for the white $(k,m)$-regular set 
$S\setminus \{ s_1 \}$ takes up the form
\begin{equation} 
(-1)^{r-1}p(u_1,u_0)\prod_{r\geq i>j\geq 0} p(u_i,u_j)^{-1}\cdot [u_ru_{r-1}\ldots u_2[u_0u_1]]= 
[u_0u_1u_2\ldots u_r]
\label{kho}
\end{equation}
$$
-(1-q^{-2})\sum _{l=2}^r\alpha _{k,m}^{s_l}(-1)^{r-l}
\prod_{r\geq i>j\geq l} p(u_i,u_j)^{-1} \cdot [u_ru_{r-1}\ldots u_l]\cdot [[u_0u_1]u_2\ldots u_{l-1}].
$$
By definition $p(u_0,u_1)p(u_1,u_0)=\mu _k^{s_2,s_1},$ see Definition \ref{slo}, while by (\ref{mu2}),
(\ref{mu4}) we have $\mu _k^{s_2,s_1}=q^{-2},$ for the regularity condition implies $s_1\neq n,$
$s_1\neq \psi (s_2)-1,$ $s_1\neq \psi (k).$ Hence by (\ref{cha}) we may write
$$
p(u_1,u_0)[u_0,u_1]=-[u_1,u_0]+(1-q^{-2})u_1\cdot u_0.
$$
This implies
$$
p(u_1,u_0)[u_ru_{r-1}\ldots u_2[u_0u_1]]=
-[u_ru_{r-1}\ldots u_2u_1u_0]+(1-q^{-2})[[u_ru_{r-1}\ldots u_2],u_1\cdot u_0].
$$
Since $[u_i,u_0]=0,$ $i\geq 2,$ ad-identity (\ref{br1}) yields
$$
[[u_ru_{r-1}\ldots u_2],u_1\cdot u_0]=[u_ru_{r-1}\ldots u_2u_1]\cdot u_0.
$$
Thus the left hand side of (\ref{kho}) reduces to
$$
(-1)^r\prod_{r\geq i>j\geq 0} p(u_i,u_j)^{-1}\cdot [u_ru_{r-1}\ldots u_2u_1u_0] + {\frak A},
$$ 
where 
$$
{\frak A}=(1-q^{-2})(-1)^{r-1}\prod_{r\geq i>0} p(u_i,u_0)^{-1}\prod_{r\geq i>j\geq 1} p(u_i,u_j)^{-1}
\cdot [u_ru_{r-1}\ldots u_2u_1]\cdot u_0.
$$
At the same time ${\frak A}$ up to a sign coincides with the missing summand of the right hand side of (\ref{kho}) corresponding to $l=1,$ for 
$$
\alpha _{k,m}^{s_1}
=\tau _{s_1}p(u_ru_{r-1}\ldots u_1,u_0)^{-1}=\prod _{r\geq i>0}p(u_i,u_0)^{-1}.
$$
\end{proof}
\begin{corollary}
If $S$ is white $(k,m)$-regular, $s\in S\cup \{ n\} ,$ $k\leq s<m,$ then 
$$
\Phi^{S}(k,m)=-p_{ab}^{-1} [\Phi^{S}(1+s,m),\Phi^{S}(k,s)],
$$
where $a=u(1+s,m),$ $b=u(k,s).$
\label{xny1}
\end{corollary}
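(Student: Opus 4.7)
The plan is to reduce the identity to the explicit product formula of Lemma \ref{xny} combined with the re-bracketing principle of Lemmas \ref{100} and \ref{indle}. Write $S \cap [k, m-1] = \{s_1 < \cdots < s_r\}$ and set $u_i = u[1+s_i, s_{i+1}]$ with $s_0 = k-1$, $s_{r+1} = m$. I first handle the generic case $s = s_\ell \in S$. The restricted sets $S \cap [k, s-1]$ and $S \cap [1+s, m-1]$ remain white-regular for the parameter pairs $(k, s)$ and $(1+s, m)$, so three applications of Lemma \ref{xny} present $\Phi^{S}(k,m)$, $\Phi^{S}(k,s)$, and $\Phi^{S}(1+s,m)$ as explicit scalar multiples of the bracketed super-words $[u_r \cdots u_0]$, $[u_{\ell-1} \cdots u_0]$, and $[u_r \cdots u_\ell]$ respectively. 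By Lemma \ref{100}, the long bracket re-brackets as $\bigl[[u_r \cdots u_\ell],\, [u_{\ell-1} \cdots u_0]\bigr]$. Comparing scalar prefactors, the product $\prod_{r \geq i > j \geq 0} p(u_i, u_j)^{-1}$ factors into subproducts over pairs with both indices in $[\ell, r]$, both in $[0, \ell-1]$, and a remaining cross term $\prod_{i \geq \ell,\, j \leq \ell-1} p(u_i, u_j)^{-1}$. By bimultiplicativity of $p(-,-)$ and the fact that $p$ depends only on constitutions, the cross term collapses to $p(u_r \cdots u_\ell,\, u_{\ell-1} \cdots u_0)^{-1} = p(a,b)^{-1} = p_{ab}^{-1}$, since $u_r \cdots u_\ell$ and $u_{\ell-1} \cdots u_0$ have the same constitutions as $a$ and $b$. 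The signs match via $(-1)^r = -(-1)^{r-\ell}(-1)^{\ell-1}$, which yields the formula.

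The case $s = n$ requires an extra splitting step, since white regularity automatically forces $n \notin S$ whenever $k \leq n < m$, so there is no $s_\ell$ to split at. Let $p$ be the unique index with $s_p < n < s_{p+1}$, and set $w_p = u[s_p+1, n]$, $w_{p+1} = u[n+1, s_{p+1}]$. The piece $u_p = u[s_p+1, s_{p+1}]$ decomposes as $u_p = [w_p, w_{p+1}]$ by Corollary \ref{rww} when $s_{p+1} \neq \psi(s_p+1)$, or as a scalar multiple $\beta [w_{p+1}, w_p]$ via the third clause of (\ref{ww}) in the exceptional case $s_{p+1} = \psi(s_p+1)$. After substituting this decomposition into $[u_r \cdots u_0]$, I claim the extended sequence $u_0, \ldots, u_{p-1}, w_p, w_{p+1}, u_{p+1}, \ldots, u_r$ still satisfies the non-adjacent skew-commutation hypothesis of Lemma \ref{indle}, which lets us re-bracket as $\bigl[[u_r \cdots u_{p+1} w_{p+1}],\, [w_p u_{p-1} \cdots u_0]\bigr]$. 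Since the intervals $[n+1, m]$ and $[k, n]$ fall under the automatic regularity clause of Definition \ref{reg1}, Lemma \ref{xny} identifies the two factors as scalar multiples of $\Phi^{S}(n+1, m)$ and $\Phi^{S}(k, n)$, and the scalar bookkeeping proceeds exactly as in the first case (the cross term again being $p_{ab}^{-1}$ with $a = u(n+1,m)$, $b = u(k,n)$).

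The main obstacle is the commutation verification underlying the second case. The new pairings to check are $[w_p, u_j] = [u_j, w_p] = 0$ for $j > p$, $[u_i, w_{p+1}] = [w_{p+1}, u_i] = 0$ for $i < p$, together with the cross-equator relations $[u_i, u_j] = 0$ for $i < p < j$ (the latter already supplied by Lemma \ref{100} applied to the original sequence). All of these reduce to Proposition \ref{NU}, whose exceptional conditions — of the form $m = \psi(i)-1$, $j = \psi(k)$, or $i = \psi(j)-1$ — are precisely the configurations forbidden by the white $(k,m)$-regularity of $S$, visible on the shifted schemes (\ref{grab})–(\ref{grab1}) as the absence of black columns. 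One additional minor check is needed in the exceptional subcase $s_{p+1} = \psi(s_p+1)$: the scalar $\beta$ from (\ref{ww}) together with the reversed order $[w_{p+1}, w_p]$ must be absorbed into the coefficient $p_{ab}^{-1}$, which is a routine calculation using the values of $\sigma$ and $\mu$ listed in (\ref{mu21})–(\ref{mu4}).
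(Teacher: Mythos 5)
Your proposal is correct and follows essentially the same route as the paper: split the bracketed word $[u_r\cdots u_0]$ at $s$ (or, for $s=n$, first decompose $u_p$ into $u[1+s_p,n]$ and $u[n+1,s_{p+1}]$ via Corollary \ref{rww}), re-bracket using Lemma \ref{indle} after verifying the non-adjacent commutations, and apply Lemma \ref{xny} to both halves. The only superfluous step is your exceptional subcase $s_{p+1}=\psi(1+s_p)$: it never occurs, since $s_p$ and $\psi(1+s_p)$ form a column on the shifted scheme and white $(k,m)$-regularity forbids black columns, so Corollary \ref{rww} applies unconditionally.
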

\begin{proof} Let $s=s_t,$ $1\leq t\leq r.$
Since by Lemma \ref{100} the value of the bracketed word
$[u_ru_{r-1}\ldots u_0]$ is independent of the precise alignment of brackets,
we have $[u_ru_{r-1}\ldots u_0]=[[u_ru_{r-1}\ldots u_t], [u_{t-1}\ldots u_0]].$
It remains to apply Lemma \ref{xny}.

Let $k\leq s=n<m.$ Since in a white regular set, $n$ is always white, we can find $j$
such that $s_j<n<s_{j+1}.$ Denote $u_j^{\prime }=u[1+s_j,n],$ 
$u_j^{\prime \prime }=u[n+1,s_{j+1}].$ 
The points $s_j$ and  $\psi (1+s_{j})$ form a column on shifted scheme 
(\ref{grab}) or (\ref{grab1}). Hence $\psi (1+s_{j})$ is a white point.
In particular  $s_{j+1}\neq \psi (1+s_{j}).$ Thus, by Corollary \ref{rww}  with 
$k\leftarrow 1+s_j,$ $m\leftarrow s_{j+1}$ we have
$u_j=[u_j^{\prime },u_j^{\prime \prime }]$ 
$=-p(u_j^{\prime \prime },u_j^{\prime })^{-1}[u_j^{\prime \prime },u_j^{\prime }].$ 

Note that the value of the bracketed word 
\begin{equation}
[u_ru_{r-1}\ldots u_{j+1}u_j^{\prime \prime }u_j^{\prime }u_{j-1}\ldots u_1u_0]
\label{nez}
\end{equation}
is independent of the precise alignment of brackets. 
Indeed, Lemma \ref{nuly}
with $k\leftarrow 1+s_j,$ $i\leftarrow s_{i},$ $m\leftarrow s_{i+1}$
says $[u_i,u_j^{\prime }]=0,$ $i>j,$ unless $s_{i+1}=\psi (1+s_{j})$ or 
$s_{i}=\psi (1+s_j).$ However the points $s_{j},$ and  $\psi (1+s_{j})$ 
 form a column on the shifted  scheme (\ref{grab}) or (\ref{grab1}).
Hence $\psi (1+s_{j})$ is not a black point. In particular 
$s_{i+1}\neq \psi (1+s_{j}),$ and  $s_{i}\neq \psi (1+s_j).$

At the same time if $i<j-1,$ then $u_j^{\prime }$ and $u_i,$ 
are separated by $u_{j-1}$ (Definition \ref{sep}), hence Lemma \ref{sepp}
 implies $[u_j^{\prime },u_i]=0.$

In perfect analogy we get $[u_j^{\prime \prime }, u_i]=0,$ $i<j,$ and 
$[u_i, u_j^{\prime \prime }]=0,$ $i>j+1.$ Thus Lemma \ref{indle} implies that
(\ref{nez}) is independent of the precise alignment of brackets.
In particular 
$$
[u_ru_{r-1}\ldots u_{j+1}u_j^{\prime \prime }u_j^{\prime }u_{j-1}\ldots u_1u_0]
=\left[ [u_ru_{r-1}\ldots u_{j+1}u_j^{\prime \prime }],[u_j^{\prime }u_{j-1}\ldots u_1u_0]
\right]. 
$$
It remains to apply Lemma \ref{xny}. 
\end{proof}
\begin{lemma}
If $k\leq t<m,$ $t\notin S,$ then
\begin{equation} 
\Phi^{S\cup \{ t\}}(k,m)-\Phi^{S}(k,m)=(q^{-2}-1)p_{ab}^{-1}\tau _t
\Phi^{S}(1+t,m)\Phi^{S}(k,t),
\label{kho1}
\end{equation}
where $a=u(1+t,m),$ $b=u(k,t).$
\label{pfb}
\end{lemma}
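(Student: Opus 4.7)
The plan is to argue by induction on $j=|S\cap[k,t-1]|$, the number of elements of $S$ that lie strictly to the left of $t$ in the interval. Throughout, write $S\cap[k,m-1]=\{s_1<\cdots<s_r\}$ and let $j$ be the index with $s_j<t<s_{j+1}$ (using the conventions $s_0=k-1$, $s_{r+1}=m$), so that $s_1,\ldots,s_j$ are exactly the elements of $S$ in $[k,t-1]$.

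For the base case $j=0$, both sums in the definition (\ref{dhs}) of $\Phi^{S}(k,m)$ and $\Phi^{S\cup\{t\}}(k,m)$ that come from indices $s_i<t$ are empty. Moreover, for $s_i>t$ one has $t\notin[1+s_i,m-1]$, so $\Phi^{S\cup\{t\}}(1+s_i,m)=\Phi^{S}(1+s_i,m)$, and these terms cancel in the difference. What survives is only the new term corresponding to $t$ itself, which reads $-(1-q^{-2})\alpha_{km}^{t}\,\Phi^{S\cup\{t\}}(1+t,m)\,u[k,t]$. Since also $\Phi^{S\cup\{t\}}(1+t,m)=\Phi^{S}(1+t,m)$ and, because $S\cap[k,t-1]=\emptyset$, $\Phi^{S}(k,t)=u[k,t]$, the definition $\alpha_{km}^{t}=\tau_t\,p(u(1+t,m),u(k,t))^{-1}=\tau_t p_{ab}^{-1}$ immediately delivers (\ref{kho1}).

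For the inductive step, apply (\ref{dhs}) to both $\Phi^{S\cup\{t\}}(k,m)$ and $\Phi^{S}(k,m)$ and subtract. The $u[k,m]$ terms cancel; the sum over $s_i>t$ vanishes because $\Phi^{S\cup\{t\}}(1+s_i,m)=\Phi^{S}(1+s_i,m)$ for such $i$; and the new term at $t$ contributes $-(1-q^{-2})\alpha_{km}^{t}\Phi^{S}(1+t,m)\,u[k,t]$. To each remaining index $s_i\leq s_j<t$ apply the inductive hypothesis to the pair $(1+s_i,m)$, which is legitimate because $|S\cap[1+s_i,t-1]|=j-i<j$; this rewrites $\Phi^{S\cup\{t\}}(1+s_i,m)-\Phi^{S}(1+s_i,m)$ as $(q^{-2}-1)\,p(u(1+t,m),u(1+s_i,t))^{-1}\tau_t\,\Phi^{S}(1+t,m)\,\Phi^{S}(1+s_i,t)$. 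Plugging this back gives
\[
\Phi^{S\cup\{t\}}(k,m)-\Phi^{S}(k,m)=(q^{-2}-1)\tau_t\,\Phi^{S}(1+t,m)\Bigl(\alpha_{km}^{t}u[k,t]-(1-q^{-2})\sum_{i=1}^{j}\beta_i\,\Phi^{S}(1+s_i,t)u[k,s_i]\Bigr),
\]
for explicit scalars $\beta_i=\alpha_{km}^{s_i}\,p(u(1+t,m),u(1+s_i,t))^{-1}$; the scalar $\Phi^{S}(1+t,m)$ can be brought out uniformly on the left since it multiplies every surviving summand.

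What remains is to match the bracket against $p_{ab}^{-1}\Phi^{S}(k,t)$, and this is the only real computational step. Expanding $\Phi^{S}(k,t)$ via (\ref{dhs}), the $u[k,t]$ coefficient identification reduces to the definitional equality $\alpha_{km}^{t}=\tau_t p_{ab}^{-1}$, and the $\Phi^{S}(1+s_i,t)u[k,s_i]$ coefficient identification reduces to the claim
\[
\alpha_{km}^{s_i}\,p(u(1+t,m),u(1+s_i,t))^{-1}=p_{ab}^{-1}\,\alpha_{kt}^{s_i},
\]
which I expect to be the main (but mild) obstacle. This identity is verified purely by bimultiplicativity of $p$, using the two word-factorizations $u(1+s_i,m)=u(1+s_i,t)\cdot u(1+t,m)$ and $u(k,t)=u(k,s_i)\cdot u(1+s_i,t)$; after cancelling $\tau_{s_i}$ both sides become the same product of three $p$-values in $\mathbf{k}$, and since these factors are scalars their ordering is irrelevant. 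This closes the induction.
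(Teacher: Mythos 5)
Your argument is essentially the paper's own proof: the paper also expands both sides via (\ref{dhs}), cancels the $s_i>t$ terms, applies the lemma inductively to the pairs $(1+s_i,m)$, pulls $\Phi^{S}(1+t,m)$ out on the left, and closes with exactly the bimultiplicativity identity you isolate, namely $\alpha_{km}^{s_i}\,p(u(1+t,m),u(1+s_i,t))^{-1}=p_{ab}^{-1}\alpha_{kt}^{s_i}$. The only structural difference is the induction variable -- the paper inducts on $m-k$ while you induct on $|S\cap[k,t-1]|$ -- and both quantities strictly decrease when passing to $(1+s_i,m)$, so this is immaterial.

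One coefficient in your displayed intermediate formula is off, and you should repair it before this counts as a complete proof. The term coming from the new index $t$ is $-(1-q^{-2})\alpha_{km}^{t}\Phi^{S}(1+t,m)u[k,t]=(q^{-2}-1)\tau_t p_{ab}^{-1}\Phi^{S}(1+t,m)u[k,t]$; after you extract the global prefactor $(q^{-2}-1)\tau_t$, the surviving coefficient of $u[k,t]$ inside the bracket is therefore $\tau_t^{-1}\alpha_{km}^{t}=p_{ab}^{-1}$, not $\alpha_{km}^{t}$ as written. As displayed, your bracket carries an extra factor $\tau_t$ on the $u[k,t]$ term, and your subsequent claim that the $u[k,t]$ coefficients match "by the definitional equality $\alpha_{km}^{t}=\tau_t p_{ab}^{-1}$" compares $\alpha_{km}^{t}$ against $p_{ab}^{-1}$, which differ by $\tau_t$. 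Since $\tau_t=1$ unless $t=n$, this is invisible in most cases, but the lemma is stated for arbitrary $t\notin S$ with the factor $\tau_t$ appearing explicitly, so the case $t=n$ (where $\tau_n=q$) must come out right. With the coefficient corrected to $p_{ab}^{-1}$ the bracket is exactly $p_{ab}^{-1}\Phi^{S}(k,t)$ by (\ref{dhs}), and the proof closes as you intend.
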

\begin{proof}
We shall use induction on $m-k.$ If $m=k,$ there is nothing to prove. 
By definition (\ref{dhs}) we have 
$$
\Phi^{S\cup \{ t\}}(k,m)-\Phi^{S}(k,m)=-(1-q^{-2})
 \hbox{\large \{ } \tau _tp_{ab}^{-1}\Phi^{S}(1+t,m)u[k,t]
$$
$$
+\sum _{s_i<t}\tau _{s_i}p^{-1}_{u_iv_i}
(\Phi^{S\cup \{ t\}}(1+s_i,m)-\Phi^{S}(1+s_i,m))u[k,s_i]\hbox{\large \} } ,
$$
where $u_i=u(1+s_i,m),$  $v_i=u(k,s_i).$ By means of the inductive supposition we may continue
$$
=(q^{-2}-1)p_{ab}^{-1}\tau _t\Phi^{S}(1+t,m)
$$
$$
\cdot \hbox{\large \{ }u[k,t]
-(1-q^{-2})
\sum _{s_i<t}\tau _{s_i}p^{-1}_{u_iv_i}p^{-1}_{ab_i}p_{ab}\Phi^{S}(1+s_i,t)u[k,s_i] 
\hbox{\large \} },
$$
where $b_i=u(1+s_i,t).$ It remains to note that
$$
p^{-1}_{u_iv_i}p^{-1}_{ab_i}p_{ab}=p(u(1+s_i,t), u(k,s_i))^{-1},
$$
and use definition (\ref{dhs}).
\end{proof}
\begin{corollary}
If $S\cup \{ t\} $ is white $(k,m)$-regular, $t\notin S,$ $k\leq t<m,$ then
\begin{equation} 
\Phi^{S}(k,m)\sim \left[ \Phi^{S}(k,t),\Phi^{S}(1+t,m)\right] .
\label{ff}
\end{equation}
\label{xnt}
\end{corollary}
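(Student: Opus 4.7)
The plan is to combine Lemma~\ref{pfb} with Corollary~\ref{xny1}, applied to the enlarged regular set $S\cup\{t\}$ with $s=t$. The key preliminary observation is that, since $t\notin[k,t-1]\cup[1+t,m-1]$, recursion~(\ref{dhs}) gives
$$\Phi^{S\cup\{t\}}(k,t)=\Phi^{S}(k,t), \qquad \Phi^{S\cup\{t\}}(1+t,m)=\Phi^{S}(1+t,m).$$
Because $S\cup\{t\}$ is white $(k,m)$-regular and $t\in(S\cup\{t\})\cup\{n\}$, Corollary~\ref{xny1} yields
$$\Phi^{S\cup\{t\}}(k,m)=-p_{ab}^{-1}\bigl[\Phi^{S}(1+t,m),\,\Phi^{S}(k,t)\bigr],$$
with $a=u(1+t,m)$, $b=u(k,t)$. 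Substituting this into the identity of Lemma~\ref{pfb} expresses $\Phi^{S}(k,m)$ as the bracket above plus a scalar multiple of the product $\Phi^{S}(1+t,m)\Phi^{S}(k,t)$.

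Next, I reverse the order of the arguments in the bracket via the antisymmetry identity~(\ref{cha}) with $u=\Phi^{S}(1+t,m)$, $v=\Phi^{S}(k,t)$, $p_{uv}=p_{ab}$, $p_{vu}=p_{ba}$. A straightforward regrouping yields
$$\Phi^{S}(k,m)=\bigl[\Phi^{S}(k,t),\,\Phi^{S}(1+t,m)\bigr]-p_{ab}^{-1}\bigl(1-\mu_k^{m,t}+(q^{-2}-1)\tau_t\bigr)\Phi^{S}(1+t,m)\Phi^{S}(k,t),$$
where $\mu_k^{m,t}=p_{ab}p_{ba}$ is defined in~(\ref{mu1}). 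Hence the claim reduces (in fact to equality, not merely~$\sim$) to the scalar identity
$$\mu_k^{m,t}=1+(q^{-2}-1)\tau_t.\eqno(\star)$$

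The main obstacle is the combinatorial verification of~$(\star)$ from the regularity hypothesis. Since $q^2\neq 1$, $(\star)$ is equivalent to the pair of equalities $\tau_t=1$ and $\mu_k^{m,t}=q^{-2}$ (the only other candidate $\tau_t=q$, $\mu_k^{m,t}=1$ would force $q^2=1$). To obtain them, I apply Definition~\ref{reg1} of white $(k,m)$-regularity to the index $i=t$: whenever $k\leq\psi(t)\leq m+1$, the membership $t\in S\cup\{t\}$ forces
$$\psi(t)-1\notin(S\cup\{t\})\cup\{k-1,m\},$$
which rules out $t\in\{n,\,\psi(k),\,\psi(m)-1\}$. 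Formula~(\ref{tau}) then gives $\tau_t=1$, and formulas~(\ref{mu2})--(\ref{mu4}) give $\mu_k^{m,t}=q^{-2}$ (the case $m=\psi(k)$ is vacuous, since no set is $(k,\psi(k))$-regular). In the complementary ranges $\psi(t)<k$ or $\psi(t)>m+1$, an inspection shows $t\neq n$ forces $\tau_t=1$, while a direct computation from~(\ref{mu21}) and~(\ref{mu23}) (separated or one-sided indices) again yields $\mu_k^{m,t}=q^{-2}$. Thus $(\star)$ holds in all cases, completing the proof.
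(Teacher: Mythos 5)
Your proof is correct and follows essentially the same route as the paper: Corollary \ref{xny1} applied to $S\cup\{t\}$ with $s=t$, combined with Lemma \ref{pfb}, followed by the antisymmetry rearrangement and the verification that regularity forces $t\notin\{n,\psi(k),\psi(m)-1\}$ and hence $\tau_t=1$, $\mu_k^{m,t}=q^{-2}$. The only (harmless) differences are that you make explicit the identifications $\Phi^{S\cup\{t\}}(k,t)=\Phi^{S}(k,t)$, $\Phi^{S\cup\{t\}}(1+t,m)=\Phi^{S}(1+t,m)$ and observe that the relation is in fact an equality rather than a projective one.
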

\begin{proof}
Denote $A=\Phi^{S}(k,t),$ $B=\Phi^{S}(1+t,m).$ By Corollary \ref{xny1}
we have 
$\Phi^{S\cup \{ t\}}(k,m)=-p_{ab}^{-1}[B,A].$ At the same time
$t\neq n$ (for $S \cup \{ t\} $ is white $(k,m)$-regular), and hence by Lemma \ref{pfb}
we get 
$
\Phi^{S\cup \{ t\}}(k,m)-\Phi^{S}(k,m)=(q^{-2}-1)p_{ab}^{-1}BA.
$
These two equalities imply 
$$
\Phi^{S}(k,m)=-p_{ab}^{-1}[B,A]-(q^{-2}-1)p_{ab}^{-1}BA
$$
\begin{equation}
=p_{ab}^{-1}(-BA+p_{BA}AB-(q^{-2}-1)BA)=p_{ab}^{-1}p_{BA}(AB-q^{-2}p_{BA}^{-1}BA).
\label{fff}
\end{equation}
By definition (\ref{slo}) we know that $p_{AB}p_{BA}={\mu }_k^{m,t}.$ In which case 
schemes (\ref{grab}), (\ref{grab1}) related to the white regular set 
$S \cup \{ t\} $ show that $t\neq \psi (m)-1,$ $t\neq n,$ $t\neq \psi (k),$
$m\neq \psi (k),$ for $t,m$ are black points. Hence formulae (\ref{mu2}), (\ref{mu4})
imply  ${\mu }_k^{m,t}=q^{-2}.$ Thus, we get $p_{AB}p_{BA}=q^{-2};$
that is, $q^{-2}p_{BA}^{-1}=p_{AB}.$ Now (\ref{fff}) reduces to (\ref{ff}).
\end{proof}
\begin{lemma}
A set $S$ is white $(k,m)$-regular if and only if $\overline{\psi ({S})-1}$
is black regular with respect to $(\psi (m),\psi (k)).$
Here by $\psi ({S})-1$ we denote $\{ \psi (s)-1\, |\, s\in {S}\} ,$
while the complement is related to the interval  $[\psi (m),\psi (k)-1].$
\label{bwd}
\end{lemma}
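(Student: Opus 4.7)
The plan is to verify the equivalence pointwise, by substituting $j=\psi(i+1)=\psi(i)-1$ in the white regularity condition and exploiting the fact that the map $i\mapsto\psi(i)-1$ is an involution on integers (indeed $\psi(\psi(i)-1)=2n-\psi(i)+2=i+1$, so $\psi(\psi(i)-1)-1=i$). First I would dispose of the degenerate cases. If $m\le n$ or $k>n$, every set is both white and black regular on the relevant pair by the convention in Definition~\ref{reg1}, and $\psi$ sends this ``easy'' range onto the corresponding easy range for the dual pair $(\psi(m),\psi(k))$, so both sides trivially hold. If $m=\psi(k)$, then it was observed after Definition~\ref{reg1} that no set is $(k,\psi(k))$-regular of either color, so both sides fail; more precisely, the obstructing pair $i=k-1$, $\psi(i)-1=m=\psi(k)$ for white $(k,\psi(k))$-regularity corresponds under the substitution to the obstructing pair $j=\psi(k)$, $\psi(j)-1=k-1$ for black $(k,\psi(k))$-regularity.

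In the main case $k\le n<m\le 2n$ with $m\ne\psi(k)$, the substitution turns the unordered pair $\{i,\psi(i)-1\}$ into $\{\psi(j)-1,j\}$, and the conditions $k-1\le i<m$, $k\le\psi(i)\le m+1$ translate to $\psi(m)\le j\le\psi(k)$ and $k-1\le j\le m$; these are exactly the quantifier conditions for black $(\psi(m),\psi(k))$-regularity (the second clause $\psi(m)\le\psi(j)\le\psi(k)+1$ being the same as $k-1\le j\le m$). Using that $\psi(j)-1\in\{k-1,m\}\Leftrightarrow j\in\{\psi(k),\psi(m)-1\}$, the white condition at fixed $j$ becomes the disjunction
\[
\bigl(\psi(j)-1\notin S,\ j\ne\psi(k),\ j\ne\psi(m)-1\bigr)\ \text{or}\ \bigl(j\notin S,\ j\ne k-1,\ j\ne m\bigr).
\]

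On the black side, $a\in T=\overline{\psi(S)-1}$ iff $a\in[\psi(m),\psi(k)-1]$ and $\psi(a)-1\notin S$, since $a\in\psi(S)-1$ is equivalent to $\psi(a)-1\in S$ (using $\psi(a+1)=\psi(a)-1$). Because $\{\psi(m)-1,\psi(k)\}$ lies outside $[\psi(m),\psi(k)-1]$, the set $T\setminus\{\psi(m)-1,\psi(k)\}$ coincides with $T$; and the involution identifies $\psi(j)-1\in[\psi(m),\psi(k)-1]$ with $k\le j\le m-1$. Hence the black condition at $j$ unfolds to precisely the same disjunction displayed above. The main obstacle is purely bookkeeping: one must carefully track the four boundary indices $k-1,m,\psi(k),\psi(m)-1$ through the substitution to see that the asymmetric white exclusions ``not in $S\cup\{k-1,m\}$'' mesh exactly with the asymmetric black inclusions ``in $T\setminus\{\psi(m)-1,\psi(k)\}$'' without an off-by-one error.
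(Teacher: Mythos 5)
Your proof is correct and follows essentially the same route as the paper's: the substitution $j=\psi(i)-1$, the observation that this map is an involution exchanging the two quantifier conditions, and the pointwise matching of the exclusions $S\cup\{k-1,m\}$ with the inclusions in $\overline{\psi(S)-1}\setminus\{\psi(m)-1,\psi(k)\}$. The only difference is that you spell out the degenerate cases ($m\le n$, $k>n$, $m=\psi(k)$) and the boundary bookkeeping more explicitly than the paper does, which is harmless.
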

\begin{proof}
Let us replace the parameter $i$ with $j=\psi (i)-1$ in the definition of regularity.
Since $\psi $ change the order, we have $k-1\leq i<m$ is equivalent to
$\psi (k)+1\geq \psi (i)>\psi (m),$ that is $\psi (k)\geq j\geq \psi (m).$
Similarly the condition $k\leq \psi (i)\leq m+1$ is equivalent to
$\psi (k)\geq i\geq \psi (m)-1.$ Since $\psi (j)=i+1,$ this is
$\psi (k)+1\geq \psi (j)\geq \psi (m).$ 

The condition $i\notin {S} \cup \{ k-1,m\} $ is equivalent to
$j\notin (\psi ({S})-1)\cup \{ \psi (m)-1, \psi (k)\} ,$ which, in turn,
is equivalent to $j\in \left( \overline{\psi ({S})-1}\right) \setminus  \{ \psi (m)-1, \psi (k)\} .$
In the same way $\psi (i)-1\notin {S}\cup \{ k-1,m\} $
is equivalent to 
$\psi (j)-1\in \left( \overline{\psi ({S})-1}\right) \setminus  \{ \psi (m)-1, \psi (k)\} .$
\end{proof}
\begin{lemma}
A set $S$ is black $(k,m)$-regular if and only if $\overline{\psi ({S})-1}$
is white $(\psi (m),\psi (k))$-regular. 
\label{bwd1}
\end{lemma}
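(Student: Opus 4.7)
The plan is to deduce Lemma \ref{bwd1} from Lemma \ref{bwd} by observing that the transformation $\phi(S) := \overline{\psi(S) - 1}$ is an involution that swaps the roles of the pairs $(k, m)$ and $(\psi(m), \psi(k))$. Concretely, given $S \subseteq [k, m-1]$, its image $\psi(S) - 1$ lies in $[\psi(m), \psi(k) - 1]$, so the complement $\phi(S)$ is a well-defined subset of that interval; and starting from a subset of $[\psi(m), \psi(k)-1]$ the same recipe lands back in $[k, m-1]$. First I would verify $\phi \circ \phi = \mathrm{id}$ using the pointwise identity $\psi(\psi(j)-1) - 1 = j$, which is immediate from $\psi(a) = 2n - a + 1$: unpacking, $j \in \phi(S)$ means $\psi(j)-1 \notin S$, so $\psi(\phi(S)) - 1$ equals the complement of $S$ in $[k, m-1]$, and taking the complement once more recovers $S$.

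Next I would check the endpoint bookkeeping so that the definitions of white and black regularity transport correctly. The map $\psi(\,\cdot\,) - 1$ sends the excluded endpoints $k-1$ and $m$ to $\psi(k)$ and $\psi(m)-1$ respectively, which are exactly the points excluded for the pair $(\psi(m), \psi(k))$; moreover these two points lie outside the interval $[\psi(m), \psi(k)-1]$ on which the complementation is performed, so adjoining or deleting them interacts harmlessly with $\phi$. In particular, under the substitution $j = \psi(i) - 1$ already exploited in the proof of Lemma \ref{bwd}, the condition "$i \in S \setminus \{k-1, m\}$" translates into "$j \notin \phi(S) \cup \{\psi(m)-1, \psi(k)\}$" in $[\psi(m)-1, \psi(k)]$, and the condition on $\psi(i)-1$ translates symmetrically; so the defining membership conditions for black $(k,m)$-regularity of $S$ exactly match those for white $(\psi(m), \psi(k))$-regularity of $\phi(S)$.

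With these two ingredients in hand, Lemma \ref{bwd1} is now a one-line application of Lemma \ref{bwd}. Applying that lemma to $T := \phi(S)$, with $(k, m)$ replaced by $(\psi(m), \psi(k))$, gives: $T$ is white $(\psi(m), \psi(k))$-regular iff $\phi(T)$ is black regular with respect to $(\psi(\psi(k)), \psi(\psi(m))) = (k, m)$; since $\phi(T) = \phi(\phi(S)) = S$, this is exactly the claim. The only step requiring any genuine care is verifying $\phi \circ \phi = \mathrm{id}$ with the correct ambient intervals for complementation; once that is settled the rest is bookkeeping. A parallel direct proof imitating the argument of Lemma \ref{bwd} verbatim (running the substitution $j = \psi(i) - 1$ through the black-regularity condition rather than the white one) is equally short and would be the natural fallback if the involution argument needed further unpacking.
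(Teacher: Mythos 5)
Your proposal is correct and follows the same route as the paper, which likewise derives Lemma \ref{bwd1} from Lemma \ref{bwd} via the substitutions $k\leftarrow \psi(m)$, $m\leftarrow \psi(k)$, $S\leftarrow \overline{\psi(S)-1}$. The only difference is that you explicitly verify the involution property $\overline{\psi(\overline{\psi(S)-1})-1}=S$ and the endpoint bookkeeping, which the paper leaves implicit; this is a sound and welcome elaboration rather than a deviation.
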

\begin{proof}
This follows from the above lemma under the substitutions
$k\leftarrow \psi (m),$ $m\leftarrow \psi (k),$ $S\leftarrow \overline{\psi ({S})-1}.$
\end{proof}

Alternatively one may easily to check Lemma \ref{bwd} and Lemma \ref{bwd1}
by means of the scheme interpretation (\ref{grab}--\ref{grab3}). Indeed,
the shifted representation for $\Phi ^{T}(\psi (m),\psi (k)),$ $T=\overline{\psi (S)-1}$
appears from one for $\Phi ^{S}(k,m)$ by changing the color of all points
and switching the rows.
\begin{proposition}
If $S$ is black $(k,m)$-regular, then 
$$
\Phi ^{S}(k,m)=(-1)^{m-k}q^{-2r}\left( \prod _{m\geq i>j\geq k}p_{ij}^{-1}\right) \cdot
\Phi ^{T}(\psi (m),\psi (k)),
$$
where $T=\overline{\psi (S)-1}$ is a white $(\psi (m),\psi (k))$-regular set
with $r$ elements,
and as above by $\psi ({S})-1$ we denote $\{ \psi (s)-1\, |\, s\in {S}\} ,$
while  the complement is related to the interval  $[\psi (m),\psi (k)-1].$
\label{dec}
\end{proposition}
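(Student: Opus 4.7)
The plan is a double induction, the outer on $m-k$ and the inner on $r = |T|$. The outer base case $m=k$ is trivial (both sides equal $x_k = x_{\psi(k)}$, scalar $1$). The inner base case $r=0$ forces $T=\emptyset$ and hence $\psi(S)-1 = [\psi(m),\psi(k)-1]$, i.e.\ $S = [k,m-1]$; the identity then reduces exactly to relation (\ref{algr1}) of Section~6.

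For the inner inductive step with $r\geq 1$, the crucial choice is $t' := \max T$, which gives $s := \psi(t'+1) \in [k,m-1] \setminus S$ with $\tau_s = 1$ (since $n \notin T$ for white regular $T$ forces $t'\neq n$, hence $s \neq n$). This choice has two useful consequences: $T \cap [t'+1,\psi(k)-1] = \emptyset$, which by the $\psi$-duality in Lemma~\ref{bwd1} forces $S \cap [k,s-1] = [k,s-1]$; and $S \cap [1+s,m-1]$ is black $(1+s,m)$-regular, with white dual of cardinality $r-1$. Setting $S' := S \cup \{s\}$ and $T' := T \setminus \{t'\}$, the inner induction hypothesis applies.

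Three relations combine. First, Lemma~\ref{pfb} gives $\Phi^{S'}(k,m) - \Phi^S(k,m) = (q^{-2}-1)p(D,C)^{-1} DC$ with $C = \Phi^S(k,s)$, $D = \Phi^S(1+s,m)$. Second, writing $A := \Phi^{T'}(\psi(m),t')$ and $B := \Phi^{T'}(t'+1,\psi(k))$, Corollary~\ref{xnt} applied to $T = T' \cup \{t'\}$ produces $\Phi^{T'}(\psi(m),\psi(k)) = [A,B]$ (the bracket scalar collapses to $1$ because $p(a,b) = p(B,A)$), while Corollary~\ref{xny1} applied to $T$ at $t'\in T$ produces $\Phi^T(\psi(m),\psi(k)) = -p(B,A)^{-1}[B,A]$; combined with $p(A,B)p(B,A) = q^{-2}$ (guaranteed by the white regularity of $T$, as in the proof of Corollary~\ref{xnt}), these yield $\Phi^T - \Phi^{T'} = (1-q^2)p(A,B)\cdot BA$. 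Third, the base case (\ref{algr1}) applied to $C = \Phi^{[k,s-1]}(k,s)$ gives $C = (-1)^{s-k}\prod_{s\geq i>j\geq k} p_{ij}^{-1}\cdot B$, while the outer induction hypothesis applied to $D = \Phi^{S\cap[1+s,m-1]}(1+s,m)$ (valid since $m-(1+s) < m-k$) gives $D = (-1)^{m-s-1}q^{-2(r-1)}\prod_{m\geq i>j\geq 1+s} p_{ij}^{-1}\cdot A$.

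Substituting the inner IH $\Phi^{S'} = (-1)^{m-k}q^{-2(r-1)}\prod p_{ij}^{-1} \Phi^{T'}$ together with these three relations into the target identity, and matching separately the coefficients of $\Phi^{T'}$ and of $BA$ on both sides, both equations collapse to a single scalar identity following from $(q^{-2}-1) = -q^{-2}(q^2-1)$ and the trivial factorisation $\prod_{m\geq i>j\geq k} p_{ij}^{-1} = \prod_{s\geq i>j\geq k} p_{ij}^{-1}\cdot\prod_{m\geq i>j\geq 1+s}p_{ij}^{-1}\cdot p(D,C)^{-1}$. The main obstacle is the verification that $S \cap [1+s,m-1]$ is black $(1+s,m)$-regular: one must trace through each $i \in [1+s,m]$ satisfying the regularity range condition and show that a black-regularity witness from the ambient $S$ lies in $[1+s,m-1]$. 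The delicate case is $i = t'$ (which arises only when $t' \geq n+1$), where the witness must be $t' \in S$; this is forced by the original black regularity of $S$ at $i=t'$, using $s\notin S$ together with the exclusion $t'=m$ (which would itself violate the black regularity of $S$ at $i=s$, because then both $s$ and $\psi(s)-1 = t' = m$ lie outside $S\setminus\{k-1,m\}$).
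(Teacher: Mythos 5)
Your proof is correct and follows essentially the same route as the paper's: a double induction in which Lemma \ref{pfb} is applied once on the $S$-side (adjoining the white point $s=\psi(t')-1$) and once on the $T$-side (removing $t'$), with Corollaries \ref{xny1} and \ref{xnt} converting $AB$ back into $\Phi^{T'}(\psi(m),\psi(k))$ plus a $BA$ term so that the two perturbations match. The only real deviation is your specific choice $t'=\max T$ where the paper removes an arbitrary $t\in T$; this lets you evaluate the left factor $C$ by the base case (\ref{algr1}) instead of the outer induction hypothesis, and prompts your explicit (and correct) verification that $S\cap[1+s,m-1]$ remains black $(1+s,m)$-regular -- a point the paper's own proof leaves implicit when it invokes the induction on $m-k$ for both factors.
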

\begin{proof}
We shall use double induction on $r$ and on $m-k.$ If $m=k,$ then the equality reduces to
$x_k=x_{\psi (k)}.$ If for given $k,m$ we have $r=0,$ then $S$ contains the interval
$[k,m-1]$ and the equality reduces to (\ref{algr1}).

Suppose that $r>0.$ We fix an element $t\in T.$ By the inductive supposition on $r$ we get
\begin{equation} 
\Phi^{S\cup \{ \psi (t)-1\}}(k,m)
=(-1)^{m-k}q^{-2(r-1)}\left( \prod _{m\geq i>j\geq k}p_{ij}^{-1}\right) \cdot
\Phi ^{{T}\setminus \{ t\} } (\psi (m),\psi (k)).
\label{de1}
\end{equation}
We have $t\notin \psi ({S})-1,$ and hence $\psi (t)-1\notin S.$ In particular 
$\psi (t)-1\neq n,$ and $\tau _{\psi (t)-1}=1,$ see (\ref{tau}).
 Thus, relation (\ref{kho1}) with $t\leftarrow \psi (t)-1$ implies
\begin{equation} 
\Phi^{S}(k,m)=\Phi^{S\cup \{ \psi (t)-1\}}(k,m)+(1-q^{-2})p_{ab}^{-1}a\cdot b,
\label{de2}
\end{equation}
where $a=\Phi^{S}(\psi (t),m),$ $b=\Phi^{S}(k, \psi (t)-1).$
Inductive supposition on $m-k$ yields 
$$
a=(-1)^{m-\psi (t)}q^{-2r_1}\left( \prod _{m\geq i>j\geq \psi (t)}p_{ij}^{-1}\right) \cdot
\Phi ^{T}(\psi (m),t ),
$$
$$
b=(-1)^{\psi (t)-1-k}q^{-2r_2}\left( \prod _{\psi (t)>i>j\geq k}p_{ij}^{-1}\right) \cdot
\Phi ^{T}(1+t, \psi (k)),
$$
where $r_1$ is the number of elements in $T\cap [\psi (m), t-1],$ and $r_2$
is the number of elements in $T\cap [1+t,\psi (k)-1].$ Obviously $r_1+r_2=r-1.$
Therefore 
\begin{equation} 
p_{ab}^{-1}ab=(-1)^{m-k-1}q^{-2(r-1)}\left( \prod _{m\geq i>j\geq k}p_{ij}^{-1}\right) \cdot cd,
\label{de3}
\end{equation}
where $c=\Phi ^{T}(\psi (m),t ),$ $d=\Phi ^{T}(1+t, \psi (k)).$ Now (\ref{de2}) and (\ref{de1})
imply
\begin{equation} 
\Phi^{S}(k,m)=(-1)^{m-k}q^{-2(r-1)}\left( \prod _{m\geq i>j\geq k}p_{ij}^{-1}\right)
\cdot \left\{ \Phi ^{{T}\setminus \{ t\} } (\psi (m),\psi (k))-(1-q^{-2})cd\right\} .
\label{de4}
\end{equation}
We have $t\neq n,$ for $T$ is white regular. Hence relation (\ref{kho1}) with
$S \leftarrow T\setminus \{ t\} ,$ $t\leftarrow t,$ $k\leftarrow \psi (m),$ $m\leftarrow \psi (k)$
implies
$$
\Phi ^{{T}\setminus \{ t\} } (\psi (m),\psi (k))=\Phi ^{{T}} (\psi (m),\psi (k))+(1-q^{-2})p_{dc}^{-1}dc,
$$ 
and the expression in braces of (\ref{de4}) reduces to
\begin{equation} 
\Phi ^{T} (\psi (m),\psi (k))+(1-q^{-2})p_{dc}^{-1}[d,c].
\label{de5}
\end{equation}
At the same time Corollary \ref{xny1} with
$S \leftarrow T,$ $s\leftarrow t,$ $k\leftarrow \psi (m),$ $m\leftarrow \psi (k)$
shows that $p_{dc}^{-1}[d,c]=-\Phi ^{T} (\psi (m),\psi (k)).$ This allows us to continue (\ref{de5}):
$$
=\Phi ^{T} (\psi (m),\psi (k))-(1-q^{-2})\Phi ^{T} (\psi (m),\psi (k))=q^{-2}\Phi ^{T} (\psi (m),\psi (k)).
$$
In order to get the required relation it remains to replace the expression in
braces of (\ref{de4}) with $q^{-2}\Phi ^{T} (\psi (m),\psi (k)).$
\end{proof}
\begin{corollary} 
If $S$ is $(k,m)$-regular, then 
$\Phi ^{S}(k,m)\sim \Phi ^{T}(\psi (m),\psi (k))$ for a suitable  $(\psi (m),\psi (k))$-regular set $T$.
\label{dec0}
\end{corollary}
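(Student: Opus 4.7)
The plan is to reduce Corollary \ref{dec0} to Proposition \ref{dec} by splitting on the two sub-cases of Definition \ref{reg1} and exploiting the fact that the correspondence $S\mapsto \overline{\psi(S)-1}$ is an involution on regular sets that swaps the colors.

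First, if $S$ is black $(k,m)$-regular, Proposition \ref{dec} applies directly: taking $T=\overline{\psi(S)-1}$, which is white $(\psi(m),\psi(k))$-regular by Lemma \ref{bwd1}, one reads off $\Phi^S(k,m)\sim \Phi^T(\psi(m),\psi(k))$. This case is immediate, so the real content of the corollary is the white case.

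For the white case I would set $T=\overline{\psi(S)-1}\subseteq [\psi(m),\psi(k)-1]$; by Lemma \ref{bwd} this $T$ is black $(\psi(m),\psi(k))$-regular. Applying Proposition \ref{dec} to $T$ with the pair $(k,m)$ replaced by $(\psi(m),\psi(k))$ would yield
$$\Phi^T(\psi(m),\psi(k))\sim \Phi^{T'}(k,m),$$
where $T'=\overline{\psi(T)-1}\subseteq [k,m-1]$ is white $(k,m)$-regular. The remaining step is to verify the set-theoretic identity $T'=S$. For $i\in [k,m-1]$, membership $i\in \psi(T)-1$ is equivalent to $\psi(i+1)\in T$, which is equivalent to $\psi(i+1)\notin \psi(S)-1$, i.e.\ to $\psi(i+1)+1=\psi(i)\notin \psi(S)$, which since $\psi$ is an involution is the same as $i\notin S$. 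Hence $T'=S$, and we obtain $\Phi^S(k,m)\sim \Phi^T(\psi(m),\psi(k))$ with the same $T$ as above.

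The main obstacle is not really an obstacle but bookkeeping: keeping straight in which interval each complement is taken and confirming the involutivity of $S\mapsto \overline{\psi(S)-1}$ between subsets of $[k,m-1]$ and subsets of $[\psi(m),\psi(k)-1]$. All the genuine algebraic work (the duality formula relating $\Phi^S(k,m)$ and $\Phi^T(\psi(m),\psi(k))$ up to an explicit scalar) was already carried out in Proposition \ref{dec}; the corollary only packages the two regularity cases into one symmetric statement by producing $T$ either as $\overline{\psi(S)-1}$ directly (black case) or as the same complement reinterpreted as the black-regular partner in the dual interval (white case).
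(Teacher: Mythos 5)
Your proof is correct and follows essentially the same route as the paper: the black case is a direct application of Proposition \ref{dec}, and the white case applies Proposition \ref{dec} to the dual pair $(\psi(m),\psi(k))$ after passing to $T=\overline{\psi(S)-1}$ via Lemma \ref{bwd}. The only difference is that you spell out the involutivity check $\overline{\psi(T)-1}=S$, which the paper leaves implicit in the substitution ``$S\leftarrow T$, $T\leftarrow S$''; your verification of it is accurate.
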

\begin{proof} 
If $S$ is black $(k,m)$-regular, we apply Proposition \ref{dec}. If $S$ is white $(k,m)$-regular,
we still may apply Proposition \ref{dec} with $S \leftarrow T,$ $T\leftarrow S$ due to
Lemma \ref{bwd1}. 
\end{proof}
\begin{corollary} 
Let $S$ be $(k,m)$-regular. If $m>\psi (k),$ then the leading term of 
$\Phi ^{S}(k,m)$ is proportional to $u[\psi (m),\psi (k)].$
In particular always $\Phi ^{S}(k,m)\neq 0.$
\label{dec1}
\end{corollary}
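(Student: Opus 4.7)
The plan is to reduce the statement to the situation directly covered by the recursion (\ref{dhs}), via the duality of Proposition \ref{dec}. First I would set $k'=\psi(m)$ and $m'=\psi(k)$. By Corollary \ref{dec0}, $\Phi^{S}(k,m)\sim \Phi^{T}(k',m')$ for a suitable $(k',m')$-regular set $T$. The hypothesis $m>\psi(k)$ rewrites as $m'=\psi(k)<m=\psi(k')$; so $m'<\psi(k')$, and $u[k',m']=u[\psi(m),\psi(k)]$ is one of the hard super-letters of Proposition \ref{strB} (or Proposition \ref{strBu}). It therefore makes sense to speak of the decomposition (\ref{sp1}) of $\Phi^{T}(k',m')$ with $k\leftarrow k'$.

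Next I would unfold the defining recursion
\[
\Phi^{T}(k',m')=u[k',m']-(1-q^{-2})\sum_{i}\alpha _{k'm'}^{s_i}\,\Phi^{T}(1+s_i,m')\,u[k',s_i],
\]
where $s_i\in T\cap [k',m'-1]$, and show that each correction term is already in PBW form with last super-letter $u[k',s_i]$, $s_i<m'$. The critical substep is to verify that every $\Phi^{T}(1+s_i,m')$ lies in the subalgebra $A_{k'+1}$. Tracking the identification $x_{n+j}=x_{n-j+1}$ of Definition \ref{fis}, the extended indices in $[1+s_i,m']$ contribute as original letters the set $\{x_j:j\geq \min(1+s_i,\psi(m'))\}$. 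Since $s_i\geq k'$ gives $1+s_i>k'$, and $\psi(m')=k>k'$ (the latter because $m>\psi(k)$), this minimum strictly exceeds $k'$, as required.

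Once $\Phi^{T}(1+s_i,m')\in A_{k'+1}$ is established, its PBW expansion involves only super-letters $u[a,b]$ with $a>k'$; their underlying words begin with $x_a<x_{k'}$ and so are lex-smaller than the word of $u[k',s_i]$. Hence each product $\Phi^{T}(1+s_i,m')\,u[k',s_i]$ is already a sum of PBW basis elements ending in the super-letter $u[k',s_i]$. Consequently the decomposition (\ref{sp1}) of $\Phi^{T}(k',m')$ relative to $x_{k'}$ contains $u[k',m']$ with coefficient~$1$ and otherwise only contributions $F_i\,u[k',i]$ with $i<m'$; thus the leading term of $\Phi^{T}(k',m')$ is $u[k',m']=u[\psi(m),\psi(k)]$. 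Transporting back along $\Phi^{S}(k,m)\sim \Phi^{T}(k',m')$ gives the corollary, and in particular $\Phi^{S}(k,m)\neq 0$.

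The main obstacle is the index bookkeeping of the critical substep: one has to guarantee that the recursive factors $\Phi^{T}(1+s_i,m')$ do not ``wrap around'' the middle index~$n$ and reintroduce the letter $x_{k'}$. This is exactly where the hypothesis $m>\psi(k)$ (equivalently $k>k'$) is indispensable; without it the smallest original letter occurring in $\Phi^{T}(1+s_i,m')$ could drop to $x_{k'}$, and the whole last-super-letter analysis of the correction terms would collapse.
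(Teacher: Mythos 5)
Your proposal is correct and follows essentially the same route as the paper: reduce via the duality $\Phi^{S}(k,m)\sim\Phi^{T}(\psi(m),\psi(k))$ (Proposition \ref{dec}, equivalently Corollary \ref{dec0}) to the case $m'<\psi(k')$, where the recursion (\ref{dhs}) exhibits $u[\psi(m),\psi(k)]$ as the leading PBW term. The only difference is that you spell out the index bookkeeping showing $\Phi^{T}(1+s_i,m')\in A_{k'+1}$, which the paper asserts without detail in Section 5; that verification is accurate.
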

\begin{proof} 
If $m<\psi (k),$ then definition (\ref{dhs}) shows that the leading term of $\Phi ^{S}(k,m)$
in the PBW-decomposition is $u[k,m],$ hence $\Phi ^{S}(k,m)\neq 0.$

If $m>\psi (k),$ then Proposition \ref{dec} 
(with $T \leftarrow  S,$ $S \leftarrow \,T$ provided that $S$ is white regular)
shows that $\Phi ^{S}(k,m)$ is proportional to $\Phi ^{T}(\psi (m),\psi (k))\neq 0,$
for $\psi (k)<\psi (\psi (m))=m.$
\end{proof} 

\begin{corollary} 
If $S$ is black $(k,m)$-regular, $t\notin {S}\setminus \{ n\} ,$ $k\leq t<m,$ then
$$
\Phi ^{S}(k,m)\sim \left[ \Phi ^{S}(k,t),\Phi ^{S}(1+t,m)\right] 
$$
\label{xnz}
\end{corollary}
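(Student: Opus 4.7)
The plan is to deduce this corollary from its white analogue, Corollary \ref{xny1}, via the duality of Proposition \ref{dec}. Since $S$ is black $(k,m)$-regular, Proposition \ref{dec} gives $\Phi^S(k,m) \sim \Phi^T(\psi(m),\psi(k))$, where $T=\overline{\psi(S)-1}$ (complement in $[\psi(m),\psi(k)-1]$) is white $(\psi(m),\psi(k))$-regular. Set $s':=\psi(t)-1$; then $\psi(m)\leq s' \leq \psi(k)-1$ follows from $k\leq t\leq m-1$, and the hypothesis $t\notin S\setminus\{n\}$ translates to $s'\in T\cup\{n\}$: if $t\notin S$, then $\psi(t)-1\notin\psi(S)-1$, so $s'\in T$; if $t=n$, then $s'=n$.

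Applying Corollary \ref{xny1} to the white regular set $T$ at $s=s'$ yields
\[
\Phi^T(\psi(m),\psi(k)) \sim [\Phi^T(\psi(t),\psi(k)),\,\Phi^T(\psi(m),\psi(t)-1)].
\]
To identify the inner factors with $\Phi^S(k,t)$ and $\Phi^S(1+t,m)$, apply Proposition \ref{dec} in reverse inside each sub-interval. An elementary unwinding of the involution $\psi$ shows that the dual of $T\cap[\psi(t),\psi(k)-1]$ in $[k,t-1]$ equals $S\cap[k,t-1]$, and likewise the dual of $T\cap[\psi(m),\psi(t)-2]$ in $[1+t,m-1]$ equals $S\cap[1+t,m-1]$. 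Provided these two restrictions of $T$ are regular in their new sub-intervals, Proposition \ref{dec} gives
\[
\Phi^T(\psi(t),\psi(k)) \sim \Phi^S(k,t), \qquad \Phi^T(\psi(m),\psi(t)-1) \sim \Phi^S(1+t,m),
\]
and combining these with the two previous $\sim$-relations completes the proof.

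The main obstacle is the regularity verification for the two restrictions. If $t=n$, both sub-intervals $(n+1,\psi(k))$ and $(\psi(m),n)$ lie entirely on one side of $n$, so regularity is automatic by Definition \ref{reg1}. Otherwise $t\neq n$ and $t\notin S$, and exactly one of the two sub-intervals straddles $n$; for the corresponding restriction I must check white regularity directly. This is transparent from the shifted-scheme interpretation (\ref{grab})--(\ref{grab3}): the assumption $s'\in T\cup\{n\}$ with $T$ white regular says that the column at $s'$ in $T$'s shifted scheme is white, so cutting the scheme there produces two sub-schemes each inheriting the ``no black column'' property, and the first-column-white condition needed when the sub-interval straddles $n$ is preserved as well.
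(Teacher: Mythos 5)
Your proposal is correct and follows essentially the same route as the paper: dualize via Proposition \ref{dec} to the white regular set $T=\overline{\psi(S)-1}$, apply Corollary \ref{xny1} at the point $\psi(t)-1\in T\cup\{n\}$, and dualize each factor back after checking regularity of the restrictions (the paper phrases this last check as black regularity of $S$ on $[k,t]$ and $[1+t,m]$, which is equivalent to your white-regularity check on the dual sub-intervals by Lemma \ref{bwd1}). The only quibble is the phrase ``the column at $s'$ is white'' --- since $s'\in T$ that column is black--white rather than white --- but the conclusion you draw from it (cutting there preserves the no-black-column and first-column conditions) is exactly what is needed and is what the paper asserts via the shifted schemes.
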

\begin{proof} 
If $t\notin {S}\setminus \{ n\} ,$ then 
$\psi (t)-1\in {T}\cup \{ n\} ,$ where $T=\overline{\psi (S)-1}.$ By Proposition \ref{dec}
we have $\Phi ^{S}(k,m)\sim \Phi ^{T}(\psi (m),\psi (k)).$ Corollary \ref{xny1} yields
$$
\Phi ^{T}(\psi (m),\psi (k))\sim \left[ \Phi ^{T}(\psi (t),\psi (k)),\Phi ^{T}(\psi (m),\psi (t)-1)\right] .
$$
Since $t$ is a white point or $t=n,$ the set $S$ is black $(k,t)$-regular and black $(1+t,m)$-regular,
see the shifted schemes (\ref{grab2}), (\ref{grab3}).
Hence Proposition \ref{dec} implies $\Phi ^{S}(k,t)\sim \Phi ^{T}(\psi (t),\psi (k)),$
 $\Phi ^{S}(1+t,m)\sim \Phi ^{T}(\psi (m),\psi (t)-1).$ We are done.
\end{proof} 
\begin{corollary}
If $S \setminus \{ s\} $ is black $(k,m)$-regular, $s\in S,$ $k\leq s<m,$ then
\begin{equation} 
\Phi^{S}(k,m)\sim \left[ \Phi^{S}(1+s,m), \Phi^{S}(k,s)\right] .
\label{ff1}
\end{equation}
\label{xnt1}
\end{corollary}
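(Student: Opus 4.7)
The plan is to dualize via Proposition \ref{dec} and then invoke Corollary \ref{xnt} on the white-regular set obtained from $T\cup\{\psi(s)-1\}$. The reversed bracket order in the target $[\Phi^{S}(1+s,m),\Phi^{S}(k,s)]$ (compared to Corollary \ref{xnz}) emerges naturally from the fact that $\psi$ reverses the order of indices.

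First I would verify that $S$ itself is black $(k,m)$-regular. Since $k\leq s<m$, the index $s$ is interior, so $s\notin\{k-1,m\}$; hence adjoining $s$ to the already black-regular $S\setminus\{s\}$ cannot violate Definition \ref{reg1}. Set $T=\overline{\psi(S)-1}$ (complement in $[\psi(m),\psi(k)-1]$) and $t^{*}=\psi(s)-1$. By Lemma \ref{bwd1}, $T$ is white $(\psi(m),\psi(k))$-regular; likewise $T\cup\{t^{*}\}=\overline{\psi(S\setminus\{s\})-1}$ is white $(\psi(m),\psi(k))$-regular. Also $t^{*}\notin T$ and $\psi(m)\leq t^{*}<\psi(k)$.

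Next, Proposition \ref{dec} gives $\Phi^{S}(k,m)\sim\Phi^{T}(\psi(m),\psi(k))$. Applying Corollary \ref{xnt} to $T$ at the breakpoint $t^{*}$ yields
$$\Phi^{T}(\psi(m),\psi(k))\sim\bigl[\Phi^{T}(\psi(m),\psi(s)-1),\,\Phi^{T}(\psi(s),\psi(k))\bigr].$$
I would then identify each factor with the claimed $\Phi^{S}$. The restriction $T\cap[\psi(m),\psi(s)-2]$ is white $(\psi(m),\psi(s)-1)$-regular (inherited from the white regularity of $T\cup\{t^{*}\}$), and its $\psi$-dual is precisely $S\cap[s+1,m-1]$, which one checks is black $(s+1,m)$-regular. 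Invoking Proposition \ref{dec} on this sub-datum gives $\Phi^{T}(\psi(m),\psi(s)-1)\sim\Phi^{S}(s+1,m)$. A symmetric argument on $[\psi(s),\psi(k)-1]$ shows $\Phi^{T}(\psi(s),\psi(k))\sim\Phi^{S}(k,s)$. Substituting produces the desired $\Phi^{S}(k,m)\sim[\Phi^{S}(1+s,m),\Phi^{S}(k,s)]$.

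The main obstacle is the bookkeeping in the identification step: confirming that the regularity of $T\cup\{t^{*}\}$ restricts correctly to the two sub-intervals $[\psi(m),\psi(s)-2]$ and $[\psi(s),\psi(k)-1]$, and that under $\psi$-duality this matches the natural splitting of $S$ around the column at position $s$. The schematic pictures (\ref{grab})--(\ref{grab3}) make the verification transparent: the black column at position $s$ splits the scheme of $\Phi^{S}(k,m)$ into those of $\Phi^{S}(k,s)$ and $\Phi^{S}(s+1,m)$, while dualizing under $\psi$ swaps their left-right positions, which explains the reversed bracket order in the conclusion.
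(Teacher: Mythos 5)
Your proposal is correct and follows exactly the route the paper intends: the paper's entire proof is the remark that the corollary ``follows from Lemma \ref{xnt} and Proposition \ref{dec} in the similar way'' as Corollary \ref{xnz} is deduced from Corollary \ref{xny1}, and that similar way is precisely the dualization through $T=\overline{\psi(S)-1}$ at the breakpoint $\psi(s)-1$ that you carry out. Your verification that $S$ is black $(k,s)$- and $(1+s,m)$-regular (inherited from the black $(k,m)$-regularity of $S\setminus\{s\}$), which justifies applying Proposition \ref{dec} to each factor, correctly fills in the bookkeeping the paper leaves implicit.
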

\begin{proof}
Follows from Lemma \ref{xnt} and Proposition \ref{dec} in the similar way.
\end{proof}

\section{Root sequence}
\smallskip
Our next goal is to show that the total number of right coideal subalgebras containing {\bf k}$[G]$
 is less than or equal to $(2n)!!=2^n\cdot n!.$

In what follows for short we shall denote by $[k:m],$ $k\leq m\leq 2n$ the element 
$x_k+x_{k+1}+\ldots +x_m$ considered as an element of the group $\Gamma ^+.$
Of course $[k:m]=[\psi (m):\psi (k)].$
If $k\leq m<\psi (k),$ then $[k:m]$ is an $U_q^+({\mathfrak so}_{2n+1})$-root
since $u[k,m]$ is a PBW-generator for $U_q^+({\mathfrak so}_{2n+1}).$ 
The simple $U_q^+({\mathfrak so}_{2n+1})$-roots
are precisely the generators $x_k=[k:k],$ $1\leq k\leq n.$
To put it another way, the $U_q^+({\mathfrak so}_{2n+1})$-roots  form the positive
part $R^+$ of the classical root system of type $B_n,$ provided that we formally 
replace symbols $x_i$ with $\alpha _i$ 
(the Weyl basis for $R,$ see \cite[Chapter IV, \S 6, Theorem 7]{Jac}).

We fix a notation {\bf U} for a (homogeneous if $q^t=1,$ $t>4$) right 
coideal subalgebra of $U_q^+({\mathfrak so}_{2n+1}),$ $q^t\neq 1$ 
(respectively of $u_q^+({\mathfrak so}_{2n+1}),$) that contains $G.$
The {\bf U}-roots form a subset $D({\bf U})$ of $R^+.$ In this section we will see, in particular, that $D({\bf U})$
uniquely defines {\bf U}.
\begin{definition} \rm
Let $\gamma _k$ be a simple {\bf U}-root of the form $[k:m],$ $k\leq m<\psi (k)$
with the maximal $m.$
Denote by $\theta _k$ the number $m-k+1,$ the length of $\gamma _k.$
If there are no simple {\bf U}-roots of the form $[k:m],$ $k\leq m<\psi (k)$ 
we put $\theta _k=0.$
The sequence $r({\bf U})=(\theta_1, \theta_2, \ldots ,\theta_n)$
satisfies $0\leq \theta_k\leq 2n-2k+1$ and it is uniquely defined by {\bf U}.
We shall call $r({\bf U})$ a {\it root sequence of } {\bf U}, or just an $r$-{\it sequence of} {\bf U}.
By $\tilde{\theta }_k$ we denote $k+\theta_k -1,$ the maximal value of $m$ for the simple
 {\bf U}-roots of the form $[k:m]$ with fixed $k.$
\label{tet}
\end{definition}
\begin{theorem} 
For each sequence 
$\theta=(\theta_1, \theta_2, \ldots ,\theta_n),$ such that $0\leq \theta_k\leq 2n-2k+1,$
$1\leq k\leq n$ there exists at most one $($homogeneous if $q^t=1,$ $t>4$ $)$ right coideal subalgebra
{\bf U}$\, \supseteq G$ of $U_q^+(\frak{ sl}_{n+1}),$ $q^t\neq 1$ $($respectively, of $u_q^+(\frak{ sl}_{n+1}))$
with $r({\bf U})=\theta .$
\label{teor}
\end{theorem}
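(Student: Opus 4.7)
The plan is to recover, uniquely from the root sequence $\theta = r(\mathbf{U})$, a full set of PBW-generators for $\mathbf{U}$ over $\mathbf{k}[G]$. By Corollary \ref{vd}, any such $\mathbf{U}$ is generated over $\mathbf{k}[G]$ by finitely many elements of the form $\Phi^{S}(k,m)$ with $k \leq m < \psi(k)$, so it suffices to pin down both the multiset of leading pairs $(k,m)$ and the label $S = S_{k,m}$ attached to each. Granted such a recovery procedure, two right coideal subalgebras with the same root sequence must share all their PBW-generators and hence coincide.

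I would carry out the recovery by downward induction on $k$. For each $k$ with $\theta_k > 0$, the definition of the root sequence furnishes a PBW-generator $\Phi^{S_k}(k,\tilde{\theta}_k)\in\mathbf{U}$. Applying the coproduct projection of Lemma \ref{las3} to this element yields $\Phi^{S_k}(l+1,\tilde{\theta}_k)\in\mathbf{U}$ for each $l\notin S_k$ with $k\leq l<\tilde{\theta}_k$, producing elements of $A_{k+1}$ whose presence is controlled by the inductive hypothesis. Combining this with the regularity splittings of Corollaries \ref{xnt} and \ref{xnt1} simultaneously pins down $S_k$, uncovers any additional simple and non-simple $\mathbf{U}$-roots of starting index $k$, and forces their labels; the binary predicate $\mathbf{P}$ and the combinatorial sets $R_k, T_k$ from the introduction's description of Section 10 are exactly the bookkeeping of this recursion.

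The label $S_{k,m}$ of a chosen PBW-generator at a fixed leading pair $(k,m)$ is itself forced, since if both $\Phi^{S}(k,m)$ and $\Phi^{S'}(k,m)$ lay in $\mathbf{U}$ then, by the recursion (\ref{dhs}), their difference would be a linear combination of strictly smaller basis super-words of the form $\Phi^{\ast}(l+1,m)\,u[k,l]$ with $l$ running through the symmetric difference $S\triangle S'$. Re-running the spectrum-reduction argument of Proposition \ref{phib} on this difference produces a PBW-generator whose leading super-letter is strictly smaller than $u[k,m]$ and hence already fixed by the induction, and one checks that this is incompatible with $S \neq S'$.

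The main obstacle is the regime $m\geq\psi(k)$, where $\Phi^{S}(k,m)$ is no longer a PBW-generator in the naive sense and its spectrum may be a proper subset of $S\cup\{m\}$, cf.\ Corollary \ref{dec1}. Here the duality of Proposition \ref{dec} is essential: for a $(k,m)$-regular $S$ it rewrites $\Phi^{S}(k,m)$ as a scalar multiple of $\Phi^{T}(\psi(m),\psi(k))$, whose leading pair lies back in the triangular range $k'\leq m'<\psi(k')$ where the inductive scheme applies directly. Verifying that each PBW-generator of $\mathbf{U}$ may indeed be chosen with a $(k,m)$-regular label $S$---so that the duality is available throughout the recursion---is precisely where the combinatorial content of Sections 6--8 enters, and where the careful construction of the sets $T_k$ must be invoked to keep the recovery procedure well-defined.
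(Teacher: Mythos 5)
Your overall architecture coincides with the paper's: show that the root sequence determines the set of simple {\bf U}-roots together with the label $S$ of each generator $\Phi^{S}(k,m)$, by downward induction on $k$, and then conclude that two subalgebras with the same root sequence share a set of PBW-generators. However, there is a genuine gap at the decisive step. Knowing only $\theta_k$, i.e.\ the single maximal value $\tilde{\theta}_k$ among simple roots starting at $k$, does not by itself "uncover any additional simple and non-simple {\bf U}-roots of starting index $k$"; you need a criterion that decides, for each $j<\tilde{\theta}_k$, whether $[k:j]$ is a {\bf U}-root in terms of data already known from the induction. This is exactly the paper's Lemma \ref{su}: $[k:j]$ is a {\bf U}-root if and only if $[1+j:\tilde{\theta}_k]$ is \emph{not} a sum of {\bf U}-roots (all of which start at indices $>k$ and are therefore known inductively). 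Its proof is not a formality --- it requires the case analysis over non-regular sets, the three exceptional configurations on the shifted scheme, and Lemma \ref{sos1} (that a sum of {\bf U}-roots of the form $[k:m]$, $m\neq\psi(k)$, is itself a {\bf U}-root, proved by explicitly constructing elements with leading term $u[k,m]$ via the bracketings (\ref{tc1})--(\ref{tc4})). None of this is present or replaced in your sketch; the remark that {\bf P}, $R_k$, $T_k$ "are exactly the bookkeeping" conflates the existence construction of Theorem \ref{su4} with the uniqueness statement being proved here.

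Two smaller points. First, your uniqueness argument for the label $S$ is both weaker and less clean than what is actually true: for a \emph{simple} root $[k:m]$, if $\Phi^{S}(k,m)$ and $\Phi^{S'}(k,m)$ both lie in {\bf U} then their difference is a nonzero homogeneous element of degree $[k:m]$ whose PBW leading term is not $u[k,m]$, hence $[k:m]$ is a sum of other {\bf U}-roots, contradicting simplicity (Lemma \ref{uni}); the label is then pinned down by Lemma \ref{sub1} as $\{\,j\mid [k:j]\hbox{ is a {\bf U}-root}\,\}$. Your proposed "re-run the spectrum reduction on the difference" does not obviously terminate in a contradiction and, as stated for an arbitrary leading pair rather than a simple root, the claim is not even needed (for non-simple roots the paper does not assert uniqueness of the generator, only fixes one choice). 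Second, the PBW-generators attached to non-simple {\bf U}-roots must also be exhibited from the simple-root data; this is the content of the constructions in Lemma \ref{sos1} and Lemma \ref{sub2} and is missing from your outline.
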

The proof will result from the following lemmas.
\begin{lemma} 
If $[k:m]$ is a simple {\bf U}-root, then 
there exists only one element $a\in \, ${\bf U} of the form $a=\Phi ^{S}(k,m).$ 
\label{uni}
\end{lemma}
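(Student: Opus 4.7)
The plan is to suppose that $a_1 = \Phi^{S_1}(k,m)$ and $a_2 = \Phi^{S_2}(k,m)$ both lie in ${\bf U}$ and to prove that the difference $a = a_1 - a_2$ vanishes. The defining recursion (\ref{dhs}) immediately shows that both $a_1$ and $a_2$ have leading term $u[k,m]$ with coefficient $1$ in their $U_q^+({\mathfrak so}_{2n+1})$- (or $u_q^+({\mathfrak so}_{2n+1})$-) PBW expansion; thus $a$ is $\Gamma^+$-homogeneous of degree $[k:m]$ and its coefficient at $u[k,m]$ equals zero.

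Next I would expand $a \in {\bf U}$ in the PBW-basis of ${\bf U}$ supplied by Proposition \ref{pro}. A basis monomial $g \cdot c_{u_1}^{n_1} \cdots c_{u_r}^{n_r}$ with $g \in G$ has $\Gamma^+$-degree $\sum_i n_i D(c_{u_i})$, so equating this sum with $[k:m]$ presents the latter as a sum of $\sum_i n_i$ positive ${\bf U}$-roots. The hypothesis that $[k:m]$ is a \emph{simple} ${\bf U}$-root forces $\sum_i n_i = 1$, so $a$ is a ${\bf k}[G]$-linear combination of a single PBW-generator $c_0$ of ${\bf U}$ whose $\Gamma^+$-degree equals $[k:m]$.

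The third step is to identify $c_0$. Corollary \ref{vd} tells us that $c_0$ has leading term $u[k',m']$ for some $k' \leq m' < \psi(k')$, and I would match multiplicities of $x_1,\dots,x_n$ in $[k':m']$ against those in $[k:m]$. When $m \leq n$ the variables $x_k,\dots,x_m$ occur in $[k:m]$ each with multiplicity $1$, which forces $(k',m')=(k,m)$; when $m > n$, the variables $x_{\psi(m)},\dots,x_n$ appear in $[k:m]$ with multiplicity $2$ and $x_k,\dots,x_{\psi(m)-1}$ with multiplicity $1$, and again $(k',m') = (k,m)$, the competing candidate $(\psi(m),\psi(k))$ being ruled out by the PBW-constraint $m'<\psi(k')$ (which would read $\psi(k)<m$, contradicting $m<\psi(k)$). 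Since the construction preceding Proposition \ref{pro} assigns at most one PBW-generator to each leading term, $c_0$ is uniquely determined by $(k,m)$.

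Writing $a = h\cdot c_0$ with $h \in {\bf k}[G]$, the leading PBW-term of $h\, c_0$ in $U_q^+({\mathfrak so}_{2n+1})$ is $h\, u[k,m]$, and the vanishing of the coefficient at $u[k,m]$ established in the first paragraph forces $h=0$, whence $a = 0$ and $a_1 = a_2$. I expect the principal technical point to be the third paragraph: the multiplicity analysis, which essentially uses the hypothesis $m < \psi(k)$ to exclude the symmetric competitor $(\psi(m),\psi(k))$ as a possible leading-term pair.
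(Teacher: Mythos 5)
Your proposal is correct and follows essentially the same route as the paper: form the difference of two candidate elements, observe that its coefficient at the leading term $u[k,m]$ vanishes, expand it in the PBW-basis of {\bf U} from Proposition \ref{pro}, and use simplicity of $[k:m]$ to rule out every monomial, forcing the difference to be zero. Your third paragraph merely makes explicit a step the paper leaves implicit (that the only {\bf U}-PBW-generator of degree $[k:m]$ is the one with leading term $u[k,m]$, so a single-generator monomial is excluded by the vanishing leading coefficient); this is a faithful elaboration rather than a different argument.
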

\begin{proof} 
Suppose that $a=\Phi ^{S}(k,m)$ and $b=\Phi ^{S^{\prime }}(k,m)$
are two different elements from {\bf U}. Then $a-b$ is not a PBW-generator for {\bf U}
since its leading term with respect to the PBW-decomposition given in Proposition \ref{strB} 
is not equal to $u[k,m].$ Hence nonzero homogeneous element $a-b$ is a polynomial
in PBW-generators of {\bf U}. Thus $[k:m],$ being the degree of $a-b,$ is a sum of 
{\bf U}-roots, a contradiction.
\end{proof}
\begin{lemma} 
Let $\Phi ^{S}(k,m)\in \, ${\bf U}, $k\leq m<\psi (k).$ Suppose that 
$\Phi ^{S^{\prime }}(k,m)\notin \, ${\bf U} 
for all subsets $S^{\prime }\subset S.$ If $j\notin S,$ $k\leq j<m,$
then $\Phi ^{S}(1+j,m)\in \, ${\bf U}.
If $j\in S,$ $k\leq j<m,$ then $\Phi ^{S^{\prime \prime }}(k,j)\in \, ${\bf U}
with $S^{\prime \prime }\subseteq S \cap [k, j],$
in particular $[k:j]$ is an {\bf U}-root.
\label{sub0}
\end{lemma}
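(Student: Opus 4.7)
The plan is to exploit the two coproduct-projection tools from Section 5---Lemma \ref{las3} for Part 1, and Lemma \ref{las4} together with Proposition \ref{phib} for Part 2---applied directly to the element $\Phi^S(k,m)\in {\bf U}.$

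For Part 1, suppose $j\notin S,$ $k\leq j<m.$ Since ${\bf U}$ is a right coideal subalgebra, $\Delta (\Phi^S(k,m))\in {\bf U}\otimes U_q^+({\mathfrak so}_{2n+1}),$ and hence $({\rm id}\otimes \pi_{kj})\Delta (\Phi^S(k,m))\in {\bf U}\otimes {\bf k}u[k,j].$ Lemma \ref{las3} identifies this element as $\tau_j(1-q^{-2})g_{kj}\Phi^S(1+j,m)\otimes u[k,j].$ Because the scalar is nonzero and $g_{kj}^{\pm 1}\in {\bf U},$ we conclude $\Phi^S(1+j,m)\in {\bf U}.$

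For Part 2, fix $j\in S$ and consider the PBW decomposition
$$
\Phi^S(k,m)=u[k,m]+\sum_{s\in S\cap [k,m-1]} F_s\, u[k,s],\qquad F_s=-(1-q^{-2})\alpha_{km}^{s}\Phi^S(1+s,m)\in A_{k+1}.
$$
The first subtask is to verify that $F_j\neq 0.$ If $F_j=0,$ then $\Phi^S(1+j,m)=0,$ and since $\Phi^S(1+j,m)=\Phi^{S\setminus \{j\}}(1+j,m)$ (the two sets agree on $[1+j,m-1]$), Lemma \ref{pfb} applied with $t=j$ forces $\Phi^S(k,m)=\Phi^{S\setminus \{j\}}(k,m),$ placing $\Phi^{S\setminus \{j\}}(k,m)$ into ${\bf U}$ and contradicting the minimality hypothesis on $S.$

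With $F_j$ nonzero and homogeneous of degree $D(u(1+j,m)),$ apply $({\rm id}\otimes \nu_{F_j})\Delta$ to $\Phi^S(k,m),$ where $\nu_{F_j}$ is any homogeneous projection onto ${\bf k}F_j.$ By Lemma \ref{las4} the $u[k,m]$ contribution vanishes, the summands $F_s u[k,s]$ with $s>j$ give zero, the summand $F_j u[k,j]$ contributes $g_{F_j}u[k,j]\otimes F_j,$ and each summand with $s<j$ contributes $g_{F_j}F_s^{\prime }u[k,s]\otimes F_j$ for certain $F_s^{\prime }\in A_{k+1}.$ Stripping off the group-like factor $g_{F_j}$ yields an element of ${\bf U}\cap A$ of the form $u[k,j]+\sum_{s\in S\cap [k,j-1]} F_s^{\prime }u[k,s],$ whose leading term is $u[k,j]$ and whose spectrum lies in $S\cap [k,j-1]$ (note $j<m<\psi (k),$ so the hypotheses of Proposition \ref{phib} are met). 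Proposition \ref{phib} then supplies $\Phi^{S^{\prime \prime }}(k,j)\in {\bf U}$ with $S^{\prime \prime }\subseteq S\cap [k,j-1]\subseteq S\cap [k,j],$ from which $[k:j]$ is automatically an {\bf U}-root. The main obstacle is securing the nonvanishing of $F_j$: without the minimality hypothesis on $S$ this can genuinely fail (the text explicitly warns that factors $\Phi^S(1+s,m)$ may be zero), and the entire Part 2 argument collapses if Lemma \ref{las4} cannot be activated.
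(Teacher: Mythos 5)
Your proof is correct and follows the paper's own argument almost step for step: Lemma \ref{las3} settles the case $j\notin S$, and for $j\in S$ you apply $\Delta \cdot ({\rm id}\otimes \nu_a)$ with $a=F_j\sim \Phi^{S}(1+j,m)$, invoke Lemma \ref{las4}, and finish with Proposition \ref{phib}, exactly as the paper does. The one place you diverge is the preliminary nonvanishing of $\Phi^{S}(1+j,m)$: the paper derives it by observing that a vanishing factor would make the spectrum of $\Phi^{S}(k,m)$ a proper subset of $S\cup \{ m\}$, so that Proposition \ref{phib} applied to $\Phi^{S}(k,m)$ itself would yield $\Phi^{S^{\prime }}(k,m)\in {\bf U}$ with $S^{\prime }\subset S$; you instead obtain the same contradiction from the identity of Lemma \ref{pfb}, which shows $\Phi^{S}(k,m)=\Phi^{S\setminus \{ j\} }(k,m)$ when the factor vanishes. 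Both arguments are valid, and yours is arguably more direct in that it pinpoints the exact set $S\setminus \{ j\}$ violating minimality rather than appealing to an unspecified subset of the spectrum. One cosmetic remark: the vanishing of the $u[k,m]$ term under $({\rm id}\otimes \nu_{F_j})\Delta$ is not literally one of the three cases of Lemma \ref{las4} (which assumes $i<m$ or $i=l$); it follows directly from the coproduct formula (\ref{co}) together with the fact that $\nu_{F_j}$ annihilates everything of degree different from $D(u(1+j,m))$, which is how the paper phrases it.
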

\begin{proof}  If in (\ref{dhs}) we have  $\Phi ^{S}(1+s_i,m)=0,$ then
the spectrum {\rm Sp}$(a)$ of $a=\Phi ^{S}(k,m)$ is a proper subset of $S\cup \{ m\} .$
By Proposition \ref{phib} there exists a subset 
$S^{\prime }\subseteq {\rm Sp}(a)\subset S$ such that
$\Phi ^{S^{\prime }}(k,m)\in \, ${\bf U}. This contradiction implies that
$\Phi ^{S}(1+j,m)\neq 0$ for all $j\in S\cap [k,m-1].$

If $j\notin S,$ then Lemma \ref{las3} implies $\Phi ^{S}(1+j,m)\in \, ${\bf U}.

If $j\in S,$ then we apply $\Delta \cdot ({\rm id}\otimes \nu _a)$ with 
$a=\Phi ^{S}(1+j,m)\neq 0$
as defined in Lemma \ref{las4} to both sides of  (\ref{dhs}). Lemma \ref{las4}
shows that the value of $\Delta (\Phi ^{S}(1+i,m)u[k,i])\cdot ({\rm id}\otimes \nu _a)$ 
has the following three options. If $j<i<m,$ this is zero. If $i=j,$ this is $g_au[k,j]\otimes a.$
If $i<r,$ this is $g_ab_i^{\prime }u[k,i],$ $b_i^{\prime }\in A_{k+1}.$ Since 
$\Delta (u[k,m]) \cdot ({\rm id}\otimes \nu _a)=0$ due to  (\ref{co}), we get the following 
relation
$$
b=u[k,j]+\sum_{i<j, i\in {S}} b_i^{\prime }u[k,i]\in \hbox{\bf U}, \ \ b_i^{\prime }\in A_{k+1}.
$$
By definition this means that $[k:j]$ is an {\bf U}-root, while Proposition \ref{phib} 
implies $\Phi ^{S^{\prime \prime }}(k,j)\in \, ${\bf U}
with $S^{\prime \prime }\subseteq {\rm Sp}(b)\subseteq S \cap [k, j].$
\end{proof}
\begin{lemma} 
If $[k:m]$ is a simple {\bf U}-root, $k\leq m<\psi (k),$ then 
the minimal  $S$ such that  $\Phi ^{S}(k,m)\in \, ${\bf U} equals 
$\{ j\ | \ k\leq j<m, \ [k:j] \hbox{ is an {\bf U}-root}\} ,$ and this is a $(k,m)$-regular set, see Definition 
$\ref{reg1}.$
\label{sub1}
\end{lemma}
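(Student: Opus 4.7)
The plan is to establish three things about a minimal set $S_{\min}$ with $\Phi^{S_{\min}}(k,m)\in\mathbf{U}$: that $S_{\min}\subseteq S_0$, that $S_0\subseteq S_{\min}$, and that $S_0$ is $(k,m)$-regular, where $S_0:=\{j: k\le j<m,\ [k:j] \text{ is a }\mathbf{U}\text{-root}\}$. Existence of $S_{\min}$ follows from Proposition \ref{phib} together with finite descent among the subsets of $[k,m-1]$.

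The inclusion $S_{\min}\subseteq S_0$ is immediate from Lemma \ref{sub0}: by minimality the hypothesis ``$\Phi^{S'}(k,m)\notin\mathbf{U}$ for every $S'\subsetneq S_{\min}$'' is satisfied, so its ``in particular'' clause yields $j\in S_0$ for every $j\in S_{\min}$. For the reverse inclusion I argue by contradiction: suppose $j\in S_0\setminus S_{\min}$. Since $j\notin S_{\min}$, Lemma \ref{sub0} produces $\Phi^{S_{\min}}(1+j,m)\in\mathbf{U}$, whose degree $[1+j:m]$ is therefore a sum of $\mathbf{U}$-roots (each PBW-generator appearing in its PBW-expansion having degree a $\mathbf{U}$-root). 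Combined with the $\mathbf{U}$-root $[k:j]$ guaranteed by $j\in S_0$, this exhibits $[k:m]=[k:j]+[1+j:m]$ as a sum of two or more $\mathbf{U}$-roots, each strictly smaller than $[k:m]$ in $\Gamma^+$, contradicting simplicity of the $\mathbf{U}$-root $[k:m]$. Hence $S_0=S_{\min}$, and in particular the minimal $S$ is uniquely determined.

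For regularity, Definition \ref{reg1} handles $m\le n$ and $k>n$ automatically, so assume $k\le n<m<\psi(k)$. I split on whether $n\in S_0$ and claim white regularity when $n\notin S_0$ and black regularity when $n\in S_0$. Suppose the appropriate condition fails at a pair $(i,\psi(i)-1)$: the boundary $i=k-1$ is ruled out by $m<\psi(k)$ (which forces $\psi(k-1)>m+1$, violating the range hypothesis), while the diagonal pair $i=n$ corresponds exactly to the case split on whether $n\in S_0$. For every other failure pair, both $[k:i]$ and $[k:\psi(i)-1]$ must be $\mathbf{U}$-roots, and using the $\Gamma^+$-identity $[k:\psi(i)-1]=[k:i]+2[i+1:n]$ to transfer the doubled contribution, together with the sum-of-$\mathbf{U}$-roots decompositions of $[1+\ell:m]$ extracted from $\Phi^{S_{\min}}(1+\ell,m)\in\mathbf{U}$ for the indices $\ell\notin S_0$ furnished by Lemma \ref{sub0}, one assembles an explicit decomposition of $[k:m]$ into two or more strictly smaller $\mathbf{U}$-roots, again contradicting simplicity. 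The black case is reduced to a white case via the duality Proposition \ref{dec} together with Lemmas \ref{bwd}, \ref{bwd1}, which reroute the entire setup through the involution $S\mapsto\overline{\psi(S)-1}$, $[k,m]\mapsto[\psi(m),\psi(k)]$.

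The main obstacle is this regularity step: in each failure configuration one must produce an honest $\Gamma^+$-decomposition of $[k:m]$ into a sum of two or more $\mathbf{U}$-roots strictly smaller than $[k:m]$. Coordinating the $[k:j]$-type roots with $j\in S_0$, the root-sum decompositions obtained from $\Phi^{S_{\min}}(1+\ell,m)\in\mathbf{U}$ for $\ell\notin S_0$, and the $\psi$-duality that bridges the white and black alternatives requires careful bookkeeping across several boundary sub-cases, particularly at $\psi(m)-1$ where the pair at $i=m$ forces the only non-automatic membership constraint on $S_0$.
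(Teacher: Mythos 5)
Your overall architecture (minimality plus Lemma \ref{sub0} gives $S_{\min}\subseteq S_0$; simplicity of $[k:m]$ gives $S_0\subseteq S_{\min}$; regularity by contradiction with simplicity) matches the paper's, but there is a genuine gap in your step $S_0\subseteq S_{\min}$. From $j\in S_0\setminus S_{\min}$ you obtain $\Phi^{S_{\min}}(1+j,m)\in{\bf U}$ via Lemma \ref{sub0} and conclude that its degree $[1+j:m]$ is a sum of ${\bf U}$-roots. That inference is valid only if $\Phi^{S_{\min}}(1+j,m)\neq 0$, and this is exactly the point the paper warns about after (\ref{dhs}): once $1+j\geq\psi(m)$ the recursion gives no a priori nonvanishing, and the nonvanishing statement, Corollary \ref{dec1}, requires the set to be $(1+j,m)$-regular. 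The paper therefore establishes regularity of the minimal $S$ \emph{first}, and then in this direction splits off the three configurations in which $(1+j,m)$-regularity can fail even though $(k,m)$-regularity holds (namely $j=\psi(m)-1$; or $\psi(j)-1\in S$ with $n\notin S$; or $\psi(j)-1\notin S$ with $n\in S$), producing in each a hand-built root decomposition such as $[k:m]=[k:\psi(m)-1]+2[n+1:m]$. Your ordering --- regularity last --- removes the only tool that could detect and close these cases, so as written the argument does not go through when $1+j\geq\psi(m)$.

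A secondary weakness is that your regularity step is only a sketch. The identity $[k:\psi(i)-1]=[k:i]+2[i+1:n]$ is correct, but the decompositions of $[k:m]$ into ${\bf U}$-roots are never exhibited; the paper's are explicit, $[k:m]=[k:n-i]+[n+1:n+i]+[n+1:m]$ in the white case and $[k:m]=[k:n]+[n+1:n+i]+[1+n+i:m]$ in the black case, and obtaining the middle roots requires applying Lemma \ref{sub0} not to $\Phi^{S}(k,m)$ itself but to the auxiliary elements $\Phi^{S}(n+1,m)$, respectively $\Phi^{S''}(k,n)$, that a first application of Lemma \ref{sub0} supplies. These pieces are recoverable, but they constitute the substance of the proof rather than bookkeeping.
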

\begin{proof} 
Suppose that $S$ is not $(k,m)$-regular. Then we have $k\leq n<m.$

If $n$ is a white point, $n\notin S,$ then by Lemma \ref{sub0} we have
 $\Phi ^{S}(1+n,m)\in \, ${\bf U}. Hence $[n+1:m]=[\psi (m):n]$ is an {\bf U}-root
due to Corollary \ref{dec1}.
Since $S$ is not white $(k,m)$-regular, on the shifted scheme (\ref{grab1})
we can find a black column, say, $n+i\in S \cup \{ m\},$
$n-i\in S.$ By Lemma \ref{sub0} applied to  $\Phi ^{S}(n+1,m)$
we have $[n+1:n+i]$ is an {\bf U}-root, while the same lemma applied to
$\Phi ^{S}(k,m)$ shows that $[k:n-i]$ is also an {\bf U}-root.
Now we have
$$
[k:m]=[k:n]+[n+1:m]=[k:n-i]+[n+1:n+i]+[n+1:m]
$$
is a sum of {\bf U}-roots, a contradiction.

If $n$ is a black point, $n\in S$, then by Lemma \ref{sub0}  we have 
$\Phi ^{S^{\prime \prime }}(k,n)\in \,${\bf U}, and $[k:n]$
is an {\bf U}-root. Since $S$ is not black $(k,m)$-regular, 
we can find $i,$ $1\leq i\leq m-n$ 
such that $n+i\notin  S \setminus \{ m\}$, $n-i\notin S$, see (\ref{grab2}). 
We have $n-i\notin S^{\prime \prime },$ for $S^{\prime \prime }\subseteq S.$
Hence Lemma \ref{sub0} applied to $\Phi ^{S^{\prime \prime }}(k,n)$ implies that
$[1+n-i:n]=[n+1:n+i]$ is an {\bf U}-root. The same lemma applied to 
$\Phi ^{S}(k,m)$ shows that $\Phi ^{S}(1+n+i,m)\in \,${\bf U}.
Hence, due to Corollary \ref{dec1}, the element $[1+n+i:m]=[\psi (m):n-i]$ is an {\bf U}-root as well.
We get a similar contradiction:
$$
[k:m]=[k:n]+[n+1:m]=[k:n]+[n+1:n+i]+[1+n+i:m].
$$

Due to Lemma \ref{sub0} it remains to show that if $[k:j]$ is an {\bf U}-root,
then $j\in S.$ Suppose in contrary that $j\notin S.$ Then Lemma \ref{sub0}
implies $a=\Phi ^{S}(1+j,m)\in \,${\bf U}. 

If $S$ is $(1+j,m)$-regular, or $1+j<\psi (m),$ then $a\neq 0$ and
$[1+j:m]$ is an  {\bf U}-root, see Corollary \ref{dec1}.
This is a contradiction, for $[k:m]=[k:j]+[1+j:m].$

Suppose, finally, that $S$ is not $(1+j,m)$-regular and $1+j\geq \psi (m).$ 
Since $S$ indeed is $(k,m)$-regular,
this is possible only in two cases: $j=\psi (m)-1$, or $n\notin S,$ $\psi (j)-1\in S,$
see the shifted scheme representations (\ref{grab1}), (\ref{grab3}). 

In the former case by Lemma \ref{sub0} either 
$\Phi ^{S}(1+n,m)\in \,${\bf U} (if $n\notin S),$ or 
$\Phi ^{S^{\prime \prime }}(k,m)\in \,${\bf U} and 
$\Phi ^{S}(1+j,m)\in \, ${\bf U}, for 
$j\notin S^{\prime \prime } \subseteq S$
 (if $n\in S).$ Therefore $[n+1:m]=[\psi (m),n]=[j+1:n]$
is an {\bf U}-root due to Corollary \ref{dec1}. We get a contradiction 
$[k:m]=[k:\psi (m)-1]+[\psi (m),n]+[n+1:m].$

In the latter case similarly $\Phi ^{S}(1+n,m)\in \,${\bf U} and 
$\Phi ^{S^{\prime \prime }}(1+j,n)\in \,${\bf U}. Hence Corollary \ref{dec1} 
implies that $[n+1:m],$ $[1+j:n]$ are {\bf U}-roots. Again contradiction:
$[k:m]=[k:j]+[1+j,n]+[n+1:m].$
\end{proof}
\begin{lemma} 
If $[k:m]=\sum _{i=1}^{r+1}[l_i:m_i],$ $k\leq m\leq 2n,$ $l_i\leq m_i<\psi (l_i),$
then it is possible to replace some of the pairs $(l_i,m_i)$ with $(\psi (m_i), \psi (l_i))$
so that the given decomposition takes up the form
\begin{equation}
[k:m]=[1+k_0:k_1]+[1+k_1:k_2]+\ldots +[1+k_r:m]
\label{dso}
\end{equation}
with $k-1=k_0<k_1<k_2<\ldots <k_r<m=k_{r+1}.$
\label{sos}
\end{lemma}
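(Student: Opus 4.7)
The plan is to induct on $r+1$, the number of summands.

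In the base case $r=0$, the $\Gamma^+$-support together with the multiplicities of a positive root $[l:m]$ determines the pair $(l,m)$ uniquely (among parameterizations with $l\leq m<\psi(l)$). Hence the identity $[l_1:m_1]=[k:m]$ forces $(l_1,m_1)=(k,m)$ when $k+m\leq 2n$ and $(l_1,m_1)=(\psi(m),\psi(k))$ when $k+m\geq 2n+2$; in the latter case a single flip recovers $(k,m)$. The intermediate value $k+m=2n+1$ would make $[k:m]=2(x_k+\ldots+x_n)$, which is not a positive root, and so cannot occur for $r=0$.

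For $r\geq 1$ the strategy is to peel off a starting interval: I would locate an index $j^*$ and a choice (flipped or not) for $(l_{j^*},m_{j^*})$ so that the resulting interval begins at $k$, set $k_1$ to its right endpoint, remove this summand, and apply the inductive hypothesis to the remaining $r$ summands, which then sum to $[k_1+1:m]$. The main obstacle is showing that such a starting summand exists. In the principal case $k+m\leq 2n$, the coefficient of $x_k$ in $[k:m]$ equals $1$, so exactly one summand $j^*$ contributes $x_k$. Analyzing which $s\in\{k,\psi(k)\}$ can lie in $[l_{j^*},m_{j^*}]$, and using $m_{j^*}<\psi(l_{j^*})$, one verifies that coefficient $1$ forces $k\in[l_{j^*},m_{j^*}]$ with $\psi(k)>m_{j^*}$. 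Supposing now $l_{j^*}<k$: then $x_{l_{j^*}}$ has coefficient $\geq 1$ in $[l_{j^*}:m_{j^*}]$ but coefficient $0$ in $[k:m]$ (since $l_{j^*}\notin[k,m]$ and $\psi(l_{j^*})>\psi(k)>m$), a contradiction. Hence $l_{j^*}=k$, no flip is needed; a parallel coefficient check on $x_{m_{j^*}}$ yields $m_{j^*}\leq m$, so $k_1=m_{j^*}\in[k,m-1]$.

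The case $k+m\geq 2n+2$ follows by $\psi$-duality, using $[k:m]=[\psi(m):\psi(k)]$ with $\psi(m)+\psi(k)\leq 2n$: apply the principal case to the reflected problem, then transport the resulting consecutive partition back by the reflection $[a,b]\mapsto[\psi(b),\psi(a)]$ of each subinterval. The boundary case $k+m=2n+1$ is a small variation: here $x_k$ has coefficient $2$, and no single summand can supply both copies (such a summand would have $l_i<k$, contradicting the same $x_{l_i}$ bookkeeping). Two summands then contribute coefficient $1$ each, both forced to have $l_i=k$ as in the principal case, and peeling off one reduces the tail to a problem with $(k_1+1)+m\geq 2n+2$, already handled.
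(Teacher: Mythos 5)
Your argument is correct and is essentially the paper's proof: both locate a summand that (possibly after a flip) forms an initial segment $[k:k_1]$ (or, symmetrically, a terminal segment $[\psi(m_i):m]$), peel it off, and induct, with the paper inducting on $m-k$ and finding this summand via the maximal letter $x_{\min(k,\psi(m))}$ --- the same device as your coefficient count for $x_k$. The only cosmetic differences are your choice of induction variable (the number of summands), your handling of $m>\psi(k)$ by reflecting the whole decomposition rather than peeling from the right, and your explicit treatment of the boundary case $m=\psi(k)$, which the paper's two-case split covers uniformly.
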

\begin{proof} 
We use induction on $m-k.$ Either $x_k$ or $x_m$ is the maximal letter among 
$\{ x_j | k\leq j\leq m\} .$ Hence there exists at least one $i$ such that, respectively,
$l_i=k$ or $l_i=\psi (m).$ In the former case we may put $k_1=m_i$
and apply the inductive supposition to $[m_i+1:m].$ In the latter case we put
$k_r=\psi (m_i)-1.$ Then $[k_r+1:m]=[\psi (m_i): \psi (l_i)].$
One may apply the inductive supposition to $[k:k_r].$
\end{proof} 

\begin{lemma} 
If $[k:m],$ $k\leq m\neq \psi (k)$ is a sum of {\bf U}-roots, then $[k:m]$ itself is an {\bf U}-root.
\label{sos1}
\end{lemma}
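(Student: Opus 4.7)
My plan is to induct on $r$, the number of summands minus one in the hypothetical decomposition $[k:m]=\sum_{i=1}^{r+1}[l_i:m_i]$ as a sum of {\bf U}-roots. The base case $r=0$ is immediate. For the inductive step I first reduce to $m<\psi(k)$: if $m>\psi(k)$ then $[k:m]=[\psi(m):\psi(k)]$ in $\Gamma^+$ with $\psi(m)<\psi(k)<\psi(\psi(m))=m$, so the relabelled data $k'=\psi(m)$, $m'=\psi(k)$ satisfies $m'<\psi(k')$.

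Assuming $m<\psi(k)$, I apply Lemma \ref{sos} to obtain a chain
\[ [k:m]=[1+k_0:k_1]+\cdots+[1+k_r:m],\qquad k-1=k_0<k_1<\cdots<k_r<k_{r+1}=m, \]
with each $\gamma_i:=[1+k_{i-1}:k_i]$ a {\bf U}-root. Each pair $(1+k_{i-1},k_i)$ is in canonical form since $k_i<m<\psi(k)\leq\psi(1+k_{i-1})$, so combining with the fact $s=1$ established in Corollary \ref{vd}, each $\gamma_i$ is the degree of some $c_i=\Phi^{S_i}(1+k_{i-1},k_i)\in{\bf U}$. The induction hypothesis applied to $[k:k_r]=\gamma_1+\cdots+\gamma_r$ (legal because $k_r<m<\psi(k)$ gives $k_r\neq\psi(k)$) produces $a\in{\bf U}$ with leading PBW-term $u[k,k_r]$.

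I then form $b:=[a,c_{r+1}]\in{\bf U}$, which is homogeneous of degree $[k:m]$. Since $x_k$ appears exactly once in this degree, $b$ has a unique expansion $b=\sum_{i=k}^{m}F'_iu[k,i]$ with $F'_i\in A_{k+1}$. The coefficient $F'_m$ receives contribution only from the leading-times-leading term $[u[k,k_r],u[1+k_r,m]]$, which by Proposition \ref{ins2} equals $u[k,m]$ once the three exceptional cases are excluded: $m=\psi(k)$ by hypothesis, $k_r=\psi(k)$ by $k_r<m<\psi(k)$, and $k_r=\psi(m)-1$ because this would force $\gamma_{r+1}=[\psi(m):m]=2[\psi(m):n]$ in $\Gamma^+$, which is not a canonical PBW-degree $[l:m']$ with $l\leq m'<\psi(l)$ (a direct coefficient check), contradicting $\gamma_{r+1}$ being a {\bf U}-root. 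All other cross-terms produce PBW-monomials ending in $u[k,j]$ for some $j<m$: brackets $[u[k,j],u[1+k_r,m]]$ with $j<k_r$ vanish by Proposition \ref{NU} (its exceptions again ruled out by $m\neq\psi(k)$ and $k_r\neq\psi(m)-1$), while brackets of $u[k,k_r]$ with the lower summands of $c_{r+1}$ collapse via Proposition \ref{ins2} to expressions ending in $u[k,s]$ with $s<k_r$. Thus $F'_m$ is a nonzero scalar, and Proposition \ref{phib} applied to $b$ yields $\Phi^{S'}(k,m)\in{\bf U}$, so $[k:m]$ is a {\bf U}-root.

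The principal technical obstacle is verifying $F'_m\neq 0$, which hinges on ruling out the exceptional case $k_r=\psi(m)-1$ of Proposition \ref{ins2}. The key structural observation is that $[\psi(m):m]=2[\psi(m):n]$ cannot match any canonical PBW-degree $[l:m']$ with $l\leq m'<\psi(l)$, which both resolves the obstacle and clarifies the role of the hypothesis $m\neq\psi(k)$.
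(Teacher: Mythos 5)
Your overall strategy (peel off the last piece by induction on $r$) differs from the paper's, which first refines the decomposition so that all pieces are \emph{simple} roots, chooses regular sets $S_i$ for each piece, groups the factors into two blocks around the unique piece crossing $n$, and only then applies an ``$r=1$'' computation --- and your observation that $k_r=\psi(m)-1$ is impossible because $[\psi(m):m]=2[\psi(m):n]$ cannot equal any $[l:m']$ with $l\leq m'<\psi(l)$ is correct and genuinely nice. But the proposal has a concrete error that creates a real gap. You write $k_i<m<\psi(k)\leq\psi(1+k_{i-1})$; since $\psi$ reverses order and $k=1+k_0\leq 1+k_{i-1}$, the inequality goes the other way, $\psi(1+k_{i-1})\leq\psi(k)$, with equality only for $i=1$. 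Hence the pairs $(1+k_{i-1},k_i)$ need \emph{not} be in canonical form: any piece contained in $[n+1,2n]$ has $k_i>\psi(1+k_{i-1})$ (e.g.\ $n=3$, $\gamma=[4:5]=[2:3]$). In that situation what Proposition \ref{phib} actually gives you is $\Phi^{T}(\psi(k_i),\psi(1+k_{i-1}))\in\mathbf{U}$; to convert this into the element $\Phi^{S_i}(1+k_{i-1},k_i)$ your bracket computation needs, you must invoke the duality of Proposition \ref{dec}/Corollary \ref{dec0}, which requires $T$ to be regular, which by Lemma \ref{sub1} is only guaranteed when the root is \emph{simple} --- so the reduction to simple roots that the paper performs is not optional. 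None of this machinery appears in your argument, and without it the existence of $c_{r+1}=\Phi^{S_{r+1}}(1+k_r,m)$ in $\mathbf{U}$, and hence the entire computation of $F'_m$, is unsupported precisely in the cases where the lemma is hard.

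A second, smaller gap: to kill the cross-terms you cite Proposition \ref{NU} for $[u[k,j],u[1+k_r,m]]$ with $j<k_r$, claiming its exceptions are ruled out by $m\neq\psi(k)$ and $k_r\neq\psi(m)-1$. But the relevant exception of Proposition \ref{NU} is $m=\psi(j)-1$, i.e.\ $j=\psi(m)-1$, and $j$ here ranges over the whole spectrum of $a$ below $k_r$; nothing prevents $\psi(m)-1$ from occurring there, so those brackets need not vanish. The correct way to dispose of the cross-terms is not vanishing but the degree argument of the paper's auxiliary statement (all super-words in their PBW-decomposition \emph{start} with super-letters smaller than $u[k,m]$, hence cannot produce the lone super-letter $u[k,m]$); as written, your justification of $F'_m\neq 0$ does not go through.
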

\begin{proof} 
Without loss of generality we may suppose that $m<\psi (k),$ for $[k:m]=[\psi (m):\psi (k)].$
By Lemma \ref{sos} we have a decomposition (\ref{dso}), where $[1+k_i:k_{i+1}],$ $0\leq i<r$
are {\bf U}-roots. Increasing, if necessary, the number $r$ we may suppose that all roots 
$[1+k_i:k_{i+1}],$ $0\leq i<r$ are simple. 

If $k_{i+1}<\psi (1+k_i),$ then by Proposition \ref{phib}
we find a set $S_i\subseteq [1+k_i,k_{i+1}-1]$ such that 
$\Phi ^{S_i}(1+k_i,k_{i+1})\in \, ${\bf U}. Moreover by Lemma \ref{sub1}
the set $S_i$ may be taken to be $(1+k_i,k_{i+1})$-regular. 

If $k_{i+1}>\psi (1+k_i),$  then of course $\psi (1+k_i)< \psi (\psi (k_{i+1})),$ and again by 
Proposition  \ref{phib} and Lemma \ref{sub1} we find a $(\psi (k_{i+1}), \psi (1+k_{i}))$-regular 
set $T_i\subseteq [\psi (k_{i+1}), \psi (1+k_{i})-1]$ such that   
$\Phi ^{T_i}(\psi (k_{i+1}), \psi (1+k_{i}))\in \, ${\bf U}. By Corollary \ref{dec0} with 
$S \leftarrow T_i$ we have $\Phi ^{T_i}(\psi (k_{i+1}), \psi (1+k_{i}))$ 
$\sim \Phi ^{S_i}(1+k_{i}, k_{i+1}),$ where $S_i$ is $(1+k_{i}, k_{i+1})$-regular.
Thus in all cases 
\begin{equation}
f_i\stackrel{df}{=}\Phi ^{S_i}(1+k_i,k_{i+1})\in {\bf U},\ \ {S}_i\subseteq [1+k_i,k_{i+1}-1]
\label{ttt}
\end{equation}
with regular $S_i$ (we stress that this is a restriction on $S_i$ only if $1+k_i\leq n<k_{i+1}$).

By Definition \ref{root} we have to construct an element $c\in \, ${\bf U} with the leading 
super-word $u[k,m].$ Firstly we shall prove that for $r=1$ the element $c=[f_0,f_1]$
is such an element even if $[1+k_{i}:k_{i+1}]$ are not necessarily simple roots,
but $S_i,$ $i=0,1$ are still regular sets.

There is the following natural diminishing process for the decomposition
of a linear combination of super-words in the PBW-basis given in Theorem \ref{BW}, 
and Propositions  \ref{strB}, \ref{strBu}. Let $W$ be a super-word.
First, due to \cite[Lemma 7]{Kh3} we decompose the super-word $W$ 
in a linear combination of smaller monotonous super-words. Then, we replace  
each non-hard super-letter with the decomposition of its value that exists 
by Definition \ref{tv1}, and again decompose the appeared super-words 
in linear combinations of smaller monotonous super-words, and so on, 
until we get a linear combination of monotonous super-words in hard
super-letters. If these super-words are not restricted, 
we may apply Definition \ref{h1} and repeat the process
until we get only monotonous restricted words in hard super-letters.

This process shows that if a super-word $W$ starts with a super-letter smaller than $u[k,m],$
then all super-words in the PBW-decomposition of $W$ do as well. Using this remark we shall
prove the following auxiliary statement.

{\it If $k\leq i<j<m<\psi (k),$ $m\neq \psi (i)-1,$ then all super-words in the PBW-decomposition 
of $[u[k,i], \Phi ^{S}(1+j,m)]$ start with super-letters smaller than} $u[k,m].$

Indeed, by definition (\ref{dhs}) we have 
$$
\Phi ^{S}(1+j,m)=u[1+j,m]+
\sum_{m>s\geq 1+j} \gamma _s \Phi ^{S}(1+s,m)\cdot u[1+j,s], \ \gamma _s \in {\bf k}.
$$
We use induction on $m-j.$
By Proposition \ref{NU} we have $[u[k,i], u[1+j,m]]=0,$ for inequalities $\psi (k)>m>j$ imply $j\neq \psi (k).$
Denote $u=u[k,i],$ $v=\Phi ^{S}(1+s,m),$ $w=u[1+j,s].$
Relation (\ref{br1}) reads $[u,v\cdot w]=[u,v]\cdot w+p_{uv}v\cdot [u,w].$
By the inductive supposition all super-words in the PBW-decomposition
of $[u,v]$ start with smaller than $u[k,m]$ super-letters. Hence so do ones for $[u,v]\cdot w.$
The element $v$ depends only on $x_i,$ $i>k.$ Therefore so do all super-letters
in the PBW-decomposition of $v,$ while the starting super-letters of   $v\cdot [u,w]$
are still less than $u[k,m].$ Thus, all super-words in the PBW-decomposition 
of $[u[k,i], \Phi ^{S}(1+j,m)]$ start with super-letters smaller than $u[k,m].$
This proves the auxiliary statement.

Now  we have 
$$
[f_1,f_2]=\left[ \Phi ^{S_0}(k, k_1),\Phi ^{S_1}(1+k_1, m)\right] 
$$
$$
=\left[ u[k,k_1]+
\sum_{k_1>s\geq k} \gamma _s \Phi ^{S_0}(1+s,k_1)\cdot u[k,s],\right.
$$
$$
\left. u[1+k_1,m]+
\sum_{m>l\geq 1+k_1} \beta _l \Phi ^{S_1}(1+l,m)\cdot u[1+j,l]\right] 
$$
$$
=u[k,m]+\sum_{m>l\geq 1+k_1} \beta _l \left[u[k,k_1] ,
\Phi ^{S_1}(1+l,m)\cdot u[1+k_1,l]\right] 
$$
$$
+\sum_{k_1>s\geq k} \gamma _s 
 \left[   \Phi ^{S_0}(1+s,k_1)\cdot u[k,s], f_2\right] . 
$$
We see that each element in the latter sum has a non trivial left factor that 
depends only on $x_i,$ $i>k,$ this is either $\Phi ^{S_0}(1+s,k_1)$ or 
$f_2.$ Hence all super-words in the PBW-decomposition 
of that elements start with super-letters smaller than $u[k,m].$ To check the former sum
denote $u=u[k,k_1],$ $v=\Phi ^{S_1}(1+l,m),$ $w=u[1+k_1,l].$ By (\ref{br1})
the general element in the sum is proportional to 
$[u,v\cdot w]=[u,v]\cdot w+p_{uv}v\cdot [u,w].$ By the above auxiliary statement
with $i\leftarrow k_1,$ $j\leftarrow l$ all super-words in the PBW-decomposition 
of $[u,v]$ start with super-letters smaller than $u[k,m].$ Hence so do the ones
of $[u,v]\cdot w.$ The element $v$ depends only on $x_i,$ $i>k.$ Therefore the starting 
super-letters in the PBW-decomposition of   $v\cdot [u,w]$
are less than $u[k,m]$ as well. Thus the leading term of $[f_0,f_1]$ indeed is $u[k,m].$
This completes the case $r=1.$

\smallskip
Consider the general case. Denote by $t$ the index such that $1+k_t\leq n\leq k_{t+1},$
if any. Recall that $S_t$ is either white or black $(1+k_t,k_{t+1})$-regular,
while each of $S_i,$ $i\neq t$ is both white and black $(1+k_i,k_{i+1})$-regular,
for its degree in $x_n$ is less than or equal to $1.$ We shall consider four options for the regular
set $S_t$ given in (\ref{grab}--\ref{grab3}) separately.

1. $k_{t+1}<\psi (1+k_t),$ {\it and $S_t$ is white regular}. Let 
$S =\cup _{i=0}^t{S}_i\cup \{ k_i\, | \, 0<i<t\} .$ The set $S$ is white 
$(k,k_{t+1})$-regular since all complete columns on the shifted scheme (\ref{grab}) for
$\Phi ^{S}(k,k_{t+1})$ coincide with ones for $\Phi ^{S_t}(k_t,k_{t+1}).$
By Lemma \ref{xny} we have 
$$
\Phi ^{S}(k,k_{t+1})\sim [f_tf_{t-1}\ldots f_2f_1f_0]
$$
with an arbitrary alignment of brackets on the right hand side. In the same way consider 
the set $S^{\prime }=\cup _{i=t+1}^r{S}_i\cup \{ k_i\, | \, t+1<i<r\} .$ This set is white
$(1+k_{t+1}, m)$-regular, for the shifted scheme (\ref{grab}) for
$\Phi ^{S^{\prime }}(1+k_{t+1},m)$ has no complete columns at all. 
Lemma \ref{xny} yields 
$$
\Phi ^{S^{\prime }}(1+k_{t+1},m)\sim [f_rf_{r-1}\ldots f_{t+2}f_{t+1}].
$$
Now we may apply the considered above case $r=1$ with $S_0\leftarrow S,$
 $S_1\leftarrow S^{\prime },$ $t_1\leftarrow k_{t+1}.$  
Thus the leading super-word of the element
\begin{equation}
c=\left[ [f_tf_{t-1}\ldots f_2f_1f_0],[f_rf_{r-1}\ldots f_{t+2}f_{t+1}]\right]
\label{tc1}
\end{equation}
equals $u[k,m]$ and obviously $c\in \, ${\bf U}, for $f_i\in \, ${\bf U}, $0\leq i\leq r.$

2. $k_{t+1}>\psi (1+k_t),$ {\it and $S_t$ is white regular}. In perfect analogy we consider 
the sets $S =\cup _{i=0}^{t-1}{S}_i\cup \{ k_i\, | \, 0<i<t-1\} ,$ and 
$S^{\prime }=\cup _{i=t}^r{S}_i\cup \{ k_i\, | \, t<i<r\} .$ 
By the case $r=1$ under the substitutions $S_0\leftarrow S,$
 $S_1\leftarrow S^{\prime },$ $t_1\leftarrow k_t,$ we see that  the required element is
\begin{equation}
c=\left[ [f_{t-1}f_{t-2}\ldots f_1f_0],[f_rf_{r-1}\ldots f_{t+1}f_t]\right] .
\label{tc2}
\end{equation}

3. $k_{t+1}<\psi (1+k_t),$ {\it and $S_t$ is black regular}. Let 
$S =\cup _{i=0}^t{S}_i.$ The set $S$ is black 
$(k,k_{t+1})$-regular since all complete columns on the shifted scheme (\ref{grab2}) for
$\Phi ^{S}(k,k_{t+1})$ coincide with ones for $\Phi ^{S_t}(k_t,k_{t+1}).$
No one of the points $k_1, k_2, \ldots , r_r$ belongs to the set $S$, see (\ref{ttt}).
Therefore by multiple use of Corollary \ref{xnz} we have 
$$
\Phi ^{S}(k,k_{t+1})\sim [f_0f_1\ldots f_{t-1}f_t]
$$
with an arbitrary alignment of brackets on the right hand side. In the same way consider 
the set $S^{\prime }=\cup _{i=t+1}^r{S}_i.$ This set is black
$(1+k_{t+1}, m)$-regular, for the shifted scheme (\ref{grab2}) for
$\Phi ^{S^{\prime }}(1+k_{t+1},m)$ has no complete columns at all. 
The multiple use of Corollary \ref{xnz} yields 
$$
\Phi ^{S^{\prime }}(1+k_{t+1},m)\sim [f_{t+1}f_{t+2}\ldots f_{r-1}f_r].
$$
Now we may find $c$ using the case $r=1$ with $S_0\leftarrow {S},$
 $S_1\leftarrow {S}^{\prime },$ $t_1\leftarrow k_{t+1}:$  
\begin{equation}
c=\left[ [f_0f_1\ldots f_{t-1}f_t],[f_{t+1}f_{t+2}\ldots f_{r-1}f_r]\right] .
\label{tc3}
\end{equation}

4. $k_{t+1}>\psi (1+k_t),$ {\it and $S_t$ is black regular}. In perfect analogy we consider 
the sets $S =\cup _{i=0}^{t-1}{S}_i,$ and 
$S^{\prime }=\cup _{i=t}^r{S}_i.$ 
By the case $r=1$ under the substitutions $S_0\leftarrow {S},$
 $S_1\leftarrow {S}^{\prime },$ $t_1\leftarrow k_t,$ we see that  
the required element is
\begin{equation}
c=\left[ [f_0f_1\ldots f_{t-2}f_{t-1}],[f_tf_{t+1}\ldots f_{r-2}f_{r-1}f_r]\right] .
\label{tc4}
\end{equation}
The proof is complete.
\end{proof}

\begin{lemma} 
If $[k:m],$ $k\leq m<\psi (k)$ is a simple {\bf U}-root, $k\leq j<m,$ then $[k:j]$ is an {\bf U}-root
if and only if $[1+j:m]$ is not a sum of {\bf U}-roots.
\label{su}
\end{lemma}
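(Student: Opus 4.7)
The plan is to treat the two implications separately, drawing on Lemma~\ref{sub1} for the structural description of which roots $[k:j']$ occur in {\bf U}.

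The forward implication ($\Rightarrow$) is immediate from Definition~\ref{root}: if $[k:j]$ is an {\bf U}-root and $[1+j:m]$ is assumed to be a sum of {\bf U}-roots, then $[k:m]=[k:j]+[1+j:m]$ writes $[k:m]$ as a sum of at least two {\bf U}-roots (the summand $[k:j]$ is distinct from $[k:m]$ because $j<m$), contradicting the simplicity of $[k:m]$. This direction uses no information about the elements $\Phi^S(k,m)$.

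For the backward implication ($\Leftarrow$), I suppose $[1+j:m]$ is not a sum of {\bf U}-roots. By Lemma~\ref{sub1}, there exists a $(k,m)$-regular $S$ with $\Phi^S(k,m)\in{\bf U}$ and $S=\{j':k\leq j'<m,\ [k:j']\text{ is an {\bf U}-root}\}$; it suffices to show $j\in S$. Suppose not; then by Lemma~\ref{sub0} we have $\Phi^S(1+j,m)\in{\bf U}$, and the task is to deduce that $[1+j:m]$ is a sum of {\bf U}-roots, which will contradict the hypothesis. If $m<\psi(1+j)$, then (\ref{dhs}) shows that $\Phi^S(1+j,m)$ has leading term $u[1+j,m]$, so $[1+j:m]$ is itself an {\bf U}-root. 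If $m>\psi(1+j)$ and $S\cap[1+j,m-1]$ is $(1+j,m)$-regular, Corollary~\ref{dec1} gives the leading term $u[\psi(m),\psi(1+j)]$, and again $[1+j:m]$ is an {\bf U}-root. Otherwise, the $(k,m)$-regularity of $S$ forces, by inspection of the shifted schemes introduced in Definition~\ref{reg1}, either $j=\psi(m)-1$ or else $n\notin S$ together with $\psi(j)-1\in S$; in each subcase I apply Lemma~\ref{sub0} independently to the portions of $\Phi^S(k,m)$ lying above and below the middle index $n$ to produce {\bf U}-roots whose sum is $[1+j:m]$ (namely $2[\psi(m),n]$ in the first subcase and $[1+j,n]+[n+1,m]$ in the second), exactly as in the last paragraph of the proof of Lemma~\ref{sub1}.

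The main obstacle is the edge behavior when $1+j\leq n<m$: the element $\Phi^S(1+j,m)$ then need not have the naive leading term $u[1+j,m]$, and the regular/non-regular dichotomy for the restriction of $S$ to $[1+j,m-1]$ must be resolved by splitting across $n$ and invoking Corollary~\ref{dec1} on each half. In essence this is a re-run of the case analysis already carried out at the end of the proof of Lemma~\ref{sub1}, but with the logical roles exchanged: there the hypothesis that $[k:j]$ is an {\bf U}-root was used to contradict simplicity of $[k:m]$, whereas here the hypothesis that $[1+j:m]$ is not a sum of {\bf U}-roots is contradicted directly.
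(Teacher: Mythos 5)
Your argument is correct and follows essentially the same route as the paper's: the forward direction from simplicity of $[k:m]$, and the converse via the minimal $(k,m)$-regular set $S$ of Lemma \ref{sub1}, Lemma \ref{sub0}, and the dichotomy according to whether the restriction of $S$ to $[1+j,m-1]$ is $(1+j,m)$-regular. The one divergence is in the terminal case analysis: where you list two subcases, the paper lists three, the extra one being $\psi(1+j)\notin S$ together with $n\in S$ (i.e.\ $S$ black $(k,m)$-regular), which it resolves by the three-term decomposition $[1+j:m]=[1+j:n]+[n+1:\psi(1+j)]+[\psi(j):m]$. Your shorter list is in fact exhaustive: if $S$ is black $(k,m)$-regular and $j\notin S$, then applying Definition \ref{reg1} to $i=j$ (admissible precisely when $1+j\geq \psi(m)$, which is the only situation in which regularity fails to be inherited) forces $\psi(1+j)\in S\setminus\{m\}$, so the restriction of a black regular set is automatically black $(1+j,m)$-regular and the non-regular branch can only occur for white regular $S$ --- hence the paper's third subcase is vacuous and your claimed dichotomy is complete. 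Since you justify that dichotomy only ``by inspection of the shifted schemes,'' you should record this one-line verification for the black regular branch; otherwise a reader will reasonably ask why the configuration $n\in S$, $\psi(1+j)\notin S$ cannot arise, and, if it could, your proposed decompositions $2[\psi(m):n]$ and $[1+j:n]+[n+1:m]$ would not cover it (in that configuration $[n+1:m]$ is not a sum of {\bf U}-roots at all).
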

\begin{proof} 
If $[k,j],$ is an {\bf U}-root, then $[1+j:m]$ is not a sum of {\bf U}-roots, for $[k:m]=[k:j]+[1+j:m]$ is a simple {\bf U}-root.

We note, first, that the converse statement is valid if the minimal $S$, such that 
$\Phi ^{S}(k,m) \in \, ${\bf U},  is $(1+j, m)$-regular. 
Indeed, in this case $\Phi ^{S}(1+j,m) \neq 0$ due to Corollary \ref{dec1}. By Lemma \ref{sub1} the element $[k:j]$ is an {\bf U}-root if and only if  $j\in S.$  
If $j\notin S,$ then by Lemma \ref{sub0} we have $a=\Phi ^{S}(1+j,m)\in \,${\bf U}.
Hence the nonzero homogeneous element $a$ is a polynomial
in PBW-generators of {\bf U}. Thus $[1+j:m],$ being the degree of $a,$ is a sum of 
{\bf U}-roots (by Lemma \ref{sos1} this is even an {\bf U}-root, for   
the regularity hypothesis implies $\psi (1+j)\neq m$). 

Suppose, next, that $S$ is not $(1+j, m)$-regular, and $j\notin S.$
 In this case $1+j\leq n<m.$ Moreover 
$m\geq \psi (1+j)$ since otherwise all complete columns in the shifted scheme
(\ref{grab}--\ref{grab3}) of 
$\Phi ^{S}(1+j,m)$ coincide with that of  $\Phi ^{S}(k,m).$
Obviously in general only the first from the left complete column for $\Phi ^{S}(1+j,m)$
may be different from a complete column for $\Phi ^{S}(k,m).$
Hence we have just the following three options:
1) $\psi (1+j)=m;$ 2) $\psi (1+j)\in S,$ while $n\notin {S};$ 
3) $\psi (1+j)\notin S,$ while $n\in {S}.$

1) In the shifted scheme of
$\Phi ^{S}(k,m),$ the point   $j=\psi (m)-1$ has the same 
color as $n$ does , see (\ref{grab}), (\ref{grab2}); that is, $n$ is a white point. 
At the same time, since $S$ is always 
$(n+1,m)$-regular, we already know that the point $n$ is white if and only if 
$[n+1:m]$ is an {\bf U}-root. Thus $[n+1:m]$ is an {\bf U}-root, while
$[1+j:m]=[1+j:n]+[n+1:m]=2[n+1:m]$ is a sum of two {\bf U}-roots.

2) In the second case certainly $S$ is $(n+1,m)$-regular. Hence $n\notin S$
implies that $[n+1:m]$ is an {\bf U}-root. By Lemma \ref{sub0} we have
$\Phi ^{S^{\prime \prime }}(k,\psi (1+j))\in \, ${\bf U} with 
$S^{\prime \prime }\subseteq S,$ for $\psi (1+j)\in S.$
In particular, still $n\notin S.$
Hence again the same lemma implies $a=\Phi ^{S}(n+1,\psi (1+j))\in \, ${\bf U}.
By Corollary (\ref{dec1}) the leading super-word of $a$ equals $u[1+j,n];$
that is, $[1+j:n]$ is an {\bf U}-root. Now $[1+j:m]=[1+j:n]+[n+1:m]$ is a sum of two roots,
which is required. 

3) By Lemma \ref{sub0} we have
$\Phi ^{S^{\prime \prime }}(k,n)\in \, ${\bf U} with 
$S^{\prime \prime }\subseteq S,$ for $n\in S.$
In particular, still $j\notin S^{\prime \prime }.$ Hence the same lemma
implies that $[1+j:n]$ is an {\bf U}-root. Since $\psi (1+j)\notin S,$ and
obviously $S$ is $(\psi (1+j), m)$-regular, we already know that $[1+\psi (1+j):m]=[\psi (j):m]$
is an {\bf U}-root. Now we have $[1+j:m]=[1+j:n]+[n+1:\psi (1+j)]+[\psi (j):m]$ is a sum of 
{\bf U}-roots, for $[n+1:\psi (1+j)]=[1+j:n].$
\end{proof}
\begin{lemma} 
A $($homogeneous$)$ right coideal subalgebra {\bf U} that contains {\bf k}$[G]$ 
is uniquely defined by the set of all its
simple roots.
\label{sub2}
\end{lemma}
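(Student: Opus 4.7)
Plan: Suppose $\mathbf{U}$ and $\mathbf{U}'$ are two right coideal subalgebras of $U_q^+(\mathfrak{so}_{2n+1})$ (or homogeneous ones in $u_q^+(\mathfrak{so}_{2n+1})$) each containing $\mathbf{k}[G]$ and having identical sets of simple roots. I will prove $\mathbf{U}=\mathbf{U}'$ by producing, for every root $\gamma=[k:m]$, an element of $\mathbf{U}\cap\mathbf{U}'$ whose leading super-word is the single super-letter $u[k,m]$, and then using induction on $|\gamma|=m-k+1$ to transfer the chosen PBW-generator of $\mathbf{U}$ for $\gamma$ into $\mathbf{U}'$.

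Iterating Lemma~\ref{sos1} shows that any sum of simple roots of the form $[k:m]$ with $m\neq\psi(k)$ is itself a root, and conversely every root is a sum of simple ones, so $D(\mathbf{U})=D(\mathbf{U}')$. For each simple root $[k:m]$, Lemma~\ref{sub1} specifies the $(k,m)$-regular set $S_{km}=\{j:k\le j<m,\ [k:j]\in D(\mathbf{U})\}$ purely from this common root data, and Lemma~\ref{uni} identifies $\Phi^{S_{km}}(k,m)$ as the unique element of its form in each algebra. Thus $\Phi^{S_{km}}(k,m)\in\mathbf{U}\cap\mathbf{U}'$ for every simple root.

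By Proposition~\ref{pro} and Corollary~\ref{vd}, $\mathbf{U}$ is generated over $\mathbf{k}[G]$ by a PBW-system in which each generator $c_\gamma$ has the form $\Phi^S(k,m)$ with leading super-word $u[k,m]$. It suffices to show $c_\gamma\in\mathbf{U}'$ for every root $\gamma$, by induction on $|\gamma|$. The case $|\gamma|=1$ forces $c_\gamma=x_k$, which lies in both algebras. For simple $\gamma$, the previous paragraph gives $c_\gamma=\Phi^{S_{km}}(k,m)\in\mathbf{U}'$. For non-simple $\gamma$, Lemma~\ref{sos} yields a decomposition $[k:m]=[1+k_0:k_1]+\cdots+[1+k_r:m]$; refining it further if necessary, I may assume every summand is itself a simple root, and the Cases~1--4 construction in the proof of Lemma~\ref{sos1}, applied to the common simple-root generators $\Phi^{S_{\cdot}}(1+k_i,k_{i+1})\in\mathbf{U}\cap\mathbf{U}'$, provides an element $\tilde c_\gamma\in\mathbf{U}\cap\mathbf{U}'$ whose leading super-word is $u[k,m]$ with coefficient one.

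Then $c_\gamma-\tilde c_\gamma\in\mathbf{U}$ has leading super-word strictly below $u[k,m]$, so every monotonous super-word in its PBW expansion over $\mathbf{k}[G]$ is of total degree $\gamma$ yet differs from the one-letter word $u[k,m]$. Each such super-word is a product $\prod c_{\gamma_i}^{n_i}$ with $\sum n_i|\gamma_i|=|\gamma|$ in which either at least two distinct factors occur or a single factor has multiplicity $\ge 2$, forcing $|\gamma_i|<|\gamma|$ for every $i$. By the inductive hypothesis each $c_{\gamma_i}\in\mathbf{U}'$, so $c_\gamma-\tilde c_\gamma\in\mathbf{U}'$ and $c_\gamma=\tilde c_\gamma+(c_\gamma-\tilde c_\gamma)\in\mathbf{U}'$; the symmetric argument gives $\mathbf{U}=\mathbf{U}'$. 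The main subtle point is to ensure the bracket construction of $\tilde c_\gamma$ borrowed from the proof of Lemma~\ref{sos1} uses only the canonical simple-root generators common to both algebras, which is precisely why one must first refine Lemma~\ref{sos}'s decomposition into simple pieces and then invoke the uniqueness statements of Lemmas~\ref{sub1} and~\ref{uni}.
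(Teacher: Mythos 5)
Your proof is correct and follows essentially the same route as the paper: both determine the full root set from the simple roots via Lemmas \ref{sos} and \ref{sos1}, pin down the canonical elements $\Phi^{S}(k,m)$ for simple roots via Lemmas \ref{uni} and \ref{sub1}, and use the bracketed elements $c$ of formulae (\ref{tc1})--(\ref{tc4}) for the non-simple roots. Your additional induction on $|\gamma|$ merely makes explicit the paper's opening remark that two subalgebras with the same PBW-basis coincide.
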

\begin{proof} Since obviously two subalgebras with the same PBW-basis coincide,
it suffices to find a PBW-basis of {\bf U} that depends only on a set of simple {\bf U}-roots.
We note firstly that the set of all {\bf U}-roots is uniquely defined by the set of simple {\bf U}-roots.
Indeed, if $[k:m]$ is an {\bf U}-root, then it is a sum of simple {\bf U}-roots. 
By Lemma \ref{sos} there exists a sequence $k-1=k_0<k_1<\ldots <k_r<m=k_{r+1}$
such that $[1+k_i:k_{i+1}],$ $0\leq i\leq r$ are simple {\bf U}-roots.
Conversely, 
if there exists a sequence $k-1=k_0<k_1<\ldots <k_r=m+1$
such that $[1+k_i:k_{i+1}],$ $0\leq i<r$ are simple {\bf U}-roots, then 
by Lemma \ref{sos1} the element $[k:m]$ is an {\bf U}-root. Of course the decomposition of $[k:m]$
in a sum of simple {\bf U}-roots is not unique in general, however for the construction 
of the PBW-basis we may fix that decomposition for each non-simple {\bf U}-root 
from the very beginning. 

Now if $[k:m]$ is a simple {\bf U}-root, Lemmas \ref{uni} and \ref{sub1}
show that the element $\Phi ^{S}(k,m)\in \, ${\bf U} 
is uniquely defined by the set of simple {\bf U}-roots. We include this element
in the PBW-basis of {\bf U}. If $[k:m]$ is a non-simple {\bf U}-root with a fixed decomposition
in a sum of simple {\bf U}-roots, then we include
in the PBW-basis the element $c$ defined in  one of the formulae (\ref{tc1}-\ref{tc4})
depending up the type of the decomposition.  
\end{proof}
\begin{lemma} 
If for $($homogeneous$)$ right coideal subalgebras
{\bf U}, {\bf U}$^{\prime }$ containing {\bf k}$[G]$ we have $r({\bf U})=r({\bf U}^{\prime }),$
 then {\bf U}$\, =\, ${\bf U}$^{\prime }.$
\label{su3}
\end{lemma}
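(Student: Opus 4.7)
By Lemma \ref{sub2}, a (homogeneous) right coideal subalgebra containing ${\bf k}[G]$ is determined by its set of simple {\bf U}-roots, so it suffices to recover this set from the root sequence $r({\bf U})=(\theta_1,\ldots,\theta_n)$. I would carry out the reconstruction by downward induction on $k$, running from $k=n+1$ down to $k=1$, maintaining as inductive datum the set of all simple {\bf U}-roots $[l{:}m]$ with $l>k$ together with (via Lemma \ref{sos} and Lemma \ref{sos1}) the set of all {\bf U}-roots whose first index exceeds $k$. The base case $k=n+1$ is vacuous.

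For the inductive step I would distinguish two cases. If $\theta_k=0$, then no simple {\bf U}-root starts at $k$; moreover, by Lemma \ref{sos}, any hypothetical {\bf U}-root $[k{:}j]$ would decompose as a sum of simple {\bf U}-roots whose leading summand $[k{:}k_1]$ again starts at $k$, contradicting $\theta_k=0$. Hence no {\bf U}-root starts at $k$ at all, and the invariant is updated trivially. If $\theta_k>0$, then by the very definition of $\theta_k$ the element $[k{:}\tilde\theta_k]$ is a simple {\bf U}-root, and this gives me a valid anchor for Lemma \ref{su}: for each $j$ with $k\le j<\tilde\theta_k$, that lemma characterizes $[k{:}j]$ as a {\bf U}-root precisely when $[1{+}j{:}\tilde\theta_k]$ is not a sum of {\bf U}-roots, a question depending only on {\bf U}-roots with first index $>k$ and thus decidable from the inductive data. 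For the remaining range $\tilde\theta_k<j\le 2n-k$ (where $j\neq\psi(k)$), Lemma \ref{sos1} reduces ``is $[k{:}j]$ an {\bf U}-root?'' to ``is it a sum of {\bf U}-roots?'', and every admissible decomposition has a leading simple piece $[k{:}k_1]$ with $k_1\le\tilde\theta_k$, already computed in the previous substep.

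Once all {\bf U}-roots with first index $k$ are pinned down, simplicity becomes combinatorial: $[k{:}j]$ is simple iff there is no splitting index $k\le k_1<j$ with both $[k{:}k_1]$ an {\bf U}-root and $[1{+}k_1{:}j]$ a sum of {\bf U}-roots. This completes one inductive step and updates the invariant at level $k$; running the induction down to $k=1$ fully determines the set of simple {\bf U}-roots from $r({\bf U})$, and Lemma \ref{sub2} then yields ${\bf U}={\bf U}'$. The main obstacle is purely organizational rather than mathematical: one must apply Lemma \ref{su} with precisely the maximal anchor $[k{:}\tilde\theta_k]$ (not a shorter simple root) and must arrange the induction so that every invocation of ``sum of {\bf U}-roots'' refers exclusively to previously computed data at strictly larger first index. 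Once this discipline is imposed, all of the substantive content has been supplied by the preceding lemmas.
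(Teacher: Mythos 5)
Your argument is correct and follows essentially the same route as the paper: reduce via Lemma \ref{sub2} to recovering the set of simple {\bf U}-roots, then run a downward induction on the onset $k$, using $[k:\tilde\theta_k]$ as the anchor in Lemma \ref{su} and the already-determined roots of larger onset (via Lemmas \ref{sos} and \ref{sos1}) to decide which $[k:j]$ are roots and which are simple. You merely spell out more explicitly the bookkeeping for $\theta_k=0$, for $j>\tilde\theta_k$, and for the final simplicity check, all of which the paper leaves implicit.
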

\begin{proof} 
By Lemma \ref{sub2} it suffices to show that the $r$-sequence uniquely defines the set of all simple roots. We use the downward induction on $k,$ the onset of a simple {\bf U}-root.
If $k=n,$ then the only possible $\gamma =[n:n]$ is a simple {\bf U}-root if and only if  $\theta _n=1.$
Let $k<n.$ 
By definition there  do not exist simple {\bf U}-roots of the form
$[k:m],$ $m>\tilde{\theta }_k$,  while $[k:\tilde{\theta }_k]$
 is a simple {\bf U}-root. If $m<\tilde{\theta }_k,$
then by Lemma \ref{su} the element $[k:m]$ is an {\bf U}-root if and only if
$[m+1:\tilde{\theta }_k]$  is not a sum of {\bf U}-roots
starting with a number greater than $k.$ 
By inductive supposition the $r$-sequence
defines all roots starting with a number greater than $k.$
 Hence by Lemma \ref{su} the $r$-sequence defines  the set of
all {\bf U}-roots of the form $[k:m],$ $m<\tilde{\theta }_k$
as well. Thus  the $r$-sequence
defines the set if all {\bf U}-roots and the set of simple {\bf U}-roots.
\end{proof}

 \section{Examples}
In this section we find the simple roots for some fundamental 
examples of right coideal subalgebras. We keep all notations of the 
above  section.

\begin{example} \rm 
Let ${\bf U}(k,m)$ be a right coideal subalgebra generated over  {\bf k}$[G]$ by 
a single element $u[k,m],$ $k\leq m\leq \psi (k).$ 
By (\ref{co}) the right coideal generated by $u[k,m]$ is spanned by the elements 
$g_{ki}u[i+1,m].$ Hence ${\bf U}(k,m)$ as an algebra
is generated over {\bf k}$[G]$ by the elements $u[i,m],$ $k\leq i\leq m.$ 
Respectively, the additive monoid of degrees of  homogeneous elements from ${\bf U}(k,m)$  is
generated by $[i:m],$ $k\leq i\leq m.$ In this monoid the indecomposable elements
(by definition they are simple ${\bf U}(k,m)$-roots)  are precisely $[i:m],$ $k\leq i\leq m,$ $i\neq \psi (m).$
The length of $[i:m]$ equals $m-i+1.$ However, if  $i>\psi (m),$ then the maximal letter 
among $x_j,$  $i\leq j\leq m$ is $x_{\psi (m)},$ for  $[i:m]=[\psi (m):\psi (i)]$
with $\psi (m)\leq \psi (i)<\psi (\psi (m)).$ Hence  the maximal length 
of a simple root starting with $\psi (m)$ equals $m-(\psi (m)+1)+1=2(m-n)-1,$
while there are no simple roots of the form 
$[k^{\prime }:m^{\prime }],$ $k^{\prime }\leq m^{\prime }<\psi (k^{\prime })$ 
with $k^{\prime }>\psi (m).$
Thus due to Definition \ref{tet} we have
\begin{equation}
\theta _i=\left\{
\begin{matrix} 
m-i+1, \hfill & \hbox{ if } k\leq i<\psi (m); \cr
2(m-n)-1, \hfill & \hbox{ if } k\leq i=\psi (m)\leq n; \cr
0, \hfill & \hbox{ otherwise. } 
\end{matrix} 
\right. 
\label{r00}
\end{equation} 
The set $\{ u[i,m] \, |\, k\leq i\leq m, i\neq \psi (m)\} $ is a set of PBW-generators for ${\bf U}(k,m)$
over {\bf k}$[G].$
\label{ex1}
\end{example}
\begin{example} \rm Let us analyze in details the simplest (but not trivial, \cite{BDR})
case $n=2.$ Consider the following  six elements
$w_1=u[1,3]=[[x_1,x_2],x_2],$ $w_2=u[2,4]=[x_2,[x_2,x_1]],$
$w_3=u[1,2]=[x_1,x_2],$ $w_4=u[3,4]=[x_2,x_1],$ $w_5=x_1,$ $w_6=x_2.$ 
Denote by $U_j,$ $1\leq j\leq 6$ a right coideal subalgebra generated by $w_j$ and {\bf k}$[G].$  

By means of (\ref{r00}) we have $r(U_1)=(3,1).$ Indeed, in this case $k=1,$ $m=3,$
$\psi (m)=2;$ hence $\theta _1=m-1+1=3$ according to the first option of (\ref{r00}),
while  $\theta _2=2(m-n)-1=1$ due to the second option of (\ref{r00}).

In the same way $r(U_2)=(3,0),$ for in this case $k=2,$ $m=4,$
$\psi (m)=1;$ hence $\theta _1=2(m-2)-1=3$ according to the second option,
while  $\theta _2=0$ due to the third one.

In perfect analogy we have $r(U_3)=(2,1),$ $r(U_4)=(2,0),$ $r(U_5)=(1,0),$ $r(U_6)=(0,1).$
We see that all these six right coideal subalgebras are different. There are two more
(improper) right coideal subalgebras $U_7=U_q^+({\mathfrak so}_{5}),$
$U_8={\bf k}[G]$ with the $r$-sequences $(1,1)$ and $(0,0)$
respectively. Thus we have found all $(2n)!!=8$ possible right coideal subalgebras in
$U_q^+({\mathfrak so}_{5})$ containing $G.$ They form the following lattice:
 
\begin{picture}(40,100)(-150,10)
\thicklines
\put(20,80){$\bullet $} 
   \put(24,81){\line(1,-1){19}}
\put(10, 90){$U_q^+({\mathfrak so}_{5})$}
\put(40,60){$\bullet \ [x_1,x_2]$}
\put(40,40){$\bullet \ [[x_1,x_2],x_2]$}
\put(40,20){$\bullet \  x_2$}
 \put(43,21){\line(0,1){42}}
 \put(0,60){$\bullet $} \put(-52,60){$[x_2,[x_2,x_1]]$} \put(2,62){\line(1,1){21}}
\put(0,40){$ \bullet $} \put(-32,40){$[x_2,x_1]$}
\put(0,20){$\bullet $} \put(-12,20){$x_1$}
\put(2.5,20){\line(0,1){42}} 
\put(1,23.5){\line(1,-1){20}}
 \put(20,0){$\bullet $} \put(14, -12){${\bf k}[G]$} 
\put(22,2){\line(1,1){21}}
\end{picture}

\

\

\label{ex2}
\end{example}

\smallskip
Our next goal is to generalize formula (\ref{r00}) to an arbitrary right coideal subalgebra 
{\bf U}$^{S}(k,m)$ generated over {\bf k}$[G]$ (as a right coideal subalgebra) 
by a single element $\Phi ^{S}(k,m)$ with a $(k,m)$-regular set $S$.
\begin{proposition}
If $S$ is $(k,m)$-regular, then the coproduct of $\Phi ^{S}(k,m)$
has a decomposition
\begin{equation}
\Delta (\Phi ^{S}(k,m))=\sum a^{(1)}\otimes a^{(2)},
\label{coph}
\end{equation}
where degrees of the left components of tensors belong to the additive monoid $\Sigma $
 generated by all $[1+t:s]$ with $t$ being a white point $(t=k-1,$ or $t\notin S,$ $k\leq t<m)$, 
and $s$ being a black point $(s\in S\cap [k,m-1],$ or $s=m).$
  \label{ex2i}
\end{proposition}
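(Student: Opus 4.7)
Plan: I would prove this by induction on $r := |S \cap [k,m-1]|$, using the recursive definition (\ref{dhs}) of $\Phi^S(k,m)$ together with the explicit coproduct formula of Theorem \ref{cos}.

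Base case ($r = 0$): here $\Phi^S(k,m) = u[k,m]$, and by Theorem \ref{cos} the left-component degrees of $\Delta(u[k,m])$ are $0$, $[k:m]$, and $[i+1:m]$ for $k \leq i \leq m-1$. Since $S \cap [k,m-1] = \emptyset$, every such $i$ (and also $k-1$) is a white point of $\Phi^S(k,m)$ and $m$ is the unique black point, so each of these degrees is a generator of $\Sigma$.

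Inductive step: Apply $\Delta$ to (\ref{dhs}). Expand $\Delta(u[k,m])$ and $\Delta(u[k,s_j])$ via Theorem \ref{cos}, and invoke the inductive hypothesis on $\Delta(\Phi^S(1+s_j,m))$ (valid since $|S \cap [1+s_j, m-1]| = r-j < r$). The degrees from $\Delta(u[k,m])$ split into ``good'' pieces with left degree $[i+1:m]$, $i \notin S$, which already lie in $\Sigma$, and ``bad'' pieces with left degree $[s_j+1:m]$, one for each $s_j \in S$. The latter are cancelled by pairing the $\Phi^S(1+s_j,m)\otimes 1$ summand of $\Delta(\Phi^S(1+s_j,m))$ with the $g_{k,s_j}\otimes u[k,s_j]$ summand of $\Delta(u[k,s_j])$; a direct computation, using the commutation $u[1+s_j,m]\,g_{k,s_j} = p(u(1+s_j,m),u(k,s_j))\,g_{k,s_j}\,u[1+s_j,m]$ together with the identity $\alpha_{km}^{s_j} = \tau_{s_j}\,p(u(1+s_j,m), u(k,s_j))^{-1}$, produces exactly the opposite of the bad piece.

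For the remaining cross terms $a^{(1)}b^{(1)}\otimes a^{(2)}b^{(2)}$, note that the monoid $\Sigma_j$ supplied by induction for $\Phi^S(1+s_j,m)$ differs from $\Sigma$ only by its generators of the form $[1+s_j : s']$, where $s_j$ plays the role of a white starting point but is black in $\Phi^S(k,m)$. When such a generator is paired with a left factor from $\Delta(u[k,s_j])$ of the form $[i+1:s_j]$, the two segments concatenate to $[i+1:s']$; for $i \in \{k-1\}\cup([k,s_j-1]\setminus S)$ this lies in $\Sigma$, and for $i = s_{j'} \in S$ the concatenation is again bad but cancels against a symmetric contribution coming from the index $j'$ in the outer sum — a cancellation that is forced by reapplying (\ref{dhs}) to $\Phi^S(1+s_j,m)$ and matching the $\alpha$-coefficients.

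The main obstacle is the combinatorial bookkeeping: verifying that every bad left-degree is eliminated and that the surviving terms are indexed by legitimate $\Sigma$-elements. The cleanest way to organize it would be to process the outer sum over $j$ in order of decreasing $s_j$, showing at each stage that all bad left degrees whose right factor involves indices $\geq s_j$ have already been cancelled; this reduces the whole argument to a single induction and makes the role of the precise value $\alpha_{km}^{s_j} = \tau_{s_j}\,p(u(1+s_j,m),u(k,s_j))^{-1}$ manifest.
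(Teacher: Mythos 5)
Your base case is fine, and so is the cancellation of the single worst term (left degree $[1+s_j:m]$ from $\Delta(u[k,m])$ against the product of $\Phi^{S}(1+s_j,m)\otimes 1$ with $g_{k,s_j}\otimes u[k,s_j]$), but the inductive step has a genuine gap exactly where the proposition becomes nontrivial. The problematic terms are the products $b^{(1)}g_{k,s_j}\otimes b^{(2)}u[k,s_j]$ in which $b^{(1)}\otimes b^{(2)}$ is a general term of $\Delta(\Phi^{S}(1+s_j,m))$ whose left degree contains a generator $[1+s_j:s^{\prime}]$ of $\Sigma_j$ (such a generator starts at the point $s_j$, which is white for $\Phi^{S}(1+s_j,m)$ but black for $\Phi^{S}(k,m)$): here the degree-zero factor $g_{k,s_j}$ contributes nothing to concatenate with, and your assertion that these terms cancel against a symmetric contribution from the index $j^{\prime}$ is not backed by any computation. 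In the paper's proof these terms do \emph{not} cancel; they are handled by three devices your plan lacks: (i) a strengthened induction hypothesis --- the left degree lies in the submonoid $\Sigma^{\prime}$ omitting the generators that start at the left endpoint, or in $[k:s]+\Sigma^{\prime}$, so that at most one summand of the degree starts at the left end; (ii) the replacement of the recursion (\ref{dhs}) by the skew-commutator presentation $\Phi^{S}(k,m)\sim\left[ \Phi^{S}(1+s_1,m),u[k,s_1]\right] $ of Corollary \ref{xny1} (available only for regular $S$ via Lemma \ref{xny}), which together with Proposition \ref{NU} shows that the offending skew commutator of tensors frequently vanishes outright; and (iii) when it does not vanish, a recombination of degree summands using $[a:b]=[\psi (b):\psi (a)]$ showing that the apparently bad degree $[1+s_1:s]+\alpha $ equals $[\psi (s_j):s]+(\alpha -\alpha _1)$ and hence does lie in $\Sigma^{\prime}$ or in $[k:s]+\Sigma^{\prime}$. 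Steps (ii) and (iii) invoke the white $(k,m)$-regularity of $S$ at every turn (whiteness of $\psi(s_i)-1$, of $\psi(k)$, of $n$), and the black case is then reduced to the white one by the duality of Proposition \ref{dec}. Your proposal never uses the regularity hypothesis at all, which is the clearest sign the argument cannot close as sketched: the whole difficulty of the statement is concentrated in the $\psi$-folded degrees, where two distinct intervals $[1+t:s]$ and $[\psi (s):\psi (1+t)]$ name the same element of $\Gamma ^+$, and only regularity controls which of them belongs to $\Sigma$.
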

\begin{proof}
Let $S$ be white $(k,m)$-regular. 
Lemma \ref{xny} shows that $\Phi ^{S}(k,m)$
is a linear combination of products (in different orders) of the elements 
$u_i=u[1+s_i,s_{i+1}],$ $0\leq i\leq r.$ Hence by (\ref{co}) the coproduct  
is a linear combination of products of the tensors
\begin{equation}
u_i\otimes 1,\ \ f_i\otimes u_i, \ \ h_{i}u[1+t_i,s_{i+1}]\otimes u[1+s_i,t_i],
\label{tens}
\end{equation}
where $s_i<t_i<s_{t+1},$ $f_i={\rm gr}\, (u_i),$  $h_i={\rm gr}\, (u[1+s_i,t_i]).$
Degrees of the left components  of these tensors, except  $u_i\otimes 1,$ $i>0,$
 belong to $\Sigma .$ We stress that in each product there is exactly one
tensor of (\ref{tens}) related to a given $i.$ 

Denote by $\Sigma ^{\prime }$
the additive monoid generated by all $[1+t:s],$ where $t\notin S,$ $k\leq t<m,$
while $s$ is a black point.
By  induction on the number $r$
of elements in $S \cap [k,m-1]$ we shall prove that there exists a decomposition (\ref{coph})
such that for each $i$ either $D(a^{(1)})\in \Sigma ^{\prime }$ or 
$D(a^{(1)})=[k:s]+\alpha ,$ where $s$ is a black point, and  $\alpha \in \Sigma ^{\prime }.$

If $r=0,$ then 
$\Phi ^{S}(k,m)=u[k,m],$ and the statement follows from (\ref{co}).

If $r>0,$ then Corollary \ref{xny1} implies that 
$\Phi ^{S}(k,m)\sim \left[ \Phi ^{S}(1+s_1,m), u[k,s_1]\right] .$
By the inductive supposition we have 
$\Delta (\Phi ^{S}(1+s_1,m))=\sum b^{(1)}\otimes b^{(2)},$ where either
$D(b^{(1)})=\alpha \in \Sigma _1^{\prime },$ or $D(b^{(1)})=[1+s_1:s]+\alpha ,$ 
$\alpha \in \Sigma _1^{\prime }$ with 
$s$ being a black point on the scheme of $\Phi ^{S}(1+s_1,m),$
see (\ref{grb}). Here $\Sigma _1^{\prime }$ is the $\Sigma ^{\prime }$ related to 
$\Phi ^{S}(1+s_1,m):$
the additive monoid generated by all $[1+t:s],$ where $t\notin S,$ $s_1<t<m,$
and $s$ is a black point.
 Certainly $\Sigma _1^{\prime }\subseteq \Sigma ^{\prime },$
for on the scheme of $\Phi ^{S}(1+s_1,m)$ there is 
just one point, $s_1,$ that has color  other than it has on the scheme of $\Phi ^{S}(k,m).$

By (\ref{co}) the coproduct of $u_0=u[k,s_1]$ is a linear combination 
of the tensors (\ref{tens}) with $i=0.$ Degree of the left components 
of the tensors of 
$$\left[ b^{(1)}\otimes b^{(2)},h_0u[1+t_0,s_1]\otimes u[k,t_0]\right] $$
equals either $[1+t_0:s_1]+\alpha $ or $[1+t_0:s_1]+[1+s_1:s]+\alpha =[1+t_0:s]+\alpha .$
In both cases it belongs to $\Sigma ^{\prime } $ since $t_0$ is a white point in both schemes,
and $t_0\neq k-1.$

In the same way, degree of the left components 
of the tensors of $\left[ b^{(1)}\otimes b^{(2)},u_0\otimes 1\right] $
equals either $[k:s_1]+\alpha $ or $[k:s_1]+[1+s_1:s]+\alpha =[k:s]+\alpha .$
In both cases the degree has a required form.

It remains to consider the skew commutator
$$
\left[ b^{(1)}\otimes b^{(2)},f_0\otimes u_0\right] =b^{(1)}f_0\otimes b^{(2)}u_0
-p(b^{(1)}b^{(2)},u_0) f_0b^{(1)}\otimes u_0b^{(2)}.
$$
Degree of the left components of these tensors equals $D(b^{(1)}).$
We shall prove that one of the following three options is valid: 
$\left[ b^{(1)}\otimes b^{(2)},f_0\otimes u_0\right] =0,$ or
$D(b^{(1)})\in \Sigma ^{\prime },$ or $D(b^{(1)})=[k:s]+\alpha ,$ $\alpha \in \Sigma ^{\prime }$
with black $s.$

The comments given around (\ref{tens}) show that there exists a sequence of elements 
$(t_i\, |\,   0\leq i\leq r),$ such that   $s_i\leq t_i\leq s_{i+1},$ and 
\begin{equation}
D(b^{(1)})=\sum _{i=1}^r [1+t_i:s_{i+1}], \ \ \ D(b^{(2)})=\sum _{i=1}^r [1+s_i:t_i],
\label{ste}
\end{equation}
where formally $[1+s_i:s_i]=[1+s_{i+1}:s_{i+1}]=0.$ We consider separately the following two cases.

{\bf Case 1.} $t_1>s_1.$ Due to the first of (\ref{ste}), degree of $b^{(1)}$ in 
$x_{1+s_1}$ is less than or equal to 1. At the same time the equality $D(b^{(1)})=[1+s_1:s]+\alpha $
shows that this degree equals 1, and $x_{1+s_1}$-th component of $\alpha $ is zero.
Hence there exists $i\geq 2$ such that $t_i<\psi (1+s_1)\leq s_{i+1}.$ However,
$\psi (1+s_1)=\psi (s_1)-1$ is a white point, for $S$ is white $(k,m)$-regular.
In particular $\psi (1+s_1)\neq s_{i+1};$ that is, $\psi (1+s_1)<s_{i+1}.$
Now the nonempty interval $[1+\psi (1+s_1):s_{i+1}]=[\psi (s_1):s_{i+1}]$
must be covered by $\alpha \in \Sigma _1^{\prime }.$
This is possible only if $\alpha $
has a summand $\alpha _1=[\psi (s_1):s_j],$ $j\geq i+1,$
for degree of  $\Phi ^{S}(1+s_1,m)$ in each of $x_l,$ $\psi (s_1)\leq l\leq m$
equals 1, while the $x_{\psi (s_1)-1}$-th component of $\alpha $ is zero
(recall that $x_{\psi (s_1)-1}=x_{1+s_1}).$ Thus, we have $\alpha -\alpha _1\in \Sigma ^{\prime }_1.$

If $\psi (s_j)>k,$ or, equivalently, $s_j<\psi (k),$ then  $\psi (s_j)-1$ is a white point,
for $\psi (1+s_1)<s_{i+1}\leq s_j$ implies $s_1>\psi (s_j)-1.$ We have 
$$
\alpha _1+[1+s_1:s]=[\psi (s_1):s_j]+[1+s_1:s]=[\psi (s_j):s]\in \Sigma ^{\prime }.
$$ 
Hence $D(b^{(1)})=(\alpha _1+[1+s_1:s])+(\alpha -\alpha _1)\in \Sigma ^{\prime },$
which is required.

If $\psi (s_j)<k,$ or, equivalently, $s_j>\psi (k),$ then $\psi (k)$ is a white point, see (\ref{grab1}).
Hence $[\psi (s_j):k-1]=[1+\psi (k):s_j]\in \Sigma ^{\prime },$ while
$$
\alpha _1+[1+s_1:s]=[\psi (s_j):k-1]+[k:s]=[\psi (s_j):s]\in [k:s]+\Sigma ^{\prime },
$$
 and  $D(b^{(1)})=(\alpha _1+[1+s_1:s])+(\alpha -\alpha _1)\in [k:s]+\Sigma ^{\prime }.$

Of course $s_j\neq \psi (k)$ since $S$ is white $(k,m)$-regular, see (\ref{grab1}).

{\bf Case 2.} $t_1=s_1.$ Assume, first, that the sequence $(t_i\, |\, 1<i\leq r)$ does not contain 
the point $\psi (s_1)-1=\psi (1+s_1).$ We have seen, see comments around (\ref{tens}), that
$b^{(2)}$ is a product of the elements $u[1+s_i,t_i],$ $i>0$ in some order.
For $i=1$ the tensor $u_1\otimes 1$ does participate in the construction of $b^{(1)}\otimes b^{(2)}$ 
(recall that now $t_1=s_1).$
 By Proposition \ref{NU} with $i\leftarrow s_1,$ $j\leftarrow s_i$ $m\leftarrow t_i$
we have $\left[ u[1+s_i,t_i],u_0\right] =0,$ $i>1,$
for now $t_i\neq \psi (s_1)-1$ and $s_i\neq \psi (k),$ see (\ref{grab1}).
Hence ad-identity (\ref{br1f}) implies $\left[ b^{(2)},u_0\right] =0;$
that is, $b^{(2)}u_0=p(b^{(2)},u_0)u_0b^{(2)}.$ Since $f_0={\rm gr}(u_0),$ we have
$$
(b^{(1)}\otimes b^{(2)})(f_0\otimes u_0)=b^{(1)}f_0\otimes b^{(2)}u_0
$$
$$
=p(b^{(1)},u_0)f_0b^{(1)}\otimes p(b^{(2)},u_0)u_0b^{(2)}
=p(b^{(1)}b^{(2)},u_0)(f_0\otimes u_0)(b^{(1)}\otimes b^{(2)}).
$$
This equality in more compact form is
$
\left[ b^{(1)}\otimes b^{(2)},f_0\otimes u_0\right] =0,
$
which is one of the required options.

Suppose, next, that $\psi (s_1)-1=t_i$ for a suitable $i,$ $1<i\leq r.$
Due to the first of (\ref{ste}) degree of $b^{(1)}$ in $x_{1+s_i}=x_{t_i}$ equals 1,
while the equality $D(b^{(1)})=[1+s_1:s]+\alpha $ implies that $x_{1+s_1}$-th component
of $\alpha $ is zero. At the same time $t_i\neq s_{i+1},$ for $t_i$ and $s_1$ are located 
in the same column of the shifted scheme (\ref{grab}), (\ref{grab1}). Hence, again due to the first
of (\ref{ste}), the nonempty interval $[1+t_i:s_{i+1}]=[\psi (s_1):s_{i+1}]$ must be
covered by $\alpha \in \Sigma ^{\prime }_1.$ 
This is possible only if $\alpha $ has a summand $\alpha _1=[\psi (s_1):s_j],$ $j\geq i+1,$
for degree of  $\Phi ^{S}(1+s_1,m)$ in each of $x_l,$ $\psi (s_1)\leq l\leq m$
equals 1, while the $x_{\psi (s_1)-1}$-th component of $\alpha $ is zero
(recall that $x_{\psi (s_1)-1}=x_{1+s_1}).$ Thus, we have $\alpha -\alpha _1\in \Sigma ^{\prime }_1.$

If $\psi (s_j)>k,$ or, equivalently, $s_j<\psi (k),$ then  $\psi (s_j)-1$ is a white point,
for $\psi (1+s_1)<s_{i+1}\leq s_j$ implies $s_1>\psi (s_j)-1.$ We have 
$$
\alpha _1+[1+s_1:s]=[\psi (s_1):s_j]+[1+s_1:s]=[\psi (s_j):s]\in \Sigma ^{\prime }.
$$ 
Hence $D(b^{(1)})=(\alpha _1+[1+s_1:s])+(\alpha -\alpha _1)\in \Sigma ^{\prime },$
which is required.

If $\psi (s_j)<k,$ or, equivalently, $s_j>\psi (k),$ then $\psi (k)$ is a white point, see (\ref{grab1}).
Hence $[\psi (s_j):k-1]=[1+\psi (k):s_j]\in \Sigma ^{\prime },$ while
$$
\alpha _1+[1+s_1:s]=[\psi (s_j):k-1]+[k:s]=[\psi (s_j):s]\in [k:s]+\Sigma ^{\prime },
$$
 and  $D(b^{(1)})=(\alpha _1+[1+s_1:s])+(\alpha -\alpha _1)\in [k:s]+\Sigma ^{\prime }.$
Of course $s_j\neq \psi (k)$ since $S$ is white $(k,m)$-regular, see (\ref{grab1}).
 This completes the proof for a white regular set $S$.

If $S$ is black $(k,m)$-regular, then by Proposition \ref{dec} we have 
$\Phi ^{S}(k,m)\sim \Phi ^T(\psi (m),\psi (k)),$ where 
$T=\overline{\psi (S)-1}$ is a white $(\psi (m),\psi (k))$-regular set.
If $t,s$ are, respectively, white and black points for $\Phi ^{S}(k,m),$
then so do $\psi (s)-1,$ $\psi (t)-1$ with respect to $\Phi ^T(\psi (m),\psi (k)).$ We have
$$
[1+t:s]=[\psi (s):\psi (1+t)]=[1+(\psi (s)-1):\psi (t)-1].
$$
Hence  $\Phi ^{S}(k,m)$ and $\Phi ^T(\psi (m),\psi (k))$ define the same additive
monoid $\Sigma .$ It remains to apply already proved statement to $\Phi ^T(\psi (m),\psi (k)).$
\end{proof}
\begin{corollary}
If $S$ is $(k,m)$-regular, then all {\bf U}$^{S}(k,m)$-roots belong 
to the monoid $\Sigma$ defined in the above proposition. 
\label{call}
\end{corollary}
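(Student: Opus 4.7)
The plan is to show that every homogeneous nonzero element of ${\bf U}^{S}(k,m)$ has $\Gamma^+$-degree lying in $\Sigma$. Once this is established, the corollary follows immediately: each ${\bf U}^{S}(k,m)$-root is by Definition \ref{root} of the form $s\,D(c_u)$ for a PBW-generator $c_u \in {\bf U}^{S}(k,m)$, and $\Sigma$, being an additive submonoid of $\Gamma^+$, is automatically closed under positive integer multiples.

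First I would identify a convenient set of algebra generators for ${\bf U}^{S}(k,m)$ over ${\bf k}[G]$. As in the opening paragraph of Example \ref{ex1}, the right coideal in $U_q^+({\mathfrak so}_{2n+1})$ (or in $u_q^+({\mathfrak so}_{2n+1})$) generated by the single element $\Phi^{S}(k,m)$ coincides with the ${\bf k}[G]$-span of the left tensorands $a^{(1)}$ appearing in a Sweedler decomposition $\Delta(\Phi^{S}(k,m)) = \sum a^{(1)} \otimes a^{(2)}$, obtained by slicing with linear functionals applied to the right factor. Consequently ${\bf U}^{S}(k,m)$ is generated as an algebra over ${\bf k}[G]$ by the family of such $a^{(1)}$.

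Then I would invoke Proposition \ref{ex2i} directly: it furnishes a decomposition of $\Delta(\Phi^{S}(k,m))$ in which every $D(a^{(1)})$ lies in $\Sigma$. Since the algebra generators of ${\bf U}^{S}(k,m)$ other than the group-likes (which contribute degree $0$) all have degree in $\Sigma$, and since $D$ is additive on products of homogeneous elements while $\Sigma$ is closed under addition, every homogeneous nonzero element of ${\bf U}^{S}(k,m)$ has degree in $\Sigma$. In particular each chosen PBW-generator $c_u$ satisfies $D(c_u) \in \Sigma$, and hence $s\,D(c_u) = D(c_u) + \cdots + D(c_u) \in \Sigma$ as well.

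There is no genuine obstacle left: the essential combinatorial content, namely that the left tensorands of $\Delta(\Phi^{S}(k,m))$ have degrees controlled by the monoid $\Sigma$, was the main substance of Proposition \ref{ex2i}. The only remaining subtlety is the routine translation of ``right coideal generated by a single element'' into the concrete description via slicing the coproduct, which rests on the standard fact that for a right coideal $V$ in a Hopf algebra $H$ one has $V = \mathrm{span}\{(\mathrm{id} \otimes f)\Delta(v) : v \in V,\ f \in H^{*}\}$.
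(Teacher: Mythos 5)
Your proposal is correct and follows essentially the same route as the paper: identify the algebra generators of ${\bf U}^{S}(k,m)$ over ${\bf k}[G]$ as the left tensorands $a^{(1)}$ of $\Delta(\Phi^{S}(k,m))$, apply Proposition \ref{ex2i} to place their degrees in $\Sigma$, and conclude by additivity of $D$ and closure of $\Sigma$ under sums. Your extra remark about the factor $s$ in $sD(c_u)$ is a harmless (and slightly more careful) elaboration of the paper's phrase ``being the degrees of PBW-generators.''
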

\begin{proof}
We are reminded that coassociativity of the coproduct implies that the left components
of the tensor (\ref{coph}) span a right coideal. Hence {\bf U}$^{S}(k,m)$
as an algebra is generated by the $a^{(1)}$'s and {\bf k}$[G].$
Hence the degrees of all homogeneous elements from {\bf U}$^{S}(k,m)$
belong to $\Sigma .$ In particular all {\bf U}$^{S}(k,m)$-roots, 
being the degrees of PBW-generators, belong to $\Sigma $ as well.
\end{proof}
\begin{lemma}
Let $S$ be a white $(k,m)$-regular set. An element $[1+t:s],$ $t<s$ with white $t$ and black $s$
is indecomposable in $\Sigma $ if and only if one of the following 
conditions is fulfilled:

$a)$ The point $\psi (1+t)$ is not  black $($it is white or does not appear on the scheme at all$\, ).$

$b)$ On the shifted scheme all columns between $t$ and $s$ are white-black or 
black-white ones $($in particular all of them are complete and $n\notin [t,s]).$
  \label{ex2j}
\end{lemma}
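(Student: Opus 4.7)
The plan is to translate the question into a multiplicity computation in the degree monoid $\Gamma^+$. Because $x_r = x_{\psi(r)}$, each generator $[1+t':s']$ of $\Sigma$ contributes to the coordinate $x_i$ ($1 \leq i \leq n$) the number $|[1+t',s']\cap\{i,\psi(i)\}|\in\{0,1,2\}$, and the $x_i$-coordinate of $[1+t:s]$ is read off the shifted scheme of $\Phi^S(k,m)$: it equals $2$ when the column at $i$ lies entirely in $[1+t,s]$, $1$ when it meets $[1+t,s]$ in a single row, and $0$ otherwise. A decomposition in $\Sigma$ is therefore a family of subintervals of $[k-1,m]$ with white left-endpoints and black right-endpoints whose variable-multiplicities sum to those of $[1+t:s]$.

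For the direction ``(a) or (b) implies indecomposable'', I would argue by contradiction. Given a nontrivial decomposition $[1+t:s]=\sum_\alpha[1+t_\alpha:s_\alpha]$, inspect the summand $\alpha_0$ covering the leftmost symbol $x_{1+t}$: its starting position must be either $1+t$ or $\psi(1+t)$ (via the identification). In the latter case $\psi(1+t)$ would have to be a black point of the scheme, which (a) rules out; so $\alpha_0$ starts honestly at $1+t$. Peeling $\alpha_0$ off and iterating, one sees that every internal break of the decomposition must occur either at a position simultaneously serving as a black right-endpoint and a white left-endpoint (impossible, since each position has a single color) or must involve a $\psi$-jump across a column inside $[t,s]$ whose two endpoints are not of opposite colors. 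Under (a) the first escape is repeatedly ruled out; under (b) the column condition excludes the second escape as well, and no decomposition into more than one generator can exist.

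For the converse, when both (a) and (b) fail, I would produce an explicit decomposition. Failure of (a) places $b:=\psi(1+t)$ among the black points of the scheme inside $[1+t,s]$. In the $\psi$-palindromic case $1+t=\psi(s)$ (so $b=s$), one uses that the columns in the shifted scheme pair up position $1+t+i$ with $s-i$ to obtain the doubling
\[
[1+t:s] \;=\; 2\,[n+1:s],
\]
where $[n+1:s]$ is a generator of $\Sigma$ since $s$ is black and $n$ is white by the white $(k,m)$-regularity. In the non-palindromic case, failure of (b) points to a bad column (monochromatic, incomplete, or containing $n$) inside $[t,s]$, and a $\psi$-assisted split of the form $[1+t:s]=[1+t:c_1]+[c_2:s]$ across that column---with $c_1,c_2$ chosen column-partners---yields the required decomposition after verifying that both pieces carry admissible endpoint colors by the regularity of $S$.

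The main obstacle is the converse direction: the construction of the explicit decomposition must be case-analyzed by the nature of the failure of (b) and the location of the bad column, and the admissibility of the endpoints of the two pieces relies delicately on the white $(k,m)$-regularity of $S$ at the candidate split points.
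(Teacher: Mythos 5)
Your overall route coincides with the paper's: work in the degree monoid $\Gamma^+$, obtain decomposability from an explicit two-term split when both conditions fail, and obtain indecomposability by normalizing an arbitrary decomposition into consecutive segments and examining the breaks. The contrapositive direction is essentially correct: when (a) and (b) both fail, white regularity (no black--black columns) turns the ``bad'' column into one with $j$ and $\psi(1+j)$ both white for some $j$ strictly between $t$ and $s$, and then $[1+t:s]=[1+t:j]+[1+j:s]$ works because $[1+t:j]=[1+\psi(1+j):\psi(1+t)]\in\Sigma$ (failure of (a) supplies the black right endpoint $\psi(1+t)$), while $[1+j:s]\in\Sigma$ directly. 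Your palindromic identity $[1+t:\psi(1+t)]=2[1+n:\psi(1+t)]$ is just this split with $j=n$.

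The converse is where your sketch has a genuine gap, in two places. First, ``peeling $\alpha_0$ off and iterating'' presumes that an arbitrary relation $[1+t:s]=\sum_i[1+l_i:s_i]$ in $\Gamma^+$ can be rearranged into a chain $t=t_0<t_1<\dots<t_{r+1}=s$ in which every segment $[1+t_i:t_{i+1}]$ is a generator either directly or after applying $\psi$. Because of the identification $x_j=x_{\psi(j)}$ the summands need not tile $[1+t,s]$; the paper needs Lemma \ref{sos} for this normalization, after first replacing any palindromic summand $[1+l:\psi(1+l)]$ by $2[1+n:\psi(1+l)]$. Second, your bookkeeping of which hypothesis kills which ``escape'' does not close the argument. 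The ``honest'' break (a point serving simultaneously as a black right endpoint and a white left endpoint) is impossible unconditionally, not ``under (a)''. If one labels unreflected segments by $+$ and reflected ones by $-$, then $++$ and $--$ are impossible for color reasons, $+-$ forces a black--black column at the break (excluded by white regularity alone, independently of (a) and (b)), and $-+$ forces a white--white column there; hence any nontrivial chain is exactly $-+$, which requires $\psi(1+t)$ black (failure of (a)) \emph{and} a white--white column strictly between $t$ and $s$ (failure of (b)). As written, your proposal never explains why, under (b) alone with $\psi(1+t)$ black, a reflected first segment cannot be followed by a second segment; that is precisely the $-+$ configuration that (b) must forbid, and it has to be argued explicitly.
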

\begin{proof}
If no one of the conditions is fulfilled, then $\psi (1+t)$ is a black point and there exists
 $j,$ $t\leq j\leq s,$ such that both $j$ and $\psi (1+j)$ are white points on the scheme
(white regular scheme has no black-black columns).
Certainly $j\neq t,$ $j\neq s.$ We have 
$$[1+t:j]=[\psi (j):\psi (1+t)]=[1+\psi (1+j):\psi (1+t)]\in \Sigma .$$
Thus $[1+t:s]=[1+t:j]+[1+j:s]$ is a non trivial decomposition in $\Sigma .$

Conversely. Assume that $[1+t:s]$ is decomposable in $\Sigma :$ 
\begin{equation}
[1+t:s]=\sum _{i=1}^r[1+l_i:s_i].
\label{cjj}
\end{equation}
Without loss of generality we may suppose that $s_i\leq \psi (1+l_i)$
due to $[1+l_i:s_i]=[\psi (s_i):\psi (1+l_i)].$ Moreover, if $s_i=\psi (1+l_i),$
then $[1+l_i:n]=[1+n:s_i]\in \Sigma ,$ for $n$ is a white point ($S$ is white regular).
This allows one to replace $[1+l_i:s_i]$ with $2[1+n:s_i]$ in (\ref{cjj}). Thus we may suppose that
$s_i\leq \psi(1+l_i)$ for all $i$ in (\ref{cjj}).

By Lemma \ref{sos} we find a sequence 
$t=t_0<t_1<\ldots <t_r<s=t_{r+1}$ such that for each $i$ either $t_i$ is white and $t_{i+1}$
is black points, or $\psi (1+t_{i+1})$ is white and $\psi (1+t_i)$ is black ones.
In the former case to the index $i$ we associate the sign  ``$+$,"
while in the latter case to $i$ we associate the sign  ``$-$."
It is clear that in the sequence of indices $0,1,2,\ldots , r$ no one pair of neighbors 
has the same sign associated. 

If, now, $\psi (1+t)$ is not a black point, then ``$+$" is associated to the index $0.$
Hence ``$-$" is associated to the index $1.$ In particular $\psi (1+t_1)$ is a black point.
However $t_1$ is also a black point. This is impossible, for $S$ is white regular.

Assume that on the shifted scheme all columns between $t$ and $s$ are white-black or 
black-white ones. If $t_1$ is a white point, then both of $t_0=t,t_1$ are white,
while  both of $\psi (1+t_1), \psi (1+t_0)$ are black points; that is, no sign may be associated
to the index $0.$ Hence $t_1$ is a black point, while $\psi (1+t_1)$ must be a white one.
In this case ``$-$" may not be associated to the index $1.$ Thus ``$+$" is associated to $1.$
But then $t_1$ is a white point, a contradiction.
\end{proof}
\begin{lemma}
Let $S$ be a black $(k,m)$-regular set. An element $[1+t:s],$ $t<s$ with white $t$ and black $s$
is indecomposable in $\Sigma $ if and only if one of the following 
conditions is fulfilled:

$a)$ The point $\psi (1+s)$ is not white $($it is black or does not appear on the scheme at all$\, ).$

$b)$ On the shifted scheme all columns between $t$ and $s$ are white-black or 
black-white ones $($in particular all of them are complete and $n\notin [t,s]).$
  \label{ex2k}
\end{lemma}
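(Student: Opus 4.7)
The plan is to derive Lemma \ref{ex2k} from Lemma \ref{ex2j} by invoking the duality furnished by Proposition \ref{dec} and Lemma \ref{bwd1}. Set $T=\overline{\psi(S)-1}$; then Lemma \ref{bwd1} says $T$ is white $(\psi(m),\psi(k))$-regular, and Proposition \ref{dec} gives $\Phi^{S}(k,m)\sim\Phi^{T}(\psi(m),\psi(k))$. The last paragraph of the proof of Proposition \ref{ex2i} already checks that the additive monoid $\Sigma$ attached to $\Phi^{S}(k,m)$ coincides with the one attached to $\Phi^{T}(\psi(m),\psi(k))$, so the property of being indecomposable in $\Sigma$ transfers across the duality.

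First I would rewrite
\[
[1+t:s]=[\psi(s):\psi(1+t)]=[1+t':s'],\qquad t'=\psi(s)-1,\ s'=\psi(t)-1,
\]
which is the presentation of the same element of $\Gamma^{+}$ using the index range of the $T$-scheme. By unwinding Definition \ref{reg1} as in the proofs of Lemmas \ref{bwd} and \ref{bwd1}, the involution $i\mapsto\psi(i)-1$ sends the interior $[k,m-1]$ of the $S$-scheme onto the interior $[\psi(m),\psi(k)-1]$ of the $T$-scheme, identifies the white starting point $k-1$ of $S$ with the white starting point $\psi(m)-1$ of $T$ and the black ending point $m$ of $S$ with the black ending point $\psi(k)$ of $T$, and swaps the color of every interior point (using $i\in S\Longleftrightarrow\psi(i)-1\notin T$). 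In particular $t'$ is white and $s'$ is black on the $T$-scheme, and the self-paired position $n$, if present, is sent to itself with reversed color.

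The heart of the argument is then to translate the two conditions across the duality. Because the pairing $(i,\psi(i)-1)$ used to form a column of the shifted scheme is intertwined with the positional involution, the columns WB and BW of the $S$-scheme correspond precisely to columns WB and BW of the $T$-scheme, while WW and BB are exchanged. Consequently condition (b) of Lemma \ref{ex2k} for $S$ becomes verbatim condition (b) of Lemma \ref{ex2j} for $T$ with endpoints $t',s'$. For condition (a), one computes $\psi(1+t')=\psi(\psi(s))=s$, and checks that $\psi(1+s)=\psi(s)-1$ is not white on the $S$-scheme (including the ``not on the scheme'' case, which amounts to $s<\psi(m)-1$ or $s>\psi(k)$) if and only if $\psi(1+t')=s$ is not black on the $T$-scheme. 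Applying Lemma \ref{ex2j} to the quadruple $(T,\psi(m),\psi(k))$ and the pair $(t',s')$ closes the proof.

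The main obstacle will be the careful bookkeeping of boundary indices and of the point $n$: one must verify that the convention ``first point white, last point black'' on each scheme, together with the rule $n\in S$ in the black-regular case versus $n\notin T$ in the white-regular case, is consistent with the color-swap given by $i\mapsto \psi(i)-1$, and that the interpretations of ``not on the scheme'' on the two sides correspond exactly (they do, since $[k-1,m]$ and $[\psi(m)-1,\psi(k)]$ are interchanged by this involution). Once this matching is in place, the translation of the conditions is immediate and Lemma \ref{ex2j} finishes the argument.
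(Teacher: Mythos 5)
Your proposal is correct and follows exactly the route of the paper, whose proof of this lemma is the one-line remark that it ``follows from the above lemma by means of Lemma \ref{bwd1} and Proposition \ref{dec}''; you have simply carried out the bookkeeping (the identification $[1+t:s]=[1+t':s']$ with $t'=\psi(s)-1$, $s'=\psi(t)-1$, the color swap under $i\mapsto\psi(i)-1$, the preservation of WB/BW columns, and the equality of the monoids $\Sigma$ already noted at the end of the proof of Proposition \ref{ex2i}) that the paper leaves implicit. All of these verifications check out, so no further comment is needed.
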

\begin{proof} The proof follows from the above lemma by means of Lemma \ref{bwd1}
and Proposition \ref{dec}.
\end{proof}
\begin{lemma}
Let $S$ be a $(k,m)$-regular set. An element $\alpha =[a:b]$
is a simple  {\bf U}$^{S}(k,m)$-root if and only if $\alpha \in \Sigma $ and
it is indecomposable in $\Sigma \, ($in particular 
 $\alpha =[1+t:s],$ $t<s$
 with white $t$ and black $s$ determined in Lemmas $\ref{ex2j},$  $\ref{ex2k}).$
  \label{ex2u}
\end{lemma}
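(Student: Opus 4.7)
The plan hinges on establishing two facts and combining them with Corollary~\ref{call}: first, every ${\bf U}^{S}(k,m)$-root lies in $\Sigma$, which is precisely Corollary~\ref{call}; and second, every indecomposable element of $\Sigma$ is itself a ${\bf U}^{S}(k,m)$-root. The bulk of the proof will reside in establishing the second fact.

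Granting the second fact, the equivalence follows formally. If $\alpha$ is a simple root, then $\alpha\in\Sigma$ by Corollary~\ref{call}; were it decomposable in $\Sigma$, writing it as a sum of at least two indecomposable generators would, by the second fact, exhibit $\alpha$ as a sum of two or more ${\bf U}^{S}(k,m)$-roots, contradicting simplicity. Conversely, if $\alpha\in\Sigma$ is indecomposable, then it is a root by the second fact, and any decomposition of $\alpha$ as a sum of other roots would, by Corollary~\ref{call}, live inside $\Sigma$ and contradict indecomposability there.

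To establish the second fact, I would construct, for each indecomposable $\alpha=[1+t:s]$ of $\Sigma$ (with $t$ white and $s$ black in the appropriate shifted scheme), an element $c_{\alpha}\in{\bf U}^{S}(k,m)$ whose leading PBW-term is $u[1+t,s]$, or its $\psi$-reflection $u[\psi(s),\psi(1+t)]$ when $s\geq\psi(1+t)$. The element $\Phi^{S}(k,m)$ itself handles the extreme case $t=k-1$, $s=m$. For the general case I would iterate the bracket decompositions of Corollaries~\ref{xny1}, \ref{xnz}, and \ref{xnt1} to split the scheme of $\Phi^{S}(k,m)$ at a chosen black point $s$, producing $\Phi^{S}(k,s)$ and $\Phi^{S}(1+s,m)$ as factors; then apply the coproduct projections $\Delta\cdot(\mathrm{id}\otimes\pi_{k\ell})$ and $\Delta\cdot(\mathrm{id}\otimes\nu_{a})$ from Lemmas~\ref{las2}--\ref{las4}, in the style of the proof of Proposition~\ref{phib}, to peel off left-tensor components until the interval $[1+t,s]$ is exposed. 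Proposition~\ref{ex2i}'s bookkeeping on $\Sigma$ ensures that every element extracted this way still has degree in $\Sigma$.

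The main obstacle will be case (b) of Lemmas~\ref{ex2j} and \ref{ex2k}: the indecomposables whose support alternates between white-black and black-white columns spanning across the central index $n$. The restriction of $S$ to $[1+t,s-1]$ here is in general neither white nor black $(1+t,s)$-regular, so the clean bracket identities of Corollaries~\ref{xny1} and \ref{xnt1} do not apply verbatim. The remedy I foresee is to combine the white-regular decomposition on one side of $n$ with the black-regular one (equivalently, its $\psi$-dual white-regular form, via Proposition~\ref{dec}) on the other side, and then to verify using Proposition~\ref{ex2i} that the resulting bracketed element projects nontrivially onto the span of $u[1+t,s]$ in the PBW basis. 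The reflection identity $[1+t:s]=[\psi(s):\psi(1+t)]$ will also require care so as not to double-count generators when the indices are $\psi$-paired.
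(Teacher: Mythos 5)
Your formal skeleton is exactly the paper's: reduce everything to Corollary \ref{call} plus the claim that every indecomposable element of $\Sigma$ is a ${\bf U}^{S}(k,m)$-root, and then derive the equivalence by the two short contradiction arguments you give. That part is fine. The gap is in the key claim, and it starts with a misdiagnosis of where the difficulty lies. Case (b) of Lemmas \ref{ex2j} and \ref{ex2k} is the \emph{easy} case, not the hard one: there all columns between $t$ and $s$ are bicolored, which forces $n\notin[t,s]$, so $u[1+t,s]$ has degree at most $1$ in $x_n$ and \emph{every} set is both white and black $(1+t,s)$-regular by Definition \ref{reg1}. Hence the element $\Phi^{S''}(1+t,s)$, which lies in ${\bf U}^{S}(k,m)$ by a double application of Lemma \ref{sub0} (first at the black point $s$ to get $\Phi^{S''}(k,s)$, then at the white point $t$), is nonzero by Corollary \ref{dec1}, and its degree $[1+t:s]$ is a root. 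No splicing of white- and black-regular decompositions across $n$ is needed, because in case (b) the interval never crosses $n$.

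The genuinely problematic case is (a), and your direct construction does not survive it. When $\psi(1+t)$ is not black (resp.\ $\psi(1+s)$ is not white) and $s>\psi(1+t)$, the restriction of $S$ to $[1+t,s-1]$ need not be $(1+t,s)$-regular, and then $\Phi^{S''}(1+t,s)$ may well be zero -- Corollary \ref{dec1} gives no nonvanishing guarantee outside the regular case, and the definition (\ref{dhs}) alone does not even ensure $\Phi^{S}(k,m)\neq 0$ when $m\geq\psi(k)$. So ``peeling off'' an element of ${\bf U}^{S}(k,m)$ of degree $[1+t:s]$ by coproduct projections can simply produce $0$, and no amount of bookkeeping via Proposition \ref{ex2i} repairs that. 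The paper's way around this is indirect and uses a tool you never invoke: Lemma \ref{su} (for a simple root $[k:m']$, the element $[k:j]$ is a root iff $[1+j:m']$ is \emph{not} a sum of roots). In case (a) one checks that $[k:s]$ also satisfies condition (a), hence is indecomposable in $\Sigma$ and therefore a simple root; that $[k:t]\notin\Sigma$ and so is not a root; and then Lemma \ref{su} with $j=t$ forces $[1+t:s]$ to be a sum of roots, which by indecomposability in $\Sigma$ collapses to a single summand. Without Lemma \ref{su} (or a re-proof of its case analysis for non-regular restrictions), your argument has no way to certify that $[1+t:s]$ is a root in case (a).
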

\begin{proof} 
Without loss of generality we may suppose that $k\leq m<\psi (k)$ due to Proposition \ref{dec}.
We have already mentioned that all {\bf U}$^{S}(k,m)$-roots belong to $\Sigma $
(see Corollary \ref{call}).

Certainly $[k:m]$ is an {\bf U}$^{S}(k,m)$-root, for 
$\Phi ^{S}(k,m)\in {\bf U}^{S}(k,m).$ 
Since $\psi (k-1)-1=\psi (k)>m,$ the point $\psi (k-1)-1$ does not appear on the scheme
of $\Phi ^{S}(k,m).$ If $S$ is black $(k,m)$-regular, then $\psi (m)-1$ is a black point,
see (\ref{grab2}). Hence 
Lemmas \ref{ex2j} and \ref{ex2k} show that in both cases  $[k:m]$ is indecomposable in $\Sigma .$
Thus $[k:m]$ is a simple {\bf U}$^{S}(k,m)$-root.

If $s$ is a black point, then $[1+s:m]\notin \Sigma $ (otherwise $[k:m]$ would be
decomposable in $\Sigma ).$ In particular $[1+s:m]$ is not a sum of  
{\bf U}$^{S}(k,m)$-roots. By Lemma \ref{su} the element $[k:s]$ is an 
{\bf U}$^{S}(k,m)$-root
(in particular Lemma \ref{sub1} implies that 
$S$ equals the minimal set $S^{\prime }$ such that 
$\Phi ^{S^{\prime }}(k,m)\in {\bf U}^{S}(k,m)).$
 If additionally $[k:s]$ is indecomposable in $\Sigma ,$
then it is a simple {\bf U}$^{S}(k,m)$-root.

If $t,s$ are, respectively,  white and black points, $k\leq  t<s,$
then by Lemma \ref{sub0}, we have 
$\Phi ^{S^{\prime \prime }}(k,s)\in {\bf U}^{S}(k,m)$
for a suitable (minimal) set ${S}^{\prime \prime }\subseteq S.$
Since $t$ is still a white point for $\Phi ^{S^{\prime \prime }}(k,s),$
the same lemma applied to $\Phi ^{S^{\prime \prime }}(k,s)$
implies $\Phi ^{S^{\prime \prime }}(1+t,s)\in {\bf U}^{S}(k,m).$

Let $\alpha $ be indecomposable in $\Sigma .$ Since by definition $\Sigma $ is an additive
monoid generated by elements of the form $[1+t:s]$ with white $t$ and black $s,$
all indecomposable elements have that form: $\alpha =[1+t:s].$
If, first, $[1+t:s]$ satisfies property b)
of Lemma \ref{ex2j} or Lemma \ref{ex2k}, then $n\notin [t,s].$ Hence $S^{\prime \prime }$
(as well as any other set) is white and black $(1+t,s)$-regular. By Corollary  \ref{dec1} we have
$\Phi ^{S^{\prime \prime }}(1+t,s)\neq 0,$ hence $[1+t:s]$ is an 
{\bf U}$^{S}(k,m)$-root. This is simple, for it is indecomposable in $\Sigma .$

If, next, $[1+t:s]$ satisfies property a) of Lemma \ref{ex2j} or Lemma \ref{ex2k},
then $[k:s]$  also satisfies this property; that is, $[k:s]$ is  indecomposable in $\Sigma .$
In particular $[k:t]\notin \Sigma ,$ and hence $[k:t]$ is not an {\bf U}$^{S}(k,m)$-root.
By Lemma \ref{su} applied to the simple {\bf U}$^{S}(k,m)$-root $[k:s]$
we see that $[1+j:s]$ is a sum of {\bf U}$^{S}(k,m)$-roots. Since $[1+j:s]$
is indecomposable in $\Sigma $ and all roots belong to $\Sigma ,$ the sum has just one 
summand; that is, $[1+j:s]$ is a simple {\bf U}$^{S}(k,m)$-root.

Conversely, if $\alpha $ is a simple {\bf U}$^{S}(k,m)$-root, then 
by Corollary \ref{call} we have $\alpha \in \Sigma .$ In particular $\alpha $
is a sum of indecomposable in $\Sigma $ elements. However we have already proved 
that each indecomposable in $\Sigma $ element is an {\bf U}$^{S}(k,m)$-root.
Thus the sum has just one summand; that is, $\alpha $ is indecomposable in $\Sigma .$
\end{proof}
\begin{theorem}
Let $S$ be a white $($black$)$ $(k,m)$-regular set. The right coideal subalgebra
{\bf U}$^{S}(k,m)$ coincides with the subalgebra ${\frak A}$ generated over {\bf k}$[G]$
by all elements $\Phi ^{S}(1+t,s),$ where $t<s$ are, respectively,
white and black points that
satisfy one of the conditions of Lemma $\ref{ex2j}\ ($Lemma $\ref{ex2k}).$ 
  \label{iex2}
\end{theorem}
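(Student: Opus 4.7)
The plan is to establish both inclusions ${\frak A} \subseteq {\bf U}^{S}(k,m)$ and ${\bf U}^{S}(k,m) \subseteq {\frak A}$. By Proposition \ref{dec} together with Lemma \ref{bwd1}, the black $(k,m)$-regular case converts to the white case via the involution $S \mapsto \overline{\psi(S)-1}$, under which conditions (a) and (b) of Lemma \ref{ex2k} transform into the corresponding conditions of Lemma \ref{ex2j} and the generating sets of ${\frak A}$ match. Hence we may assume throughout that $S$ is white $(k,m)$-regular.

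For the inclusion ${\frak A} \subseteq {\bf U}^{S}(k,m)$, the argument is as follows. By Lemma \ref{ex2u}, each generator $\Phi^{S}(1+t,s)$ of ${\frak A}$ has degree $[1+t:s]$ equal to a simple ${\bf U}^{S}(k,m)$-root. By Lemma \ref{uni}, ${\bf U}^{S}(k,m)$ contains a unique element of the form $\Phi^{S^{*}}(1+t,s)$, and Proposition \ref{phib} together with Lemma \ref{sub1} identifies the minimal $S^{*}$ as $\{j : 1+t \leq j < s, [1+t:j] \text{ is an } {\bf U}^{S}(k,m)\text{-root}\}$; by Lemma \ref{ex2u} this equals $\{j : [1+t:j] \in \Sigma\}$. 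The combinatorial crux is then that $S^{*} = S \cap [1+t, s-1]$: the $\supseteq$ direction is immediate since a black $j$ paired with white $1+t$ yields a $\Sigma$-generator, while the $\subseteq$ direction takes any decomposition of $[1+t:j]$ in $\Sigma$, rearranges it into a monotone chain $t = k_{0} < k_{1} < \cdots < k_{r} = j$ via Lemma \ref{sos}, and argues that neither the ``swap'' form $[\psi(k_{i+1}):\psi(1+k_{i})]$ nor any non-generator block can occur, so $j = k_{r}$ must itself be black. This identifies the canonical element as $\Phi^{S}(1+t,s)$.

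For the inclusion ${\bf U}^{S}(k,m) \subseteq {\frak A}$, note first that $\Phi^{S}(k,m)$ itself appears among the generators of ${\frak A}$, since $(t,s)=(k-1,m)$ satisfies condition (a) of Lemma \ref{ex2j}: $\psi(k)$ is either off the scheme (when $m < \psi(k)$) or a white point by white regularity (when $m > \psi(k)$). By Proposition \ref{pro} it suffices to exhibit every PBW-generator of ${\bf U}^{S}(k,m)$ inside ${\frak A}$. For a simple ${\bf U}^{S}(k,m)$-root $[1+t:s]$, the associated PBW-generator is the $\Phi^{S}(1+t,s)$ produced in the previous paragraph, which is by construction a generator of ${\frak A}$. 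For a non-simple ${\bf U}^{S}(k,m)$-root $[k':m']$, Lemma \ref{sos} yields a decomposition into simple roots $[1+k_{0}:k_{1}] + \cdots + [1+k_{r}:k_{r+1}]$, and the constructive proof of Lemma \ref{sos1} (formulas (\ref{tc1})--(\ref{tc4})) produces the corresponding PBW-generator as an explicit iterated skew bracket of the $f_{i} = \Phi^{S}(1+k_{i}, k_{i+1})$; since each $f_{i} \in {\frak A}$ and ${\frak A}$ is closed under products and brackets, the full PBW-generator lies in ${\frak A}$.

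The main obstacle is the combinatorial claim inside the second paragraph, namely the characterization $\{j : [1+t:j] \in \Sigma\} = S \cap [1+t, s-1]$ under the hypotheses of Lemma \ref{ex2j}. Ruling out hypothetical ``swap'' decompositions $[1+k_{i}:k_{i+1}] = [\psi(k_{i+1}):\psi(1+k_{i})]$ is delicate: under condition (a) the point $\psi(1+t)$ fails to be black, which obstructs swaps at the initial block, while under condition (b) the support $[t,s]$ lies entirely to one side of $n$, placing all $\psi$-images outside the scheme. Once this is in hand, the verification in the third paragraph is routine, since the proof of Lemma \ref{sos1} already exhibits the required iterated bracket elements explicitly.
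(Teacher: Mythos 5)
Your overall architecture is the same as the paper's: for the inclusion ${\frak A}\subseteq {\bf U}^{S}(k,m)$ you identify the minimal set $S^{*}$ with $\Phi^{S^{*}}(1+t,s)\in {\bf U}^{S}(k,m)$ via Lemma \ref{sub1} and aim to show $S^{*}=S\cap[1+t,s-1]$; for the reverse inclusion you exhibit PBW-generators of ${\bf U}^{S}(k,m)$ inside ${\frak A}$ exactly as the paper does, through the elements (\ref{tc1})--(\ref{tc4}). However, at the point you yourself flag as the main obstacle the argument is both incomplete and aimed in an unnecessarily hard direction. To show that $b\in S^{*}$ forces $b\in S$ you do not need to analyze decompositions of $[1+t:b]$ in $\Sigma$, rearrange them by Lemma \ref{sos}, or rule out ``swap'' blocks at all. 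The paper's argument is a one-liner: if $b\in S^{*}$ then $[1+t:b]$ is a ${\bf U}^{S}(k,m)$-root, hence $[1+t:b]\in\Sigma$ by Corollary \ref{call}; if $b$ were white, then $[1+b:s]$ is itself a generator of $\Sigma$ (white $b$, black $s$), so $[1+t:s]=[1+t:b]+[1+b:s]$ would be a nontrivial decomposition in $\Sigma$, contradicting the indecomposability of $[1+t:s]$ guaranteed by Lemma \ref{ex2j} (resp.\ Lemma \ref{ex2k}). The delicate swap analysis you sketch is a detour; as written, the crux of your proof is not established. (The opposite containment also silently uses Lemma \ref{sos1}: a black $a$ gives $[1+t:a]\in\Sigma$, hence a sum of roots, and one needs $a\neq\psi(1+t)$ to conclude it is a single root.)

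A second, smaller omission: your route through Proposition \ref{phib} and Lemma \ref{sub1} tacitly assumes $s<\psi(1+t)$, since only then is $u[1+t,s]$ the leading term of $\Phi^{S}(1+t,s)$ and only then does Lemma \ref{sub1} apply to the root $[1+t:s]$. Generators with $s>\psi(1+t)$ do occur even for a white regular $S$ (condition a) of Lemma \ref{ex2j} permits them), and your initial black-to-white reduction does not dispose of them. The paper handles this case separately by Proposition \ref{dec}: $\Phi^{S}(1+t,s)\sim\Phi^{T}(\psi(s),\psi(1+t))$ with $\psi(1+t)<\psi(\psi(s))$, which reduces it to the case already treated. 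With these two repairs your proof coincides with the paper's.
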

\begin{proof} 
Of course we should show that $\Phi ^{S}(1+t,s)\in {\bf U}^{S}(k,m).$
To do this suppose firstly that $s<\psi (1+t).$ Denote by $S^{\prime }$ a minimal set
such that $\Phi ^{S^{\prime }}(1+t,s)\in {\bf U}^{S}(k,m),$ see Lemmas \ref{sub1}, \ref{ex2u}.

If $a\in {S}\cap [1+t, s-1],$
then by definition $[1+t:a]\in \Sigma .$ Hence $[1+t:a]$ is a sum of 
{\bf U}$^{S}(k,m)$-roots. Lemma \ref{sos1} applied to $[1+t:s]$ shows that
$[1+t:a]$ itself is an {\bf U}$^{S}(k,m)$-root (note that $a\neq \psi (1+t)$ since
$a<s<\psi (1+t)$). Thus Lemma \ref{sub1} applied to $[1+t:s]$
shows that $a\in {S}^{\prime };$ that is, $S \cap [1+t,s-1]\subseteq {S}^{\prime }.$

If $b\in {S}^{\prime },$ then by Lemma \ref{sub1} applied to $[1+t:s]$ the element $[1+t:b]$
is an {\bf U}$^{S}(k,m)$-root. In particular $[1+t:b]\in \Sigma .$ If $b\notin S,$ 
then by definition $[1+b:s]\in \Sigma ,$ and we get a contradiction  $[1+t:s]=[1+t:b]+[1+b:s].$
Thus $b\in S;$ that is, $S^{\prime }=S \cap [1+t,s-1],$
and  $\Phi ^{S}(1+t,s)=\Phi ^{S^{\prime }}(1+t,s)\in {\bf U}^{S}(k,m).$

If $s>\psi (1+t),$ then by Proposition \ref{dec} we have 
$\Phi ^{S}(1+t,s)\sim $ $\Phi ^{T}(\psi (s), \psi (1+t)).$ Since certainly
$\psi (1+t)<\psi (\psi (s)),$ we may apply already considered case:
$\Phi ^{T}(\psi (s), \psi (1+t))\in {\bf U}^{T}(\psi (m), \psi (k))={\bf U}^{S}(k,m).$

If $[a:b]$ is a non-simple {\bf U}$^{S}(k,m)$-root, then it has a decomposition
in sum of simple  roots of the form $[1+t:s].$ The element $c$ defined in each of formulae 
(\ref{tc1}-\ref{tc4})  belongs to the subalgebra ${\frak  A}$ generated by all $\Phi ^{S}(1+t,s).$
Hence {\bf U}$^{S}(k,m)$ has PBW-generators from ${\frak  A};$
that is,  {\bf U}$^{S}(k,m)={\frak A}.$
\end{proof}

The proved theorem allows one easily to find the root sequence for {\bf U}$^{S}(k, m)$
with regular $S$. Due to Corollary \ref{dec0} it suffices consider the case $k\leq m<\psi (k)$ only.

\begin{proposition}
Let $S$ be a white  $(k,m)$-regular set, $k\leq m<\psi (k).$ 
The root sequence $(\theta _i, 1\leq i\leq n)$
for {\bf U}$^{S}(k,m)$ has the following form in terms of the shifted scheme of 
${\Phi }^{S}(k,m):$
\begin{equation}
\theta _i=\left\{
\begin{matrix} 
0, \hfill & \hbox{ if } i-1 \hbox{ is not white};\hfill \cr
\psi (i)-a_i, \hfill & \hbox{ if } i-1 \hbox{ is white, } \psi (i) \hbox{ is black}; \hfill \cr
b_i-i+1, \hfill & \hbox{ if } i-1 \hbox{ is white, } \psi (i) \hbox{ is not black,}\hfill 
\end{matrix} 
\right. 
\label{erse}
\end{equation} 
where $a_i$ is the minimal number such that $(a_i,\psi (a_i)-1)$ is a white-white column,
while $b_i,$ $i\leq b_i<\psi (i),$ is the maximal black point, if any, otherwise $b_i=i-1\ ($hence 
$\theta _i=b_i-i+1=0).$  
\label{rse}
\end{proposition}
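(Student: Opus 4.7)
My plan is to derive the formula by reducing to Lemma \ref{ex2u} and Lemma \ref{ex2j}: a simple $\mathbf{U}^S(k,m)$-root is an indecomposable element of the monoid $\Sigma$ of Proposition \ref{ex2i}, and such an indecomposable has the form $[1+t:s]$ with $t$ white, $s$ black, satisfying either condition (a) $\psi(1+t)$ is not black, or condition (b) every complete column between $t$ and $s$ on the shifted scheme is WB or BW (which forces $n\notin[t,s]$). The key observation driving the three-case split is that, via the identity $[i:m']=[\psi(m'):\psi(i)]$, the element $[i:m']$ admits two natural representations as $[1+t:s]$: the \emph{left} one $(t,s)=(i-1,m')$ and the \emph{right} one $(t,s)=(\psi(m')-1,\psi(i))$. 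Membership in $\Sigma$ requires that at least one of these representations have $t$ white and $s$ black.

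Case 1 ($i-1$ not white) is settled by white regularity. If $i-1\in S$, the regularity condition at $j=i-1$ forces $\psi(i)\notin S\cup\{k-1,m\}$, so $\psi(i)$ is not black. Combined with $t=i-1$ not white, this makes both representations fail, giving $\theta_i=0$. The boundary subcases $i<k$ and $i-1>m$ are immediate since in them both $i-1$ and $\psi(i)$ lie off the scheme.

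Case 2 ($i-1$ white, $\psi(i)$ black) is the core of the argument. The constraint $m'<\psi(i)$ forces $j:=\psi(m')-1\geq i$, and since $j=n$ always yields a WW double column, the minimal $a_i\geq i$ with $(a_i,\psi(a_i)-1)$ WW is well-defined. For $m'=\psi(a_i)-1$, the right representation $(t,s)=(a_i,\psi(i))$ is valid and condition (a) holds (the WW column's top $\psi(a_i)-1=m'$ is white), so $[i:m']$ is indecomposable, giving a simple root of length $\psi(a_i)-i=\psi(i)-a_i$. For any $m''$ with $\psi(a_i)-1<m''<\psi(i)$, set $j''=\psi(m'')-1\in[i,a_i-1]$; by minimality of $a_i$ the column $(j'',m'')$ is not WW, and white regularity excludes BB, so it is WB or BW. In the WB case the left representation is valid but fails both (a) (since $\psi(i)$ is black) and (b) (since $n\in[i-1,m'']$, noting $m''\geq\psi(a_i)\geq n+1$), hence $[i:m'']$ is decomposable; in the BW case the right representation has $t=j''$ black, so neither representation is valid and $[i:m'']\notin\Sigma$.

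Case 3 ($i-1$ white, $\psi(i)$ not black) is straightforward: the right representation is invalid, so $[i:m']\in\Sigma$ iff its left representation is valid, i.e., iff $m'$ is black; condition (a) then holds automatically, so indecomposability is free. The maximal such $m'$ in $[i,\psi(i)-1]$ is $b_i$ (or no such $m'$ exists, in which case $\theta_i=0$). The main obstacle is the combinatorial analysis in Case 2, specifically using $n\in[i-1,m'']$ to invalidate alternative (b) of Lemma \ref{ex2j} and thereby force decomposability of the "in-between" elements; this relies essentially on white regularity to exclude BB columns and on the careful parity of the WB/BW dichotomy ensuring that in BW the right representation also fails via $t=j''$ being black.
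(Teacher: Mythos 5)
Your proof is correct and follows essentially the same route as the paper's: both reduce to Lemma \ref{ex2u} and Lemma \ref{ex2j}, split on the colors of $i-1$ and $\psi(i)$, and compare the two representations $[i:m']=[\psi(m'):\psi(i)]$ of a candidate simple root (the only organizational difference being that in the middle case the paper first anchors on $[i:n]$ while you go directly to $[i:\psi(a_i)-1]$ and exclude all longer candidates via the WB/BW column dichotomy). One harmless imprecision: where you write ``$[i:m'']\notin\Sigma$'' you have only shown that $[i:m'']$ is not a \emph{generator} of $\Sigma$, but since simple roots are indecomposables of $\Sigma$ and indecomposables must be generators, the conclusion you need still follows.
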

\begin{proof} 
An element $\alpha =[1+t:s]$ given in Lemma \ref{ex2u} defines a simple 
{\bf U}$^{S}(k,m)$-root starting with $i$ if either 
$i=1+t\ \& \ s<\psi (1+t)$ or $s=\psi (i) \ \& \ s>\psi (1+t).$

If $i-1$ is not a white point, then, of course, $i\neq 1+t,$ hence $s=\psi (i).$ 
The column $(s,i-1)=(\psi (i),i-1)$ is not a black-black one, for $S$ is white-regular.
Therefore it is incomplete; that is, $t=i-1$ does not appear in the scheme, a contradiction.
Thus there are no simple {\bf U}$^{S}(k,m)$-roots starting with $i$ at all, and
$\theta _i=0.$

Assume $i-1$ is white, while $\psi (i)$ is black. In this case $[1+n:\psi (i)]$ satisfies condition
a) of Lemma \ref{ex2j}. Hence $[i:n]=[\psi (n):\psi (i)]=[1+n:\psi(i)]$ 
 is a simple {\bf U}$^{S}(k,m)$-root starting with $i.$ In particular $\theta _i>n-i.$

If $i=1+t,$ $s<\psi (1+t),$ then $[1+t:s]$ does not satisfy condition a) of Lemma \ref{ex2j},
for $\psi (1+t)=\psi (i)$ is black. If it satisfies condition b), then the length of $[1+t:s]$
is less than $n-i.$

If $s=\psi (i),$ $s>\psi (1+t),$ then $[1+t:s]$ satisfies condition a) of Lemma \ref{ex2j} 
if and only if 
$(t, \psi (t+1))$ is a white-white column. In this case the length equals 
$s-(1+t)+1=\psi (i)-t.$ It has the maximal value if $t$ is minimal: $t=a_i.$

Assume $i-1$ is white, while $\psi (i)$ is not black. In this case $s\neq \psi (i).$
Hence $i=1+t,$ and $s$ is a black point such that $s<\psi (1+t)=\psi (i).$
The length of $[1+t:s]$ equals $s-t=s-i+1.$ It takes the maximal value if
$s$ is the maximal black point such that $i\leq s<\psi (i);$ that is $s=b_i.$
If all points in the interval $[i, \psi (i)-1]$ are white, then
there are no simple {\bf U}$^{S}(k,m)$-roots starting with $i$ at all. Hence still
$\theta _i=b_i-i+1=0.$
\end{proof}
\begin{proposition}
Let $S$ be a black $(k,m)$-regular set, $k\leq m<\psi (k).$ 
The root sequence $(\theta _i, 1\leq i\leq n)$
for {\bf U}$^{S}(k,m)$ has the following form in terms of the shifted scheme of
${\Phi }^{S}(k,m):$
\begin{equation}
\theta _i=\left\{
\begin{matrix} 
0, \hfill & \hbox{ if } i-1 \hbox{ is not white, } \psi (i) \hbox{ is not black}; \hfill \cr
\psi (i)-d_i, \hfill & \hbox{ if } i-1 \hbox{ is not white, } \psi (i) \hbox{ is black}; \hfill  \cr
\psi (i)-c_i, \hfill & \hbox{ if } i-1 \hbox{ is white, } \hfill 
\end{matrix} 
\right. 
\label{erseb}
\end{equation} 
where $c_i$ is the minimal number such that $(c_i,\psi (c_i)-1)$ is a black-black column,
while $d_i,$ $i\leq d_i<\psi (i),$ is the minimal white point, if any, otherwise $d_i=\psi (i)\ ($hence 
$\theta _i=\psi (i)-d_i=0).$  
\label{rseb}
\end{proposition}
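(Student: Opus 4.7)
The plan is to follow the structure of the proof of Proposition \ref{rse} line by line, replacing Lemma \ref{ex2j} with Lemma \ref{ex2k}. By Lemma \ref{ex2u}, every simple {\bf U}$^S(k,m)$-root corresponds bijectively to an indecomposable element $\alpha = [1+t:s]\in\Sigma$, where $t$ is a white and $s$ a black point of the shifted scheme of $\Phi^S(k,m)$; indecomposability is characterized by conditions (a) or (b) of Lemma \ref{ex2k}. Rewriting $\alpha$ in the canonical form $[a:b]$ with $a\le b<\psi(a)$, the onset $a=\min(1+t,\psi(s))$ equals $i$ in exactly two ways: either (I) $t=i-1$ is white, or (II) $s=\psi(i)$ is black.

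Case 1, in which $i-1$ is not white and $\psi(i)$ is not black, is immediate: neither pathway is available, so $\theta_i=0$. Case 2, in which $i-1$ is not white and $\psi(i)$ is black, reduces to pathway (II) with $s=\psi(i)$ and $t$ a white point of $[i,\psi(i)-1]$; condition (a) of Lemma \ref{ex2k} reads $\psi(1+s)=i-1$ not white, which holds by hypothesis, so every admissible $t$ yields an indecomposable of length $\psi(i)-t$. Minimizing $t$ gives $\theta_i=\psi(i)-d_i$, with the convention $d_i=\psi(i)$ covering the absence of any white point in the range.

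The substantive case is Case 3, in which $i-1$ is white. In pathway (I), $t=i-1$ and $s$ ranges over black points of $[i,\psi(i)-1]$; condition (a) of Lemma \ref{ex2k} now becomes ``$\psi(s)-1$ is not white,'' which combined with ``$s$ is black'' says that the complete column containing $s$ is black-black. The admissible values of $s$ are therefore precisely the labels lying on a B-B column, and the maximum such $s$ in $[i,\psi(i)-1]$ is $\psi(c_i)-1$, the upper label of the B-B column at the minimum position $c_i\ge i$; this yields length $\psi(c_i)-i=\psi(i)-c_i$. Pathway (II), which requires $\psi(i)$ black, loses condition (a) since $\psi(1+s)=i-1$ is white, so indecomposability must come from condition (b); but the relevant range contains the B-B column at $c_i$ unless $t>c_i$, and since $t$ must be white while $c_i$ is black, the best choice gives length at most $\psi(i)-c_i-1$. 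Hence pathway (I) dominates and $\theta_i=\psi(i)-c_i$, with the convention $c_i=\psi(i)$ handling the absence of a B-B column $\ge i$.

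The main obstacle will be the careful verification in Case 3: reducing condition (a) in pathway (I) to the B-B column characterization (handling both the $s\le n$ sub-case, where $s$ sits on the lower row and the column is $(s,\psi(s)-1)$, and the $s>n$ sub-case, where $s$ sits on the upper row and the column is $(\psi(s)-1,s)$), and confirming via condition (b) of Lemma \ref{ex2k} that pathway (II) cannot exceed pathway (I). This requires a precise analysis of which complete columns are considered ``between $t$ and $\psi(i)$'' in condition (b), so that the first B-B column at position $c_i$ serves as the tight obstruction in either pathway.
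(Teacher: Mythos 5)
Your overall strategy is exactly the paper's: the published proof of Proposition \ref{rseb} consists of a single sentence saying it follows from Lemma \ref{ex2k} the same way Proposition \ref{rse} follows from Lemma \ref{ex2j}, and your two pathways (I), (II) together with the case split on the colours of $i-1$ and $\psi(i)$ carry that out correctly. Cases 1 and 2 are fine, the treatment of pathway (II) in Case 3 via condition b) is fine, and the formulas you arrive at are the right ones.

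There is, however, one inaccurate step in Case 3, pathway (I): the claim that the admissible values of $s$ are \emph{precisely} the labels lying on a black--black column. Condition a) of Lemma \ref{ex2k} says that $\psi(1+s)=\psi(s)-1$ is ``not white,'' and this also holds when $\psi(s)-1$ \emph{does not appear on the scheme at all}, which happens exactly for black points $s<\psi(m)-1$, i.e.\ to the left of the first complete column (for instance $n=4$, $k=1$, $m=6$, $S=\{1,2,4,5\}$: here $s=1$ is black, its partner $\psi(1)-1=7>m$ is absent, so $[1:1]=x_1$ is a simple root lying on no column). Moreover a black $s$ failing condition a) can still be admissible through condition b). Neither extra family changes the answer, because in both situations $s\le n-1$, so the length satisfies $s-i+1\le n-i<n-i+1\le\psi(i)-c_i$ (using $c_i\le n$, which holds since $n$ is a black point of any black regular scheme with $k\le n<m$, so the column $(n,n)$ is black--black); but your argument does not verify this, and as stated the word ``precisely'' is false. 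The same observation $c_i\le n$ is what guarantees that a black--black column at a position $\ge i$ exists at all, so that $\theta_i=\psi(i)-c_i$ is well defined; and for the upper bound you still need the remark that an admissible $s>n$ on a black--black column has column position $\psi(s)-1\ge i$ (because $s<\psi(i)$), hence $\psi(s)-1\ge c_i$ and $s\le\psi(c_i)-1$. With these checks added, Case 3 is complete.
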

\begin{proof} 
The proof follows from Lemma \ref{ex2k} in a quite similar way like the proof of the above 
proposition follows from Lemma \ref{ex2j}.
\end{proof}

\begin{example} \rm 
Consider a right coideal subalgebra $U(w)$ generated over ${\bf k}[G]$ by the element
$w=[[x_3, [x_3x_2x_1]],x_2]$ with $n=3$ (recall that the value of $[x_3x_2x_1]$ in 
$U_q^{+}({\mathfrak so}_{7})$ is independent of the precise alignment of brackets, see (\ref{jak3})).
By definition (\ref{ww}) we have $[x_3, [x_3, [x_2,x_1]]]=u[3,6],$ while Lemma \ref{xny}
 implies $[u[3,6],x_2]\sim \Phi ^{\{ 2\} }(2,6).$ Here $\{ 2\}$ is a white $(2,6)$-regular set, however
$6>\psi (2)=5.$ By Proposition \ref{dec} we have $\Phi ^{\{ 2\} }(2,6)\sim \Phi ^{\{ 1,2,3\} }(1,5).$
Since $5<\psi (1)=6$ and $\{ 1,2,3\}$ is a black $(1,5)$-regular set,
to find the root sequence for $U(w)={\bf U}^{\{ 1,2,3\} }(1,5),$ we may apply Proposition \ref{rseb}.
The shifted scheme 
\begin{equation}  
\begin{matrix}
 \ &\stackrel{5}{\bullet }  & \stackrel{4}{\circ }  &\stackrel{3}{\bullet } \Leftarrow \cr 
 \stackrel{0}{\circ } &\stackrel{1}{\bullet } &\stackrel{2}{\bullet }  & \stackrel{3}{\bullet } \hfill  
\end{matrix}
\label{ab2}
\end{equation}
shows that $c_1=1,$ $c_2=3,$ $c_3=3,$ while $d_1=4,$ $d_2=4,$ $d_3=\psi (3)=4.$ 
 If $i=1,$
then $i-1=0$ is a white point, and by the third option of (\ref{erseb}) 
we have $\theta _1=\psi (1)-c_1=5.$
If $i=2,$ then $i-1=1$ and $\psi (i)=5$ are black points. Hence the second option of (\ref{erseb}) 
applies: $\theta _2=\psi (2)-d_2=5-4=1.$ If $i=3,$ then $i-1=2$ is a black point,
while  $\psi (i)=4$ is a white point; that is, according to the first option of (\ref{erseb}) 
we have $\theta _3=0.$  Thus $\theta (U(w))=(5,1,0).$
\label{ex3}
\end{example}

\section{Construction}
Our next goal is a construction of a right coideal subalgebra with a given root sequence
\begin{equation}
{\bf \theta }=(\theta_1, \theta_2, \ldots ,\theta_n),\hbox{ such that } 0\leq \theta_k\leq 2n-2k+1,\ 
1\leq k\leq n.
\label{secu}
\end{equation}
We shall need the following auxiliary objects.
\begin{definition} \rm
By downward induction on $k$ we define sets $R_k\subseteq [k,\psi (k)-1 ],$ 
$T_k\subseteq [k,\psi (k)],$ $1\leq k\leq 2n$
associated to a given sequence (\ref{secu})
as follows. 

For $k>n$ we put $R_k=T_k=\emptyset .$

Suppose that 
$R_i,$ $T_i,$ $k<i\leq 2n$ are already defined. We denote by {\bf P} the following binary predicate
on the set of all ordered pairs $i\leq j$:
\begin{equation}
{\bf P}(i,j)\rightleftharpoons j\in T_{i}\vee \psi (i)\in T_{\psi (j)}.
\label{pred}
\end{equation}
Of course it is defined only on pairs $(i,j)$ such that $k<i\leq j<\psi (k)$ yet.
We note that {\bf P}$\, (i,j)={\bf P}(\psi (j),\psi (i)).$ Also it is useful to note that
for given $i,j$ one of the conditions $j\in T_{i}$ or $\psi (i)\in T_{\psi (j)}$ is false
due to $T_s\subseteq [s,\psi (s)],$ all $s,$ and $T_s=\emptyset ,$ $s>n$
with the only exception being $j=\psi (i)$ when these conditions coincide.
In particular, we see that if $j<\psi (i),$ then {\bf P}$\, (i,j)$ is equivalent to $j\in T_i.$

If $\theta _k=0,$ then we set 
$R_k=T_k=\emptyset .$ If $\theta _k\neq 0,$ then 
by definition $R_k$ contains 
$\tilde{\theta }_k=k+\theta _k-1$
and all $m$ satisfying the following three properties
\begin{equation}
\begin{matrix}\smallskip
a)\ k\leq m<\tilde{\theta }_k; \hfill \cr \smallskip
b)\ \neg \hbox{\bf P}\, (m+1, \tilde{\theta }_k); \hfill \cr
c) \  \forall r (k\leq r<m)\ \  \hbox{\bf P}\, (r+1, m)\Longleftrightarrow 
\hbox{\bf P}\, (r+1, \tilde{\theta }_k). \hfill
\end{matrix}
\label{pet1}
\end{equation}
Further, we define an auxiliary set 
\begin{equation}
T_k^{\prime }=R_k\cup \bigcup_{s\in R_k} \{ a\, |\, s<a<\psi (k), \ {\bf P}(s+1,a)\} ,
\label{pet2p}
\end{equation}
and finally,
\begin{equation}
T_k=\left\{ \begin{matrix} T_{k}^{\prime }, \hfill 
& \hbox{ if } \psi (R_k+1)\cap T_k^{\prime }=\emptyset ;\hfill \cr 
T_{k}^{\prime }\cup \{ \psi (k)\}, \hfill & \hbox{ otherwise.}\hfill 
\end{matrix}
\right. 
\label{pet2}
\end{equation}
\label{tski}
\end{definition}

For example, the first step of the construction is as follows.
If $\theta _n=0,$ certainly we have $R_n=T_n=\emptyset .$ Since by definition 
$\theta_n\leq 2n-2n+1=1,$ there exists just one additional option 
$\theta _n=1.$ In this case $\tilde{\theta }_n=n,$ and $R_n=\{ n\} ,$ 
while $T_n^{\prime }=R_n.$ We have $\psi (R_n+1)\cap T_n^{\prime }=\{ n\} \neq \emptyset .$
Hence $T_n=\{ n, \psi (n) \}=\{ n, n+1 \} .$ 

\begin{example} \rm
Assume $n=3,$ ${\bf \theta }=(5,1,0).$
Since $\theta_3=0,$ by definition $R_k=T_k=\emptyset ,$ $k\geq 3.$ 

Let $k=2.$ We have $\theta_2=1\neq 0,$ hence $\tilde{\theta }_2=2+\theta_2-1=2\in R_2.$
Certainly there are no points $m$ that satisfy $k=2\leq m<\tilde{\theta }_2=2;$
that is, $R_2=\{ 2\} .$ Eq. (\ref{pet2p}) yields 
$$
T_2^{\prime }=\{ 2\}\cup \bigcup_{s\in \{ 2\} } \{ a\, |\, 2=s<a<\psi (2)=5, \ {\bf P}(3,a)\}=\{ 2\} . 
$$
We have $\psi (R_2+1)\cap T_2^{\prime }=\{ 4\} \cap \{ 2\} =\emptyset ,$ hence $T_2=\{ 2\} .$

To find $R_1$ it is convenient to fix already known values of the predicate {\bf P}
in tabulated form.

\begin{center}
\begin{tabular}{*{6}{|c}|}
\hline
$i \, \backslash \, j$ & 2 & 3 & 4 & 5 \\
\hline 
$2 \hfill $ & $T \hfill $ & $  F \hfill $ & $ F \hfill $ & $ F $ \\
\hline 
$3 \hfill $ & $ \ \hfill $ & $ F \hfill $ & $ F\hfill $ & $ F \hfill $ \\
\hline 
$4 \hfill $ & $ \ \hfill $ & $ \ \hfill $ &  $ F \hfill $ & $ F \hfill $ \\
\hline
$5 \hfill $ & $ \ \hfill $ & $ \ \hfill $ & $ \ \hfill $ & $ T \hfill $ \\
\hline

\end{tabular}
\end{center}

\centerline{\bf Values of P}

\smallskip 
\smallskip 

 We have $\theta _1\neq 0;$ that is,  $\tilde{\theta }_1=1+5-1=5\in R_1.$

There exist four points $m$ that satisfy $k=1\leq m<\tilde{\theta }_1=5;$
they are $1,$ $2,$ $3,$ and $4.$ 
The point $m=4$ does not satisfy condition $b),$  for {\bf P}$\, (5,5)$ is true.
Hence $4\notin R_1.$ The points $m=1,2,$ and $3$ satisfy condition $b)$ since in
the last column of the tableaux there is just one value $``T";$ this corresponds
to $m+1=5.$

Let us check condition $c)$ for $m=1.$ The interval $1=k\leq r<m=1$ is empty.
Therefore the equivalence $c$) is true (elements from the empty set 
satisfy all conditions, even $r\neq r\, $). Thus $1\in R_1.$

Equivalence $c)$ in terms of the tableaux of the values of {\bf P}  
means that the column corresponding to
$j=m$ equals  a subcolumn corresponding to $j=\tilde{\theta }_1=5.$ This is indeed the case
for $m=3,$ but not for $m=2.$ Thus $R_1=\{ 1,3,5\} .$ 

To find $T_1^{\prime }$ we have to check just two remaining points: $a=2,4.$ 
From the tableaux we see that {\bf P}$\, (x,4)$  is always false, hence  $4\notin T^{\prime }_1 .$
At the same time {\bf P}$\, (s+1,2)$ is true for $s=1\in R_1.$ Hence $2\in T^{\prime }_1.$

Finally, $\psi (R_1+1)\cap T_1^{\prime }=\{ 5,3,1\} \cap \{ 1,2,3,5\} \neq \emptyset ,$
hence $T_1=\{ 1,2,3,5,6\} .$

Thus, for ${\bf \theta }=(5,1,0)$ we have $R_3=T_3=\emptyset ,$
$R_2=T_2=\{ 2\} ,$ $R_1=\{ 1,3, 5\} ,$ and $T_1=\{ 1,2,3,5,6\} .$
\label{rex1}
\end{example}
\begin{theorem} For each sequence 
${\bf \theta }=(\theta_1, \theta_2, \ldots ,\theta_n),$ such that 
$$0\leq \theta_k\leq 2n-2k+1,\ \ 1\leq k\leq n$$ 
there exists a homogeneous right coideal subalgebra
{\bf U}$\, \supseteq {\bf k}[G]$ with $r(\hbox{\bf U})={\bf \theta }.$ In what follows we shall
denote it by {\bf U}$_{\theta }.$
\label{su4}
\end{theorem}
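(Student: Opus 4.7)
The plan is to define $\mathbf{U}_{\theta}$ as the subalgebra generated over $\mathbf{k}[G]$ by the elements $\Phi^{T_k}(k,m)$ with $1\leq k\leq n$ and $m\in R_k$, where $R_k$ and $T_k$ are the sets constructed in Definition \ref{tski}, and then to establish two things: first, that $\mathbf{U}_{\theta}$ is indeed a right coideal subalgebra; second, that $r(\mathbf{U}_{\theta})=\theta$. Uniqueness is already provided by Theorem \ref{teor}, so construction is all that is needed.

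First I would verify that for every $k$ and every $m\in R_k$ the set $T_k\cap [k,m-1]$ is $(k,m)$-regular. The combinatorial hypotheses b) and c) in (\ref{pet1}), together with the closure step (\ref{pet2p})--(\ref{pet2}), force exactly the column-coloring structure depicted in the shifted schemes (\ref{grab})--(\ref{grab3}); the extra element $\psi(k)$ in the ``otherwise'' branch of (\ref{pet2}) switches the scheme from white-regular to black-regular when required. With regularity in hand I can invoke Proposition \ref{dec} and the nonvanishing statement of Corollary \ref{dec1}, so each generator is a well-defined nonzero element of $U_q^+(\mathfrak{so}_{2n+1})$ or $u_q^+(\mathfrak{so}_{2n+1})$.

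Next I would prove the right coideal property by downward induction on $k$. The base $k>n$ is trivial since $R_k=\emptyset$. For the inductive step, I apply Proposition \ref{ex2i} to each generator $\Phi^{T_k}(k,m)$: every left tensor factor appearing in $\Delta(\Phi^{T_k}(k,m))$ has degree in the additive monoid $\Sigma$ generated by the indecomposable segments $[1+t:s]$ with $t$ white and $s$ black on the scheme of $\Phi^{T_k}(k,m)$. The heart of the argument is that each such segment is itself a $\mathbf{U}_{\theta}$-root, realized as (a scalar multiple of) the value of a suitable bracketed monomial in the generators $\Phi^{T_i}(i,\cdot)$ with $i>k$, via the decomposition formulae (\ref{tc1})--(\ref{tc4}) of Lemma \ref{sos1}. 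The predicate $\mathbf{P}(i,j)$ is designed precisely to keep track of which such segments are already available from the inductively constructed layers $\mathbf{U}$-root data for larger $k$: the clauses $j\in T_i$ and $\psi(i)\in T_{\psi(j)}$ in (\ref{pred}) are the two ways a segment $[i:j]$ can be read off from the scheme of an already constructed generator. Conditions b) and c) in (\ref{pet1}) guarantee that $T_k$ meshes consistently with all $T_{k'}$, $k'>k$, so that Proposition \ref{ex2i} delivers the coideal closure.

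Finally, I would compute $r(\mathbf{U}_{\theta})$. The maximality part $\theta_k\leq 2n-2k+1$ together with the definition of $\tilde{\theta}_k\in R_k$ produces the simple root $[k:\tilde{\theta}_k]$ by Lemma \ref{sub1} and the regularity of $T_k$. For the opposite direction, I use Lemma \ref{su} together with the inductive description of roots starting with indices $>k$: condition b) in (\ref{pet1}) ensures $[m+1:\tilde{\theta}_k]$ is \emph{not} a sum of $\mathbf{U}_{\theta}$-roots with onset $>k$, and condition c) ensures that the two segments $[k:m]$ and $[k:\tilde{\theta}_k]$ are linked by Lemma \ref{su} in a way that forces both to be roots, so no larger $m$ can produce a new simple root with onset $k$; Propositions \ref{rse} and \ref{rseb} applied layer by layer then yield $\theta_k=\tilde\theta_k-k+1$. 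The main obstacle I anticipate is the verification of the inductive coideal closure: one must match every indecomposable segment appearing on a left tensor factor in $\Delta(\Phi^{T_k}(k,m))$ with an element of $\mathbf{U}_{\theta}$ coming from the higher-index layer, and this is exactly the combinatorial content encoded by the symmetric pair of clauses in $\mathbf{P}(i,j)$ and by the ``$\psi(k)$'' correction in (\ref{pet2}). Once that bookkeeping is carried out, both (a) and (b) fall into place and Theorem \ref{teor} provides uniqueness.
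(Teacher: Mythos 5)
Your overall plan (generate $\mathbf{U}_{\theta}$ by the elements $\Phi^{T_k}(k,m)$, $m\in R_k$, check regularity of $T_k$, establish the coideal property, then compute the root sequence) matches the paper's, but the central step is argued by an invalid inference. You propose to get coideal closure from Proposition \ref{ex2i}, which only says that the \emph{degrees} of the left tensor components of $\Delta(\Phi^{T_k}(k,m))$ lie in the monoid $\Sigma$, and you then conclude from the fact that each indecomposable segment of $\Sigma$ is a $\mathbf{U}_{\theta}$-root. Knowing that $D(a^{(1)})$ is a sum of $\mathbf{U}_{\theta}$-roots does not place the element $a^{(1)}$ itself in $\mathbf{U}_{\theta}$: a homogeneous component of $U_q^+(\mathfrak{so}_{2n+1})$ of a given degree is in general larger than its intersection with $\mathbf{U}_{\theta}$. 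The paper sidesteps this entirely by introducing $\tilde{\mathbf{U}}$, the subalgebra generated by the full right coideal subalgebras $\mathbf{U}^{T_k}(k,m)$ (so the coideal property holds by construction), and then proving $\tilde{\mathbf{U}}=\mathbf{U}_{\theta}$: Theorem \ref{iex2} identifies the algebra generators of each $\mathbf{U}^{T_k}(k,m)$ as explicit elements $\Phi^{T_k}(1+t,s)$, Claims 6--7 compute the roots and simple roots of $\tilde{\mathbf{U}}$ in terms of the predicate $\mathbf{P}$, and Claim 8 exhibits a PBW-generating set of $\tilde{\mathbf{U}}$ inside $\mathbf{U}_{\theta}$ via the elements (\ref{tc1})--(\ref{tc4}). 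That is the missing idea in your draft.

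A second, related gap is that you assert rather than prove the combinatorial backbone. The regularity of $T_k$ for $m\in R_k$ (the paper's Claim 5) is not a formal consequence of conditions (\ref{pet1}b), (\ref{pet1}c) and (\ref{pet2p})--(\ref{pet2}); it rests on four nontrivial properties of $\mathbf{P}$ (the sequence characterization, transitivity, the dichotomy ``$\mathbf{P}(k,s)$ or $\mathbf{P}(s+1,m)$'', and the criterion $m\in T_k\Leftrightarrow\neg\mathbf{P}(m+1,\tilde{\theta}_k)$), each proved by induction. The same properties are what make the root computation work: Propositions \ref{rse} and \ref{rseb} give the root sequence of a \emph{single} $\mathbf{U}^{S}(k,m)$, and applying them ``layer by layer'' does not by itself determine the simple roots of the join of all these subalgebras; one needs the global description of the root set of $\tilde{\mathbf{U}}$ (Claim 6) and the simplicity criterion via condition (\ref{pet1}c) and Claim 4 (Claim 7). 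Without these verifications the proof is not complete.
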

\begin{proof} 
Denote by {\bf U} a subalgebra generated over {\bf k}$[G]$ by values in $U_q({\mathfrak so}_{2n+1})$
or in $u_q({\mathfrak so}_{2n+1})$ of the following elements
\begin{equation}
\Phi ^{S}(k,m),\ \ 1\leq k\leq m \hbox{ with } m\in R_k,\ S =T_k.
\label{pet3}
\end{equation}
(For example, if $\theta =(5,1,0),$ then the generators are
$x_1,$ $x_2,$ $[x_3x_2x_1],$ $[[x_3[x_3x_2x_1]],x_2] ).$
 We shall prove that {\bf U} is a right coideal subalgebra with $r({\bf U})=\theta .$ 
To attain these ends we need to check
some properties of $R_k, T_k,$ and {\bf P}.

\smallskip
\noindent
{\bf Claim 1.} {\bf P}$\, (k,m)$ {\it is true if and only if there exists a sequence 
\begin{equation}
k-1=k_0<k_1<\ldots <k_r<m=k_{r+1},
\label{tir}
\end{equation}
 such that for each $i,$ $0\leq i\leq r,$
either  $k_{i+1}\in R_{1+k_i}$ or} $\psi (1+k_i)\in R_{\psi (k_{i+1})}.$

\smallskip
We shall use induction on $m-k.$ If $m=k,$ then the condition $k\in T_k$ is equivalent to
$k\in T_k^{\prime },$ for $k\neq \psi (k).$ Eq. (\ref{pet2p}) implies, in turn, that  
$k\in T_k^{\prime }$ is equivalent to $k\in R_k.$ Thus {\bf P}$\, (k,k)$ is equivalent 
to $k\in R_k\vee \psi (k)\in R_{\psi (k)};$ that is, we have sequence (\ref{tir}) with $r=0.$

Assume, first, $m\in T_k.$
If $m\in R_k,$ we put $k_1=m+1,$ $r=1.$ 

If $m\notin R_k,$ $m\neq \psi (k),$ then by definition $m\in T_k^{\prime };$ that is, by
(\ref{pet2p}) there exists $s\in R_k,$ $s<m$ such that {\bf P}$\, (s+1,m)$
is true.
By the inductive supposition applied to $(s+1,m)$ there exists sequence (\ref{tir})
with $k_0=s.$ One may extend this sequence from the left by $k-1<k<s,$
for $s\in R_k.$

If $m=\psi (k),$ then by definition $\psi (s_1+1)\in T_k^{\prime }$ for a suitable $s_1\in R_k.$
Of course, we have that {\bf P}$\, (k,\psi (s_1+1))$ is true. Hence considered 
above case with $m\leftarrow \psi (s_1+1)$ yields sequence (\ref{tir}) with 
$k_{r+1}=\psi (s_1+1).$ We may extend this sequence from the right by $\psi (s_1+1)<\psi (k)=m$ 
since $s_1=\psi (1+\psi (s_1+1))\in R_{\psi (\psi (k))}=R_k.$ 

Assume, next, $\psi (k)\in T_{\psi (m)}.$ Since $\psi (k)-\psi (m)=m-k,$ 
we may apply considered above 
case with $k\leftarrow \psi (m),$ $m\leftarrow \psi (k).$ Hence there exists sequence (\ref{tir})
with $k_0=\psi (m)-1,$ $k_{r+1}=\psi (k).$ Let us denote $k_i^{\prime }=\psi (k_i)-1,$ $0\leq i\leq r+1.$
We have
\begin{equation}
k-1=k_{r+1}^{\prime }<k_r^{\prime }<\ldots <k_1^{\prime }<k_0^{\prime }=m.
\label{tir1}
\end{equation}
In this case 
$k_i^{\prime }\in R_{1+k_{i+1}^{\prime }}$ is equivalent to $\psi (1+k_i)\in R_{\psi (k_{i+1})},$
while  $\psi (1+k_{i+1}^{\prime })\in R_{\psi (k_{i}^{\prime })}$ is equivalent to 
$k_{i+1}\in R_{1+k_i}.$

\smallskip
Conversely, suppose that we have sequence (\ref{tir}). Without loss of generality
we may suppose that $m\leq \psi (k),$ otherwise we turn to (\ref{tir1}).
The inductive supposition implies that {\bf P}$\, (1+k_1,m)$ is true.  
Moreover $k_1\in R_k.$ Indeed, otherwise we have 
$\psi (k)\in R_{\psi (k_1)}\subseteq [\psi (k_1),k_1-1].$ In particular  $\psi (k)<k_1,$ and
hence $k>\psi (k_1).$ However $k_1\leq m\leq \psi (k)$ implies 
$\psi (k_1)\geq k.$
Now, if $m\neq \psi (k),$ then definition (\ref{pet2p}) with $s\leftarrow k_1,$ $a\leftarrow m$
implies $m\in T_k^{\prime }.$ 

Let $m=\psi (k).$ In this case considering as above sequence (\ref{tir1})
we have $k_r^{\prime }\in R_k.$ By definition $k_r^{\prime }=\psi (k_r)-1.$
Hence $k_r\in \psi (R_k+1).$ At the same time definition (\ref{pet2p}) shows that
$k_r\in T_k^{\prime }$ since the inductive supposition implies that {\bf P}$\, (1+k_1,k_r)$
is true provided that $r>1,$ while if $r=1,$ then $k_r=k_1\in R_k.$
Thus definition (\ref{pet2}) implies $m=\psi (k)\in T_k.$

\smallskip
\noindent
{\bf Claim 2.}  {\it If {\bf P}$\, (k,s)$ and  {\bf P}$\, (s+1,m),$ then {\bf P}$\, (k,m).$} 

\smallskip
\noindent
Indeed, one may extend from the right sequence (\ref{tir}) corresponding to the pair
$(k,s)$ by the sequence corresponding to $(s+1,m).$

\smallskip
\noindent
{\bf Claim 3.}  {\it If {\bf P}$\, (k,m),$ then for each $s,$ $k\leq s<m$ either {\bf P}$\, (k,s)$
or {\bf P}$\, (s+1,m).$} 

\smallskip
\noindent
We use induction on $m-k.$ Without loss of generality we may suppose that
$m\leq \psi (k),$ for {\bf P}$\, (k,m)$ is equivalent to {\bf P}$\, (\psi (m),\psi (k)).$
By Claim 1 there exists sequence (\ref{tir}) with $k_0=k-1,$ $k_{r+1}=m.$
The same claim implies {\bf P}$\, (1+k_1,m)$ provided that $r\geq 1.$

Since $k\leq s<m,$ there exists $i,$ $1\leq i\leq r,$ such that 
$k_i<s\leq k_{i+1}.$ If $i\geq 1,$ then the inductive supposition applied 
to $(1+k_1,m)$ implies that either {\bf P}$\, (1+k_1,s)$ or {\bf P}$\, (s+1,m).$
In the latter case  we have got the required condition.
If {\bf P}$\, (1+k_1,s)$ is true, then Claim 2 implies {\bf P}$\, (k,s),$ for  
{\bf P}$\, (k,k_1)$ is true according to Claim 1.

Thus, it remains to check the case $i=0;$ that is, $k\leq s\leq k_1.$
In this case $k_1\in R_k.$ Indeed, otherwise we have 
$\psi (k)\in R_{\psi (k_1)}\subseteq [\psi (k_1),k_1-1].$ In particular  $\psi (k)<k_1,$ and
hence $k>\psi (k_1).$ However $k_1\leq m\leq \psi (k)$ implies 
$\psi (k_1)\geq k.$

Claim 2 with $s\leftarrow 1+k_1,$ $k\leftarrow s+1$ says that conditions 
{\bf P}$\, (s+1,k_1)$ and {\bf P}$\, (1+k_1,m)$ imply {\bf P}$\, (s+1,m).$
Hence it is sufficient to show that either {\bf P}$\, (k,s)$ or {\bf P}$\, (s+1,k_1)$ is true.
If $s=k_1,$ then of course $s=k_1\in R_k$ yields {\bf P}$\, (k,s).$
This allows us to replace $m$ with $k_1$  and suppose further that $m\in R_k,$ $i=0.$
In this case condition (\ref{pet1}$c$) with $r\leftarrow s$ is
``{\bf P}$\, (s+1,m)\Longleftrightarrow {\bf P}\, (s+1,\tilde{\theta }_k).$"
Therefore we have to consider only one case $m=\tilde{\theta }_k.$

Let us suppose that for some $s,$ $k\leq s<\tilde{\theta }_k$ we have 
$\neg {\bf P}(k,s)$ and $\neg {\bf P}(s+1,\tilde{\theta }_k).$ By induction on $s,$
in addition to the induction on $m-k,$ we shall show
that these conditions are inconsistent 
(more precisely they imply $s\in R_k,$ which contradicts to 
$\neg {\bf P}(k,s);$ see definition (\ref{pet2p})).

Definition (\ref{pet1}) with $m=k$ shows that $k\in R_k$ if and only if 
$\neg {\bf P}(k+1,\tilde{\theta }_k).$
Since in our case $\neg {\bf P}(s+1,\tilde{\theta }_k),$
we have $s\in R_k,$ provided that $s=k.$

Let $s>k.$ Conditions (\ref{pet1}$a$) and (\ref{pet1}$b$) with $m\leftarrow s$ are valid. 
Suppose that (\ref{pet1}$c$) fails. In this case we may find a number $t,$ $k\leq t<s,$
such that $\neg ({\bf P}(t+1,s)\Longleftrightarrow  {\bf P}(t+1,\tilde{\theta }_k)).$

If ${\bf P}(t+1,s)$ but $\neg {\bf P}(t+1,\tilde{\theta }_k),$ then by the inductive supposition
(induction on $s$) either {\bf P}$\, (k,t)$ or {\bf P}$\, (t+1,\tilde{\theta }_k);$ that is,
{\bf P}$\, (k,t)$ is true. Claim 2 implies  {\bf P}$\, (k,s),$ a contradiction. 

If {\bf P}$\, (t+1,\tilde{\theta }_k),$
 but $\neg {\bf P}(t+1,s),$ then the inductive supposition
of the induction on $m-k$
with $k\leftarrow t+1,$ $m\leftarrow \tilde{\theta }_k$
shows that either {\bf P}$\, (t+1,s)$ or {\bf P}$\, (s+1,\tilde{\theta }_k);$
that is, {\bf P}$\, (s+1,\tilde{\theta }_k),$ again a contradiction.

Thus $s$ satisfies all conditions (\ref{pet1}a -- \ref{pet1}$c$), hence $s\in R_k.$

\smallskip
\noindent
{\bf Claim 4.} {\it If $k\leq m<\tilde{\theta }_k,$ then $m\in T_k$
if and only if $\neg {\bf P}(m+1,\tilde{\theta }_k).$}

\smallskip
\noindent
First of all recall that condition $m\in T_k$ is equivalent to {\bf P}$\, (k,m),$
for by definition $\tilde{\theta }_k<\psi (k).$

According to Claim 3 one of the conditions {\bf P}$\, (k,m)$ or 
{\bf P}$\, (m+1,\tilde{\theta }_k)$ always holds. If both of them are valid, then
due to Claim 1 we find sequence (\ref{tir}) with $k_0=k-1,$ $k_{r+1}=m,$
 such that $k_{i+1}\in R_{1+k_i}\vee \psi (1+k_i)\in R_{\psi (k_{i+1})},$ $0\leq i\leq r.$
Due to (\ref{pet1}$b$) we have $m\notin R_k,$ and of course  $\psi (k)\notin R_{\psi (m)},$
for $m\leq \tilde{\theta }_k<\psi (k).$ Hence $r>1.$ 

Again by the first claim  we get {\bf P}$\, (1+k_1,m).$
Since $k_1\leq m<\psi (k),$ we have $\psi (k)\notin R_{\psi (k_1)}.$ Hence $k_1\in R_k.$
Therefore $k_1$ satisfies condition (\ref{pet1}$b$), which is  $\neg {\bf P}(1+k_1,\tilde{\theta }_k).$
However Claim 2 shows that the conditions 
{\bf P}$\, (1+k_1,m)$ and {\bf P}$\, (m+1,\tilde{\theta }_k )$ imply {\bf P}$\, (1+k_1,\tilde{\theta }_k );$
a contradiction, that proves the claim.

\smallskip
\noindent
{\bf Claim 5.} {\it The set $T_k$ is $(k,m)$-regular for all $m\in R_k.$}

\smallskip
\noindent
We may suppose that $k\leq n<m$ since otherwise we have nothing to prove.
Assume, first, that $n$ is a white point; that is $n\notin T_k,$ while scheme (\ref{grab})
has a black column, say $n-i\in T_k,$ $n+i\in T_k,$ $i>0.$ Condition $n+i\in T_k$
implies {\bf P}$\, (k,n+i).$ Hence by Claim 3 with $m\leftarrow n+i,$ $s\leftarrow n$
we have {\bf P}$\, (k,n)\vee {\bf P}(n+1,n+i).$ However $n\notin T_k$ implies 
$\neg {\bf P}(k,n),$ for $\psi (k)\notin T_{\psi (n)}=T_{n+1}=\emptyset .$
Hence {\bf P}$\, (n+1,n+i)$ is true. We have 
{\bf P}$\, (n+1,n+i)$ $={\bf P}(\psi (n+i), \psi (n+1))$ $={\bf P}(n-i+1,n)$ is also true.
Since $n-i\in T_k$ implies {\bf P}$\, (k,n-i),$ Claim 2 with $s\leftarrow n-i,$
$m\leftarrow n$ shows that {\bf P}$\, (k,n)$ is true. This is a contradiction, for
$n\notin T_k$ implies $\neg {\bf P}(k,n).$

Let, then, $n$ be a black point; that is, $n\in T_k,$ while scheme (\ref{grab2})
have a white column, say $n-i\notin T_k,$ $n+i\notin T_k,$ $i>0.$
Condition $n-i\notin T_k$ implies $\neg {\bf P}(k,n-i),$ for $T_{\psi (n-i)}=T_{n+i+1}=\emptyset .$
By Claim 3 with $m\leftarrow n,$ $s\leftarrow n-i$ we have
${\bf P}(n-i+1,n),$ for $n\in T_k$ implies {\bf P}$\, (k,n).$ Hence
{\bf P}$\, (n-i+1,n)$ $={\bf P}(\psi (n), \psi (n-i+1))$ $={\bf P}(n+1, n+i)$ is true as well.
At the same time Claim 4 with $m\leftarrow n+i$
implies {\bf P}$\, (n+i+1,\tilde{\theta }_k),$ while Claim 2 with $k\leftarrow n+1,$
$s\leftarrow n+i$ implies {\bf P}$\, (n+1,\tilde{\theta }_k).$
Again Claim 4 with $m\leftarrow n$ shows that $n\notin T_k,$ a contradiction.

It remains to show, next, that if $n\in T_k,$ then the first from the left complete
column of (\ref{grab2}) is a black one; that is $\psi (m)-1\in T_k.$
Let to the contrary $\psi (m)-1\notin T_k.$ Then we have
$\neg {\bf P}(k,\psi (m)-1),$ for $T_{\psi (\psi (m)-1)}=T_{m+1}=\emptyset .$
Claim 3 with $s\leftarrow \psi (m)-1,$ $m\leftarrow n$ implies
{\bf P}$\, (\psi (m),n),$ while
Claim 4 with $m\leftarrow n$ implies $\neg {\bf P}(n+1, \tilde{\theta }_k).$
We see that the point $r=n<m$ does not satisfy condition (\ref{pet1}c),
 for {\bf P}$\, (n+1,m)$ $={\bf P}(\psi (n),m)$ $={\bf P}(\psi (m), n)$ is true,
while {\bf P}$\, (n+1, \tilde{\theta }_k)$ is false. Thus $m\notin R_k,$
a contradiction.

\smallskip
\noindent
{\bf Claim 6.} {\it Let  ${\bf \tilde{U}}$ be a subalgebra generated
by all right coideals {\bf U}$\, ^{T_k}(k,m),$
 $m\in R_k.$  If $1\leq a\leq b\leq 2n,$ $b\neq \psi (a),$
then {\bf P}$\, (a,b)$ is true if and only if $[a:b]$ is an ${\bf \tilde{U}}$-root. In particular the set of all
 ${\bf \tilde{U}}$-roots is} $\{ [k:m]\, |\, m\in T_k^{\prime }\} .$

\smallskip
\noindent
Certainly ${\bf \tilde{U}}$ is a right coideal subalgebra that contains {\bf k}$[G].$ By Theorem
\ref{iex2} it is generated over {\bf k}$[G]$ by elements $\Phi ^{T_k}(1+t,s),$
where $t<s$ are, respectively, white and black points for $\Phi ^{T_k}(k,m);$
that is, $t=k-1$ or $t\notin T_k,$ and $s=m$ or $s\in T_k.$ In particular {\bf P}$\, (k,s)$
is true, while {\bf P}$\, (k,t)$ is false ($\psi (k)\notin [t,\psi (t)]\supseteq T_{\psi (t)},$
for $k\leq t<s<\psi (k)$). Hence, by Claim 3 with $s\leftarrow t$ we have 
{\bf P}$\, (1+t,s).$ 

If $\gamma =[a:b],$ $a\leq b<\psi (a)$ is an ${\bf \tilde{U}}$-root, 
then, by definition, in ${\bf \tilde{U}}$ there exists
a homogeneous element $c_u\in {\bf \tilde{U}}$ of the form (\ref{vad22}) of degree $\gamma .$
Since ${\bf \tilde{U}}$ is generated by $\Phi ^{T_k}(1+t,s),$ the degree $\gamma $ is a sum 
of degrees $[1+t:s]$ of the generators. In particular
$\gamma =\sum _i[a:b],$ where {\bf P}$\, (a_i,b_i)$ are true, and $b_i\neq \psi (a_i).$
By Lemma \ref{sos} we may modify the decomposition of $\gamma $ so that
$$
\gamma =[k_0-1:k_1]+[1+k_1:k_2]+\ldots [1+k_r:k_{r+1}],
$$
where $a-1=k_0<k_1<\ldots <k_r<b=k_{r+1},$ and for each $i,$ $0\leq i\leq r,$
still {\bf P}$\, (1+k_i,k_{i+1})$ is true. Now Claim 2 implies  {\bf P}$\, (a,b).$
Hence $b\in T_a^{\prime },$ for $a\leq b<\psi (a).$

Conversely, if $m\in T_k^{\prime },$ then by Claim 1 we have a sequence
$
k-1=k_0<k_1<\ldots <k_r<m=k_{r+1},
$
 such that for each $i,$ $0\leq i\leq r,$
either  $k_{i+1}\in R_{1+k_i}$ or $\psi (1+k_i)\in R_{\psi (k_{i+1})}.$
By definition ${\bf \tilde{U}}$ contains elements
$\Phi ^{T_{a_i}}(a_i,b_i),$ where $a_i=1+k_i,$ $b_i=k_{i+1}$ provided that
$k_{i+1}<\psi (1+k_i),$ and $a_i=\psi (k_{i+1}),$ $b_i=\psi (1+k_i)$ provided that
 $k_{i+1}>\psi (1+k_i).$ Hence $[a_i:b_i]=[1+k_i:k_{i+1}]$ are ${\bf \tilde{U}}$-roots.
In particular $[k:m]$ is a sum of ${\bf \tilde{U}}$-roots.
By Lemma \ref{sos1} the element $[k:m]$ itself is an ${\bf \tilde{U}}$-root.

\smallskip
\noindent
{\bf Claim 7.} {\it  The set of all simple ${\bf \tilde{U}}$-roots is $\{ [k:m]\, |\, m\in R_k\}.$ In particular}
$r({\bf \tilde{U}})=\theta .$ 

\smallskip
\noindent
If $\gamma =[k:m],$ $k\leq m<\psi (k)$ is a simple ${\bf \tilde{U}}$-root, 
then due to the above claim {\bf P}$\, (k,m)$ is true.
Hence, according to Claim 1, we may find sequence (\ref{tir}).
In this case $\gamma =[k:k_1]+[1+k_1:k_2]+\ldots +[1+k_{r}:m]$ is a sum of ${\bf \tilde{U}}$-roots,
for {\bf P}$\, (1+k_i,k_{i+1})$ is true by definition (\ref{pred}).
Since $\gamma $ is simple this is possible only if $r=0.$ Thus,  $m=k_1 \in R_k,$
for $\psi (k)\notin [m,\psi (m)]\supseteq R_{\psi (m)}.$

Conversely. Let $m\in R_k.$ Then by  definition (\ref{pet2}) we have $m\in T_k.$
Claim 6 implies that $[k:m]$ is an ${\bf \tilde{U}}$-root.
If it is not simple, then it is a sum of two or more  ${\bf \tilde{U}}$-roots, 
$[k:m]=[k:k_1]+[1+k_1:k_2]+\ldots [1+k_r:m],$ where, 
due to Claim 6, we have {\bf P}$\, (1+k_i,k_{i+1}),$ $0\leq i\leq r$ are true.
Claim 2 implies that {\bf P}$\, (1+k_1,m)$ is true as well. Definition (\ref{pet1}c) with 
$r\leftarrow k_1$ implies {\bf P}$\, (1+k_1,\tilde{\theta }_k).$ Now Claim 4
provides a contradiction, $k_1\notin T_k$ (recall that {\bf P}$\, (k,k_1)$ implies $k_1\in T_k$
since $k\leq k_1\leq m<\psi (k)$).

\smallskip
\noindent
{\bf Claim 8.} ${\bf \tilde{U}}$ {\it as an algebra is generated by {\bf k}$[G]$ and 
$\Phi ^{T_k}(k,m),$ $m\in R_k;$ that is,} ${\bf \tilde{U}}={\bf U}.$

\smallskip
\noindent
It suffices to note that {\bf U} contains a set of PBW-generators for ${\bf \tilde{U}}$
over {\bf k}$[G].$
If $[k:m]$ is an ${\bf \tilde{U}}$-root, then it is a sum of simple ${\bf \tilde{U}}$-roots,
$[k:m]=\sum [k_i:m_i],$ $m_i\in R_{k_i}.$ Elements $f_i=\Phi ^{T_{k_i}}(k_i,m_i)$
by definition belong to {\bf U}.
The PBW-generator corresponding to the root $[k:m]$ can be taken to be
a polynomials in $f_i$ determined in one
of the formulae $(\ref{tc1}-\ref{tc4})$ depending up of the type of the decomposition of 
$[k:m]$ in a sum of simple roots.

Theorem \ref{su4} is completely proved. \end{proof}

\begin{corollary} Every $($homogeneous if $q^t=1,$ $t>4$ $)$ right coideal subalgebra {\bf U} of 
$U^{+}_q(\mathfrak{so}_{2n+1}),$ $q^t\neq 1$  $($respectively of $u^{+}_q(\mathfrak{so}_{2n+1}))$
that contains $G$ is generated as an algebra by $G$ and a set of elements $\Phi ^{S}(k,m)$
with $(k,m)$-regular sets $S$.
\label{rug}
\end{corollary}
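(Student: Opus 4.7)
The plan is to deduce the corollary directly from the classification established by Theorem \ref{teor} and Theorem \ref{su4}, together with the regularity observation made inside the proof of Theorem \ref{su4}. Given any right coideal subalgebra $\mathbf{U} \supseteq {\bf k}[G]$ satisfying the hypotheses of the corollary, I first compute its root sequence $\theta = r(\mathbf{U}) = (\theta_1, \ldots, \theta_n)$ as in Definition \ref{tet}. By construction $\theta$ satisfies $0 \leq \theta_k \leq 2n-2k+1$ for $1 \leq k \leq n$, so it falls within the scope of Theorem \ref{su4}.

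Next I invoke Theorem \ref{su4} to produce the right coideal subalgebra $\mathbf{U}_\theta$. Unpacking its construction (see \eqref{pet3} and the preceding Definition \ref{tski}), $\mathbf{U}_\theta$ is by definition generated over ${\bf k}[G]$ by the elements $\Phi^{T_k}(k,m)$ for $1 \leq k \leq m$ with $m \in R_k$. The crucial point is Claim 5 of the proof of Theorem \ref{su4}, which asserts precisely that $T_k$ is $(k,m)$-regular whenever $m \in R_k$. Consequently $\mathbf{U}_\theta$ is generated as an algebra over ${\bf k}[G]$ by elements of the required form $\Phi^S(k,m)$ with $(k,m)$-regular $S$.

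Finally, to identify $\mathbf{U}$ with $\mathbf{U}_\theta$, I apply Theorem \ref{teor}: since both $\mathbf{U}$ and $\mathbf{U}_\theta$ are (homogeneous, when $q^t=1$, $t>4$) right coideal subalgebras containing the coradical, and $r(\mathbf{U}) = \theta = r(\mathbf{U}_\theta)$ by Theorem \ref{su4}, uniqueness forces $\mathbf{U} = \mathbf{U}_\theta$. This produces the generating set of the required form and completes the proof.

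No real obstacle arises here: all technical work—the construction of $\mathbf{U}_\theta$ from the sets $R_k, T_k$, the verification of the predicate properties (Claims 1--4, 6--8), and above all the regularity of $T_k$ on $R_k$ (Claim 5)—was already carried out in the proof of Theorem \ref{su4}. The corollary is a direct structural reading of that construction, made possible only because the classification is complete: every right coideal subalgebra over the coradical has been realized as one of the $\mathbf{U}_\theta$'s, whose defining generators are manifestly $\Phi^S(k,m)$ with regular $S$.
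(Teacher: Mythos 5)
Your proof is correct and follows exactly the paper's own argument: combine Theorem \ref{teor} (uniqueness) with Theorem \ref{su4} (existence) to identify $\mathbf{U}$ with $\mathbf{U}_\theta$, read off the generators $\Phi^{T_k}(k,m)$, $m\in R_k$, from definition (\ref{pet3}), and invoke Claim 5 for the regularity of $T_k$. Nothing to add.
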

\begin{proof}
Theorems \ref{teor} and \ref{su4} imply that {\bf U} has the form {\bf U}$_{\theta },$
where $\theta $ is the root sequence. At the same time definition (\ref{pet3})
shows that {\bf U}$_{\theta }$ as an algebra is generated by $G$ and elements  
$\Phi ^{T_k}(k,m),$ $m\in R_k.$ It remains to apply Claim 5.
\end{proof}

\section{Right coideal subalgebras that do not contain the coradical}

In this small section we restate the main result in a slightly more general form. More precisely
we consider homogeneous right coideal subalgebras
in  $U^{+}_q(\mathfrak{so}_{2n+1}),$ (respectively in $u^{+}_q(\mathfrak{so}_{2n+1})$)
whose intersection with the coradical is a subgroup. 
We are reminded  that for every submonoid 
$\Omega \subseteq G$ the set of all linear combinations {\bf k}$\, [\Omega]$
is a right coideal subalgebra.
Conversely if $U_0\subseteq \,${\bf k}$\, [G]$ is a right coideal subalgebra, then
$U_0=\, ${\bf k}$\, [\Omega]$ for $\Omega =U_0\cap G$
since  $a=\sum_i \alpha_i g_i\in U_0$ implies $\Delta (a)=
\sum_i \alpha_i g_i\otimes g_i\in U_0\otimes \, ${\bf k}$\, [G];$ that is, $\alpha_i g_i\in U_0.$
\begin{definition} \rm
For a sequence $\theta=(\theta_1, \theta_2, \ldots ,\theta_n),$ such that $0\leq \theta_k\leq 2n-2k+1,$
$1\leq k\leq n$ we denote by {\bf U}$^1_{\theta }$ a subalgebra with 1 generated by 
$g^{-1}_{km}\Phi ^{S}(k,m),$ where $g_{km}=g(u(k,m))$ and 
$m\in R_k,$ $S =T_k;$ see Theorem \ref{su4}.
\label{1te}
\end{definition}
\begin{lemma} The subalgebra
{\bf U}$^1_{\theta }$ is a homogeneous right coideal, and 
{\bf U}$^1_{\theta }\cap G=\{ 1\} .$ 
\label{1su}
\end{lemma}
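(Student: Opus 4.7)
The plan is to verify the three assertions in turn, with the substance concentrated in the coideal property.

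Homogeneity and ${\bf U}^1_\theta\cap G=\{1\}$: each generator $g_{km}^{-1}\Phi^{T_k}(k,m)$ is homogeneous in each $x_i$ (since $\Phi^{T_k}(k,m)$ is and $g_{km}^{-1}\in G$ contributes trivial $x_i$-degree), and products of such elements remain homogeneous, so ${\bf U}^1_\theta$ decomposes as a direct sum over $\Gamma^+$-degrees. Every nonempty product of generators has strictly positive total $x_i$-degree, so the degree-zero subspace of ${\bf U}^1_\theta$ equals ${\bf k}\cdot 1$; since every $g\in G$ has all $x_i$-degrees zero, this yields $G\cap{\bf U}^1_\theta=\{1\}$.

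For the right coideal property, I will first establish an auxiliary statement: \emph{for every homogeneous $a\in{\bf U}_\theta$, the element $g_a^{-1}a$ lies in ${\bf U}^1_\theta$}. By Claim 8 in the proof of Theorem~\ref{su4}, ${\bf U}_\theta$ is generated over ${\bf k}[G]$ by the elements $\Phi^{T_k}(k,m)$ with $m\in R_k$. Using the commutation $\Phi g=\chi^\Phi(g)g\Phi$ I will push all group elements to the left and write $a=\sum_j\lambda_jg_jM_j$ with $M_j$ a monomial in the generators; homogeneity of $a$ forces $g_j=g_ag_{M_j}^{-1}$ on each contributing term, whence $g_a^{-1}a=\sum_j\lambda_jg_{M_j}^{-1}M_j$. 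A short induction, moving each factor $g_{\Phi_j}^{-1}$ inward through the subsequent $\Phi$'s via $g^{-1}\Psi=\chi^\Psi(g)\Psi g^{-1}$, rewrites $g_{M_j}^{-1}M_j$ as a scalar multiple of a product of generators $g_{\Phi_j}^{-1}\Phi_j$ of ${\bf U}^1_\theta$, establishing the claim.

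Next, I will combine this with the biproduct form of $\Delta$. Because the kernel of $\xi$ coincides with ${\bf\Lambda}$ (Section~4), formula~(\ref{copro}) gives $\Delta(a)=\sum a^{(1)}g_{a^{(2)}}\otimes a^{(2)}$ with $a^{(1)},a^{(2)}\in A$ for any $a\in A$. Taking $a=\Phi^{T_k}(k,m)$, applying $g_a^{-1}\otimes g_a^{-1}$, and using $g_a=g_{a^{(1)}}g_{a^{(2)}}$ together with $g^{-1}a^{(1)}=\chi^{a^{(1)}}(g)a^{(1)}g^{-1}$, I obtain
\[
\Delta\bigl(g_a^{-1}a\bigr)\;=\;\sum\chi^{a^{(1)}}(g_{a^{(2)}})\,\bigl(g_{a^{(1)}}^{-1}a^{(1)}\bigr)\otimes\bigl(g_a^{-1}a^{(2)}\bigr).
\]
Because $\Delta(a)\in{\bf U}_\theta\otimes H$ (Theorem~\ref{su4}) and $g_{a^{(2)}}\in G\subseteq{\bf U}_\theta$, each $a^{(1)}$ itself lies in ${\bf U}_\theta$, and the auxiliary claim then yields $g_{a^{(1)}}^{-1}a^{(1)}\in{\bf U}^1_\theta$. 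Thus every left tensor factor lies in ${\bf U}^1_\theta$, and multiplicativity of $\Delta$ extends this containment to arbitrary products of generators.

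The main obstacle will be the scalar bookkeeping in the auxiliary claim: pushing inverse group elements through successive $\Phi^{T_k}(k,m)$ produces accumulating $\chi$-factors that must be tracked carefully, though nothing beyond the defining commutation rules enters. Once the claim is established, the biproduct manipulation leading to the displayed coproduct formula is essentially a single line.
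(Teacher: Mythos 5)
Your overall route is the same as the paper's: show that ${\bf U}^1_\theta$ is spanned by the elements $g_a^{-1}a$ with $a$ ranging over the ${\bf k}$-subalgebra $B_\theta$ generated by the $\Phi^{T_k}(k,m)$, and then use the biproduct form of the coproduct, $\Delta(a)=\sum a^{(1)}g_{a^{(2)}}\otimes a^{(2)}$ with $a^{(1)},a^{(2)}\in A$, to see that the left tensor factors of $\Delta(g_a^{-1}a)$ are again of the form $g_{a^{(1)}}^{-1}a^{(1)}$. The homogeneity statement, the computation $g_a^{-1}a^{(1)}g_{a^{(2)}}=\chi^{a^{(1)}}(g_{a^{(2)}})\,g_{a^{(1)}}^{-1}a^{(1)}$, and the extension to products by multiplicativity of $\Delta$ are all fine.

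There is, however, one genuine error: your auxiliary statement is false as you state it. For a homogeneous $a\in{\bf U}_\theta$ written as $\sum_j\lambda_jg_jM_j$, homogeneity (which in this paper means homogeneity in the $x_i$-degrees) puts no constraint whatsoever on the group elements $g_j$; since all the $M_j$ in a fixed degree have $g_{M_j}=g_a$, your claim ``$g_j=g_ag_{M_j}^{-1}$'' would force every $g_j$ to equal $1$, which fails already for $a=gx_1\in{\bf U}_\theta$ with $1\neq g\in G$ (and indeed $g_1^{-1}gx_1\notin{\bf U}^1_\theta$ because the degree-$x_1$ component of ${\bf U}^1_\theta$ is ${\bf k}g_1^{-1}x_1$). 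The claim, and your argument for it, are correct exactly when $a$ has no group part, i.e.\ when $a\in B_\theta={\bf U}_\theta\cap A$ — which is the only case your main argument actually uses: the elements $a^{(1)}$ you extract lie in $A$ by construction and in ${\bf U}_\theta$ because $G$ is invertible inside ${\bf U}_\theta$, hence in ${\bf U}_\theta\cap A=B_\theta$ (this last identification, ${\bf k}[G]B_\theta\cap A=B_\theta$, uses $H=\oplus_{g\in G}gA$ and should be said explicitly). So restrict the auxiliary claim to $B_\theta$, note $a^{(1)}\in B_\theta$, and the proof closes; this is precisely how the paper phrases it, working with $B_\theta$ from the start rather than with all of ${\bf U}_\theta$.
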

\begin{proof}
The subalgebra
{\bf U}$^1_{\theta }$ is homogeneous since it is generated by homogeneous elements.
Its zero homogeneous  component equals {\bf k} since among the generators just one,
the unity, has zero degree. 

Denote by $B_{\theta }$ a {\bf k}-subalgebra generated by 
 $\Phi ^{S}(k,m)$ with $m\in R_k,$ $S=T_k.$
The algebra {\bf U}$^1_{\theta }$ is spanned by all elements of the form 
$g_a^{-1}a,$ $a\in B_{\theta }.$ Since {\bf U}$_{\theta }$ is a right coideal, 
for any homogeneous $a\in B_{\theta }$ we have
$\Delta (a)=\sum g(a^{(2)})a^{(1)}\otimes a^{(2)}$ where $a^{(1)}\in B_{\theta },$
$g_a=g(a^{(1)})g(a^{(2)}).$ Therefore 
$\Delta (g_a^{-1}a)=\sum g(a^{(1)})^{-1}a^{(1)}\otimes g_a^{-1}a^{(2)}$ with 
$g(a^{(1)})^{-1}a^{(1)}\in \, ${\bf U}$_{\theta }^1.$
\end{proof}

\begin{lemma} 
If $\Omega $ is a submonoid of $G,$ then ${\bf k }[\Omega ]{\bf U}_{\theta }^{1}$
is a homogeneous right coideal subalgebra,  
and ${\bf k }[\Omega ]{\bf U}_{\theta }^{1}\cap G=\Omega .$ Moreover 
${\bf k }[\Omega ]{\bf U}_{\theta }^{1}$ 
$={\bf k }[{\Omega }^{\prime } ]{\bf U}_{{\theta }^{\prime }}^{1}$
if and only if ${\Omega }={\Omega }^{\prime }$ and ${\theta }={\theta }^{\prime }.$
\label{1su1}
\end{lemma}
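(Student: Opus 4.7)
The plan is to verify the three assertions in sequence, with the uniqueness part reducing to Theorem \ref{teor} via the identity ${\bf k}[G]{\bf U}_\theta^1 = {\bf U}_\theta$. Homogeneity in each $x_i$ is inherited from the two factors, both of which are spanned by homogeneous elements. For closure under multiplication I would use that for any homogeneous $u \in {\bf U}_\theta^1$ and $g \in G$ one has $ug = \chi^u(g)\, gu$, so ${\bf U}_\theta^1 \cdot {\bf k}[\Omega] = {\bf k}[\Omega] \cdot {\bf U}_\theta^1$; together with the fact that $\Omega$ is a submonoid and ${\bf U}_\theta^1$ is a subalgebra (Lemma \ref{1su}), this gives the algebra property. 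The right coideal property then follows by multiplying $\Delta(g)=g\otimes g$ with $\Delta({\bf U}_\theta^1)\subseteq {\bf U}_\theta^1\otimes H$ (again Lemma \ref{1su}) and observing that the resulting left tensor factors lie inside ${\bf k}[\Omega]{\bf U}_\theta^1$.

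Next I would compute the intersection with $G$ using the $\Gamma^+$-grading in which every group-like element has degree $0$. The inclusion $\Omega \subseteq {\bf k}[\Omega]{\bf U}_\theta^1 \cap G$ is immediate. For the reverse inclusion, Lemma \ref{1su} asserts that the zero homogeneous component of ${\bf U}_\theta^1$ equals ${\bf k}$, so the zero component of ${\bf k}[\Omega]{\bf U}_\theta^1$ is precisely ${\bf k}[\Omega]$. Since elements of $G$ live in degree zero, any element of the intersection must sit in ${\bf k}[\Omega]\cap G = \Omega$.

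The decisive step is the uniqueness, and I would handle it through the identity ${\bf k}[G]{\bf U}_\theta^1 = {\bf U}_\theta$. For the inclusion $\supseteq$, each defining generator $\Phi^{T_k}(k,m)$ of ${\bf U}_\theta$ factors as $g_{km}\bigl(g_{km}^{-1}\Phi^{T_k}(k,m)\bigr)$ and hence lies in ${\bf k}[G]{\bf U}_\theta^1$; for $\subseteq$, each generator $g_{km}^{-1}\Phi^{T_k}(k,m)$ of ${\bf U}_\theta^1$ belongs to ${\bf U}_\theta$, which already contains ${\bf k}[G]$. Given this identity, suppose ${\bf k}[\Omega]{\bf U}_\theta^1 = {\bf k}[\Omega']{\bf U}_{\theta'}^1$. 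The previous paragraph forces $\Omega = \Omega'$, while multiplying both sides on the left by ${\bf k}[G]$ yields ${\bf U}_\theta = {\bf U}_{\theta'}$; applying the root-sequence invariant and Theorem \ref{teor} then forces $\theta = \theta'$.

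I do not anticipate a serious obstacle: the structural content is all contained in Lemma \ref{1su} and Theorem \ref{teor}, and the argument reduces to bookkeeping with the $\Gamma^+$-grading and with the character action of $G$ on homogeneous components. The most delicate point is the identity ${\bf k}[G]{\bf U}_\theta^1 = {\bf U}_\theta$, but both inclusions fall out of a direct inspection of the listed generators in Definitions \ref{tski} and \ref{1te}.
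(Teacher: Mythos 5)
Your proposal is correct and follows essentially the same route as the paper: homogeneity from homogeneous generators, identification of the zero-degree component with ${\bf k}[\Omega]$ to compute the intersection with $G$, the coideal property from Lemma \ref{1su} together with $\Delta(g)=g\otimes g$, and the uniqueness via ${\bf k}[G]{\bf U}_{\theta}^{1}={\bf U}_{\theta}$ and the fact that $r({\bf U}_{\theta})=\theta$. The only differences are that you make explicit two points the paper leaves implicit (the commutation $ug=\chi^{u}(g)gu$ giving closure under multiplication, and the generator-by-generator check of ${\bf k}[G]{\bf U}_{\theta}^{1}={\bf U}_{\theta}$), which is harmless.
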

\begin{proof}
The subalgebra ${\bf k }[\Omega ]{\bf U}_{\theta }^{1}$ is homogeneous since it is generated by homogeneous elements.
Its zero homogeneous  component equals ${\bf k}[\Omega ].$ Hence 
${\bf k }[\Omega ]{\bf U}_{\theta }^{1}\cap G=\Omega .$ 
By the above lemma we have 
$$
\Delta ({\bf k }[\Omega ]{\bf U}_{\theta }^{1})\subseteq 
({\bf k }[\Omega ]\otimes {\bf k }[\Omega ]) \cdot ({\bf U}_{\theta }^{1}\otimes 
U^{+}_q(\mathfrak{so}_{2n+1})).
$$
Hence ${\bf k }[\Omega ]{\bf U}_{\theta }^{1}$ is a right coideal subalgebra.
Finally, the equality  ${\bf k }[\Omega ]{\bf U}_{\theta }^{1}$ 
$={\bf k }[{\Omega }^{\prime } ]{\bf U}_{{\theta }^{\prime }}^{1}$ implies both the
equality of zero homogeneous components, ${\bf k }[{\Omega }]={\bf k }[{\Omega }^{\prime }],$
and ${\bf U}_{\theta }={\bf k }[G]{\bf U}_{\theta }^{1}$ 
$={\bf k }[G]{\bf U}_{{\theta }^{\prime }}^{1}={\bf U}_{{\theta }^{\prime }}.$ Hence 
$\theta ={\theta }^{\prime}$ due to Theorem \ref{su4}.
\end{proof}
\begin{theorem} If $U$ is a homogeneous right coideal subalgebra   
of  $U^{+}_q(\mathfrak{so}_{2n+1})\,  ($resp. of $u^{+}_q(\mathfrak{so}_{2n+1}))$
such that $\Omega \stackrel{df}{=} U\cap G$ is a group,
then $U={\bf k }[\Omega ]{\bf U}_{\theta }^{1}$
for a suitable $ \theta .$
\label{orc}
\end{theorem}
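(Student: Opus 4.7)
The plan is to reduce the statement to the already-classified right coideal subalgebras containing the full coradical. First I form $V:={\bf k}[G]\cdot U$. Homogeneity of $U$ in each $x_i$ implies that $G$ normalizes $U$ by conjugation, so $V$ is a subalgebra, and it is a right coideal as the product of two right coideals. Hence $V$ is a right coideal subalgebra containing ${\bf k}[G]$, and Theorem \ref{su4} produces a unique sequence $\theta$ with $V={\bf U}_{\theta}$. This is the $\theta$ in the conclusion.

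To establish the inclusion ${\bf k}[\Omega]{\bf U}_{\theta}^{1}\subseteq U$, since $\Omega\subseteq U$ is given, it suffices to show that each generator $w_{km}:=g_{km}^{-1}\Phi^{T_k}(k,m)$ of ${\bf U}_{\theta}^{1}$ (with $m\in R_k$) lies in $U$. Because ${\bf k}[G]U={\bf U}_{\theta}$ preserves $\Gamma^{+}$-degree, $U^{[k:m]}\neq 0$; pick a nonzero $u\in U^{[k:m]}$. Since $({\bf U}_{\theta})^{[k:m]}$ is one-dimensional as a ${\bf k}[G]$-module, generated by $\Phi^{T_k}(k,m)$, we may write $u=\alpha\,\Phi^{T_k}(k,m)$ for some nonzero $\alpha\in{\bf k}[G]$. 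From the coproduct formula of Proposition \ref{ex2i}, $\Delta(u)=u\otimes\alpha+\alpha g_{km}\otimes u+\text{middle}$, where the middle second factors have $\Gamma^{+}$-degree strictly less than $[k:m]$. Extracting the coefficient of the linearly independent second factor $u$ and using $\Delta(u)\in U\otimes H$ forces $\alpha g_{km}\in U\cap{\bf k}[G]={\bf k}[\Omega]$, so $\alpha=\beta g_{km}^{-1}$ with $\beta\in{\bf k}[\Omega]$ and $u=\beta\,w_{km}$. The Hopf projection $\pi:H\to{\bf k}[G]$ induces the right coaction $\rho=({\rm id}\otimes\pi)\Delta$, which makes $U$ into a $G$-graded space; the distinct group elements $h\in\Omega$ appearing in $\beta$ contribute distinct $G$-grades $hg_{km}^{-1}$ to the summands of $u=\sum_h c_h\,hw_{km}$. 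Each $hw_{km}$ therefore lies in $U$, and multiplication by $h^{-1}\in\Omega\subseteq U$ yields $w_{km}\in U$.

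For the reverse inclusion $U\subseteq{\bf k}[\Omega]{\bf U}_{\theta}^{1}$, the key observation is that every PBW monomial of $\Gamma^{+}$-degree $\gamma$ in ${\bf U}_{\theta}^{1}$ shares the same $G$-grade $g_{\gamma}^{-1}$, where $g_{\gamma}=\prod g_i^{m_i}$ for $\gamma=\sum m_i x_i$; this follows because each generator $w_{km}$ has grade $g_{km}^{-1}=g_{[k:m]}^{-1}$ and the $G$-grading is multiplicative. Since $U$ is $G$-graded, I may restrict attention to $u\in U_{\bar g}\cap U^{\gamma}$ for some $\bar g\in G$. Using the vector space decomposition ${\bf U}_{\theta}={\bf k}[G]\otimes{\bf U}_{\theta}^{1}$, this $u$ takes the form $u=(\bar g g_{\gamma})\,c$ for a unique $c\in({\bf U}_{\theta}^{1})^{\gamma}$. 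The coproduct then expands as $\Delta(u)=u\otimes\bar g+(\bar g g_{\gamma})\otimes u+\text{middle}$, whose middle second factors have positive $\Gamma^{+}$-degree strictly less than $\gamma$. Extracting the coefficient of the unique second factor $u$ yields $\bar g g_{\gamma}\in U\cap{\bf k}[G]={\bf k}[\Omega]$, hence $\omega:=\bar g g_{\gamma}\in\Omega$. Therefore $u=\omega c\in{\bf k}[\Omega]{\bf U}_{\theta}^{1}$, and summing over $G$-graded components gives $U\subseteq{\bf k}[\Omega]{\bf U}_{\theta}^{1}$.

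The main technical obstacle will be verifying that the terms $\alpha g_{km}\otimes u$ and $(\bar g g_{\gamma})\otimes u$ appear uniquely as second factors of $\Delta(u)$ at the full $\Gamma^{+}$-degree; this rests on Proposition \ref{ex2i} to ensure that all remaining middle second factors have strictly smaller $\Gamma^{+}$-degree. Once this uniqueness is in hand, the right coideal property collapses the corresponding first-factor coefficients into ${\bf k}[\Omega]$, and the structural identification $U={\bf k}[\Omega]{\bf U}_{\theta}^{1}$ follows by combining both inclusions.
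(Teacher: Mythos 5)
Your proof is correct and follows essentially the same route as the paper's: identify ${\bf k}[G]U={\bf U}_{\theta}$ via Theorems \ref{teor} and \ref{su4}, split homogeneous elements of $U$ along the $G$-grading coming from $({\rm id}\otimes \pi)\Delta$, and pair the group-like first tensor factor with the top-$\Gamma^{+}$-degree second tensor factor of $\Delta(ha)$ to force $hg_a\in \Omega$ (the paper does this in one pass for an arbitrary $ha\in U$ rather than separating the two inclusions and treating only generators). Two points worth tidying but not affecting validity: the displayed coproducts conflate $\Delta(\alpha)$ with $\alpha\otimes\alpha$ for $\alpha=\sum_h c_h h\in{\bf k}[G]$ (the degree-$[k:m]$ part is really $\sum_h c_h\, hg_{km}\otimes h\Phi^{T_k}(k,m)$, whose linearly independent second factors still give $hg_{km}\in\Omega$ for each $h$), and the rank-one claim for $({\bf U}_{\theta})^{[k:m]}$ needs the observation that $[k:m]$ with $m\in R_k$ is a \emph{simple} ${\bf U}_{\theta}$-root, so no PBW monomial with two or more factors can have that degree.
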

\begin{proof}
Let $u=\sum h_ia_i\in U$ be a homogeneous element of degree $\gamma \in \Gamma ^{+}$
with different $h_i\in G,$ and $a_i\in A,$ where by $A$ we denote the {\bf k}-subalgebra
generated by $x_i,$ $1\leq i\leq n.$ Denote by ${\pi }_{\gamma }$ the natural
projection on the homogeneous component of degree $\gamma .$
Respectively $\pi _g,$ $g\in G$ is a projection on the subspace {\bf k}$g.$
We have $\Delta (u)\cdot (\pi _{\gamma }\otimes \pi _{h_i})=h_ia_i\otimes h_i.$
Thus $h_ia_i\in U.$

By Theorem \ref{su4} and Theorem \ref{teor}
 we have ${\bf k}[G]U={\bf U}_{\theta }$ for a suitable $\theta .$
If $u=ha\in U,$ $h\in G,$ $a\in A,$ then $\Delta (u)\cdot (\pi _{hg_a}\otimes \pi _{\gamma })=
hg_a\otimes ha.$ Therefore $hg_a\in U\cap G=\Omega ;$ that is, $u=\omega g_a^{-1}a,$
$\omega \in \Omega .$ Since $\Omega $ is a subgroup we get $g_a^{-1}a\in U.$
It remains to note that all elements $g_a^{-1}a,$ such that $ha\in U$ span the algebra 
{\bf U}$_{\theta }^1.$
\end{proof}

If $U\cap G$ is not a group, then $U$ may have a more complicated structure, see 
\cite[Example 6.4]{KL}.

\end{document}